\let\OLDthebibliography\thebibliography
\renewcommand\thebibliography[1]{
  \OLDthebibliography{#1}
  \setlength{\parskip}{0pt}
  \setlength{\itemsep}{0pt plus 0.3ex}
}
\theoremstyle{plain}
\newtheorem{theorem}{Theorem}
\numberwithin{theorem}{section}
\newtheorem{lemma}[theorem]{Lemma}
\newtheorem{proposition}[theorem]{Proposition}
\newtheorem{corollary}[theorem]{Corollary}
\theoremstyle{remark}
\newtheorem*{remark*}{Remark}
\newtheorem{remark}[theorem]{Remark}
\theoremstyle{definition}
\newtheorem{definition}[theorem]{Definition}
\newtheorem{example}[theorem]{Example}
\DeclareMathOperator{\Tw}{Tw}
\DeclareMathOperator{\TwOp}{TwOp}
\tikzset{
  optree/.style={scale=.5,thick,grow'=up,level distance=10mm,inner sep=1pt},
  comp/.style={draw=none,circle,fill,line width=0,inner sep=0pt},
  dot/.style={draw,circle,fill,inner sep=0pt,minimum width=3pt},
  circ/.style={draw,circle,inner sep=1pt,minimum width=4mm},
  emptycirc/.style={draw,circle,inner sep=1pt,minimum width=2mm},
  root/.style={level distance=10mm,inner sep=1pt},
  leaf/.style={draw=none,circle,fill,line width=0,inner sep=0pt},
  nodot/.style={draw,circle,inner sep=1pt},
}
\begin{document}

\title{Twisted arrow categories, operads and Segal conditions}
\author{Sergei Burkin\thanks{Graduate School of Mathematical Sciences, University of Tokyo. Email: sburkin@protonmail.com}}
\maketitle

\begin{abstract}
We introduce twisted arrow categories of operads and of algebras over operads. Up to equivalence of categories, the simplex category~$\Delta$, Segal's category~$\Gamma$, Connes cyclic category $\Lambda$, Moerdijk--Weiss dendroidal category~$\Omega$, and categories similar to graphical categories of Hackney--Robertson--Yau are twisted arrow categories of symmetric or cyclic operads. Twisted arrow categories of operads admit Segal presheaves and 2-Segal presheaves, or decomposition spaces. Twisted arrow category of an operad $P$ is the $(\infty, 1)$-localization of the corresponding category $\Omega/P$ by the boundary preserving morphisms. Under mild assumptions, twisted arrow categories of operads, and closely related universal enveloping categories, are generalized Reedy. We also introduce twisted arrow operads, which are related to Baez--Dolan plus construction.
\end{abstract}

\tableofcontents

\section*{Introduction}
\addcontentsline{toc}{section}{Introduction}

The twisted arrow category $\Tw(\mathcal{C})$ of a category $\mathcal{C}$, introduced by Quillen, is defined uniquely by the following sequence of categories.
\[\Delta/\mathcal{C}\rightarrow \Tw(\mathcal{C})\rightarrow \mathcal{C}^{op}\times \mathcal{C}\rightarrow\mathcal{C}^{op} \label{seq:delta0}\tag{$\Delta$}\]
We introduce twisted arrow categories of operads. The twisted arrow category $\Tw(P)$ of an operad $P$ is defined uniquely by the lower row of the following diagram. 
\begin{center}
\begin{tikzcd}
& \mathcal{C}_P^{op} \arrow[r] \arrow[d, hook] & PROP(P)^{op} \arrow[d, hook]   &   \\
\Omega/P\arrow[r] & \Tw(P) \arrow[r] & \mathcal{U}(P) \arrow[r] & cat(P)^{op}
\end{tikzcd}
\end{center}
This lower row is the only natural generalization of the sequence \eqref{seq:delta0} to operads. The categories in the diagram are: the category $\Omega/P$ of elements of the dendroidal nerve of the operad $P$ (\cite{moerdijk2007dendroidal}); the category $\mathcal{C}_P$ introduced in \cite{deBrito2018catp} and related to operadic categories (\cite{Batanin2015operadic}) and to operator categories (\cite{Barwick2018operator}); the PROP corresponding to $P$; the universal enveloping category $\mathcal{U}(P)$ (\cite{ginzburg1994koszul}); and the category of operators $cat(P)$ (\cite{May1978uniqueness}), on which the approach of Lurie to $\infty$-operads is based. The categories $\Tw(P)$ and $\mathcal{U}(P)$ have wide subcategories $Upper$ and $Lower$ that form the strict factorization system $(Upper,Lower)$. The $(Upper, Lower)$ strict factorization system generates $(Active, Inert)$ orthogonal factorization system. The categories $\mathcal{C}_P^{op}$ and $PROP(P)^{op}$ in the diagram are the subcategories $Upper$ of $\Tw(P)$ and of $\mathcal{U}(P)$ respectively. 
 
The~simplex category $\Delta$, Segal's category $\Gamma$ and Moerdijk--Weiss category $\Omega$ are equivalent to the twisted arrow categories of the operad $uAs$ of monoids, the operad $uCom$ of commutative monoids, and the operad $sOp$ of single-coloured symmetric operads (\cite{berger2007resolution}) respectively. 

\medskip

We introduce Segal presheaves over twisted arrow categories of operads. There are two possible definitions of Segal presheaves: one comes from the work of Chu and Haugseng (\cite{chu2019homotopy}), while another is a sheaf condition on restriction to the subcategory $Lower$ or $Inert$. Under mild conditions on the operad the two definitions are equivalent.

If an operad $P$ is sufficiently nice, Segal presheaves over $\Tw(P)$ can be seen as ``multi-object algebras over $P$'', partial $P$-algebras whose elements have ``objects'', such that composability of elements $a_i$ of an algebra $A$ via an operation $p$ in $P$ and the objects of this composition are determined by the objects of the elements $a_i$. These objects will be called \emph{petals}. For any operad $P$ the category of single-object (i.e.\@ single-petal) Segal presheaves over $\Tw(P)$ is equivalent to the category of algebras over $P$ (\textbf{Theorem~\ref{thm:algebras-as-segal}}). 

We also introduce 2-Segal presheaves over twisted arrow categories of operads. This notion generalizes 2-Segal sets, or discrete decomposition spaces  (\cite{decompositionspaces1,decompositionspaces2,decompositionspaces3,Dyckerhoff2019higher}). For any operad $P$ the category of 2-Segal presheaves over $\Tw(P)$ is equivalent to the category of special morphisms of operads into $P$, called decomposition morphisms.

A \emph{palatable} operad $P$ is an operad such that Segal presheaves over $\Tw(P)$ are 2-Segal.  For a palatable operad $P$ a Segal presheaf $X$ over $\Tw(P)$ corresponds to the algebra $X'$ over the operad $P_{Pl(X)}$ constructed from the operad $P$ and the petals of $X$. The operads $uAs$ and $sOp$ are palatable. For any set of colours $C$ and any $C$-coloured operad $Q$ seen as a Segal presheaf $X$ over the category $\Omega\simeq\Tw(sOp)$ the operad $sOp_{Pl(X)}$ coincides with the operad $sOp_C$ whose algebras are $C$-coloured operads.

\medskip

We further generalize the functor $\Tw$ to algebras over operads. Twisted arrow categories of categories, of operads, of Segal presheaves over twisted arrow categories of palatable operads, and of algebras over operads  belong respectively to the sequences \eqref{seq:delta}, \eqref{seq:omega}, \eqref{seq:twx} and \eqref{seq:twa}. 
\begin{alignat*}{4}
  &\Omega/(uAs, \mathcal{C})\to \Delta/\mathcal{C}\simeq &&\Tw(uAs)/\mathcal{C}\to &&\Tw(\mathcal{C}) \longrightarrow  &&\mathcal{U}(\mathcal{C})=\mathcal{C}^{op} \times \mathcal{C} \label{seq:delta} \tag{Cat}\\
 &\Omega/(sOp, P)\to \Omega/P\simeq &&\Tw(sOp)/P \to &&\Tw(P) \longrightarrow  &&\mathcal{U}(P) \label{seq:omega} \tag{Op}\\
  &\Omega/(Q, X)\xrightarrow{\hspace{1.8cm}} &&\Tw(Q)/X \longrightarrow  &&\Tw_Q(X)\to \, &&\mathcal{U}_Q(X) \label{seq:twx} \tag{SPsh}\\
 &\Omega/(Q, A)\xrightarrow{\hspace{1.8cm}} &&\Tw(Q)/A \longrightarrow  &&\Tw_Q(A)\to  &&\mathcal{U}_Q(A) \label{seq:twa} \tag{Alg}
\end{alignat*}

The sequences increase in generality. The categories on the right are the enveloping categories (\cite{ginzburg1994koszul}). The categories on the left are the categories of elements of dendroidal nerves of operadic algebras. In the second column are the categories of elements of Segal presheaves. The functors on the right are discrete opfibrations, with fibers corresponding to morphisms, operations, and elements of algebras respectively. Up to equivalence of categories, the functors on the left and in the middle are the localizations by the \emph{active} morphisms. The active morphisms in $\Delta$ and $\Omega$ are the endpoint preserving and the boundary preserving morphisms respectively. The opfibration property and the localization property define twisted arrow categories uniquely up to isomorphism or up to equivalence.

The construction $\Tw_Q$, the most general construction of this work, has another meaningful definition. Recall that for any operad $Q$ and $Q$-algebra $A$ the Baez--Dolan plus construction $A^+\equiv(Q,A)^+$ of $A$ (\cite{Baez1998higher3}) is the operad whose algebras are $Q$-algebras endowed with a $Q$-algebra map to $A$, while the enveloping operad $\mathcal{U}Op(A)$ of $A$ (\cite{getzler1994operads,fresse1998lie}) is the operad whose algebras are $Q$-algebras endowed with a $Q$-algebra map from $A$. The enveloping category $\mathcal{U}(A)$ of $A$ is the underlying category of the operad $\mathcal{U}Op_Q(A)$. Similarly, the twisted arrow category $\Tw_Q(A)$ of $A$ is the underlying category of the operad $\TwOp_Q(A)$ that we call \emph{the twisted arrow operad} of $A$. Algebras over the operad $\TwOp_Q(A)$ are equivalently $Q$-algebras endowed with a $Q$-algebra map from $A$ and with a $Q$-algebra map to $A$ such that the composition of these maps is the identity map of $A$. The twisted arrow operad construction can be seen as a composition of these two constructions in two ways: there is an isomorphism $\TwOp_Q(A)\cong\mathcal{U}Op_{(Q,A)^+}(id_A)\cong (\mathcal{U}Op_Q(A),id_A)^+$ and there is a sequence $(Q,A)^+\to\TwOp_Q(A)\to\mathcal{U}Op_Q(A)$ that determines $\TwOp_Q(A)$.

\medskip 

We attempt to understand why categories of elements and twisted arrow categories of Segal presheaves appear in homotopy theory. Twisted arrow categories of categories appear in category\footnote{Twisted arrow categories of categories implicitly appear in (co)end calculus via the functor $\Tw(\mathcal{C})\to\mathcal{C}^{op}\times\mathcal{C}$. We do not know if there is an analogous theory for operads.} and homotopy theory as the middle term of the subsequences $\Delta/\mathcal{C}\to \Tw(\mathcal{C})\to\mathcal{C}^{op}\times\mathcal{C}$ of the sequences \eqref{seq:delta} and \eqref{seq:delta0}. For example, Thomason, Baues--Wirsching and Hochschild--Mitchell cohomology theories are based respectively on the categories $\Delta/\mathcal{C}$,  $\Tw(\mathcal{C})$ and $\mathcal{C}^{op}\times\mathcal{C}$  (\cite{baues1985cohomology,Wells2001ExtensionTF,galvez2013thomason}). For operads the situation should be similar, while the case of algebras is more delicate.  

Twisted arrow categories of algebras have 2-categorical nature. In general for a $Q$-algebra $A$ there is a strict 2-category $\mathbb{T}_Q(A)$ similar to bicategories of correspondences and such that, after inversion of 2-morphisms, $\mathbb{T}_Q(A)$ is the $(\infty, 1)$-localization of the category $\Tw(Q)/A$ by the active morphisms. The homotopy category of this $(\infty, 1)$-localization is $\Tw_Q(A)$ (\textbf{Theorem~\ref{thm:infty-localization}}).

An operad $Q$ is \emph{canonically decomposable} if for any (equivalently, for the terminal) $Q$-algebra $A$ connected components of Hom-categories in $\mathbb{T}_Q(A)$ have initial objects. If an operad $Q$ is canonically decomposable, then for any $Q$-algebra $A$ the category $\Tw_Q(A)$ admits a simple description, and the functor $\Tw(Q)/A\to\Tw_Q(A)$ is the $(\infty, 1)$-localization by the active morphisms. The operads $uAs$ and $sOp$ are canonically decomposable, and the left functors in the sequences \eqref{seq:delta}, \eqref{seq:omega}, \eqref{seq:twa} and \eqref{seq:twx} and the middle functors in the sequences \eqref{seq:delta} and \eqref{seq:omega} are $(\infty, 1)$-localizations. As a corollary, we get \textit{Theorem 3.0.1}  of \cite{walde20172}, which implies \textit{Theorem 1.1} of \cite{deBrito20dendroidal} and the asphericity of $\Omega$, proved in \cite{ara2019dendroidal}.

There is another connection with homotopy theory. For any operad $P$ the twisted arrow category $\Tw(P)$ is endowed with orthogonal factorization system $(R_-,R_+)$. The factorization system $(R_-,R_+)$, with the degree map equal to arity, is a generalized Reedy structure if and only if the underlying category of $P$ is a groupoid. There is a more general statement for graded operads (\textbf{Theorem~\ref{thm:groupoid-reedy}}). We also consider the analogous case of the image of $\Tw(P)$ in $\mathcal{U}(P)$ (\textbf{Theorem~\ref{thm:enveloping-image-reedy}}), and the cases where the generalized Reedy structure behaves particularly well.

\paragraph{Related works.} 
Twisted arrow categories of simplicial operads were introduced independently by Truong Hoang (\cite{hoang2020quillen}) with the aim of generalizing Quillen cohomology and cotangent complex from $\infty$-categories to $\infty$-operads. We give possible dendroidal analogue: the construction of a simplicial set from a dendroidal set. This construction has 2-categorical nature and sends the nerve of an operad to the nerve of its twisted arrow category. Otherwise we consider only the discrete case.

Graphical categories of Hackney--Robertson--Yau are similar to twisted arrow categories of the operads whose algebras are generalized operads (\cite{hackney2015infinity,hackney2017factorizations,hackney2019higher,hackney2019graphical,hackney2019modular,raynor2018compact}). However, the categories $U$ and $\widetilde{U}$ of \cite{hackney2019graphical} are not twisted arrow categories of operads. The closely related graded operad $mOp$ is not palatable, and its twisted arrow category is \emph{non-dualizable} generalized Reedy. We show that this problem, and essentially the same problem considered in \cite{raynor2018compact,hackney2019graphical,hackney2019modular}, is caused by the suboperad $ciuAs$ of $mOp$, the operad of monoids with anti-involution and with compatible bilinear form. We show that this problem does not arise for similar operads $iuAs_{Tr}$ and $iuAs_{iTr}$: these operads are palatable and the corresponding twisted arrow categories are dualizable generalized Reedy. 

The starting point of the present work was the work of Dehling and Vallette (\cite{dehling2015symmetric}) and the observation that, being central to homotopy theory of operads, the category $\Omega$ should arise from some construction applied to the operad $sOp$. 

The functor $\Tw(P)\to\mathcal{U}(P)$ has appeared implicitly in the definition of Hochschild and cyclic homology as functor homology (\cite{pirashvili25hochschild}). The connection with functor homology was also studied by Benoit Fresse: the enveloping category $\mathcal{U}(P)$ of an operad $P$ is the opposite of the category $\Gamma^+_P$ introduced in  \cite{fresse2014functor}. 

Twisted arrow categories and enveloping categories of operads generate \emph{algebraic patterns} of Chu and Haugseng (\cite{chu2019homotopy}). The approach of Chu and Haugseng is closely related to existing nerve theorems (\cite{leinster2004,weber2007familial,berger2012monads}). This should imply that any operad generates a category via the nerve theorems. However, for a general operad $P$, including operads $mOp$ and $mOp_{(g,n)}$, representable presheaves over $\Tw(P)$ are not Segal, thus the corresponding category obtained from $P$ via the nerve theorems in general should be different from $\Tw(P)$. 

\paragraph{Structure of the paper.} In Section~\ref{sec:preliminaries} we recall the operads whose algebras are generalized operads and describe operads whose algebras are cyclic algebras over cyclic operads. In Section~\ref{sec:main-def} we define twisted arrow categories and operads, describe the canonical decomposability property that allows to compute twisted arrow categories in practice, provide examples of twisted arrow categories of operads, including examples related to functor homology and to categories of cobordisms, describe the structure on twisted arrow categories of operads that is used in the rest of the work, and give a criterion for twisted arrow categories of operads to be generalized Reedy. In Section~\ref{sec:segal} we describe the equivalence between algebras over an operad and single-object Segal presheaves over the corresponding twisted arrow category and describe the connection between the categories of elements of single-object Segal presheaves and the twisted arrow categories of the corresponding algebras. Then we give two possible definitions of Segal presheaves and explain the connection between these definitions. We show that the cyclic nerve of a category is a part of Segal condition. We introduce generalized decomposition spaces, or 2-Segal sets, and describe their connection with special morphisms of operads. This connection is used to prove that Segal presheaves over nice operads can be seen as algebras. This allows for a simple proof that the twisted arrow category of the operad of single-coloured operads is equivalent to the category $\Omega$. In Section~\ref{sec:final} we give another definition of twisted arrow category, give an explanation for the concrete definition of canonical decomposability property, and show that twisted arrow categories of algebras have 2-categorical nature. We show that twisted arrow category of an operad is the $\infty$-localization of the category of elements of its dendroidal nerve by the active morphisms. In Appendix~\ref{sec:appendix-cat-th} we recall basic facts about strict factorization systems and prove lemmas used in the main part. In Appendix~\ref{sec:ez-cats} we show how to check if the twisted arrow category of an operad is a particularly nice generalized Reedy category. In Appendix~\ref{sec:quasi-cat} we give a possible construction of twisted arrow set of a dendroidal set.

\paragraph{Acknowledgements.} I would like to thank Toshitake Kohno for his advice and support, and Anton Khoroshkin, Genki Sato, Christine Vespa, Michal Wasilewicz and Jun Yoshida for helpful discussions. I would also like to thank the anonymous referee for suggesting improvements. The work was supported by the Japanese Government (MEXT) scholarship.

\paragraph{Notation and conventions.}  Operads are \emph{graded}, coloured, symmetric, and have identity operations. Gradings of operads are always \emph{nice}, see Definition~\ref{def:graded-operad}. Symmetric groups act on operations on the right. Free and forgetful functors are denoted by $F$ and $U$. Expressions $p(a_1,\dots,a_n)$ denote both elements $(p,a_1,\dots,a_n)$ in free algebras $FU(A)$ and their images in $A$. Free operads are constructed not from symmetric sequences, but from $\mathbb{N}$-sequences: symmetric groups act freely on operations of a free operad. The operad whose category of algebras is the category of uncoloured operads (\cite{berger2007resolution}) is denoted by $sOp$, and the similar operad encoding $C$-coloured operads is denoted by $sOp_C$. 

\section{Preliminaries}
\label{sec:preliminaries}

We recall \emph{graph-substitution operads}, the symmetric operads that encode generalized operads. We list all the graph-substitution operads that appear in the present work. Then we describe the four symmetric operads that encode cyclic monoids.

\subsection{Graph-substitution operads}
\label{subsect:mop}

Graph-substitution operads are the operads that encode generalized operads. In most cases the colours of these operads are the integers $n$, $n\geq -1$. The operations of non-zero arity are non-empty connected graphs with half-edges, endowed with additional structure. The input colours of such an operation $p$ correspond to the vertices of the graph of $p$, with the colour of a vertex of degree $n$ equal to $(n-1)$. The operadic substitution $p\circ_i q$ is obtained by substitution of the graph of $q$ into the $i$-th vertex of the graph of $p$.

\begin{definition}
A graph with half-edges $G=(V,H,t,inv)$ is a finite set of \emph{vertices} $V$, a finite set of \emph{half-edges} $H$, an \emph{adjacency} map $t:H\to V$, and an involution $inv:H\to H$. A \emph{leaf} of $G$ is a fixed point of $inv$. An \emph{inner edge} of $G$ is a two-element orbit of $inv$. The set of vertices $V$ together with the set of inner edges is a graph, which may have loops and multiple edges. This graph is always assumed to be non-empty and connected.
\end{definition}

\begin{definition}
An operadic graph is a graph with half-edges $(V,H,t,inv)$ endowed with linear order on vertices, linear order on leaves, and, for every vertex $v\in V$, linear order on the set $t^{-1}(v)$ of half-edges adjacent to $v$. Linear orders on half-edges will be given by bijections with sets $[n]=\{0,\dots,n\}$. Linear order on vertices will be given by bijection with a set $\{1,\dots,n\}$. 
\end{definition}

\begin{figure}[b]
\centering
\begin{tikzpicture}[scale=1]
\foreach \place/\name/\label in {{(-2,0)/a/$1$}, {(0,0)/b/$2$}, {(2,0)/c/$3$}, {(4,0)/d/$4$}}
    \node[circ] (\name) at \place {\label};
\foreach \place/\name/\label in {{(-2.5,1)/a1/$2$}, {(-2,1)/a2/$0$}, {(2.5,1.5)/c1/$1$}}
    \node[leaf,label=\label] (\name) at \place {};
  \draw (a) to [out=60,in=90,looseness=1.5] (b);
  \draw (b) to [out=145,in=160,looseness=2] (c);
  \draw (b) to [out=60,in=110,looseness=1.2] (c);
  \draw (a) to [out=150,in=150,looseness=2] (d);
  \draw (c) to [out=30,in=80,looseness=1.2] (d);
  \draw (d) to [out=115,in=40,looseness=15] (d);
  \draw (a) to (a1);
  \draw (a) to (a2);
  \draw (c) to (c1);
\end{tikzpicture}
\caption{An operadic graph, an element of $mOp(3,2,3,3;2)$. For each vertex its adjacent half-edges are ordered from left to right. The orders on leaves and on vertices correspond to the indexing.}
\label{fig:mop-example1}
\end{figure}
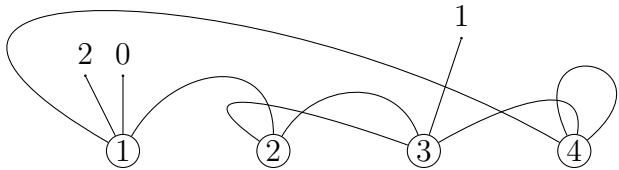

Two operadic graphs are isomorphic if there are bijections between their vertices and between their half-edges that respect all the structure maps and linear orders. Isomorphic graphs will be treated as the same graph.

\begin{definition}
For any $m>0$ and $n_0,n_1,\dots,n_m\geq -1$ the set $mOp(n_1,\dots,n_m;n_0)$ is the set of operadic graphs with $m$ vertices and with $(n_0+1)$ leaves, such that for all $j$ the degree of the $j$-th vertex $v_j$ is equal to $(n_j+1)$. Define additionally $mOp(;1)$ to be the singleton set with element $\mu_0$, called  \emph{the exceptional edge}, and $mOp(;-1)$ to be the singleton set with element $\bigcirc$, called \emph{the nodeless loop}.
\end{definition}

Examples of operadic graphs are given in Figures~\ref{fig:mop-example1}, \ref{fig:mop-example2} and  \ref{fig:mop-example3}. The last figure suggests that $mOp$ is an operad, somewhat similar to the little-disks operad $\mathcal{D}_2$. An example of composition in $mOp$ is given in Figure~\ref{fig:mop-composition}. Formally this composition is defined as follows.

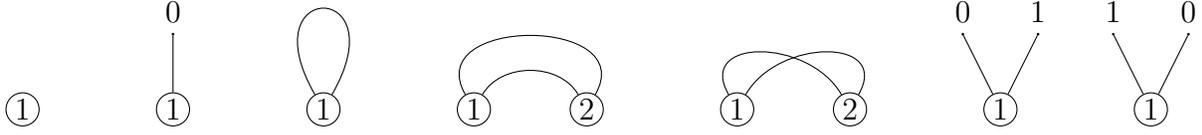
\begin{figure}[t]
\centering
\begin{tikzpicture}[scale=1]
\foreach \place/\name/\label in {{(-4,0)/aa/$1$}, {(0,0)/ba/$1$}, {(2,0)/ca/$1$}, {(3.5,0)/cb/$2$}, {(5.5,0)/da/$1$}, {(7,0)/db/$2$}, {(9,0)/ea/$1$}, {(11,0)/fa/$1$}, {(-2,0)/ga/$1$}}
    \node[circ] (\name) at \place {\label};
\foreach \place/\name/\label in {{(8.5,1)/e1/$0$}, {(9.5,1)/e2/$1$}, {(10.5,1)/f1/$1$}, {(11.5,1)/f2/$0$}, {(-2,1)/g1/$0$}}
    \node[leaf,label=\label] (\name) at \place {};
  \draw (ba) to [out=120,in=60,looseness=20] (ba);
  \draw (ca) to [out=120,in=60,looseness=1.8] (cb);
  \draw (ca) to [out=60,in=120,looseness=1] (cb);
  \draw (da) to [out=60,in=60,looseness=1.5] (db);
  \draw (da) to [out=120,in=120,looseness=1.5] (db);
  \draw (ea) to (e1);
  \draw (ea) to (e2);
  \draw (fa) to (f1);
  \draw (fa) to (f2);
  \draw (ga) to (g1);
\end{tikzpicture}
\caption{The unique element $id_{-1}$ of $mOp(-1;-1)$, the unique element $id_0$ of $mOp(0;0)$, the unique element $\nu$ of $mOp(1;-1)$, the only two elements $\beta$ and $\beta\circ_2\xi$ of $mOp(1,1;-1)$, and the only two elements $id_1$ and $\xi$ of $mOp(1;1)$.}
\label{fig:mop-example2}
\end{figure}

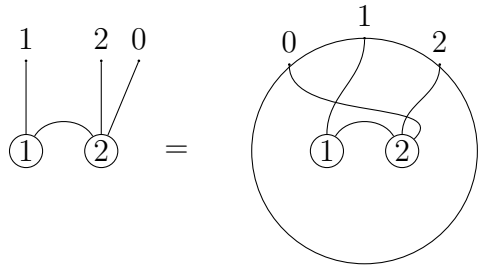
\begin{figure}[b]
\centering
\begin{tikzpicture}[scale=1]
\foreach \place/\name/\label in {{(0,0)/xa/$1$}, {(1,0)/xb/$2$}}
    \node[circ] (\name) at \place {\label};
\foreach \place/\name/\label in {{(0,1.2)/xa1/$1$}, {(1.5,1.2)/xb1/$0$}, {(1,1.2)/xb2/$2$}}
    \node[leaf,label=\label] (\name) at \place {};
    \draw (xa) to [out=60,in=120] (xb);
    \draw (xb) to (xb2);
    \draw (xb) to (xb1);
    \draw (xa) to (xa1);
    \node () at (2,0){$=$};
\foreach \place/\name/\label in {{(4,0)/a/$1$}, {(5,0)/b/$2$}}
    \node[circ] (\name) at \place {\label};
\foreach \place/\name/\label in {{(4.5,1.5)/a1/$1$}, {(3.5,1.15)/b1/$0$}, {(5.5,1.15)/b2/$2$}}
    \node[leaf,label=\label] (\name) at \place {};
    \draw (4.5,0) circle (1.5);
    \draw (a) to [out=60,in=120] (b);
    \draw (b) to [out=90,in=-90] (b2);
    \draw (b) to [out=45,in=-90] (b1);
    \draw (a) to [out=90,in=-90] (a1);
\end{tikzpicture}
\caption{An operadic graph from $mOp(1,2;2)$ and its alternative depiction.}
\label{fig:mop-example3}
\end{figure}

\begin{definition}
Let $p$ and $q$ be operadic graphs, with the number of leaves of $q$ equal to the degree of $v_i$, the $i$-th vertex of $p$. The operadic graph $p\circ_i q$ is the graph obtained by substitution of the graph $q$ into the vertex $v_i$: the set of vertices $V_{p \circ_i q}$ is the set $V_q\sqcup V_p\setminus\{v_i\}$; the set of half-edges $H_{p \circ_i q}$ is the set $(H_p\sqcup H_q)/{\sim}$, where $\sim$ identifies the $j$-th leaf of $q$ with the $j$-th half-edge of $v_i$ for all $j$; the adjacency map $t_{p\circ_i q}$ coincides with $t_q$ on $H_q$ and with $t_p$ on $H_{p \circ_i q}\setminus H_q$; the involution $inv_{p\circ_i q}$ coincides with $inv_p$ on $H_p$ and with $inv_q$ on $H_{p\circ_i q}\setminus H_p$; the set of leaves of $p$ coincides with the set of leaves of $p\circ_i q$, and by definition the orders on these sets are the same; the sets $t_{p\circ_i q}^{-1}(v)$ always coincide with either $t_p^{-1}(v)$ or $t_q^{-1}(v)$, and again by definition the orders on these sets are the same; the $k$-th vertex of $V_q$ is the $(k+i-1)$-th vertex of $V_{p\circ_i q}$, and the $k$-th vertex of $V_p$ is the $k$-th vertex of $V_{p\circ_i q}$ if $k<i$, and the $(k+|V_q|-1)$-th vertex of $V_{p\circ_i q}$ if $k>i$.

Let $p$ be an operadic graph with at least two vertices, with the $i$-th vertex $v_i$ having degree $2$. The operadic graph $p\circ_i \mu_0$ is obtained by replacing the $i$-th vertex of $p$ with an edge: the set $V_{p\circ_i \mu_0}$ is the set $V_p\setminus\{v_i\}$; the set $H_{p\circ_i \mu_0}$ is the set $t_p^{-1}(V_{p\circ_i \mu_0})=H_p\setminus t_p^{-1}(v_i)$; the adjacency map $t_{p\circ_i \mu_0}$ is the restriction of $t_p$ to $H_{p\circ_i \mu_0}$. Let $t_p^{-1}(v_i)=\{h_1,h_2\}$, $h'_1=inv_p(h_1)$, $h'_2=inv_p(h_2)$. The map $inv_{p\circ_i \mu_0}$ coincides with $inv_p$ outside of $h'_1$ and $h'_2$. If $h_1$ is a leaf, then $h'_2$ becomes a leaf, if $h_2$ is a leaf, then $h'_1$ becomes a leaf, otherwise $inv_{p\circ_i \mu_0}(h'_1)$ is equal to $h'_2$. There is a bijection between the leaves of $p\circ_i\mu_0$ and the leaves of $p$.
The orders are preserved.

Finally, let $p$ be an operadic graph with only one vertex, of degree $2$. If $p$ has a loop, then we define $p\circ_1 \mu_0$ to be $\bigcirc$. Otherwise $p\circ_1 \mu_0$ is $\mu_0$.
\end{definition}

\begin{figure}[t]
\centering
\begin{tikzpicture}[scale=1]
\foreach \place/\name/\label in {{(0,0)/xa/$1$}, {(1,0)/xb/$2$}, {(2,0)/xc/$3$}}
    \node[circ] (\name) at \place {\label};
\foreach \place/\name/\label in {{(-0.2,0.5)/xa1/$2$}, {(1.2,0.5)/xb1/$1$}, {(2.2,0.5)/xc1/$0$}}
    \node[leaf,label=\label] (\name) at \place {};
    \draw (xa) to [out=60,in=160,looseness=1.2] (xb);
    \draw (xb) to [out=135,in=90,looseness=20] (xb);
    \draw (xb) to [out=45,in=120,looseness=1.2] (xc);
    \draw (xb) to (xb1);
    \draw (xa) to (xa1);
    \draw (xc) to (xc1);
    \node () at (3,0){$\circ_2$};
\foreach \place/\name/\label in {{(4,0)/ya/$1$}, {(5,0)/yb/$2$}, {(6,0)/yc/$3$}}
    \node[circ] (\name) at \place {\label};
\foreach \place/\name/\label in {{(3.4,0.5)/ya1/$1$}, {(5,0.5)/yb1/$2$}, {(5.6,0.5)/yc1/$0$}, {(6,0.5)/yc2/$3$}, {(6.4,0.5)/yc3/$4$}}
    \node[leaf,label=\label] (\name) at \place {};
    \draw (ya) to [out=30,in=150] (yb);
    \draw (yb) to [out=30,in=150] (yc);
    \draw (ya) to (ya1);
    \draw (yb) to (yb1);
    \draw (yc) to (yc1);
    \draw (yc) to (yc2);
    \draw (yc) to (yc3);
    \draw (ya) to [out=120,in=60,looseness=15] (ya);
    \node () at (7,0){$=$};
\foreach \place/\name/\label in {{(8,0)/za/$1$}, {(9,0)/zb/$2$}, {(10,0)/zc/$3$}, {(11,0)/zd/$4$}, {(12,0)/ze/$5$}}
    \node[circ] (\name) at \place {\label};
\foreach \place/\name/\label in {{(7.6,0.5)/za1/$2$}, {(11,0.5)/zd1/$1$}, {(12.4,0.5)/ze1/$0$}}
    \node[leaf,label=\label] (\name) at \place {};
    \draw (za) to [out=60,in=120,looseness=1.5] (zd);
    \draw (zb) to [out=150,in=90,looseness=2.5] (zc);
    \draw (zb) to [out=30,in=150,looseness=1] (zc);
    \draw (zb) to [out=110,in=70,looseness=10] (zb);
    \draw (zc) to [out=60,in=150,looseness=1] (zd);
    \draw (zd) to [out=60,in=120,looseness=1.2] (ze);
    \draw (za) to (za1);
    \draw (zd) to (zd1);
    \draw (ze) to (ze1);
\end{tikzpicture}
\caption{An example of operadic composition in $mOp$.}
\label{fig:mop-composition}
\end{figure}
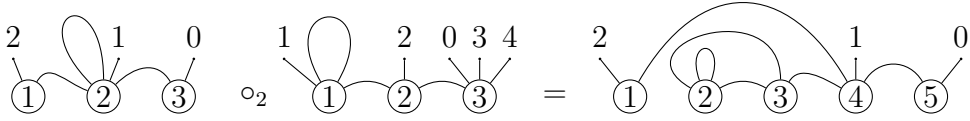

\begin{proposition}
The collection of sets $mOp$ with partial composition as described above is an operad.
\end{proposition}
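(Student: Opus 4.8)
The plan is to verify the axioms of a (graded, coloured, symmetric, unital) operad for the collection $mOp$ with the stated partial compositions: well-definedness of the $\circ_i$, unitality, associativity in its two forms, equivariance, and compatibility with the grading of Definition~\ref{def:graded-operad}. Throughout, the nodeless loop $\bigcirc\in mOp(;-1)$ and the exceptional edge $\mu_0\in mOp(;1)$ are the two arity-zero operations, and the vertex-collapsing composites $p\circ_i\mu_0$ (defined when $v_i$ has degree $2$) are handled as special cases alongside the generic substitutions.

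First I would check that $\circ_i$ lands in the right set. If $p\in mOp(n_1,\dots,n_m;n_0)$ and $q\in mOp(k_1,\dots,k_\ell;n_i)$, then $q$ has $n_i+1$ leaves, which matches the degree of $v_i$; the substituted graph $p\circ_i q$ is again a finite, non-empty, connected graph with half-edges (connectedness follows since $q$ is connected and the deleted vertex $v_i$ is replaced by a connected piece glued along all of $t_p^{-1}(v_i)$), its vertices other than those of $q$ keep their degrees and orders, and its leaves are literally the leaves of $p$ with the same order, so $p\circ_i q\in mOp(n_1,\dots,n_{i-1},k_1,\dots,k_\ell,n_{i+1},\dots,n_m;n_0)$. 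For the collapse $p\circ_i\mu_0$ one checks that $inv_{p\circ_i\mu_0}$ is still an involution and that the graph stays connected, non-empty, and retains its leaf set; here one uses that, once $p$ has a second vertex, the degree-$2$ vertex $v_i$ cannot have both of its half-edges among the leaves (it would be isolated), so the collapse never disconnects the graph nor changes the leaf count, and the remaining degenerate one-vertex cases are exactly the ones for which the definition explicitly outputs $\mu_0$ or $\bigcirc$. The linear orders on $p\circ_i q$ and on $p\circ_i\mu_0$ are defined to be transported from those of $p$ and $q$, so beyond well-definedness there is nothing to check there.

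Unitality is then immediate once the units are identified: $id_n\in mOp(n;n)$ is the one-vertex corolla whose $j$-th leaf is the $j$-th half-edge of its vertex. Substituting $id_{n_i}$ into $v_i$ replaces $v_i$ by a corolla and re-identifies each half-edge of $v_i$ with itself, returning $p$, so $p\circ_i id_{n_i}=p$; dually $id_{n_0}\circ_1 p=p$. The composites with $\mu_0$ are consistent with this, e.g. $id_1\circ_1\mu_0=\mu_0$ and $\nu\circ_1\mu_0=\bigcirc$ by the last clause. For associativity there are the two standard cases: substituting $r$ into a vertex coming from $q$, where one compares $(p\circ_i q)\circ_j r$ with $p\circ_i(q\circ_{j'}r)$, and substituting $r$ into a vertex $v_j$ of $p$ with $j\ne i$, where one compares $(p\circ_i q)\circ_{j'}r$ with $(p\circ_j r)\circ_{i'}q$. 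In each case the underlying vertex set of both sides is the evident disjoint union and the underlying half-edge set is the quotient of $H_p\sqcup H_q\sqcup H_r$ by the equivalence relation generated by the two families of leaf/half-edge identifications — and this quotient is independent of the order in which the two families are imposed — so the underlying graphs with half-edges agree; the orders on vertices, leaves and on each $t^{-1}(v)$ agree because the reindexing formulas defining $\circ_i$ compose correctly (the usual block-substitution identity for order-preserving bijections). The analogous comparisons must be run for the collapsing composites as well: collapsing a degree-$2$ vertex commutes with substitution at any other vertex, collapses at two distinct degree-$2$ vertices commute, and the definition is arranged so that whenever an iterated operation drives the underlying graph down to a single edge or to the nodeless loop the loop/no-loop bookkeeping comes out the same either way.

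Finally, equivariance: $\Sigma_m$ acts on $mOp(n_1,\dots,n_m;n_0)$ by reordering the linear order on the $m$ vertices (permuting the input colours accordingly), and then $\circ_i$ is equivariant in the evident sense — precomposing $p$ with $\sigma\in\Sigma_m$ and substituting $q$ into $v_i$ agrees, up to the block permutation $\sigma$ induces on the enlarged vertex set, with substituting $q$ into $v_{\sigma(i)}$, and $p\circ_i(q\cdot\tau)$ equals $(p\circ_i q)$ twisted by $\tau$ acting on the block of vertices coming from $q$ — all of which is read off directly from the vertex-reindexing formulas. Compatibility with the grading reduces to additivity of the relevant vertex/edge counts under substitution and collapse, which is clear. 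I expect the only real work, and the main obstacle, to be the associativity bookkeeping, and within it the interaction of the degree-$2$ collapse $\circ_i\mu_0$ with ordinary substitutions and with further collapses, together with the degenerate outputs $\mu_0$ and $\bigcirc$; this is a finite and essentially mechanical case check, since graph substitution is manifestly associative at the level of underlying graphs with half-edges and the various linear orders have been defined precisely so as to be transported compatibly.
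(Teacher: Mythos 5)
Your proposal is correct and follows essentially the same route as the paper: a direct verification of the operad axioms in which unitality, equivariance and the grading are routine, and the real work is the associativity bookkeeping for graph substitution, organized around the same case split (both operations genuine graphs, one equal to $\mu_0$, both equal to $\mu_0$) and the same key observation that the quotient of $H_p\sqcup H_q\sqcup H_r$ by the leaf identifications is independent of the order in which they are imposed. The paper merely writes out the half-edge-set computations for the $\mu_0$ cases a bit more explicitly than your sketch does.
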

\begin{proof}
Symmetric groups act on the order of vertices: a vertex indexed by $j$ in $p$ is indexed by $\sigma^{-1}(j)$ in $p^\sigma$. The identity operations $id_n$ are the graphs with one vertex and $(n+1)$ half-edges, with all half-edges being leaves, and with the order on $H_{id_n}$ as the set of leaves equal to the order on $H_{id_n}$ as the set of half-edges adjacent to the same vertex. In particular, $id_{-1}\in mOp(-1;-1)$ is the vertex without half-edges. The operad axioms of right group action, unitality and compatibility of partial composition with group action are straightforward to check. 

To see that parallel associativity holds, take any elements $p$, $q$, $r$ of $mOp$ and integers $i>j$ such that the composition $(p\circ_i q)\circ_j r$ exists. The subsets $t^{-1}_p(v_i)$ and $t^{-1}_p(v_j)$ of $H_p$ do not intersect, and this implies the following. If both $q$ and $r$ have at least one vertex, with $r$ having $k$ vertices, then $H_{(p\circ_i q)\circ_j r}=(H_p\sqcup H_q\sqcup H_r)/{\sim}=H_{(p\circ_j r)\circ_{i+k-1} q}$, and the subsets $H_q$ and $H_r$ of this set do not intersect. Thus the maps $inv$ and $t$ of $(p\circ_i q)\circ_j r$ and of $(p\circ_j r)\circ_{i+k-1} q$ coincide. If $r$ is equal to $\mu_0$, and $q$ has at least one vertex, then $H_{(p\circ_i q)\circ_j \mu_0} = H_{p\circ_i q}\setminus t_{p\circ_i q}^{-1}(v_j) = ((H_p\sqcup H_q)/{\sim})\setminus t_p^{-1}(v_j) = ((H_p\setminus t_p^{-1}(v_j))\sqcup H_q)/{\sim} = H_{(p\circ_j \mu_0)\circ_{i-1} q}$. The map $t_{(p\circ_i q)\circ_j \mu_0}$ is the restriction of $t_{p\circ_i q}$, and it coincides with $t_q$ on $H_q$ and with the restriction of $t_p$ on $H_p\setminus t_p^{-1}(v_j)$, and thus this map is equal to $t_{(p\circ_j\mu_0)\circ_{i-1} q}$. The elements $h'_1$ and $h'_2$ of $H_{(p\circ_i q)\circ_j \mu_0}$ from the definition of the composition with $\mu_0$ are elements of $H_p$, which implies that $inv_{(p\circ_i q)\circ_j \mu_0}=inv_{(p\circ_j \mu_0)\circ_{i-1} q}$. The case $q=r=\mu_0$ is checked case by case: when $v_i$ and $v_j$ are not connected by an inner edge, when they are connected by an inner edge and one of their adjacent half-edges is a leaf, and when all of their half-edges belong to inner edges.

For sequential associativity, the case when the operation $r$ in a composition $p\circ_i(q\circ_j r)$ has non-zero arity is simple. When $r=\mu_0$, either a new leaf in $q$ is created by removing the $j$-th vertex, or a new inner edge is created. Again, in both cases it is easy to see that sequential associativity holds.
\end{proof}

\paragraph{The list of graph-substitution operads.} 

Many important operads are closely related to the operad $mOp$. We call these operads \emph{graph-substitution operads}. Below we list the graph-substitution operads that appear in the present work. Unless stated otherwise, these operads contain the exceptional edge $\mu_0$. Up to this additional operation, graph-substitution operads are defined as follows. 

Several versions of the operad encoding modular operads:

\begin{itemize}
    \item The operad $mOp_{(g,n)}$, with colours given by pairs $(g,n)$ with $g\geq 0$ and $n\geq -1$. The operations are operadic graphs, as in $mOp$, but additionally endowed with a genus map $g:V\to\mathbb{N}$. The colour of a vertex $v$ is $(g(v),deg(v)-1)$. The total genus of a graph $G$ is $\sum_{v\in V} g(v) + b_1(G)$, where $b_1$ is the first Betti number. The operation $\mu_0$ is of colour $(0,1)$. 
    \item The suboperad $mOp_{st}$ of $mOp_{(g,n)}$ on all operations that satisfy $g(v)+deg(v)>2$ for all vertices $v$; it does not contain $\mu_0$. Modular operads, in the original sense of \cite{getzler1998modular}, i.e.\@ \emph{stable} modular operads, are precisely the algebras over $mOp_{st}$.
    \item The suboperad $mOp_{nc}$ of $mOp$ that consists of all operations that are not a circle with marked vertices or the nodeless loop $\bigcirc$.
\end{itemize}

The operads encoding generalized operads based on trees:

\begin{itemize}
    \item The suboperad $cOp'$ of $mOp$ consists of all operadic graphs that are trees. If the set $cOp(n_1,\dots,n_k;n_0)$ is not empty, then $n_0=\sum n_j - k + 1$. We will denote these sets simply as $cOp(n_1,\dots,n_k)$. The same applies to suboperads of $cOp'$.
    \item The suboperad $cOp$ of $cOp'$ consists of all trees with at least one leaf. The operad $cOp$ encodes cyclic operads.
    \item The suboperad $sOp$ of $cOp$ consists of rooted trees, i.e.\@ of trees such that: every inner edge contains exactly one $0$-th half-edge of the two adjacent vertices; the $0$-th leaf, called \emph{root}, is the $0$-th half-edge of its vertex, which is called \emph{the root vertex}. Elements of $sOp$ can be seen as planar rooted trees endowed with arbitrary order on non-root leaves, see Figure~\ref{fig:sop-example1}. The algebras over the operad $sOp$ are the uncoloured symmetric operads (\cite{berger2007resolution}). 
    \item The suboperad $pOp$ of $sOp$ consists of planar rooted trees endowed with planar order on non-root leaves. Its algebras are planar operads. 
    \item The operad $iuAs$ consists of trees with all vertices of degree $2$, i.e.\@ of lines with vertices endowed with orientation. Its algebras are monoids with anti-involution.
    \item The operad $uAs$ is the intersection of $sOp$ and $iuAs$. Its algebras are monoids. There are two natural maps $uAs\to sOp$ that differ by the involution of $uAs$ (categories can be regarded as operads in two ways, equally (non)canonical). 
\end{itemize}

The four operads related to cyclic monoids (monoids with a compatible bilinear form, or, equivalently, a trace map): 

\begin{itemize}
    \item The suboperad $ciuAs$ of $mOp$ consists of the operadic graphs whose vertices have degree $2$. It additionally contains $\bigcirc$ and the vertex without leaves. Its algebras are cyclic monoids with anti-involution. (The letter \emph{c} in $ciuAs$ stands for \emph{cyclic}, \emph{circle} or for the fact that $ciuAs$ is obtained from the cyclic operad $iuAs$ in a universal way described in the next subsection.)
    \item The operads $iuAs_{Tr}$ and $iuAs_{iTr}$, see Definition~\ref{def:operads-involution-trace}. Their algebras can also be seen as cyclic monoids with anti-involution.
    \item The suboperad $cuAs$ of $ciuAs$  consists of the operadic graphs whose vertices have degree $2$, all inner edges contain exactly one $0$-th and one $1$-st half-edge of adjacent vertices; if an element of $cuAs$ has leaves, then the $0$-th leaf is required to be the $0$-th half-edge of a vertex. These operadic graphs can be seen as lines and circles with marked points, without order on half-edges adjacent to vertices. The operad $cuAs$ additionally contains $\bigcirc$ and the vertex without leaves. The corresponding algebras are cyclic monoids.
\end{itemize}

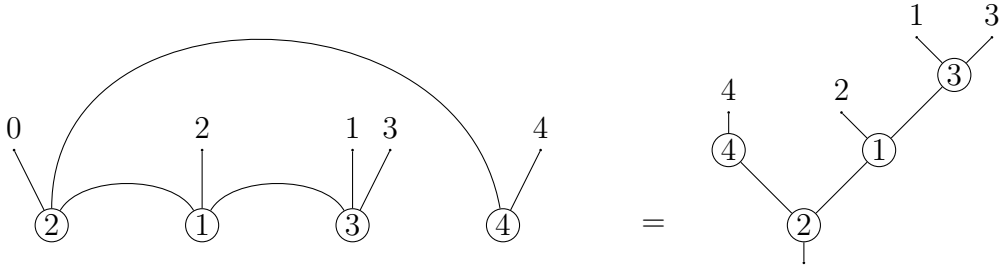
\begin{figure}[t]
\centering
\begin{tikzpicture}[scale=1]
\foreach \place/\name/\label in {{(0,0)/a/$2$}, {(2,0)/b/$1$}, {(4,0)/c/$3$}, {(6,0)/d/$4$}}
    \node[circ] (\name) at \place {\label};
\foreach \place/\name/\label in {{(-0.5,1)/a1/$0$}, {(2,1)/b1/$2$}, {(4,1)/c1/$1$}, {(4.5,1)/c2/$3$}, {(6.5,1)/d1/$4$}}
    \node[leaf,label=\label] (\name) at \place {};
  \draw (a) to [out=60,in=120,looseness=0.8] (b);
  \draw (a) to [out=90,in=100,looseness=1.3] (d);
  \draw (b) to [out=60,in=120,looseness=0.8] (c);
  \draw (a) to (a1);
  \draw (b) to (b1);
  \draw (c) to (c1);
  \draw (c) to (c2);
  \draw (d) to (d1);
  \node () at (8,0) {$=$};
\foreach \place/\name/\label in {{(10,0)/xa/$2$}, {(11,1)/xb/$1$}, {(12,2)/xc/$3$}, {(9,1)/xd/$4$}}
    \node[circ] (\name) at \place {\label};
\foreach \place/\name/\label in {{(10,-0.5)/xa1/{}}, {(10.5,1.5)/xb1/$2$}, {(11.5,2.5)/xc1/$1$}, {(12.5,2.5)/xc2/$3$}, {(9,1.5)/xd1/$4$}}
    \node[leaf,label=\label] (\name) at \place {};
  \draw (xa1) to (xa);
  \draw (xa) to (xb);
  \draw (xa) to (xd);
  \draw (xb) to (xc);
  \draw (xd) to (xd1);
  \draw (xc) to (xc1);
  \draw (xc) to (xc2);
  \draw (xb) to (xb1);
\end{tikzpicture}
\caption{An element of $sOp(2,2,2,1;4)$ represented as a planar rooted tree.}
\label{fig:sop-example1}
\end{figure}

These operads will suffice for our purposes. There are other operads that encode generalized operads (\cite[Table 1]{merkulov2010wheeled}). We mention these only in Lemma~\ref{lem:graph-operads-canonically-decomposable}.  

\begin{remark}
For any set of colours $C$ there is an operad $sOp_C$ such that the category of $sOp_C$-algebras is the category of $C$-coloured operads and colour-preserving morphisms. The set of colours of $sOp_C$ is $C'=\bigsqcup_{n\in\mathbb{N}} C^{\times (n+1)}$. An element of $sOp_C$ is a planar rooted operadic tree with any permutation on leaves, endowed with a map from the set of half-edges to $C$ such that the two half-edges of any inner edge have the same colour. In particular, the operad $sOp$ is an $sOp_\mathbb{N}$-algebra, and the operad $sOp_C$ is an $sOp_{C'}$-algebra.
\end{remark}

\paragraph{Graded operads.}

Operads in this work are graded. Any operad can be graded via arity, and for most operads appearing in practice the arity grading is the only reasonable grading. However, for some operads the correct grading is different from the arity grading.

\begin{definition}
\label{def:graded-operad}
A grading $\psi$ on an operad $P$ is a map from operations of $P$ to $\mathbb{Z}$, or to any other abelian monoid, such that for any operations $p$ and $q$ we have $\psi(p\circ_i q)=\psi(p)+\psi(q)$, $\psi(id_c)=0$ and $\psi(p^\sigma)=\psi(p)$. For any operad $P$ \emph{the arity grading} $\phi$ sends an operation $p$ of arity $n$ to $(n-1)$. We will always assume that the codomain of a grading $\psi$ is $\mathbb{Z}$, that $\psi(p)\geq -1$ for all operations $p$, that operations of grading $(-1)$ have arity $0$, and that units of binary operations have grading $(-1)$. Such grading will be called \emph{nice}. While we do not assume that operations of grading $0$ have arity $1$, we will show that this property is necessary for the twisted arrow category of $P$ to be generalized Reedy.
\end{definition}

\begin{definition}
The grading $\psi$ on the operads $mOp$ and $mOp_{(g,n)}$ and on their suboperads sends an operadic graph $p$ with underlying graph $G=(V,H)$ to $(|V|-1+ 2 b_1(G))$, and sends $\mu_0$ to $(-1)$ and $\bigcirc$ to $1$.  
\end{definition}

The map $\psi$ is a grading since the set of internal edges of $p\circ_i q$ is the disjoint union of the sets of internal edges of $p$ and of $q$, and $b_1(G)=|E|-|V|+1$. On suboperads of $mOp$ that contain only trees the grading $\psi$ coincides with the arity grading $\phi$. 

\subsection{Symmetric operads encoding cyclic algebras}

Denote by $\tau_n=(0\dots n)$ the cyclic permutation in $S_{n+1}$. A cyclic operad $P$ is an algebra over the operad $cOp$, or, equivalently, a single-colour symmetric operad endowed with an action of $\tau_n$ on $P(n)$ for all $n>0$, such that for any $p\in P(m)$ and $q\in P(n)$ the following holds: $(p\circ_1 q)^{\tau_{m+n-1}}=q^{\tau_n}\circ_n p^{\tau_m}$, if $n>0$; $(p\circ_1 q)^{\tau_{m-1}}=p^{\tau^2_m}\circ_m q$, if $n=0$; $(p\circ_i q)^{\tau_{m+n-1}}=p^{\tau_m}\circ_{i-1} q$, for $i>1$; $id^{\tau_1}=id$; and the actions of $\tau_n$ and of $S_n$ on $P(n)$ together generate an action of $S_{n+1}$ on $P(n)$.

\begin{proposition}
The operads $uAs$ and $iuAs$ are cyclic.
\end{proposition}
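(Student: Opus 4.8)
The plan is to verify, for each of $uAs$ and $iuAs$ (both single-coloured in the relevant sense, with the single colour represented by the integer $1$), the five families of identities in the description of cyclic single-colour operads recalled above, working with explicit combinatorial models. Write $P(n)$ for the arity-$n$ operations ($n$ being the number of inputs), label the inputs $1,\dots,n$, and let $0$ stand for the output port. For $uAs$ an arity-$n$ operation is a monoid operation $x_{\sigma(1)}\cdots x_{\sigma(n)}$ with $\sigma\in S_n$, which I picture as the \emph{circular word} $(0,\sigma(1),\dots,\sigma(n))$ on the ports $\{0,1,\dots,n\}$; the rooted-line picture of $uAs(n)=sOp\cap iuAs$ in arity $n$ is recovered by cutting this circle at port $0$. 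For $iuAs$ an arity-$n$ operation is a monoid-with-anti-involution operation $x_{\sigma(1)}^{\varepsilon_1}\cdots x_{\sigma(n)}^{\varepsilon_n}$, which I picture as a \emph{signed} circular word on $\{0,\dots,n\}$, one sign per port, taken modulo cyclic rotation and modulo the reflection relation that reverses the word and flips every sign; this relation is exactly the operadic incarnation of $(ab)^*=b^*a^*$, and it yields $|iuAs(n)|=n!\,2^n$ as it should.

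Next I would introduce the cyclic operator: $\tau_n\in S_{n+1}$ is the $(n{+}1)$-cycle $(0\,1\,\cdots\,n)$, and it acts by relabelling the ports, i.e.\ by rotating the (signed) circular word by one position. This is manifestly well defined on both models, since rotation commutes with the reflection relation and does not touch the signs. By construction the restriction of the resulting $S_{n+1}$-action to $\mathrm{Stab}(0)\cong S_n$ is the ambient symmetric-group action of the operad; $S_n$ together with the single $(n{+}1)$-cycle $\tau_n$ generate the whole group $S_{n+1}$, so the two actions together do generate an $S_{n+1}$-action; and $\tau_1=(0\,1)$ fixes the unique arity-$1$ operation, which is the identity, so $\mathrm{id}^{\tau_1}=\mathrm{id}$ in both cases. (Throughout I suppress the subscript on $\tau$, taking it equal to the arity of the operation it acts on.)

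The substance of the proof is the three compatibilities of $\tau$ with partial composition: $(p\circ_1 q)^{\tau}=q^{\tau}\circ_n p^{\tau}$ when $q$ has positive arity $n$, $(p\circ_1 q)^{\tau}=p^{\tau^2}\circ_m q$ when $q$ has arity $0$ (here $p\in P(m)$), and $(p\circ_i q)^{\tau}=p^{\tau}\circ_{i-1}q$ for $i>1$. In the (signed) circular-word model $\circ_i$ is ``splice the word $q$ into port $i$ of $p$, identifying port $0$ of $q$ with that slot'', so each identity reduces to a short comparison of ``rotate-then-splice'' with ``splice-then-rotate'', together with the bookkeeping of the induced relabelling of ports; the cases $i>1$ and $q$ of positive arity come out directly. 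The delicate case --- and the place where I expect the real work to sit --- is $q$ of arity $0$, i.e.\ $q=\mu_0$ the unit, where $p\circ_1\mu_0$ deletes the first vertex of $p$ and, depending on whether an incident half-edge is a leaf, either creates a new leaf or merges two half-edges into an inner edge; here one must check by hand that this deletion commutes appropriately with rotating the output port past the deleted vertex (this is where the shift $\tau$ versus $\tau^2$ on the left-hand side enters), and, for $iuAs$, that the resulting orientations are exactly those dictated by the reflection relation, so that the two sides represent the same class. Once the three identities are established, both operads are cyclic. Equivalently, since $uAs$ is generated as an operad by the binary operation $\mu$ and the nullary unit $u$ (and $iuAs$ by these together with the unary anti-involution $\ast$), one may instead prescribe $\tau$ on generators --- $\mu^{\tau_2}=\mu$ because the trace of a product is cyclic, and $u$, $\ast$ fixed --- and reduce the whole statement to a finite coherence check against the operad relations; this variant is, again, hardest at the unit relations (and, for $iuAs$, at $(ab)^*=b^*a^*$, where the signs must be tracked carefully).
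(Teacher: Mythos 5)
Your proposal is correct and follows essentially the same route as the paper: both realize an operation of $uAs$ or $iuAs$ as a (signed) circle on the output port together with the inputs — the paper via the bijection $p\mapsto\beta\circ_2 p$ into $cuAs$ or $ciuAs$, you via circular words on ports — let $\tau_n$ act by rotation, and then observe that compatibility with partial composition reduces to a direct check (which neither you nor the paper writes out in full, with the unit case $q=\mu_0$ being the only delicate point). The only nitpick is that $iuAs(1)$ has two operations, $id$ and $\xi$, not one; but the axiom only concerns $id$, and both are in fact fixed by $\tau_1$.
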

\begin{proof}
Let $\beta$ be the circle with two vertices from Figure~\ref{fig:mop-example2}. There is a bijection between $uAs$ and $cuAs\setminus uAs$ and a bijection between $iuAs$ and $ciuAs\setminus iuAs$. Both bijections are given by the map $p\mapsto \beta\circ_2 p$. Cyclic permutations $\tau_n$ act on operations in $uAs$ and $iuAs$ by cyclic permutations on indices of vertices of the corresponding circles in $cuAs$ and $ciuAs$. In particular,  permutations $\tau_n$ decrease indices of vertices of circles by $1$ modulo $(n+1)$, thus $\mu_n^{\tau_n}=\mu_n$ and $\xi^{\tau_1}=\xi$, where $\xi$ is the non-trivial element of $iuAs(1)$.
This description makes it easy to check directly that the operads $uAs$ and $iuAs$ are cyclic.
\end{proof}

\begin{definition}
Let $P$ be a cyclic operad in a symmetric monoidal category $\mathcal{C}$. A cyclic $P$-algebra $A$ is an algebra over $P$, seen as a symmetric operad, together with a map $B:A\otimes A\to V$ in $\mathcal{C}$ such that the maps $B_n:P(n)\otimes A^{\otimes(n+1)}\to V$ defined by $B_n(p\otimes a_0\otimes\dots\otimes a_n)=B(a_0\otimes p(a_1\otimes\dots\otimes a_n))$ are compatible with the action of $\tau_n$, i.e.\@ $B_n(p^{\tau_n}\otimes a_0\otimes\dots\otimes a_n)=B_n(p\otimes a_n\otimes a_0\otimes\dots\otimes a_{n-1})$.  
\end{definition}

\begin{proposition}
Let $P$ be a cyclic $\{c_1\}$-coloured operad. Define $cyc(P)$ to be the $\{c_1,c_2\}$-coloured symmetric operad $(P\sqcup F(\beta))/{\sim}$, with $\beta\in F(\beta)(c_1,c_1;c_2)$, and with equivalence $\sim$ generated by $\beta\circ_2 p^{\tau_n}\sim (\beta\circ_2 p)^{(1\dots(n+1))}$. Cyclic $P$-algebras are precisely the algebras over the coloured symmetric operad $cyc(P)$.
\end{proposition}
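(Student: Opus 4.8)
The plan is to deduce the statement, as an isomorphism of categories, from the universal property by which $cyc(P)=(P\sqcup F(\beta))/{\sim}$ is defined. Let $E$ be the endomorphism operad of a pair of objects $(A,V)$ of $\mathcal C$, coloured by $c_1\mapsto A$ and $c_2\mapsto V$. A morphism of $\{c_1,c_2\}$-coloured operads $P\sqcup F(\beta)\to E$ is, by the universal property of the coproduct, the same thing as a morphism $P\to E$ (landing in the suboperad on $c_1$, hence a $P$-algebra structure on $A$) together with a morphism $F(\beta)\to E$, and the latter --- since $F(\beta)$ is free on the single generator $\beta\in F(\beta)(c_1,c_1;c_2)$ --- is just a choice of a morphism $B\colon A\otimes A\to V$. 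Such a morphism descends along the quotient to a morphism $cyc(P)\to E$ if and only if it coequalizes the generating relation, i.e.\@ if and only if for every $n>0$ and every $p\in P(n)$ the images of $\beta\circ_2 p^{\tau_n}$ and of $(\beta\circ_2 p)^{(1\dots(n+1))}$ in $\mathrm{Hom}(A^{\otimes(n+1)},V)$ coincide. Thus a $cyc(P)$-algebra with underlying objects $A$ and $V$ is precisely a $P$-algebra structure on $A$ together with a map $B\colon A\otimes A\to V$ satisfying this condition.

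The next step is to evaluate the two operations in question. The operation $\beta\circ_2 p$ of arity $n+1$ sends $(a_0,\dots,a_n)$ to $B(a_0\otimes p(a_1\otimes\dots\otimes a_n))$, i.e.\@ to $B_n(p\otimes a_0\otimes\dots\otimes a_n)$ in the notation of the definition of a cyclic algebra; therefore $\beta\circ_2 p^{\tau_n}$ sends $(a_0,\dots,a_n)$ to $B_n(p^{\tau_n}\otimes a_0\otimes\dots\otimes a_n)$, while precomposing $\beta\circ_2 p$ with the $(n+1)$-cycle $(1\dots(n+1))\in S_{n+1}$ produces the cyclic shift $(a_0,\dots,a_n)\mapsto B_n(p\otimes a_n\otimes a_0\otimes\dots\otimes a_{n-1})$. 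Hence the generating relation of $\sim$ is exactly the identity $B_n(p^{\tau_n}\otimes a_0\otimes\dots\otimes a_n)=B_n(p\otimes a_n\otimes a_0\otimes\dots\otimes a_{n-1})$ from the definition of a cyclic $P$-algebra. It is enough to impose this for the single generator $\tau_n$, because invariance under a generator of a cyclic group forces invariance under the whole group, and the remaining clause in the definition of a cyclic operad --- that the actions of $\tau_n$ and of $S_n$ together generate an $S_{n+1}$-action on $P(n)$ --- is a property of $P$ which holds by hypothesis. A morphism of $cyc(P)$-algebras is a pair of morphisms $A\to A'$, $V\to V'$ compatible with the $P$-action and with $B$, which is exactly a morphism of cyclic $P$-algebras, so the two categories are isomorphic. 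As a consistency check one can describe $cyc(P)$ by hand: its full suboperad on $c_1$ is $P$ (no operation of $F(\beta)$ other than identities has output $c_1$, and $\beta$, having output $c_2$, can never be post-composed), and, using the relation $\sim$ together with the symmetric action, every operation of output colour $c_2$ can be brought to the normal form $(\beta\circ_2 r)^\sigma$ for a single operation $r$ of $P$ and a permutation $\sigma$, with $r$ obtained from the arguments of $\beta$ by the cyclic composition in $P$.

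The only point requiring care is the index bookkeeping in the middle step: one must fix the right-action convention for symmetric groups and the reading of the cycle $(1\dots(n+1))$ so that precomposition of $\beta\circ_2 p$ with it indeed yields the shift $(a_0,\dots,a_n)\mapsto(a_n,a_0,\dots,a_{n-1})$ occurring in the definition of a cyclic $P$-algebra --- and even if a given convention produces the inverse shift instead, the resulting notion is unchanged, since $\tau_n$-invariance and $\tau_n^{-1}$-invariance are equivalent. Everything else is formal.
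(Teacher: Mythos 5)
Your proposal is correct and follows essentially the same route as the paper's (much terser) proof: restriction along $P\to cyc(P)$ gives the underlying $P$-algebra, the generator $\beta$ gives the bilinear map $B$, and the generating relation of $\sim$ translates exactly into the $\tau_n$-compatibility of the maps $B_n$. Your version merely makes the universal-property scaffolding and the index bookkeeping explicit, which the paper leaves implicit.
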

\begin{proof}
Any $cyc(P)$-algebra $A$ is a $P$-algebra via the inclusion map $P\to cyc(P)$. The structure map of $A$ corresponding to $\beta$ is the bilinear map of the cyclic $P$-algebra $A$. The equivalence relation in $cyc(P)$ ensures that the corresponding maps $B_n$ are compatible with the action of $\tau_n$.
\end{proof}

\begin{remark} For any cyclic operad $P$ the operation $\beta$ in $cyc(P)$ is symmetric: $\beta=\beta\circ_2 id=\beta\circ_2 id^{\tau_1}=(\beta\circ_2 id)^{(12)}=\beta^{(12)}$. This also shows that the bilinear map $B:A\otimes A\to V$ of any cyclic algebra is symmetric with respect to the symmetric product $\otimes$.
\end{remark}

\begin{proposition}
The operad $cuAs$ is isomorphic to $cyc(uAs)$. The operad $ciuAs$ is isomorphic to $cyc(iuAs)$.
\end{proposition}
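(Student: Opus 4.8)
The plan is to build an explicit isomorphism of operads out of the two preceding propositions; I describe the case $cuAs\cong cyc(uAs)$, the case $ciuAs\cong cyc(iuAs)$ being formally identical. First I would identify the colours $c_1,c_2$ of $cyc(uAs)$ with the colours $1,-1$ of $mOp$, and sort out the structure of $cuAs$: its operations come in two kinds, the operadic graphs with leaves, which are lines with marked degree-$2$ vertices and hence have output colour $1$, and the operadic graphs without leaves (circles, the nodeless loop, the vertex without half-edges), which have output colour $-1$. The conditions defining $cuAs$ restrict, on graphs with leaves, to exactly the conditions defining $uAs=sOp\cap iuAs$, so the suboperad of $cuAs$ on operations of output colour $c_1$ is precisely $uAs$, and the two-vertex circle of Figure~\ref{fig:mop-example2} is an element $\beta\in cuAs(c_1,c_1;c_2)$. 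Since $cyc(uAs)=(uAs\sqcup F(\beta))/{\sim}$, the universal properties of coproduct and quotient then yield a morphism of operads $\Phi\colon cyc(uAs)\to cuAs$ which is the identity on $uAs$ and sends $\beta$ to $\beta$, provided the relation $\beta\circ_2 p^{\tau_n}\sim(\beta\circ_2 p)^{(1\dots(n+1))}$ holds in $cuAs$. It does, by construction of the cyclic structure on $uAs$: in the proof that $uAs$ is cyclic, $\tau_n$ acts on $uAs(n)$ by the cyclic relabelling of the vertices of the circle $\beta\circ_2 p$, and on $cuAs$ that relabelling is precisely the action of the permutation $(1\dots(n+1))$.

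Next I would observe that $\Phi$ is surjective, which amounts to saying that $uAs$ and $\beta$ generate $cuAs$: the identity $id_{-1}$ is the vertex without half-edges; every circle with at least one marked vertex is $\beta\circ_2 p$ for a unique line $p$, by the bijection $uAs\cong cuAs\setminus uAs$ of the preceding proposition; and $\bigcirc=(\beta\circ_2\mu_0)\circ_1\mu_0$, because $\beta\circ_2\mu_0$ is the one-vertex circle $\nu$, which has a loop, so collapsing its vertex along $\mu_0$ returns $\bigcirc$.

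For injectivity I would first note that every operation of $cyc(uAs)$ of output colour $c_1$ lies in the copy of $uAs$, since $\beta$ cannot occur in it (its output colour $c_2$ is accepted by no operation), and there $\Phi$ restricts to the identity. Every operation of output colour $c_2$ is a composite involving exactly one occurrence of $\beta$, hence, up to the symmetric group action and the identity $\beta=\beta^{(12)}$ (the remark above), is either $id_{c_2}$ or $\beta\circ_1\phi_1\circ_2\phi_2$ with $\phi_1,\phi_2\in uAs$. Using $\beta=\beta^{(12)}$ and the operad axioms one rewrites $\beta\circ_1\phi_1\circ_2\phi_2$ --- geometrically, gluing two lines end to end around the circle is again a single line --- as $\bigcirc$ when $\phi_1=\phi_2=\mu_0$, and otherwise as a symmetric-group translate of $\beta\circ_2\psi$ for $\psi=\phi_1\circ_j\phi_2\in uAs$ with $j$ the appropriate inner index (a short case analysis on the arities of $\phi_1,\phi_2$ dispatches the cases where one of them is $\mu_0$ or $id$). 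Thus every operation of $cyc(uAs)$ of output colour $c_2$ is $id_{c_2}$, $\bigcirc$, or $\beta\circ_2\psi$ for some $\psi\in uAs$, and $\Phi$ sends these respectively to $id_{-1}$, to $\bigcirc$, and to the circle $\beta\circ_2\psi\in cuAs$. If $\Phi$ identified two such operations, they would agree in output colour and arity and have the same image; comparing the images and using once more that $p\mapsto\beta\circ_2 p$ is a bijection $uAs\to cuAs\setminus uAs$ forces the two operations to coincide. Hence $\Phi$ is bijective in each component, so it is an isomorphism of operads; and the same argument with $iuAs$, lines with oriented vertices, and the same $\beta\in ciuAs$ gives $ciuAs\cong cyc(iuAs)$, compatibly with the inclusions $cuAs\subseteq ciuAs$ and $cyc(uAs)\subseteq cyc(iuAs)$.

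The one step I expect to require genuine, if routine, work is the reduction of $\beta\circ_1\phi_1\circ_2\phi_2$ to the normal form $\beta\circ_2\psi$: this needs a careful bookkeeping of the block permutations relating $\beta\circ_1\phi_1\circ_2\phi_2$, the translate $(\beta\circ_2\phi_1)^{\sigma}$ composed with $\phi_2$, and $\beta\circ_2(\phi_1\circ_j\phi_2)$, together with the identification of $\psi$ as the concatenation of $\phi_1$ and $\phi_2$ as lines, and the separate treatment of the degenerate arities. Everything else follows immediately from the two preceding propositions and the remark.
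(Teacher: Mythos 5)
Your proposal is correct and follows essentially the same route as the paper: construct the morphism $cyc(uAs)\to cuAs$ from the universal property (checking the generating relation holds in $cuAs$), reduce every operation of output colour $c_2$ to the normal form $id_{c_2}$, $\bigcirc$, or $\beta\circ_2 r$ with $r\in uAs$, and conclude bijectivity from the bijection $r\mapsto\beta\circ_2 r$ between $uAs$ and the circles with at least one marked vertex. The one step you defer --- rewriting $\beta\circ_1\phi_1\circ_2\phi_2$ as $\beta\circ_2\psi$ up to permutation --- is exactly where the paper's single explicit computation lives: it proves $\nu\circ_1\mu_2=\beta$ for $\nu=\beta\circ_1\mu_0$, after which every non-identity colour-$c_2$ operation is visibly of the form $\nu\circ_1 q$ with $q\in uAs$ and the block-permutation bookkeeping you anticipate largely collapses.
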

\begin{proof}
The circle with two vertices corresponds to the operation $\beta$. The vertex without leaves corresponds to $id_{c_2}$. The nodeless loop $\bigcirc$ corresponds to $(\beta\circ_2\mu_0)\circ_1\mu_0$ and encodes the map $I\to V$.

The obvious morphism $uAs\sqcup F(\beta)\to cuAs$ that sends $\beta$ to the circle with two vertices induces the morphism $cyc(uAs)\to cuAs$, since generating relations of $cyc(uAs)$ hold in $cuAs$. Take the operation $\nu=\beta\circ_1\mu_0$ in $cyc(uAs)$. We have $\nu\circ_1\mu_2=(\beta\circ_1\mu_0)\circ_1\mu_2=(\beta\circ_2\mu_2)\circ_1\mu_0=(\beta\circ_2\mu_2)^{(123)}\circ_1\mu_0=((\beta\circ_2\mu_2)\circ_2\mu_0)^{(12)}=(\beta\circ_2(\mu_2\circ_1\mu_0))^{(12)}=(\beta\circ_2 id_{c_1})^{(12)}=\beta$. Thus any operation $p$ in $cyc(uAs)$ with output colour $c_2$ and different from $id_{c_2}$ is of the form $\nu\circ_1 q$, with $q$ in $uAs$. If the operation $p$ above has non-zero arity, then $p=\beta\circ_2 r$ for some $r$ in $uAs$. The morphism $cyc(uAs)\to cuAs$ is bijective: the operation $r$ above is determined uniquely by the image of $p$ in $cuAs$. 

Similarly, there is a morphism $cyc(iuAs)\to cuiAs$, and any operation $p$ in $cyc(iuAs)$ with output colour $c_2$ and different from $id_{c_2}$ is of the form $\nu\circ_1 q$, with $q$ in $iuAs$. In $cyc(iuAs)$ we have $\nu\circ_1\xi=(\beta\circ_2(\xi\circ_1\mu_0))\circ_1\xi=((\beta\circ_2\xi)\circ_2\mu_0)\circ_1\xi=((\beta\circ_2\xi)^{(12)}\circ_2\mu_0)\circ_1\xi=((\beta\circ_2\xi)\circ_1\mu_0)\circ_1\xi=\nu\circ_1\xi\circ_1\xi=\nu$ and $\beta=\nu\circ_1\mu_2=\nu\circ_1\xi\circ_1\mu_2=(\nu\circ_1\mu_2)^{(12)}\circ_1\xi\circ_2\xi=\beta\circ_1\xi\circ_2\xi$. If the operation $p$ above has non-zero arity, then $p$ is of the form $\beta\circ_2 r$ with $r$ in $iuAs$. Again, the operation $r$ is determined uniquely by the image of $p$ in $ciuAs$, and the map $cyc(iuAs)\to ciuAs$ is bijective.
\end{proof}

Later we will see that the operad $ciuAs$ does not satisfy properties shared by many graph-substitution operads. There are two other operads, $iuAs_{Tr}$ and $iuAs_{iTr}$, whose algebras are also monoids with anti-involution and with a trace map. The difference between $ciuAs$, $iuAs_{Tr}$ and $iuAs_{iTr}$ is that the first operad describes algebras such that $Tr(a^*)=Tr(a)$, the second operad does not have this requirement, and the third operad is such that $Tr(a^*)=\theta(Tr(a))$, where $\theta$ is the involution of the target of the trace map. Formally these operads are defined as follows.

\begin{definition}
\label{def:operads-involution-trace}
The operad $iuAs_{iTr}$ is the $\{c_1,c_2\}$-coloured operad generated by elements $\mu_0\in iuAs_{iTr}(;c_1)$, $\mu_2\in iuAs_{iTr}(c_1,c_1;c_1)$, $\xi\in iuAs_{iTr}(c_1;c_1)$, $\nu\in iuAs_{iTr}(c_1;c_2)$ and $\theta\in iuAs_{iTr}(c_2;c_2)$, by relations on $\mu_0$, $\mu_2$ and $\xi$ that are the same as the relations in $iuAs$, and by relations $\nu\circ_1\mu_2^{(12)}=\nu\circ_1\mu_2$, $\nu\circ_1\xi=\theta\circ_1\nu$, and $\theta\circ_1\theta=id_{c_2}$. The suboperad $iuAs_{Tr}$ of $iuAs_{iTr}$ is generated by $\mu_0$, $\mu_2$, $\xi$ and $\nu$.
\end{definition}

There is only one operation in $iuAs_{iTr}(;c_2)$ and in $iuAs_{Tr}(;c_2)$, again denoted by $\bigcirc$. This operation is invariant under involution: $\theta\circ_1\bigcirc=\bigcirc$. The remaining elements in $iuAs_{iTr}\setminus iuAs$ can be seen as \emph{oriented} circles, with marked vertices endowed with orders on adjacent half-edges. Substitution into $\theta$ changes the orientation of the circles.

\section{Twisted arrow categories}
\label{sec:main-def}

\subsection{Twisted arrow operads}

We define the twisted arrow operad $\TwOp_Q(A)$ of an algebra $A$ over a set-operad $Q$. This construction is closely related to two classical constructions: the enveloping operad construction $\mathcal{U}Op_Q(A)$ (\cite{getzler1994operads,fresse1998lie}) and the Baez--Dolan plus construction $(Q,A)^+$ (\cite{Baez1998higher3}).

\begin{definition}
Let $A$ be an algebra over a $C$-coloured set-operad $Q$. The $Q$-algebra $EO(A)$ is the algebra generated by the set $H_C=\{h_c,\,c\in C\}\cong C$ and by a copy of $A$, and factored by relations in $A$. In other words, $EO(A)$ is $F(H_C, U(A))/Rel\, A= F(H_C)\sqcup A$, where $F$ and $U$ are the free and the forgetful functors.
\end{definition}

\begin{definition}
Let $A$ be an algebra over a $C$-coloured set-operad $Q$. The $Q$-algebra $TO(A)$ is the algebra generated by two copies of $A$, with the second copy then factored by relations in $A$, i.e.\@ $TO(A)$ is $F(U(A),U(A))/Rel\, A=F(U(A))\sqcup A$. 
\end{definition}

Any element of $EO(A)$ is representable by an element of $F(H_c,U(A))$ of the form  $q(h_{c_1},\dots,h_{c_m},a_1,\dots,a_n)$, and any element of $TO(A)$ is representable by an element of $F(U(A), U(A))$ of the form $q(a'_1,\dots,a'_m,a_1,\dots,a_n)$, with elements $a'_j$ and $a_j$ taken from the first and from the second copy of $A$ respectively. The elements $h_{c_j}$ and $a'_j$ in the expressions above will be called \emph{variables}. Variables in $TO(A)$ will be marked with an apostrophe. For any element in $EO(A)$ or in $TO(A)$ the multiset of its variables is well defined, while the elements $a_j\in A$ in general depend on the choice of a representative. There are two natural maps from $TO(A)$: the evaluation map $ev:TO(A)=F(U(A))\sqcup A\to A$, with $ev([q(a'_1,\dots,a'_m,a_1,\dots,a_n)]) = q(a'_1,\dots,a'_m,a_1,\dots,a_n)$, and the map $TO(A)\to EO(A)$ that sends variables to their colours. By $[q(\dots)]$ we denote the element of $TO(A)$ or of $EO(A)$ represented by the expression $q(\dots)$.

\begin{definition}
Let $A$ be an algebra over a $C$-coloured operad $Q$. The enveloping operad $\mathcal{U}Op_Q(A)$ of $A$ is $C$-coloured. For any $c_0,\dots,c_m\in C$ the set $\mathcal{U}Op_Q(A)(c_1,\dots,c_m;c_0)$ consists of the elements $[q(h_{c_1},\dots,h_{c_m},a_1,\dots,a_n)]$ of $EO(A)$ endowed with a linear order on variables, and with the output colour of $q$ equal to $c_0$. We can, and will, assume that the variables $h_{c_j}$ in expressions $[q(h_{c_1},\dots,h_{c_m},a_1,\dots,a_n)]$ are ordered from left to right. By definition the composition $[p(h_{c_1},\dots,h_{c_m},a_1,\dots,a_n)]\circ_i [q(h_{d_1},\dots,h_{d_k},b_1,\dots,b_l)]$ is equal to $[(p\circ_i q)(h_{c_1},\dots,h_{c_{i-1}},h_{d_1},\dots,h_{d_k},b_1,\dots,b_l,h_{c_{i+1}},\dots,h_{c_m},a_1,\dots,a_n)]$. Symmetric groups act by permutation on the orders of variables. The identity maps are the maps $[id_c(h_c)]$.
\end{definition}

\begin{definition}
Let $A$ be an algebra over a $C$-coloured operad $Q$. The Baez--Dolan plus construction $(Q, A)^+$ of $A$ is an $A$-coloured operad. For any $a'_0,\dots,a'_m\in A$ the set $(Q, A)^+(a'_1,\dots,a'_m; a'_0)$ consists of the elements $[q(a'_1,\dots,a'_m)]$ of $F(U(A))$ endowed with a linear order on variables, and with the evaluation $q(a'_1,\dots,a'_m)$ equal to $a'_0$. We will assume that the variables $a'_j$ in expressions $[q(a'_1,\dots,a'_m)]$ are ordered from left to right.
Composition $[p(a'_1,\dots,a'_m)]\circ_i [q(b'_1,\dots,b'_k)]$ is equal to $[(p\circ_i q) (a'_1,\dots,a'_{i-1}, b'_1,\dots, b'_k, a'_{i+1},\dots,a'_m)]$. Symmetric groups act by permutation on the orders of variables. The identity maps are the maps $[id_c(a')]$.
\end{definition}

\begin{definition}
Let $A$ be an algebra over a $C$-coloured operad $Q$. The twisted arrow operad $\TwOp_Q(A)$ of $A$ is an $A$-coloured operad. For any $a'_0,\dots,a'_m\in A$ the set of operations $\TwOp_Q(A)(a'_1,\dots,a'_m; a'_0)$ consists of the elements $[q(a'_1,\dots,a'_m, a_1,\dots,a_n)]$ of $TO(A)$ endowed with a linear order on variables, and with the evaluation $q(a'_1,\dots,a'_m,a_1,\dots,a_n)$ equal to $a'_0$.
The variables $a'_j$ in expressions $[q(a'_1,\dots,a'_m,a_1,\dots,a_n)]$ are ordered from left to right.
Composition $[p(a'_1,\dots,a'_m,a_1,\dots,a_n)]\circ_i [q(b'_1,\dots,b'_k,b_1,\dots,b_l)]$ is equal to $[(p\circ_i q) (a'_1,\dots,a'_{i-1}, b'_1,\dots, b'_k,b_1,\dots,b_l, a'_{i+1},\dots,a'_m,a_1,\dots,a_n)]$, where variables are all the elements $a'_j$ and $b'_j$. Symmetric groups act by permutation on the orders of variables. Identity maps are the maps $[id_c(a')]$.
\end{definition}

It follows from the definition of composition that $(Q,A)^+$, $\mathcal{U}Op_Q(A)$ and $\TwOp_Q(A)$ are operads. That this composition is well-defined can be checked as follows. The equivalence relation on the elements of $F(U(A),U(A))$ is generated: by relations of the form $[(p\circ_i q)(\dots,a_i,\dots,a_{i+n-1},\dots)]=[p(\dots,q(a_i,\dots,a_{i+n-1}),\dots)]$, where all the elements in the subexpression $q(\dots)$ are in the second copy of $A$; and by compatibility of the algebra structure with the action of symmetric group, i.e.\@ by relations $[q^\sigma(\dots,a_{\sigma(j)},\dots)]=[q(\dots,a_{j},\dots)]$. These generating equivalences on representatives $t_1$ and $t_2$ of elements of $TO(A)$ produce corresponding generating equivalences on representatives of $t_1\circ_i t_2$. Analogous statements hold for the operad $\mathcal{U}Op_Q(A)$.

\begin{proposition}[\cite{getzler1994operads,fresse1998lie}]
Let $A$ be an algebra over a set-operad $Q$. The category of $\mathcal{U}Op_Q(A)$-algebras is equivalent to the slice category $A/Q\mathrm{-alg}$ of $Q$-algebras under $A$.
\end{proposition}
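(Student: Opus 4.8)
The plan is to construct an equivalence of categories between $\mathcal{U}Op_Q(A)$-algebras and objects of $A/Q\mathrm{-alg}$ by unwinding both sides. First I would describe a $\mathcal{U}Op_Q(A)$-algebra $B$ concretely: since $\mathcal{U}Op_Q(A)$ is $C$-coloured, $B$ consists of sets $B_c$ for $c\in C$, and for each operation $[q(h_{c_1},\dots,h_{c_m},a_1,\dots,a_n)]$ a structure map $B_{c_1}\times\cdots\times B_{c_m}\to B_{c_0}$. Restricting along the operad map $Q\to\mathcal{U}Op_Q(A)$, $q\mapsto[q(h_{c_1},\dots,h_{c_m})]$ (the operations with no $A$-part), exhibits $B$ as a $Q$-algebra. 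On the other hand, the nullary-in-variables operations $[q(a_1,\dots,a_n)]\in\mathcal{U}Op_Q(A)(;c_0)$ are exactly the elements of $A$ of colour $c_0$ (an element $[q(a_1,\dots,a_n)]$ of $EO(A)$ with no variables $h_c$ is precisely an element $q(a_1,\dots,a_n)\in A$), so each such operation picks out a constant, i.e.\ a map $A_{c_0}\to B_{c_0}$; one checks using the composition formula in $\mathcal{U}Op_Q(A)$ that this assembles into a $Q$-algebra map $f_B\colon A\to B$. This defines a functor $\Phi$ from $\mathcal{U}Op_Q(A)$-algebras to $A/Q\mathrm{-alg}$.

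Conversely, given a $Q$-algebra $B$ with a $Q$-algebra map $f\colon A\to B$, I would define a $\mathcal{U}Op_Q(A)$-algebra $\Psi(f)$ with the same underlying $C$-coloured set, where an operation $[q(h_{c_1},\dots,h_{c_m},a_1,\dots,a_n)]$ acts on $(b_1,\dots,b_m)$ by $q\bigl(b_1,\dots,b_m,f(a_1),\dots,f(a_n)\bigr)$ in $B$ (with variables inserted in the positions prescribed by the linear order on variables). The key verification is that this is well-defined, i.e.\ independent of the representative $[q(h_{c_1},\dots,h_{c_m},a_1,\dots,a_n)]$ of the element of $EO(A)$: this follows because the equivalence relation defining $EO(A)$ is generated by operadic composition of the $a_i$'s inside $A$ and by equivariance, both of which are respected after applying $f$ and using that $B$ is a $Q$-algebra — this is the same bookkeeping already used in the excerpt to show the composition in $\mathcal{U}Op_Q(A)$ is well-defined. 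One then checks that $\Psi(f)$ respects the composition and units of $\mathcal{U}Op_Q(A)$ (immediate from the composition formula, which simply substitutes $f(a_i)$ for the $A$-slots and concatenates variable-orders) and that $\Psi$ is functorial in morphisms over $A$.

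Finally I would check $\Phi$ and $\Psi$ are mutually inverse. Starting from a $\mathcal{U}Op_Q(A)$-algebra $B$, $\Psi(\Phi(B))$ has the same underlying set and the same $Q$-action; for a general operation $[q(h_{c_1},\dots,h_{c_m},a_1,\dots,a_n)]$ one uses the factorization in $\mathcal{U}Op_Q(A)$ of that operation as $[q(h_{c_1},\dots,h_{c_{m}},h_{d_1},\dots,h_{d_n})]$ (a $Q$-operation) composed with the constants $[id_{d_j}(a_j)]\in\mathcal{U}Op_Q(A)(;d_j)$ in the last $n$ slots, together with an appropriate symmetry to put things in the stated order — this shows the $B$-action agrees with the reconstructed one. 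The other composite $\Phi(\Psi(f))=f$ is clear by construction since the constants of $\Psi(f)$ are the values of $f$. The main obstacle, and the only genuinely non-formal point, is the well-definedness of $\Psi$ on equivalence classes in $EO(A)$ and the checking that the reconstruction $\Psi\Phi\cong\mathrm{id}$ correctly handles the linear orders on variables under the symmetric group action; everything else is routine unwinding of the definitions already laid out above. (This is of course the operadic analogue of the classical fact that, for a monoid $M$ and $M$-module-style slice, modules over the "enveloping" construction are objects under $M$; the proposition is attributed to \cite{getzler1994operads,fresse1998lie} and the argument here is the standard one adapted to the present notation.)
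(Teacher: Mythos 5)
Your proposal is correct and follows essentially the same route as the paper: restriction along $Q\to\mathcal{U}Op_Q(A)$ gives the underlying $Q$-algebra, the variable-free (arity-$0$) operations coming from $A$ give the structure map $A\to B$, and conversely the action of $[q(h_{c_1},\dots,h_{c_m},a_1,\dots,a_n)]$ is defined by substituting $f(a_j)$ into the $A$-slots. The paper's proof is simply a terser version of this, omitting the well-definedness and mutual-inverse checks that you spell out.
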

\begin{proof}
For a $\mathcal{U}Op_Q(A)$-algebra $B$ the corresponding $Q$-algebra $B'$ is defined by $B'(c)=B(c)$, with structure maps given by $q(b_1,\dots,b_n)=[q(h_{c_1},\dots,h_{c_n})](b_1,\dots,b_n)$, where $b_j\in B(c_j)$, and with $Q$-algebra map $A\to B'$ sending an element $a$ to the element of $B$ corresponding to the arity $0$ operation $[id(a)]$.
In the opposite direction, for a $Q$-algebra $B'$ and a $Q$-algebra map $f:A\to B'$ the $\mathcal{U}Op_Q(A)$-algebra $B$ is defined by $B(c)=B'(c)$, with structure maps given by $[q(h_{c_1},\dots,h_{c_n},a_1,\dots,a_m)](b_1,\dots,b_n)=q(b_1,\dots,b_n,f(a_1),\dots,f(a_m))$.
\end{proof}

\begin{proposition}
Let $A$ be an algebra over a set-operad $Q$. The category of $(Q,A)^+$-algebras is equivalent to the slice category $Q\mathrm{-alg}/A$ of $Q$-algebras over $A$.
\end{proposition}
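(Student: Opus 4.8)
The plan is to mirror the proof of the preceding proposition, exhibiting a pair of mutually quasi-inverse functors between $(Q,A)^+\mathrm{-alg}$ and $Q\mathrm{-alg}/A$. Recall that a $(Q,A)^+$-algebra $B$ is a family of sets $\{B(a')\}_{a'\in A}$ together with, for each operation $[q(a'_1,\dots,a'_m)]\in(Q,A)^+(a'_1,\dots,a'_m;a'_0)$, a map $B(a'_1)\times\dots\times B(a'_m)\to B(a'_0)$, compatible with composition, units, and the symmetric group actions.

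First I would build the functor $\Phi\colon (Q,A)^+\mathrm{-alg}\to Q\mathrm{-alg}/A$. Given $B$, set $B'(c)=\bigsqcup_{a'\in A(c)}B(a')$ for each colour $c\in C$; there is an evident projection $p_B\colon B'\to A$ sending $B(a')$ to $a'$. Equip $B'$ with the structure maps $q^{B'}(b_1,\dots,b_n)=[q(a'_1,\dots,a'_n)]^B(b_1,\dots,b_n)$ whenever $b_i\in B(a'_i)$ with $a'_i\in A(c_i)$. Since $q(a'_1,\dots,a'_n)=a'_0$ in $A$, this value lies in $B(a'_0)\subseteq B'(c_0)$, and $p_B(q^{B'}(\vec b))=q^A(\vec{a'})=q^A(p_B\vec b)$, so $p_B$ is a morphism of $Q$-algebras. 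The $Q$-algebra axioms for $B'$ (associativity, unitality, equivariance) translate term by term into the operad-algebra axioms that $B$ satisfies, using the explicit partial composition $[p(\cdots)]\circ_i[q(\cdots)]$ of $(Q,A)^+$, the units $[id_c(a')]$, and the permutation action on the ordered variables. A morphism of $(Q,A)^+$-algebras is a fiberwise family of maps and hence induces a $Q$-algebra morphism over $A$.

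Next I would build $\Psi\colon Q\mathrm{-alg}/A\to(Q,A)^+\mathrm{-alg}$. Given $f\colon B'\to A$, put $B(a')=f^{-1}(a')$ for $a'\in A$, and for an operation $[q(a'_1,\dots,a'_m)]$ and $b_i\in f^{-1}(a'_i)$ set $[q(a'_1,\dots,a'_m)]^B(b_1,\dots,b_m)=q^{B'}(b_1,\dots,b_m)$; this lies in $f^{-1}(a'_0)$ because $f(q^{B'}(\vec b))=q^A(f\vec b)=q^A(\vec{a'})=a'_0$. The operad-algebra axioms for $B$ follow from the $Q$-algebra axioms of $B'$ together with the description of composition in $(Q,A)^+$, as in the well-definedness discussion preceding the enveloping-operad proposition. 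Then I would check that $\Phi$ and $\Psi$ are quasi-inverse: applying $\Psi\Phi$ to $B$ returns, over each colour $a'$, the fiber of $\bigsqcup_{a''}B(a'')\to A$ above $a'$, namely $B(a')$ with the same structure maps; applying $\Phi\Psi$ to $f\colon B'\to A$ returns $\bigsqcup_{a'\in A}f^{-1}(a')\to A$, canonically isomorphic to $f$ in the slice. Both families of comparison isomorphisms are natural, so the two functors are an equivalence.

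The one step that genuinely needs care is the well-definedness of $[q(a'_1,\dots,a'_m)]^B$ in the construction of $\Psi$: an operation of $(Q,A)^+$ is an equivalence class in the free $Q$-algebra $F(U(A))$, so one must verify that $q^{B'}(\vec b)$ depends only on the class and not on a chosen representative $q(a'_1,\dots,a'_m)$. This is immediate once one observes that the relations generating $F(U(A))$ are exactly operadic associativity and compatibility with the symmetric group action --- relations valid in every $Q$-algebra --- so substituting the $b_i$ and evaluating in $B'$ factors through the quotient, exactly as recorded after the definitions of $EO(A)$ and $TO(A)$. Everything else is a routine transcription of the operad and algebra axioms.
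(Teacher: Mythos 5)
Your construction is exactly the paper's: the same disjoint-union $Q$-algebra $B'(c)=\bigsqcup_{a'\in A(c)}B(a')$ with the evident projection to $A$ in one direction, and passage to fibers $\varepsilon^{-1}(a')$ in the other. The extra checks you record (well-definedness of the structure maps on equivalence classes, naturality of the comparison isomorphisms) are correct and merely make explicit what the paper leaves implicit.
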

\begin{proof}
Let $B$ be a $(Q,A)^+$-algebra. The corresponding $Q$-algebra $B'$ is defined by $B'(c)=\sqcup_{a'\in A(c)} B(a')$, and by $q(b_1,\dots,b_n)=[q(a'_1,\dots,a'_n)](b_1,\dots,b_n)$, where $b_j\in B(a'_j)$. The $Q$-algebra map $B'\to A$ sends $b\in B(a')$ to $a'$. In the opposite direction, let $\varepsilon:B'\to A$ be a $Q$-algebra map. Define $(Q,A)^+$-algebra $B$ by $B(a')=\varepsilon^{-1}(a')$, with the algebra maps given by $[q(a'_1,\dots,a'_n)](b_1,\dots,b_n)=q(b_1,\dots,b_n)$.
\end{proof}

\begin{proposition}
For any algebra $A$ over a set-operad $Q$ the twisted arrow operad of $A$ is isomorphic to the operad $\mathcal{U}Op_{(Q,A)^+}(id_A)$ and to the operad $(\mathcal{U}Op_Q,id_A)^+$. The category of $\TwOp_Q(A)$-algebras is equivalent to the category of $Q$-algebras $B$ over and under $A$ such that the map $A\to B\to A$ is the identity map.
\end{proposition}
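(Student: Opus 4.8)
The plan is to construct the two operad isomorphisms explicitly; once they are available, the equivalence of categories of algebras follows formally by chaining the two preceding propositions and need not be checked by hand.

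The starting observation is that an operation of each of the three operads $\TwOp_Q(A)$, $\mathcal{U}Op_{(Q,A)^+}(id_A)$ and $(\mathcal{U}Op_Q(A),id_A)^+$ with output colour $a'_0$ and input colours $a'_1,\dots,a'_m$ is the same data in three disguises: an operation $q\in Q$ of arity $m+n$ for some $n\geq 0$, elements $a_1,\dots,a_n\in A$, a linear order on the slots holding $a'_1,\dots,a'_m$, and the single constraint $q(a'_1,\dots,a'_m,a_1,\dots,a_n)=a'_0$ in $A$, everything taken modulo the relations generating equality in $F(U(A))$, namely operadic substitution of a subexpression all of whose arguments lie in $A$ and compatibility with the $Q$-action. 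For $\TwOp_Q(A)$ this is the definition, since $TO(A)=F(U(A))\sqcup A$. For $\mathcal{U}Op_{(Q,A)^+}(id_A)$ I would first use the proposition on the plus construction to identify the $(Q,A)^+$-algebra named by $id_A\in Q\mathrm{-alg}/A$: it is the algebra $\mathbf 1_A$ with $\mathbf 1_A(a')=\{a'\}$ for every colour $a'$ and with $[q(a'_1,\dots,a'_n)]$ acting as $q$; expanding $EO(\mathbf 1_A)=F(H_A)\sqcup\mathbf 1_A$ and normalizing the variable slots to the front by the symmetric-group action, an element of $EO(\mathbf 1_A)$ of colour $a'_0$ becomes $[\,[q(a'_1,\dots,a'_m,a_1,\dots,a_n)](h_{a'_1},\dots,h_{a'_m},\alpha_1,\dots,\alpha_n)\,]$ with $\alpha_i$ the unique element of $\mathbf 1_A$ of colour $a_i$ — precisely the data above. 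For $(\mathcal{U}Op_Q(A),id_A)^+$ I would first use the proposition on enveloping operads to identify the $\mathcal{U}Op_Q(A)$-algebra named by $id_A\in A/Q\mathrm{-alg}$: it is $A$ itself with $[q(h_{c_1},\dots,h_{c_m},a_1,\dots,a_n)]$ acting by $(b_1,\dots,b_m)\mapsto q(b_1,\dots,b_m,a_1,\dots,a_n)$; then an operation of $(\mathcal{U}Op_Q(A),A)^+$ is $[R(a'_1,\dots,a'_m)]$ with $R=[q(h_{c_1},\dots,h_{c_m},a_1,\dots,a_n)]\in\mathcal{U}Op_Q(A)$ and $q(a'_1,\dots,a'_m,a_1,\dots,a_n)=a'_0$, which is again the same data.

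Next I would write down the two bijections on operations using this common description — $[q(a'_1,\dots,a'_m,a_1,\dots,a_n)]\mapsto[\,[q(\dots)](h_{a'_1},\dots,h_{a'_m},\alpha_1,\dots,\alpha_n)\,]$ and $[q(a'_1,\dots,a'_m,a_1,\dots,a_n)]\mapsto[\,[q(h_{c_1},\dots,h_{c_m},a_1,\dots,a_n)](a'_1,\dots,a'_m)\,]$ — and verify in each case that the map is well defined (the relation $[(p\circ_i r)(\dots)]=[p(\dots,r(\dots),\dots)]$ with the arguments of $r(\dots)$ all in $A$ translates on the right into the operadic-substitution relation of $EO(\mathbf 1_A)$, resp.\ of $\mathcal{U}Op_Q(A)$, applied to the image of $[r(\dots)]$ under $A\hookrightarrow\mathbf 1_A$, resp.\ inside $\mathcal{U}Op_Q(A)$, and the $Q$-action relations match similarly), that it is bijective with inverse the evident map, and that it is a morphism of operads: it preserves colours, sends $[id_c(a')]$ to $[id_c(a')]$, intertwines the symmetric-group actions (both act by permuting the order of the variable slots), and commutes with $\circ_i$, since $\circ_i$ in $\mathcal{U}Op_{(Q,A)^+}(id_A)$ is computed from $\circ_i$ in $(Q,A)^+$ and hence from $\circ_i$ in $Q$ in exactly the way $[p(\dots,a_n)]\circ_i[q(\dots,b_l)]=[(p\circ_i q)(\dots,b'_1,\dots,b'_k,b_1,\dots,b_l,\dots,a_1,\dots,a_n)]$ prescribes for $\TwOp_Q(A)$, and symmetrically for $(\mathcal{U}Op_Q(A),id_A)^+$.

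With the two isomorphisms in place, the description of $\TwOp_Q(A)$-algebras follows by composing the two preceding propositions: the $\mathcal{U}Op_{(Q,A)^+}(id_A)$-algebras are the $(Q,A)^+$-algebras under $id_A$, i.e.\ the $Q$-algebras $\varepsilon\colon B\to A$ over $A$ equipped with a morphism $(A,id_A)\to(B,\varepsilon)$ in $Q\mathrm{-alg}/A$, that is, a $Q$-algebra map $\eta\colon A\to B$ with $\varepsilon\eta=id_A$; these are exactly the $Q$-algebras $B$ equipped with maps $A\xrightarrow{\eta}B\xrightarrow{\varepsilon}A$ whose composite is $id_A$, the morphisms being the maps of $Q$-algebras commuting with $\eta$ and with $\varepsilon$. (Running the same argument through $(\mathcal{U}Op_Q(A),id_A)^+$ yields the same category, as it must.) I expect the only genuine work to be the bookkeeping in verifying the bijections: because each construction nests $Q$ inside a larger operad and then applies $\mathcal{U}Op$ or $(-)^+$, one must keep the variable slots, the $A$-elements and the $\circ_i$-formulae aligned on both sides — but once the common description of operations is set up, no essential difficulty remains.
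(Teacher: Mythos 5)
Your proposal is correct and follows essentially the same route as the paper, which simply declares the construction of the two isomorphisms straightforward and then records the resulting equivalence of algebra categories; you fill in the omitted unwinding of $\mathcal{U}Op_{(Q,A)^+}(id_A)$ and $(\mathcal{U}Op_Q(A),id_A)^+$ to a common description of operations, and your identifications of the algebras named by $id_A$ (the one-point-per-colour $(Q,A)^+$-algebra, and $A$ with its extended $\mathcal{U}Op_Q(A)$-action) are exactly right. The only cosmetic difference is that the paper writes out the equivalence of algebra categories explicitly on elements, whereas you obtain it by chaining the two preceding propositions; both give the same answer.
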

\begin{proof}
The construction of isomorphisms is straightforward. The isomorphisms in turn imply the equivalence of categories of algebras. Explicitly this equivalence works as follows.

Let $B$ be a $\TwOp_Q(A)$-algebra. The corresponding $Q$-algebra $B'$ is defined by $B'(c)=\sqcup_{a'\in A(c)} B(a')$, and by $q(b_1,\dots,b_n)=[q(a'_1,\dots,a'_n)](b_1,\dots,b_n)$, where $b_j\in B(a'_j)$. The $Q$-algebra map $A\to B'$ sends an element $a$ to the element of $B$ corresponding to the arity $0$ operation $[id(a)]$. The $Q$-algebra map $B'\to A$ sends $b\in B(a')$ to $a'$. The composition $A\to B'\to A$ is the identity map. 
Let $B'$ be a $Q$-algebra endowed with compatible $Q$-algebra maps $f:A\to B'$ and $\varepsilon:B'\to A$. The $\TwOp_Q(A)$-algebra $B$ is defined by $B(a')=\varepsilon^{-1}(a')$, and by $[q(a'_1,\dots,a'_n,a_1,\dots,a_m)](b_1,\dots,b_n)=q(b_1,\dots,b_n,f(a_1),\dots,f(a_m))$.
\end{proof}

\begin{lemma}
\label{lem:op-opfibration}
Let $A$ be an algebra over a $C$-coloured operad $Q$. There is an inclusion of operads $(Q,A)^+\to\TwOp_Q(A)$ and a discrete opfibration of operads $\alpha:\TwOp_Q(A)\to \mathcal{U}Op_Q(A)$.
\end{lemma}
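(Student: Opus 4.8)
The plan is to write both maps down explicitly as the evident ones and then verify the discrete opfibration property directly. For the inclusion $\iota\colon(Q,A)^+\to\TwOp_Q(A)$ I would take the identity on colours and send an operation $[q(a'_1,\dots,a'_m)]$ of $(Q,A)^+$ to the operation $[q(a'_1,\dots,a'_m)]$ of $\TwOp_Q(A)$ having empty list of second-copy elements and the same linear order on variables. That $\iota$ is a morphism of operads is a comparison of the composition, unit and symmetric-group formulas of the two operads, which literally coincide in the case $n=l=0$ of the $\TwOp_Q(A)$ formulas; and $\iota$ preserves output colours since the evaluation of an expression with no second-copy element is computed the same way in $F(U(A))$ and in $TO(A)$. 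It is injective because, in the chain of generating relations defining the equivalence relation on $F(U(A),U(A))$, the relations that collapse a second-copy subexpression cannot be applied to an expression containing no second-copy element, so two such expressions that become equal in $TO(A)$ are already equal in $F(U(A))$; this is of the same nature as the well-definedness check given just after the definitions.

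Next, for $\alpha\colon\TwOp_Q(A)\to\mathcal{U}Op_Q(A)$ I would take the colour map $A\to C$ on colours and, on operations, the map $TO(A)\to EO(A)$ sending variables to their colours considered above, i.e.\@ replacing each variable $a'_j$ by the generator $h_{c_j}$ of its colour $c_j$, acting as the identity on the second-copy elements, and carrying the linear order along. The output colour of $[q(a'_1,\dots,a'_m,a_1,\dots,a_n)]$ is the colour of the evaluation $q(a'_1,\dots,a'_m,a_1,\dots,a_n)$, which equals the output colour of $q$, hence the output colour of $[q(h_{c_1},\dots,h_{c_m},a_1,\dots,a_n)]$, so $\alpha$ respects colours; compatibility with operadic composition is read off from the two composition formulas, and compatibility with units and with the symmetric-group action is immediate.

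Finally, to prove that $\alpha$ is a discrete opfibration it suffices — since $\alpha$ is a morphism of operads, so that the output colour of $\alpha(p)$ is $\alpha$ applied to the output colour of $p$ — to show that for every list of colours $a'_1,\dots,a'_m$ of $\TwOp_Q(A)$, with $c_j:=\alpha(a'_j)$, the map $\alpha$ restricts to a bijection from the operations of $\TwOp_Q(A)$ with input list $(a'_1,\dots,a'_m)$ onto the operations of $\mathcal{U}Op_Q(A)$ with input list $(c_1,\dots,c_m)$; the unique lift of $[q(h_{c_1},\dots,h_{c_m},a_1,\dots,a_n)]$ is then the operation $[q(a'_1,\dots,a'_m,a_1,\dots,a_n)]$, with output colour $q(a'_1,\dots,a'_m,a_1,\dots,a_n)$. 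I would establish this bijection through a common template: let $W$ be the $Q$-algebra $F(\{*_1,\dots,*_m\},U(A))/Rel\, A\cong F(\{*_1,\dots,*_m\})\sqcup A$, with $*_j$ of colour $c_j$. Via the homomorphism $\phi\colon W\to TO(A)$ sending $*_j$ to $a'_j$ and identical on the summand $A$, an operation of $\TwOp_Q(A)$ with input list $(a'_1,\dots,a'_m)$ is the same datum as an element of $W$ involving each $*_j$ exactly once (with $*_j$ playing the role of the $j$-th variable); and since only the colours $c_j$ enter the construction of $W$, the same elements of $W$ are, via the homomorphism $\psi\colon W\to EO(A)$ sending $*_j$ to $h_{c_j}$ and identical on $A$, exactly the operations of $\mathcal{U}Op_Q(A)$ with input list $(c_1,\dots,c_m)$. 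Since $\alpha\circ\phi=\psi$, under these identifications $\alpha$ is the identity map of that set, hence a bijection. The one genuinely non-formal step is this last reduction — that ``substitute $a'_j$ for $h_{c_j}$'' is well defined on equivalence classes and inverts $\alpha$ — which the template $W$, mapping onto both $TO(A)$ and $EO(A)$, makes precise; everything else is a routine comparison of the defining formulas, parallel to the well-definedness verification already in the paper.
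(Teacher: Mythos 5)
Your construction of $\alpha$ and your verification of the opfibration property are sound, and in fact more careful than the paper's own treatment: the paper simply observes that $TO(A)\to EO(A)$ respects the operad structures and exhibits $[q(a'_1,\dots,a'_m,a_1,\dots,a_n)]$ as the unique lift, whereas your template algebra $W=F(\{*_1,\dots,*_m\})\sqcup A$ mapping onto both $TO(A)$ and $EO(A)$ is a clean way to make the ``same datum on both sides'' assertion precise, including the bookkeeping when several $a'_j$ coincide or several $c_j$ coincide. For the first half the paper takes a different route: it invokes the isomorphism $\TwOp_Q(A)\cong\mathcal{U}Op_{(Q,A)^+}(id_A)$ and reduces to the canonical map $P\to\mathcal{U}Op_P(B)$, with no further argument; you instead write the map down on representatives and try to prove injectivity directly.

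That injectivity argument is where the gap is. You claim that a generating relation collapsing a second-copy subexpression ``cannot be applied to an expression containing no second-copy element.'' This fails when the inner operation $q$ has arity $0$: the hypothesis that all entries of the subexpression $q(\dots)$ lie in the second copy is then vacuous, so the relation $[(p\circ_i q)(\dots)]=[p(\dots,q(),\dots)]$ applies to a purely first-copy expression and \emph{creates} a second-copy element $q()\in A$. A chain of generating relations between two variable-only expressions may therefore pass through expressions with second-copy entries, and your argument does not rule out that two distinct elements of $F(U(A))$ become identified in $TO(A)$. Worse, this is not a repairable presentation issue: if $q_1\neq q_2$ in $Q(;c)$ have the same evaluation in $A(c)$, then $[q_1()]$ and $[q_2()]$ are distinct arity-$0$ operations of $(Q,A)^+$ but $[q_1()]=[q_1()_A]=[q_2()_A]=[q_2()]$ in $TO(A)$ (and similar collisions occur in positive arity, e.g.\@ between $p\circ_i q_1$ and $p\circ_i q_2$), so the map genuinely fails to be injective for such $Q$ and $A$. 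No proof of injectivity can succeed without an extra hypothesis (or without reading ``inclusion'' as merely ``canonical morphism''); note that the paper's one-line reduction to $P\to\mathcal{U}Op_P(B)$ has exactly the same blind spot in arity $0$. If you keep your direct approach, you should either restrict the injectivity claim to the positive-arity components under a suitable hypothesis, or flag the degenerate case explicitly.
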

\begin{proof}
Notice that for any operad $P$ and $P$-algebra $B$ there is canonical inclusion of operads $P\to\mathcal{U}Op_P(B)$. The obvious inclusion $(Q, A)^+\to\TwOp_Q(A)$ can be seen as the inclusion $(Q,A)^+\to\mathcal{U}Op_{(Q,A)^+}(id_A)$.

For the second statement, the map $TO(A)\to EO(A)$ that sends an element to its colour respects the operad structure on $\TwOp_P(A)$ and $\mathcal{U}Op_Q(A)$. For any choice of elements $a'_1,\dots,a'_m$ and an operation $[q(h_{c_1},\dots,h_{c_m},a_1,\dots,a_m)]$ from corresponding colours $c_1,\dots,c_m$ the unique lift of this operation is $[q(a'_1,\dots,a'_m,a_1,\dots,a_m)]$.
\end{proof}

\begin{remark}
The image of $(Q,A)^+$ in $\TwOp_Q(A)$ is the preimage of the suboperad $Q$ of the operad $\mathcal{U}Op_Q(A)$, i.e.\@ the sequence $(Q, A)^+\to\TwOp_Q(A)\to\mathcal{U}Op_Q(A)$ is in a certain sense short exact.
\end{remark}

Recall that any morphism of operads induces adjoint pair of extension and restriction functors between respective categories of algebras.
The canonical opfibration $\TwOp_Q(A)\to\mathcal{U}Op_Q(A)$ induces the left adjoint extension functor equivalent to the forgetful functor $(A/Q\mathrm{-alg})/id_A\to A/Q\mathrm{-alg}$ and the right adjoint restriction functor that sends a map $f:A\to B$ to the map $(f,id_A):A\to B\times A$, with the projection $B\times A\to A$ on the second factor as the augmentation map. This is opposite to the usual case where right adjoint restriction functor is forgetful.

Twisted arrow operads and enveloping operads are functorial constructions, and the functors below commute with the sequence $(Q,A)^+\to\TwOp_Q(A)\to\mathcal{U}Op_Q(A)$.

\begin{proposition}
A morphism $f:A\to B$ of $Q$-algebras induces morphisms $f_*$ between respective twisted arrow operads, enveloping operads and plus constructions.
\end{proposition}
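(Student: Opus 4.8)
The plan is to observe that each of the three constructions is functorial on the category $Q\mathrm{-alg}$ of $Q$-algebras, with values in operads whose set of colours varies with the algebra, and to take $f_*$ to be the value of this functor on $f$. Everything reduces to the functoriality of the two auxiliary algebras: $EO(A)=F(H_C)\sqcup A$ and $TO(A)=F(U(A))\sqcup A$ depend functorially on $A$, via $EO(f)=id_{F(H_C)}\sqcup f$ and $TO(f)=F(U(f))\sqcup f$. The map $TO(f)$ is well defined precisely because $f$ is a morphism of $Q$-algebras: it carries the relations that cut out the non-variable copy of $A$ inside $TO(A)$ to those that cut out $B$ inside $TO(B)$, and likewise for the symmetric-group compatibilities; the same holds for $EO(f)$, which in addition fixes every generator $h_c$. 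By construction $TO(f)$ takes an element represented by $q(a'_1,\dots,a'_m,a_1,\dots,a_n)$ to one represented by $q(f(a'_1),\dots,f(a'_m),f(a_1),\dots,f(a_n))$ with the first $m$ entries as variables, so it preserves the (well defined) multiset of variables, and it satisfies $ev_B\circ TO(f)=f\circ ev_A$, since both evaluation maps send a variable $a'$ to $a$ and are the identity on the non-variable copy; an identical remark applies to the evaluation map $F(U(A))\to A$ used in the plus construction.

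With this in hand I would define $f_*$ on operations. On $\mathcal{U}Op_Q(A)(c_1,\dots,c_m;c_0)$, apply $EO(f)$ and keep the linear order of variables; since $EO(f)$ fixes the $h_{c_j}$ and does not change the output colour of $q$, the result lies in $\mathcal{U}Op_Q(B)(c_1,\dots,c_m;c_0)$, so this $f_*$ is a morphism of $C$-coloured operads over $id_C$. On $(Q,A)^+(a'_1,\dots,a'_m;a'_0)$ and on $\TwOp_Q(A)(a'_1,\dots,a'_m;a'_0)$, apply $F(U(f))$, respectively $TO(f)$; because the relevant evaluation square commutes, the evaluation $a'_0$ goes to $f(a'_0)$, so $f_*$ carries an operation over $(a'_1,\dots,a'_m;a'_0)$ to one over $(f(a'_1),\dots,f(a'_m);f(a'_0))$, with colour map $U(f)\colon U(A)\to U(B)$ (which is colour-preserving since $f$ is).

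That each $f_*$ respects composition, identities and the symmetric-group action is a direct verification: the composition formulas for all three operads merely substitute one labelled corolla into another and concatenate the ordered lists of variables, while $EO(f)$, $F(U(f))$ and $TO(f)$ commute with operadic substitution and with permutation of arguments and fix identity operations. Finally, the naturality in $A$ of the inclusion $F(U(A))\hookrightarrow TO(A)$ and of the map $TO(A)\to EO(A)$ sending a variable $a'$ to $h_c$ (with $c$ the colour of $a$) and restricting to the identity on the non-variable copy yields the commuting squares that express compatibility of $f_*$ with the sequence $(Q,A)^+\to\TwOp_Q(A)\to\mathcal{U}Op_Q(A)$ of Lemma~\ref{lem:op-opfibration}.

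The only delicate point is the well-definedness noted above, namely that $TO(f)$ and $EO(f)$ pass to the quotients by relations and by symmetric-group compatibilities, and this is exactly where the hypothesis that $f$ is a $Q$-algebra morphism is used; the rest is a routine repetition of the operad-axiom checks already carried out for $\mathcal{U}Op_Q$, $(Q,-)^+$ and $\TwOp_Q$ individually. One could also derive functoriality of $\TwOp_Q$ in $A$ from the isomorphism $\TwOp_Q(A)\cong\mathcal{U}Op_{(Q,A)^+}(id_A)$ together with functoriality of $\mathcal{U}Op$ and of the plus construction, but the direct argument seems shorter.
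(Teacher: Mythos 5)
Your proposal is correct and follows the same route as the paper: the paper's proof simply observes that $f$ induces $Q$-algebra morphisms $FU(A)\to FU(B)$, $TO(A)\to TO(B)$ and $EO(A)\to EO(B)$, which in turn induce the three morphisms $f_*$. You have merely spelled out the well-definedness and compatibility checks that the paper leaves implicit.
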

\begin{proof}
The morphism $f$ of $Q$-algebras induces morphisms $FU(A)\to FU(B)$, $FU(A)\sqcup A\to FU(B)\sqcup B=TO(A)\to TO(B)$ and $EO(A)\to EO(B)$ of $Q$-algebras, which in turn induce the morphisms $f_*:(Q,A)^+\to (Q,B)^+$, $f_*:\TwOp_Q(A)\to\TwOp_Q(B)$ and $f_*:\mathcal{U}Op_Q(A)\to\mathcal{U}Op_Q(B)$.
\end{proof}

\begin{proposition}
For any $P$-algebra $A$ a morphism $f:Q\to P$ of set-operads induces  morphisms $f_*:(Q,A)^+\to(P,A)^+$, $f_*:\TwOp_Q(A)\to\TwOp_P(A)$, and $f_*:\mathcal{U}Op_Q(A)\to\mathcal{U}Op_P(A)$.
\end{proposition}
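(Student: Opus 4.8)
The plan is to exploit the fact that a morphism $f\colon Q\to P$ of set-operads turns every $P$-algebra into a $Q$-algebra by restriction, so that all three constructions $(-,A)^+$, $\TwOp_{(-)}(A)$ and $\mathcal{U}Op_{(-)}(A)$ make sense for $A$ viewed as a $Q$-algebra, and these are built from free $Q$- (respectively $P$-) algebras in a way compatible with the universal comparison map.

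First I would produce, for any (coloured) set $S$, the canonical $Q$-algebra morphism $F_Q(S)\to f^*F_P(S)$ coming from the universal property of the free $Q$-algebra and the inclusion of generators; it is natural in $S$ and sends a generator to the same generator. Quotienting both sides by the relations of $A$ — which is legitimate since $f$ is a morphism of operads, so the $P$-relations restrict to the $Q$-relations — yields $Q$-algebra morphisms $EO_Q(A)=F_Q(H_C,U(A))/Rel\,A\to f^*EO_P(A)$, $TO_Q(A)=F_Q(U(A),U(A))/Rel\,A\to f^*TO_P(A)$, and $F_Q(U(A))\to f^*F_P(U(A))$. Explicitly each of these sends a class $[q(\dots)]$ to $[f(q)(\dots)]$, keeping the same list of variables and the same elements of $A$.

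Next I pass to the operads of operations. An operation of $\mathcal{U}Op_Q(A)$ is a class $[q(h_{c_1},\dots,h_{c_m},a_1,\dots,a_n)]$ in $EO_Q(A)$ equipped with a linear order on its variables $h_{c_j}$; applying the comparison map gives $[f(q)(h_{c_1},\dots,h_{c_m},a_1,\dots,a_n)]$ in $EO_P(A)$, whose multiset of variables is unchanged (the map fixes each $h_c$), so the order transports and both the arity $m$ and the output colour $c_0$ are preserved. This defines a colour-preserving assignment $f_*\colon\mathcal{U}Op_Q(A)\to\mathcal{U}Op_P(A)$ on underlying collections, and the identical recipe using $TO(A)$ and $F(U(A))$ defines $f_*\colon\TwOp_Q(A)\to\TwOp_P(A)$ and the $A$-colour-preserving $f_*\colon(Q,A)^+\to(P,A)^+$. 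To see $f_*$ is a morphism of operads: it carries identities to identities since $f(id_c)=id_c$; it respects the symmetric group action, which merely permutes the order of variables; and it respects partial composition, because in all three cases the composition formula is built from an operadic composition $p\circ_i q$ in the ambient algebra followed by a shuffle of the two variable lists, and $f(p\circ_i q)=f(p)\circ_i f(q)$ while the shuffle does not involve $f$, so $f_*(x\circ_i y)=f_*(x)\circ_i f_*(y)$.

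The only real point to verify — and it is mild — is well-definedness on equivalence classes: one must check that $[q(\dots)]\mapsto[f(q)(\dots)]$ respects the generating relations of $EO(A)$, $TO(A)$ and $F(U(A))$, namely the associativity-type relations $[(p\circ_i q)(\dots)]=[p(\dots,q(\dots),\dots)]$ and the equivariance relations $[q^\sigma(\dots,a_{\sigma(j)},\dots)]=[q(\dots,a_j,\dots)]$; both are preserved precisely because $f$ commutes with partial composition and with the symmetric group action. Finally I would remark that, since every square involved already commutes at the level of the comparison maps $F_Q(S)\to f^*F_P(S)$, the three morphisms $f_*$ are compatible with the short exact sequence $(Q,A)^+\to\TwOp_Q(A)\to\mathcal{U}Op_Q(A)$ and with the morphisms induced by maps of $Q$-algebras.
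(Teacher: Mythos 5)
Your proposal is correct and follows the same route as the paper: the paper's proof simply notes that one implicitly uses the restriction functor $P\mathrm{-alg}\to Q\mathrm{-alg}$ and defines $f_*$ by sending $[q(a'_1,\dots,a'_m,a_1,\dots,a_n)]$ to $[f(q)(a'_1,\dots,a'_m,a_1,\dots,a_n)]$, leaving the remaining cases as similar. Your write-up just makes explicit the well-definedness and compatibility checks that the paper leaves implicit.
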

\begin{proof}
Here we implicitly use the restriction functor $P\mathrm{-alg}\to Q\mathrm{-alg}$.
The morphism $f_*:\TwOp_Q(A)\to\TwOp_P(A)$ sends $[q(a'_1,\dots,a'_m,a_1,\dots,a_n)]$ to $[f(q)(a'_1,\dots,a'_m,a_1,\dots,a_n)]$. The remaining cases are similar.
\end{proof}

\begin{proposition}
For any $Q$-algebra $B$ a morphism of operads $f:Q\to P$ induces morphisms $(Q,B)^+\to(P,B')^+$, $\TwOp_Q(B)\to\TwOp_P(B')$, and $\mathcal{U}Op_Q(B)\to\mathcal{U}Op_P(B')$, where $B'$ is the extension of $B$.
\end{proposition}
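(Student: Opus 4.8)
The plan is to factor each of the three induced morphisms as a composite of morphisms furnished by the two preceding propositions. Write $f_!\colon Q\mathrm{-alg}\to P\mathrm{-alg}$ for the extension functor associated to $f$ and $f^*$ for its right adjoint, so that $B'=f_!(B)$, and let $\eta\colon B\to f^*(B')$ be the unit of the adjunction; this is a morphism of ($C$-coloured) $Q$-algebras, where $C$ is the colour set of $Q$. First I would apply the proposition on morphisms of $Q$-algebras to $\eta$, obtaining $\eta_*\colon(Q,B)^+\to(Q,f^*B')^+$, $\eta_*\colon\TwOp_Q(B)\to\TwOp_Q(f^*B')$ and $\eta_*\colon\mathcal{U}Op_Q(B)\to\mathcal{U}Op_Q(f^*B')$. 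Then I would apply the proposition on morphisms of operads to $f\colon Q\to P$ and the $P$-algebra $B'$; recalling that that proposition implicitly restricts $B'$ along $f$, it produces $f_*\colon(Q,f^*B')^+\to(P,B')^+$, $f_*\colon\TwOp_Q(f^*B')\to\TwOp_P(B')$ and $f_*\colon\mathcal{U}Op_Q(f^*B')\to\mathcal{U}Op_P(B')$. The morphisms claimed in the statement are the composites $f_*\circ\eta_*$.

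Concretely, on colours the composite for $(Q,B)^+$ and for $\TwOp_Q(B)$ is the underlying map of $\eta$, and for $\mathcal{U}Op_Q(B)$ it is $f$ on colour sets; on an operation $[q(a'_1,\dots,a'_m,a_1,\dots,a_n)]$ of $\TwOp_Q(B)$ the composite takes the value $[f(q)(\eta(a'_1),\dots,\eta(a'_m),\eta(a_1),\dots,\eta(a_n))]$ in $\TwOp_P(B')$, and analogously for the other two. Well-definedness is precisely the verification carried out in the two cited propositions: $\eta$ and $f$ induce maps $TO(B)\to TO(f^*B')\to TO(B')$ and $EO(B)\to EO(f^*B')\to EO(B')$ that respect the generating relations, and since $\eta$ is a morphism of $Q$-algebras over $f$ the evaluation of the image of an operation in $B'$ equals $\eta$ of its evaluation in $B$, so the constraint ``evaluation equals $a'_0$'' is transported to ``evaluation equals $\eta(a'_0)$''; compatibility with operadic composition and with the symmetric group action on orders of variables is inherited from the component morphisms. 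Finally, both cited propositions produce morphisms commuting with the sequence $(Q,-)^+\to\TwOp_Q(-)\to\mathcal{U}Op_Q(-)$, so the composites do too.

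I expect no real obstacle beyond bookkeeping. The one point needing care is not to conflate $B$ with the restriction $f^*(B')$ of its extension: these differ in general, and the comparison morphism between them is the unit $\eta$, not an isomorphism; and, for the plus construction, to keep in mind that the resulting morphism lives over the colour map $B\to B'$ rather than being a morphism of operads on a fixed colour set.
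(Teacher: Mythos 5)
Your proof is correct and is essentially the paper's own argument: the paper's one-line proof (``the map of expressions is defined as in the previous proposition'') is precisely the composite you describe, sending $[q(a'_1,\dots,a'_m,a_1,\dots,a_n)]$ to $[f(q)(\eta(a'_1),\dots,\eta(a'_m),\eta(a_1),\dots,\eta(a_n))]$. Your explicit factorization as $f_*\circ\eta_*$ through the unit $\eta:B\to f^*(B')$ of the extension--restriction adjunction simply makes precise what the paper leaves implicit, including the correct handling of the colour change for the plus construction.
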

\begin{proof}
The map of expressions is defined as in the previous proposition. 
\end{proof}

\subsection{Twisted arrow categories}
\label{sec:decomposability}

We define twisted arrow categories of algebras over operads and introduce the property of operads called \emph{canonical decomposability}. Twisted arrow categories and enveloping categories of canonically decomposable operads admit a simple description.

\begin{definition}
Let $A$ be an algebra over a $C$-coloured operad $Q$. The enveloping category $\mathcal{U}_Q(A)$ of $A$ is the underlying category of the operad $\mathcal{U}Op_Q(A)$, i.e.\@ the category of arity $1$ operations of $\mathcal{U}Op_Q(A)$, with $Hom_{\mathcal{U}_Q(A)}(c_1,c_2) = \mathcal{U}Op_Q(A)(c_1;c_2)$. 
\end{definition}

\begin{definition}
Let $A$ be an algebra over a $C$-coloured operad $Q$. The twisted arrow category $\Tw_Q(A)$ of $A$ is the underlying category of the operad $\TwOp_Q(A)$, i.e.\@ $Hom_{\Tw_Q(A)}(a'_1,a'_2)=\TwOp_Q(A)(a'_1;a'_2)$.
\end{definition}

\begin{remark}
The discrete opfibration of operads $\TwOp_Q(A)\to\mathcal{U}Op_Q(A)$ restricts to discrete opfibration of categories $\alpha:\Tw_Q(A)\to\mathcal{U}_Q(A)$.
\end{remark} 

\begin{example}
Let $uAs$ be the operad of (unital associative) monoids and $A$ be a monoid. The objects of $\Tw_{uAs}(A)$ are the elements of $A$. A morphism from an element $a'$ is representable by an element $[\mu_{n+m+1}(a_1,\dots,a_n,a',a_{n+1},\dots,a_{n+m})]$ of $TO(A)$, and this element is equal to $[\mu_3(b,a',c)]$, with $b=\mu_n(a_1,\dots,a_n)$ and $c=\mu_m(a_{n+1},\dots,a_{n+m})$, where $n$ and $m$ might be equal to $0$. Later we will show that for any morphism in $\Tw_{uAs}(A)$ its representative of the form $\mu_3(b,a',c)$ is unique. 
The composition of morphisms $[\mu_3(b_2,d',c_2)]\circ [\mu_3(b_1,a',c_1)]$ is equal to $[\mu_5(b_2,b_1,a',c_1,c_2)]$, i.e.\@ to $[\mu_3(\mu_2(b_2,b_1),a',\mu_2(c_1,c_2))]$. Thus $\Tw_{uAs}(A)$ is the usual twisted arrow category $\Tw(A)$ of a monoid $A$. The discrete opfibration $\Tw_{uAs}(A)\to\mathcal{U}_{uAs}(A)=A\times A^{op}$ corresponds to the copresheaf $Hom:A\times A^{op}\to Sets$.
\end{example}

The following example shows that in general it is impossible to choose canonical representatives of morphisms in twisted arrow categories.

\begin{example}
For a set $C=\{c_0,c_1,c_2,c_3,c_4\}$ let $P$ be the $C$-coloured operad generated by operations $p\in P(c_1,c_2;c_0)$, $q_1,q_2\in P(c_3,c_4;c_2)$ and relation $p\circ_2 q_1 = p\circ_2 q_2$, and $A=\{a_0,a_1,a_2,b_2,a_3,a_4,b_4\}$ be the $P$-algebra such that $a_i$ and $b_i$ have colour $c_i$, and $p(a_1,a_2)=p(a_1,b_2)=a_0$, $q_1(a_3,a_4)=q_1(a_3,b_4)=a_2$, and $q_2(a_3,a_4)=q_2(a_3,b_4)=b_2$. Then in $\Tw_P(A)$ we have $[(p\circ_2 q_1)(a'_1,a_3,b_4)]=[p(a'_1,a_2)]=[(p\circ_2 q_1)(a'_1,a_3,a_4)]=[(p\circ_2 q_2)(a'_1,a_3,a_4)]=[p(a'_1,b_2)]$, i.e.\@ this morphism does not have canonical representative.
\end{example}

The following property at first may seem artificial. Its true nature is explained by Proposition~\ref{prp:canonical-decomp-initial-objects} and its corollary, the localization lemma.

\begin{definition}
Let $Q$ be a set-operad and $Q'$ be a subset of $Q$. The operad $Q$ is canonically decomposable via $Q'$ if for any operation $q$ in $Q$ of non-zero arity there exist: unique operation $q'$ in $Q'$, unique operations $q_l$ in $Q$, and unique indices $j_{li}$ satisfying $j_{l1}<\dots<j_{lk_l}$ for all $l$, such that $q$ is equal to $q'(id_{c_1}(1),q_0(j_{01},\dots,j_{0k_0}),\dots,q_m(j_{m1},\dots,j_{mk_m}))$, where $(m+2)$ is the arity and $c_1$ is the first input colour of $q'$, and the expression denotes the operadic composition of the operation $q'$ with operations $q_l$, permuted via indices $j_{li}$.
\end{definition}

\begin{example}
The operads $uCom$, $Com$, $uAs$ and $As$ of commutative monoids, commutative semigroups, monoids and semigroups respectively are canonically decomposable: $uCom$ via $\{\mu_2\}$,  $Com$ via $\{\mu_1,\mu_2\}$, $uAs$ via $\{\mu_3^{(12)}\}$, and $As$ via $\{\mu_1,\mu_2,\mu_2^{(12)},\mu_3^{(12)}\}$.
\end{example}

\begin{lemma}
\label{lem:candec}
Let $Q$ be an operad canonically decomposable via $Q'$, and $A$ be a $Q$-algebra. Then any morphism $f$ in $\Tw_Q(A)$ has unique representative of the form $q'(a',a_0,\dots,a_m)$ with operation $q'$ in $Q'$. Similarly, any morphism $f$ in $\mathcal{U}_Q(A)$ has unique representative of the form $q'(h_1, a_0,\dots,a_m)$ with $q'$ in $Q'$. Such representatives of morphisms will be called \emph{canonical representatives}.
\end{lemma}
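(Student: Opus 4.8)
The plan is to reduce everything to the definition of canonical decomposability by working directly with the description of operations in $\TwOp_Q(A)$ in terms of elements of $TO(A)$. First I would observe that a morphism $f$ in $\Tw_Q(A) = \TwOp_Q(A)(a'_1;a'_2)$ is by definition an equivalence class $[q(a',a_1,\dots,a_n)]$ in $TO(A)$ where $q$ is an operation of $Q$ of arity $(n+1)$, $a'$ is the unique variable, and $q(a',a_1,\dots,a_n)$ evaluates to $a'_2$ (after reordering so the variable is first). If $n=0$ the claim is essentially trivial modulo handling identities, so assume the arity of $q$ is at least $1$; after permuting we may assume the variable sits in the first slot. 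Applying canonical decomposability to $q$ gives a unique factorization $q = q'(id_{c_1}(1), q_0(j_{01},\dots), \dots, q_m(j_{m1},\dots))$ with $q' \in Q'$. Substituting the variable $a'$ into the first input and the appropriate elements of $A$ into the remaining inputs, and then using the defining relations of $TO(A)$ (the relations coming from the algebra structure of $A$: $[(p\circ_i r)(\dots,\vec{b},\dots)] = [p(\dots,r(\vec{b}),\dots)]$ whenever all entries of $r(\vec b)$ lie in the second copy of $A$, together with the symmetric-group relations), I can push all the subtrees $q_l$ down onto $A$: each subexpression $q_l(j_{l1},\dots,j_{lk_l})$ has all its inputs among the non-variable elements, hence evaluates in $A$ to a single element $b_l \in A$. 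This rewrites $f$ as $[q'(a', b_0,\dots,b_m)]$ with $q' \in Q'$, establishing existence.

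For uniqueness, suppose $[q'(a',b_0,\dots,b_m)] = [\tilde q'(a', \tilde b_0,\dots,\tilde b_k)]$ in $TO(A)$ with both $q',\tilde q' \in Q'$. The key point is that the map $TO(A) \to EO(A)$ sending variables to their colours, composed with the structure of $\mathcal{U}Op_Q(A)$, together with the evaluation map $ev: TO(A)\to A$, lets us recover enough data. More directly: in the free algebra $F(U(A))$ — before quotienting by $\mathrm{Rel}\,A$ in the second copy — an equivalence $[q'(a',b_0,\dots,b_m)] = [\tilde q'(a',\tilde b_0,\dots,\tilde b_k)]$ is generated by the two kinds of moves above, and I must check that neither move can change the ``top'' operation $q'$ once it is required to lie in $Q'$. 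The symmetric-group move only permutes inputs, and on a representative of the canonical shape $q'(a',\dots)$ with the variable first it changes $q'$ within its $S$-orbit but the requirement that the variable be listed first pins down a unique representative of that orbit. The composition move $[p(\dots,r(\vec b),\dots)] = [(p\circ_i r)(\dots,\vec b,\dots)]$ corresponds exactly to the factorization appearing in the definition of canonical decomposability, and since that definition asserts the factorization with $q' \in Q'$ is \emph{unique}, any two canonical-shape representatives must have the same $q'$ and hence (by cancelling $q'$ and comparing the evaluated elements $b_l$, using that the $b_l$ are the evaluations of uniquely-determined subtrees) the same $b_0,\dots,b_m$.

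The $\mathcal{U}_Q(A)$ statement is proved by the identical argument with $TO(A)$ replaced by $EO(A)$ and the variables $a'_j$ replaced by the formal symbols $h_c$: the decomposition move and the symmetric-group move are the same, and canonical decomposability of $Q$ gives uniqueness of the top operation in $Q'$ in exactly the same way; the elements $a_l \in A$ play the role the $b_l$ played before. Alternatively one can deduce it from the $\Tw$ case by applying the discrete opfibration $\alpha:\Tw_Q(A)\to\mathcal{U}_Q(A)$ of Lemma~\ref{lem:op-opfibration}, but writing the direct argument is cleaner.

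I expect the main obstacle to be the uniqueness half: one must argue carefully that the equivalence relation defining $TO(A)$, which a priori could chain together many elementary moves in complicated ways, cannot relate two distinct canonical-shape representatives — and the cleanest way to see this is to show that applying the canonical decomposition of $Q$ gives a well-defined ``normal form'' on representatives that is invariant under both types of elementary move, so that two equivalent elements have the same normal form. Verifying that invariance, i.e.\ that performing a decomposition/permutation move on an arbitrary representative $q(a',a_1,\dots,a_n)$ and then normalizing gives the same answer as normalizing directly, is where the real content of canonical decomposability (the uniqueness of $q'$, the $q_l$, and the indices $j_{li}$) is used, and it is the step I would write out in full detail.
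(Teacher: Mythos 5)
Your proposal is correct and follows essentially the same route as the paper: normalize an arbitrary representative by applying the canonical decomposition of its operation and evaluating the subtrees $q_l$ in $A$, observe that canonical representatives are fixed by this normalization, and prove uniqueness by checking that the normal form is invariant under both kinds of generating equivalence (composition and symmetric-group moves), which is exactly where the uniqueness clause of canonical decomposability is used. The paper's proof is no more detailed than yours on the invariance step you flag as the real content, so there is nothing missing relative to the published argument.
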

\begin{proof}
For any representative $q(a',b_1,\dots,b_l)$ of a morphism $f$ in $\Tw_Q(A)$ take the canonical decomposition $q'(1,q_0(j_{01},\dots,j_{0k_0}),\dots,q_m(j_{m1},\dots,j_{mk_m}))$ of $q$ and let $a_l$ be $q_l(\overline{(b_l)})$. Then $q'(a',a_0,\dots,a_m)$ is a canonical representative of $f$. This gives a map from representatives of $f$ to canonical representatives of $f$. Any canonical representative is mapped to itself. If two representatives are equivalent via generating equivalence, then they are mapped to the same canonical representative. If two canonical representatives are equivalent, they are equivalent via a sequence of generating equivalences, and thus they are equal.
\end{proof}

Next we prove that some of the graph-substitution operads are canonically decomposable. 
 
\begin{figure}[b]
\centering
\begin{tikzpicture}[scale=0.8]
\foreach \place/\name/\label in {{(0,0)/a/$1$}, {(-2,2)/b/$2$}, {(0,2)/c/$3$}, {(2.5,2)/d/$4$}}
    \node[circ] (\name) at \place {\label};
\foreach \place/\name/\label in {{(0,2.5)/c1/$0$}, {(0.5,2.5)/c2/$2$}, {(2.5,2.5)/d1/$1$}}
    \node[leaf,label=\label] (\name) at \place {};
  \draw (a) to [out=150,in=150,looseness=1.5] (b);
  \draw (a) to [out=120, in=160,looseness=2] (c);
  \draw[use as bounding box] (a) to [out=90,in=60,looseness=1.2] (b);
  \draw (a) to [out=60, in=160,looseness=1.2] (d);
  \draw (a) to [out=30, in=120,looseness=5] (c);
  \draw (c) to (c1);
  \draw (c) to (c2);
  \draw (d) to (d1);
\end{tikzpicture}
\caption{A reduced modular graph. The upper vertices are indexed by $2$, $3$ and $4$, since they appear in that order as vertices adjacent to the first vertex. The order on edges of upper vertices is obtained from the order on edges of the first vertex and from the order on leaves.}
\label{fig:mop-decomposable}
\end{figure}
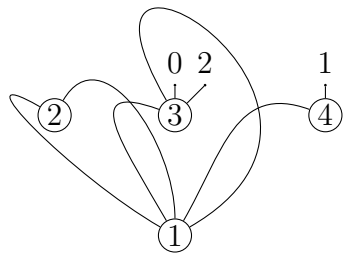

\begin{lemma}
\label{lem:graph-operads-canonically-decomposable}
The operads $pOp$, $sOp_C$, $cOp$, $uAs$, $iuAs$, $cuAs$, $ciuAs$, $iuAs_{Tr}$, $iuAs_{iTr}$, $mOp$, $mOp_{(g,n)}$, $mOp_{st}$, $mOp_{nc}$ and the operads encoding wheeled properads, wheeled operads, dioperads and $\frac{1}{2}$-props are canonically decomposable:
\begin{itemize}
\item $pOp$ is canonically decomposable via reduced planar trees, the planar rooted trees with $(m+2)$ vertices, where $m\geq 0$, with the first vertex having $m$ input edges and connected to $m$ upper vertices and to one root vertex, with the root vertex being indexed by $2$, and the upper vertices being indexed from $3$ to $(m+2)$ in planar order,
\item $sOp_C$ is canonically decomposable via reduced symmetric trees, the same trees as above, with the root vertex $v_2$ connected to the first vertex by the first edge of $v_2$, with the order on leaves such that for each vertex $v_j$ the indices of leaves of $v_j$ are ordered in planar order, see Figure~\ref{fig:3leveltrees},
\item $cOp$ is canonically decomposable via reduced cyclic trees, the planar trees of height 2, without the root leaf, with the root vertex indexed by $1$, with all $m$  edges of the first vertex being connected to the $m$ upper vertices, with upper vertices indexed in planar order, and with the order on leaves such that for each vertex $v_j$ the indices of leaves of $v_j$ are ordered in planar order,
\item $mOp$ is canonically decomposable via reduced modular graphs, the elements of $mOp$ without loop-edges and such that every vertex except the first one is adjacent only to the first vertex, with vertices ordered via the order on the edges of the first vertex, with  edges of each vertex except the first vertex ordered so that first come the edges adjacent to the first vertex, ordered via the order on the edges of the first vertex, and then come leaves, ordered via the global order on leaves.
\end{itemize}
\end{lemma}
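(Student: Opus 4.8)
The plan is to prove canonical decomposability separately for the various families of graph-substitution operads, treating the tree-based operads first and then the general graph case, and finally deducing the remaining operads ($mOp_{(g,n)}$, $mOp_{st}$, $mOp_{nc}$, wheeled properads, etc.) as suboperads or colored/graded variants of the ones already handled. Recall that an operad $Q$ is canonically decomposable via $Q'$ when every operation $q$ of nonzero arity factors \emph{uniquely} as $q'(id_{c_1}(1), q_0(j_{01},\dots), \dots, q_m(j_{m1},\dots))$ with $q'\in Q'$ and the index blocks increasing; so for each family I must (i) exhibit the decomposition, (ii) verify $q'$ lies in the claimed set $Q'$, and (iii) prove uniqueness of all the data.

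First I would handle $pOp$, $sOp_C$ and $cOp$, since these are the conceptual core. For $sOp_C$: given a planar rooted tree $T$ with at least one vertex, look at the root vertex $r$, of some arity $k$. The operation $q'$ is the ``corolla-like'' reduced tree having $r$ as its first vertex, with the $0$-th edge of $r$ going down to a new vertex $v_2$ (the future root vertex) and the other $k$ edges of $r$ going up to vertices $v_3,\dots,v_{k+2}$; the subtrees $q_0,\dots,q_{k}$ are the connected components obtained from $T$ by deleting $r$, where $q_0$ is the component below $r$ (the one containing the root leaf) and $q_1,\dots,q_k$ are the components hanging off the non-root edges of $r$ in their planar order. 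The indexing bijections $j_{li}$ record how the leaves of each component sit inside the global leaf order of $T$, made increasing within each block by the prescribed planar convention --- this is exactly what Figure~\ref{fig:3leveltrees} depicts. Substituting the $q_l$ back into the slots of $q'$ recovers $T$ because grafting the root component and the upper components onto a single vertex is the inverse operation to deleting that vertex. Uniqueness: the vertex of $q'$ that is \emph{not} one of the $m+2$ ``outer'' vertices is forced to be the root vertex of $T$ and determines $q'$ up to its built-in conventions; once $q'$ is fixed, the $q_l$ are the connected components of $T\setminus r$ and the $j_{li}$ are forced by the requirement that they be increasing. The cases $pOp$ and $cOp$ are the same argument with the planar (resp. no-root-leaf, unrooted) conventions; $uAs$, $iuAs$, $cuAs$, $ciuAs$, $iuAs_{Tr}$, $iuAs_{iTr}$ either are suboperads of these closed under the decomposition, or one checks the (very short) list of decompositions directly using the already-established dictionaries $cuAs\cong cyc(uAs)$, $ciuAs\cong cyc(iuAs)$.

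Next I would treat $mOp$, which is the main obstacle because of loops and the first Betti number. Given an operadic graph $G$ with $m'\geq 1$ vertices, again single out the first vertex $v_1$ (arity $k$); the reduced modular graph $q'$ should be the graph on vertices $\{v_1,u_2,\dots,u_{k+1}\}$ in which $v_1$ carries all $k$ of its original half-edges, each paired up with exactly one of the $u_i$ (so $q'$ has no loop-edges, and $b_1(q')=0$, keeping it a reduced modular graph). The substituted operations $q_0,\dots,q_{k}$ are: for each non-loop edge of $v_1$, the connected component of $G\setminus v_1$ attached there; for each loop-edge or pair of half-edges of $v_1$ that end up joined inside $G$, a small piece of graph (a single edge or the nodeless-loop-type correction) carrying the corresponding cycle --- this is precisely the ``either a new leaf ... or a new inner edge'' bookkeeping from the composition-with-$\mu_0$ clauses, now read backwards. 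The bookkeeping on orders is exactly as stated in the lemma: vertices of $q'$ ordered by the order on edges of $v_1$, and each $u_i$'s half-edges ordered by putting the $v_1$-adjacent ones first (inheriting $v_1$'s order) and then the global leaf order. The hard part is checking that this really is a \emph{bijection} between decompositions and graphs --- i.e. that after substitution the graph, its $b_1$, and all the linear orders come out exactly right, and that no two distinct tuples $(q',(q_l),(j_{li}))$ give the same $G$; I expect this to require a careful but essentially combinatorial case analysis splitting on whether each edge of $v_1$ is a leaf, an edge to another component, or part of a cycle through $v_1$, paralleling the associativity case-checking already done in the proof that $mOp$ is an operad.

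Finally, the remaining operads follow formally. Each of $mOp_{st}$, $mOp_{nc}$ is a suboperad of $mOp$ (or $mOp_{(g,n)}$) that is closed under taking the canonical decomposition --- one checks the reduced graph $q'$ and the pieces $q_l$ stay inside the suboperad (e.g. for $mOp_{st}$ one must verify the stability inequality $g(v)+\deg(v)>2$ is inherited, which it is since the decomposition only redistributes existing vertices and adds no low-genus low-arity vertices) --- and a suboperad closed under an already-unique decomposition is itself canonically decomposable via the induced $Q'$. For $mOp_{(g,n)}$ one runs the $mOp$ argument verbatim, additionally carrying the genus labels along: the genus map on $G$ restricts to the pieces, and $b_1$ bookkeeping is unchanged, so the total-genus colour is preserved. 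The operads encoding wheeled properads, wheeled operads, dioperads and $\frac12$-props are handled identically: they are graph-substitution operads built on the same vertex-deletion/re-grafting mechanics, so the same ``delete the first vertex, the rest are the connected components'' recipe gives existence, and the same ``the non-outer vertex must be the first vertex of $G$'' observation gives uniqueness. I would state this last family as a remark rather than spell it out, since by that point the pattern is entirely routine. I do not anticipate any genuine obstruction beyond the $mOp$ case-analysis; the tree cases are a clean ``prune the root, keep the branches'' argument and everything else reduces to them or to $mOp$.
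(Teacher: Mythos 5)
Your overall mechanism is the paper's: the pieces $q_0,\dots,q_m$ are the maximal connected subgraphs of $q$ not containing the distinguished vertex, $q'$ is obtained by contracting them, and uniqueness comes from the imposed orders together with the fact that the non-contracted vertex is forced. The paper states exactly this, uniformly for all listed operads except $mOp_{st}$, and leaves the order bookkeeping to the figures, so your level of detail is comparable.

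There is, however, one genuine error in your tree cases: you decompose around the \emph{root vertex} of $T$, whereas the definition of canonical decomposability forces the decomposition to be centered on the vertex \emph{indexed by $1$}, which in general is not the root vertex. In the expression $q'(id_{c_1}(1),q_0(j_{01},\dots),\dots,q_m(\dots))$ the identity is substituted into the first input of $q'$ and receives index $1$, so after composition the single vertex of $id_{c_1}$ occupies the first position of the composite; the composite can therefore equal $q$, with its vertex ordering, only if the distinguished vertex is the vertex of $q$ indexed by $1$. This is why the reduced symmetric tree of the lemma has the first vertex in the \emph{middle}, with a root component hanging below it (cf.\ Figure~\ref{fig:3leveltrees}, where the source vertex $p$ is not the root vertex and $p_0$ sits below it), and it is also why your own description is internally inconsistent: if $r$ were the root vertex of $T$ there would be no nonempty ``component below $r$ containing the root leaf.'' The fix is to replace ``root vertex'' by ``vertex indexed by $1$'' throughout the tree cases; your $mOp$ paragraph already correctly uses the first vertex. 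Two smaller points: when an input edge of the distinguished vertex is a leaf of $q$, the corresponding $q_l$ must be taken to be the exceptional edge $\mu_0$ (so the number of pieces is the degree of the distinguished vertex, not the number of nonempty components of the complement); and the paper singles out $mOp_{st}$ as the one case needing a modified class of reduced graphs (whose first vertex may be adjacent to leaves or to nodeless loops) precisely because $mOp_{st}$ does \emph{not} contain $\mu_0$ --- a subtlety that your ``suboperad closed under the decomposition'' reduction glosses over.
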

\begin{proof}
In all cases except $mOp_{st}$ the proofs are similar: an operation $q$ of these operads is a graph endowed with additional structure; the graphs of operations $q_0,\dots,q_m$ are the maximal connected subgraphs of the graph of $q$ that do not contain the vertex indexed by $1$; the graph of the operation $q'$ is obtained from the graph of $q$ by contraction of the subgraphs  $q_0,\dots,q_m$; if a graph $q_l$ is the edge without vertices, then in $q'$ we replace the corresponding edge with the edge with one vertex; if necessary, additional conditions on orders on edges, leaves and neighbourhoods of vertices of $q'$ ensure that the operations $q'$ and $q_0,\dots,q_m$ are unique. See Figures~\ref{fig:mop-decomposable} and \ref{fig:3leveltrees}. The case of $mOp_{st}$ is slightly different: since $mOp_{st}$ does not contain $\mu_0$, the first vertex of the corresponding reduced graphs may be adjacent to leaves or to loops without vertices.
\end{proof}

\begin{figure}[t]
\begin{align*}
  \begin{aligned}\begin{tikzpicture}[optree,
      level distance=18mm,
      level 2/.style={sibling distance=50mm},
      level 3/.style={sibling distance=20mm}]
    \node{}
    child { node[circ,label=-3:$p_0$]{}
      child { node[label=above:$2$]{} }
      child { node[circ,label=-3:$p$]{}
        child { node[circ,label=183:$p_1$]{}
          child { node[label=above:$5$]{} }
          child { node[label=above:$3$]{} } }
        child { node[circ,label=-3:$p_2$]{} }
        child { node[circ,label=-3:$p_3$]{}
          child { node[label=above:$1$]{} } } }
      child { node[label=above:$4$]{}}};
  \end{tikzpicture}\end{aligned}
  =
  \begin{aligned}\begin{tikzpicture}[optree,
    level distance=18mm,
    level 2/.style={sibling distance=30mm},
    level 3/.style={sibling distance=20mm}]
  \node{}
  child { node[circ,label=-3:$p_0^{(12)}$]{}
    child { node[circ,label=183:$p$]{}
      child { node[circ,label=180:$p_1^{(12)}$]{}
        child { node[label=above:$3$]{} }
        child { node[label=above:$5$]{} } }
      child { node[circ,label=-3:$p_2$]{} }
      child { node[circ,label=-3:$p_3$]{}
        child { node[label=above:$1$]{} } } }
    child { node[label=above:$2$]{} }
    child { node[label=above:$4$]{}}};
\end{tikzpicture}\end{aligned}
\end{align*}
\caption{These trees represent the same morphism from $p$ in the twisted arrow category $\Tw_{sOp_C}(P)$ of an operad $P$, but only the right tree is the canonical representative. The input edges of the source vertex $p$ cannot be permuted. In the notation of Lemma~\ref{lem:candec} we have $a_0=p_0^{(12)}$, $a_1=p_1^{(12)}$, $a_2=p_2$, and $a_3=p_3$.}
\label{fig:3leveltrees}
\end{figure}
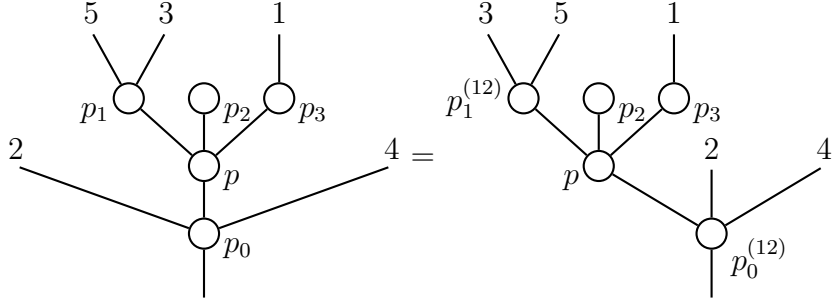

\begin{remark}
The operad of properads is not canonically decomposable, see Example~\ref{ex:properads-not-decomposable}.
\end{remark}

The first vertex in all reduced trees or graphs above will be called \emph{the source vertex}, and the remaining vertices will be called \emph{the non-source vertices}.

\begin{corollary}
\label{crl:twsop}
For a planar, symmetric, cyclic operad or an $mOp$-algebra $P$ the corresponding twisted arrow category $\Tw(P)$ has operations of $P$ as objects. A morphism from an operation $p$ is canonically represented by unique reduced planar, symmetric, cyclic tree, or modular graph  with vertices marked by operations of $P$, with the source vertex marked by $p$. The target of a morphism is equal to the evaluation of the corresponding tree or graph. A composition of morphisms $h:p\to r$ and $h':r\to t$ is obtained by grafting the non-source vertices of $h'$ to corresponding leaves of $h$, and then contracting the edges not adjacent to $p$ via composition in $P$.
The case of the enveloping category of $P$ is similar, except the source vertices of morphisms are not marked by operations of $P$.
\end{corollary}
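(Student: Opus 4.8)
The plan is to deduce everything from Lemma \ref{lem:graph-operads-canonically-decomposable} together with Lemma \ref{lem:candec} and the explicit description of composition in $\TwOp_Q(A)$. First I would instantiate $Q$ as the appropriate graph-substitution operad ($pOp$, $sOp_C$, $cOp$, or $mOp$) and $A$ as the $Q$-algebra $P$; by definition $\Tw(P) = \Tw_Q(P)$ is the underlying category of $\TwOp_Q(P)$, so its objects are the colours of $\TwOp_Q(P)$, which are exactly the elements of $P$, i.e.\@ the operations of the generalized operad $P$. This settles the object part. For morphisms, Lemma \ref{lem:graph-operads-canonically-decomposable} tells us $Q$ is canonically decomposable via the relevant class of reduced trees or graphs, so Lemma \ref{lem:candec} applies: every morphism $f$ in $\Tw_Q(P)$ out of an object $p$ has a unique canonical representative $q'(p, a_0,\dots,a_m)$ with $q'$ a reduced graph. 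Unwinding what such a representative is — a reduced tree/graph $q'$ with $(m+2)$ vertices whose source vertex receives the variable $p$ and whose non-source vertices receive the elements $a_0,\dots,a_m \in P$ — gives precisely the stated description: a reduced planar/symmetric/cyclic tree or modular graph with vertices marked by operations of $P$ and source vertex marked by $p$. The target of $f$ is by construction the evaluation $ev([q'(p,a_0,\dots,a_m)]) = q'(p,a_0,\dots,a_m)$ computed in $P$, i.e.\@ the evaluation of the marked tree/graph.

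The remaining point is the composition formula. Here I would take two morphisms $h\colon p\to r$ and $h'\colon r\to t$ with canonical representatives $q'(p,\overline a)$ and $(q')'(r,\overline b)$, and compute $h'\circ h$ in $\TwOp_Q(P)$. By the definition of composition in $\TwOp_Q(A)$, $h' \circ h = [(q')' \circ_i q'](\dots)$ where the substitution plugs the graph $q'$ (carrying its variable $p$ and its $P$-labelled non-source vertices, now treated as elements of $P$ rather than variables) into the source vertex of $(q')'$; concretely this grafts the non-source vertices of $h'$ onto the leaves of $h$. The resulting representative is generally not reduced — it has height greater than $2$, or non-source vertices adjacent to each other — so to extract the canonical representative one applies the reduction map from the proof of Lemma \ref{lem:candec}, which contracts the maximal connected subgraphs avoiding the source vertex via composition in $P$; this is exactly "contracting the edges not adjacent to $p$ via composition in $P$." I would verify on a representative picture (as in Figure \ref{fig:3leveltrees}) that grafting-then-contracting lands on the stated canonical form. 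Finally, for the enveloping category $\mathcal{U}(P) = \mathcal{U}_Q(P)$ the argument is identical using the second half of Lemma \ref{lem:candec}: canonical representatives have the form $q'(h_1,a_0,\dots,a_m)$ where $h_1$ is a colour-variable rather than an element of $P$, so the source vertex carries only a colour (the arity datum) and is not marked by an operation.

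The only genuinely non-routine step is matching the combinatorial operation "graft the non-source vertices and contract the non-$p$-adjacent edges" with the output of the reduction procedure of Lemma \ref{lem:candec} applied to the composite $(q')'\circ_i q'$; everything else is bookkeeping. The subtlety is that the canonical-decomposition data (orders on vertices, on half-edges at each vertex, on leaves) must be tracked through the grafting so that uniqueness is preserved — in particular one must check that the induced orders on the contracted graph agree with those prescribed in Lemma \ref{lem:graph-operads-canonically-decomposable}, and that the input edges of the source vertex $p$ are never permuted, which is what forces the twist. I expect this to be straightforward but tedious, and I would handle it by reducing to the already-checked case of $sOp_C$ in Figure \ref{fig:3leveltrees} and remarking that the other operads differ only in which auxiliary orders are imposed.
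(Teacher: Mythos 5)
Your proposal is correct and follows exactly the route the paper intends: the paper states this as a corollary with no written proof, leaving it as an immediate consequence of Lemma~\ref{lem:graph-operads-canonically-decomposable} (canonical decomposability of $pOp$, $sOp_C$, $cOp$, $mOp$ via reduced trees/graphs) combined with Lemma~\ref{lem:candec} (unique canonical representatives) and the definition of composition in $\TwOp_Q(A)$, which is precisely the chain of reasoning you spell out. Your identification of the composition formula with ``graft, then apply the reduction map from the proof of Lemma~\ref{lem:candec}'' is the right way to fill in the one step the paper leaves implicit.
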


\subsection{Main examples}
\label{sec:examples}

Several categories fundamental to homotopy theory are equivalent to twisted arrow categories of operads or cyclic operads. This includes the simplex category $\Delta$, Connes' cyclic category $\Lambda$, Segal's category $\Gamma$, the opposite of the category of finite sets, and, as we show later, Moerdijk--Weiss dendroidal category~$\Omega$. 

\begin{lemma}
For any operad $Q$ and terminal $Q$-algebra $A$ the twisted arrow category $\Tw_Q(A)$ is isomorphic to the enveloping category $\mathcal{U}_Q(A)$.
\end{lemma}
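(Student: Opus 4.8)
The plan is to obtain the isomorphism from the discrete opfibration $\alpha\colon\Tw_Q(A)\to\mathcal{U}_Q(A)$ recorded in the preceding remark, by checking that for a terminal $Q$-algebra $A$ this functor is a bijection on objects, and then invoking the general fact that a discrete opfibration of categories which is bijective on objects is an isomorphism.

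First I would identify the objects on the two sides. The objects of $\Tw_Q(A)$ are the elements of the set $A=\bigsqcup_{c\in C}A(c)$, the objects of $\mathcal{U}_Q(A)$ are the colours $c\in C$, and $\alpha$ sends an element $a'$ to its colour. Since $A$ is the terminal $Q$-algebra, every set $A(c)$ is a singleton, so the ``colour of'' map $A\to C$ is a bijection; that is, $\alpha$ is bijective on objects.

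Next I would spell out the general fact: if $\alpha\colon\mathcal{D}\to\mathcal{E}$ is a discrete opfibration of categories that is bijective on objects, then it is an isomorphism. Indeed, for a morphism $g$ of $\mathcal{E}$ with source $e$, the unique object $d$ of $\mathcal{D}$ lying over $e$ admits, by the opfibration property, a unique morphism of $\mathcal{D}$ over $g$ with source $d$, whose target is the unique lift of the target of $g$; hence $\alpha$ is bijective on Hom-sets, and the set-theoretic inverse is then automatically a functor. Applied to $\alpha\colon\Tw_Q(A)\to\mathcal{U}_Q(A)$ this yields $\Tw_Q(A)\cong\mathcal{U}_Q(A)$.

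I expect no real obstacle here; the only point to keep in mind is that one genuinely uses that $A$ is terminal (a singleton in each colour), not merely that its total underlying set is a point. As an alternative one could argue at the level of operads: for terminal $A$ the underlying $C$-coloured set $U(A)$ is isomorphic to the $C$-coloured set $H_C$, so the colour-remembering map $TO(A)=F(U(A))\sqcup A\to F(H_C)\sqcup A=EO(A)$ is an isomorphism of $Q$-algebras compatible with the operad structures of $\TwOp_Q(A)$ and $\mathcal{U}Op_Q(A)$, and passing to underlying categories gives the statement; but the opfibration argument is the shortest.
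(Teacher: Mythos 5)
Your proposal is correct and matches the paper's argument, which simply observes that the fibres of the canonical opfibration $\Tw_Q(A)\to\mathcal{U}_Q(A)$ are singletons when $A$ is terminal. You spell out the "discrete opfibration with singleton fibres is an isomorphism" step more explicitly, but the approach is the same.
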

\begin{proof}
The fibres of the canonical opfibration $\Tw_Q(A)\to\mathcal{U}_Q(A)$ are singletons.
\end{proof}

\begin{proposition}
\label{pr:asdelta}
The twisted arrow category $\Tw_{pOp}(uAs_{pl})$ and the enveloping category $\mathcal{U}_{pOp}(uAs_{pl})$ of the terminal planar operad $uAs_{pl}$ are isomorphic to the category $\Delta$.
\end{proposition}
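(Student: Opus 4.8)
By the preceding Lemma, since $uAs_{pl}$ is a terminal $pOp$-algebra the opfibration $\alpha\colon\Tw_{pOp}(uAs_{pl})\to\mathcal{U}_{pOp}(uAs_{pl})$ is an isomorphism, so it suffices to produce an isomorphism $\Tw_{pOp}(uAs_{pl})\cong\Delta$. I would first unwind the explicit description supplied by Corollary~\ref{crl:twsop} for the planar operad $P=uAs_{pl}$. As $uAs_{pl}$ is terminal it has a unique operation $\underline n$ of each arity $n\ge 0$, so the objects form a copy of $\mathbb N$. A morphism out of $\underline n$ is a reduced planar tree (Lemma~\ref{lem:graph-operads-canonically-decomposable}) whose source vertex is $\underline n$ and whose remaining vertices are marked by operations of $uAs_{pl}$; since such an operation is determined by its arity alone, the morphism is just the underlying reduced planar tree shape with an $n$-valent source vertex. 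Such a shape is a three-level planar rooted tree — a root vertex at the bottom, the source vertex above it, and $n$ upper vertices, one on each input of the source — and its combinatorial content is the number $a\ge 0$ of leaves of the root vertex lying to the left of the edge going up to the source, the number $b\ge 0$ of such leaves lying to the right, and the arities $j_1,\dots,j_n\ge 0$ of the $n$ upper vertices listed in planar order. The target of the morphism is $\underline N$ with $N=a+j_1+\dots+j_n+b$, the number of non-root leaves.

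Next I would define a functor $\Phi\colon\Tw_{pOp}(uAs_{pl})\to\Delta$ by $\Phi(\underline n)=[n]$ and by sending the morphism above to the order-preserving map $f\colon[n]\to[N]$ with $f(i)=a+j_1+\dots+j_i$, so that $f(0)=a$, $f(n)=N-b$ and $f(i)-f(i-1)=j_i$. On each hom-set this is a bijection: an order-preserving map $[n]\to[N]$ is the same datum as its sequence of $n+2$ nonnegative jumps $(f(0),\,f(1)-f(0),\,\dots,\,f(n)-f(n-1),\,N-f(n))$, which sums to $N$, and this is precisely the tuple $(a,j_1,\dots,j_n,b)$; conversely every such tuple is realized by exactly one reduced planar tree. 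It is a bijection on objects. The remaining point is functoriality: by Corollary~\ref{crl:twsop} composition in $\Tw_{pOp}(uAs_{pl})$ is given by grafting the upper and root vertices of the second tree onto the leaves and root of the first, contracting all edges not incident to the source vertex, and composing the marks in $uAs_{pl}$. Here the non-root leaves of the first tree, read in planar order, are exactly the points of $[N]$, and the jumps of the second order-preserving map get distributed over the blocks these points cut out; reading off the jumps of the resulting three-level tree therefore gives the composite of the two order-preserving maps, and identities go to identities. Hence $\Phi$ is an isomorphism of categories, and composing with $\alpha$ gives $\mathcal{U}_{pOp}(uAs_{pl})\cong\Delta$ as well.

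The argument is essentially bookkeeping, and the part that I expect to require the most care is pinning down the planar conventions — which side of the source edge the root-vertex leaves lie on, and the planar order of the upper vertices — so that $\Phi$ genuinely lands in $\Delta$ rather than in $\Delta^{op}$, together with the verification that the grafting--contracting composition of reduced planar trees coincides with composition of monotone maps. Computing one small pair of morphisms, say a coface followed by a codegeneracy, fixes all the conventions, after which the general case is routine. Alternatively one may define the inverse functor $\Delta\to\Tw_{pOp}(uAs_{pl})$ directly from the jump sequence of an order-preserving map; this trivializes composition on the $\Delta$ side while leaving one to expand the grafting--contracting composition on the other.
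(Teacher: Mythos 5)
Your proposal is correct and follows essentially the same route as the paper: both identify morphisms out of $\mu_n$ with reduced planar trees recorded by the position of the source edge in the root vertex and the arities of the upper vertices, and both send such a tree to the monotone map determined by the resulting jump sequence (your $(a,j_1,\dots,j_n,b)$ is the paper's $(i-1,m_1,\dots,m_n,m_0-i)$). The only cosmetic difference is that the paper checks functoriality by reinterpreting $f$ as the map sending each leaf to the rightmost leaf of the grafted corolla, whereas you track the distribution of jumps over blocks directly; both amount to the same bookkeeping.
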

\begin{proof}
In the planar operad $uAs_{pl}$ there is exactly one operation $\mu_n$ for each arity $n\geq 0$. A reduced planar tree with lower vertex $\mu_{m_0}$ connected by the $i$-th edge to the source vertex $\mu_n$, and with upper vertices $\mu_{m_j},\,1\leq j \leq n$, is the canonical representative of a morphism from $\mu_n$ to $\mu_{m_0+\dots+m_n-1}$. 
The categories $\Tw_{pOp}(uAs_{pl})$ and $\Delta$ are isomorphic: the operation $\mu_n$ corresponds to $[n]$, and the morphism above corresponds to the map $f$ with $f(0)=i-1$, $f(j)-f(j-1)=m_j$. This correspondence is bijective. The set-map $f$ is equal to the following map from the indices of leaves of $\mu_n$ to the indices of leaves of the target: the $j$-th leaf is mapped to the rightmost, i.e.\@ to the next in the counter-clockwise direction, leaf of the tree that was grafted into the $j$-th leaf, including the $0$-th (root) leaf. Functoriality follows easily from this description.
\end{proof}

\begin{proposition}
\label{pr:comgamma}
Let $uCom$ be the terminal symmetric operad, the operad of  commutative monoids. The categories $\Tw_{sOp}(uCom)$ and $\mathcal{U}_{sOp}(uCom)$ are isomorphic to Segal's category $\Gamma=sk(FinSet_*^{op})$, the skeleton of the opposite category of the category of finite pointed sets.
\end{proposition}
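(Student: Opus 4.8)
The plan is to first identify the enveloping category with the twisted arrow category and then run the argument of Proposition~\ref{pr:asdelta} with ``planar order'' weakened to ``arbitrary order''. Since $uCom$ is the terminal symmetric operad, it is a terminal $sOp$-algebra, so by the lemma above the canonical opfibration $\Tw_{sOp}(uCom)\to\mathcal{U}_{sOp}(uCom)$ is an isomorphism; it therefore suffices to construct an isomorphism $\Tw_{sOp}(uCom)\cong\Gamma$.

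Next I would unwind Corollary~\ref{crl:twsop}. The operad $uCom$ has exactly one operation $\mu_n$ in each arity $n\geq 0$, so its twisted arrow category has objects $\mu_n$, $n\geq 0$, and — the vertex markings being vacuous because $uCom$ is terminal — a morphism out of $\mu_m$ is simply a reduced symmetric tree with source vertex $\mu_m$. By Lemma~\ref{lem:graph-operads-canonically-decomposable} such a tree is determined by a root vertex $\mu_{k_0}$ receiving the source on its first input, upper vertices $\mu_{k_1},\dots,\mu_{k_m}$ sitting over the inputs $1,\dots,m$ of the source, and a linear order on the $N=(k_0-1)+k_1+\cdots+k_m$ leaves in which the leaves of any single vertex appear in increasing order; its evaluation is $\mu_N$. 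Identifying $\mu_n$ with $\langle n\rangle$, I attach to such a morphism $\mu_m\to\mu_N$ the based map $g\colon\langle N\rangle\to\langle m\rangle$ with $g(j)=i$ when the $j$-th leaf is an input of the upper vertex over the $i$-th input of the source, and $g(j)=0$ when the $j$-th leaf is an input of the root vertex. Read as a morphism $\langle m\rangle\to\langle N\rangle$ of $\Gamma=FinSet_*^{op}$, this is a bijection onto $\mathrm{Hom}_\Gamma(\langle m\rangle,\langle N\rangle)$: one recovers $k_i=|g^{-1}(i)|$ for $i\geq 1$, $k_0=1+|g^{-1}(0)|$, and the order condition on leaves says precisely that, once the fibres of $g$ are prescribed, no further choice of tree remains. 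I would also record that the identity of $\mu_m$ is the reduced tree all of whose vertices are $\mu_1$, which maps to $\mathrm{id}_{\langle m\rangle}$.

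The real content — as opposed to this bookkeeping — is functoriality: that the grafting-and-contracting composition of reduced symmetric trees from Corollary~\ref{crl:twsop} matches composition of based maps, i.e.\@ composition in $\Gamma$. I would establish this by tracing one leaf of the second morphism $h'\colon\mu_N\to\mu_{N'}$, with based map $\psi$, through the composite with $h\colon\mu_m\to\mu_N$, with based map $\phi$: after grafting the non-source vertices of $h'$ onto the leaves of $h$ and contracting the edges not meeting the source vertex, the $j'$-th leaf of $h'$ — which sits over the $\psi(j')$-th leaf of $h$ when $\psi(j')\neq 0$, and over the part of $h'$'s root vertex that bypasses the source when $\psi(j')=0$ — becomes a leaf of the vertex of $h$ indexed $\phi(\psi(j'))$, respectively a leaf of the new root vertex, i.e.\@ goes to the basepoint; and the resulting leaf order is the one induced from the two given orders, so the composite tree is already in canonical reduced form. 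Hence the based map of the composite is $\phi\circ\psi$, which is composition in $FinSet_*$, that is, in $\Gamma^{op}$. This is the non-planar analogue of the ``functoriality follows easily'' step in Proposition~\ref{pr:asdelta}; the only point requiring attention beyond that case is checking that dropping the planarity constraint on the leaf order is exactly what promotes monotone maps (the morphisms of $\Delta$) to all based maps (the morphisms of $\Gamma$), so that the hom-sets come out to be all of $\mathrm{Hom}_\Gamma$ rather than a subcategory.
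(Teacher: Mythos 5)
Your proof is correct and follows essentially the same route as the paper's: reduce to the enveloping category via terminality of $uCom$, describe morphisms by canonical reduced symmetric trees, and match a tree to the based map recorded by its fibres of leaf indices over non-source vertices. The only difference is that you spell out the bijectivity and functoriality checks that the paper merely asserts, and your leaf-tracing argument for functoriality is sound since a morphism over $uCom$ is determined by its fibres.
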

\begin{proof}
Again, there is exactly one operation $\mu_n\in uCom(n)$ for each arity $n\geq 0$, and a reduced symmetric tree with the root vertex $\mu_{m_0}$ connected by the $1$-st edge to the source vertex $\mu_n$, and with upper vertices $\mu_{m_j}$, $1\leq j \leq n$, is the canonical representative of a morphism $\mu_n\to\mu_{m_0+\dots+m_n-1}$. Under the isomorphism of $\Tw_{sOp}(uCom)$ and $\Gamma$ the object $\mu_n$ corresponds to the set $[n]$ with marked point $0$, the tree above corresponds to the opposite of the map of sets $f:[m_0+\dots+m_n-1]\to [n]$, and this map is described by its fibres: $f^{-1}(j)$ is the set of indices of leaves adjacent to the $j$-th non-source vertex, with the root leaf and the root vertex indexed by $0$. This correspondence between trees and maps is well-defined, bijective and functorial.
\end{proof}

\begin{proposition}
Let $uCom_c$ be the terminal cyclic operad, the cyclic operad of cyclic commutative monoids. The categories $\Tw_{cOp}(uCom_c)$ and $\mathcal{U}_{cOp}(uCom_c)$ are the skeletons of  the opposite category of the category of non-empty finite sets. Let $uCom'_c$ be the terminal $cOp'$-algebra. The categories $\Tw_{cOp'}(uCom'_c)$ and $\mathcal{U}_{cOp'}(uCom'_c)$ are the skeletons of the opposite category of the category of finite sets.
\end{proposition}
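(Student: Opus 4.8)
The plan is to follow the proof of Proposition~\ref{pr:comgamma} almost verbatim, replacing Segal's category $\Gamma$ of finite pointed sets by the skeleton of the opposite of the category of finite sets. The one genuinely new feature, and the source of the distinction between $cOp$ and $cOp'$, is the nullary colour $-1$, realised by the isolated one-vertex operadic tree.

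First I would pin down the objects. As $uCom_c$ is the terminal $cOp$-algebra, it has exactly one operation $\mu_n$ in each colour of $cOp$, and the colours of $cOp$ are the integers $n\geq 0$, because a vertex of an operadic tree with at least one leaf has degree $\geq 1$. Identifying $\mu_n$ with the set $\{0,\dots,n\}$ of its $n+1$ legs, the objects of $\Tw_{cOp}(uCom_c)$ are thus in bijection with the non-empty finite sets. For $cOp'$ the extra colour $-1$ is present (the isolated vertex), so the objects of $\Tw_{cOp'}(uCom'_c)$ range over all finite sets, with $\mu_{-1}$ the empty set.

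Next I would describe the morphisms via Corollary~\ref{crl:twsop} together with the canonical decomposability of $cOp$ and $cOp'$ (Lemmas~\ref{lem:graph-operads-canonically-decomposable} and~\ref{lem:candec}). A morphism out of $\mu_n$ is canonically represented by a unique reduced cyclic tree whose source vertex (a corolla with edge set $\{0,\dots,n\}$) carries $\mu_n$, whose non-source vertices carry the forced operations of $uCom_c$, and whose evaluation is the target. To such a tree I associate the set map $f\colon L\to\{0,\dots,n\}$ out of its linearly ordered leaf set $L$, sending a leaf to the index of the edge of the source vertex bearing the non-source vertex to which the leaf is attached; equivalently $f^{-1}(i)$ is the planar-ordered leaf set of the $i$-th non-source vertex, exactly as in Proposition~\ref{pr:comgamma} but with no marked point. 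Reading the construction backwards (make the $i$-th non-source corolla have leaf set $f^{-1}(i)$) shows this is a bijection between reduced cyclic trees with source $\mu_n$ and set maps with domain ranging over the standard finite sets; here $L\neq\emptyset$ is forced in the $cOp$ case (a reduced tree in $cOp$ has a leaf, so not all non-source corollas are unary) and $L=\emptyset$ is allowed in the $cOp'$ case. Using the identification of objects above, this identifies $Hom_{\Tw_{cOp}(uCom_c)}(\mu_n,\mu_m)$ with the set of maps from the $(m+1)$-element set to the $(n+1)$-element set, hence realises $\Tw_{cOp}(uCom_c)$ as the skeleton of the opposite of the category of non-empty finite sets, and likewise $\Tw_{cOp'}(uCom'_c)$ as the skeleton of $FinSet^{op}$.

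Finally I would verify that composition is transported correctly. By Corollary~\ref{crl:twsop} the composite of two morphisms is obtained by grafting the non-source vertices of the second reduced cyclic tree onto the leaves of the first and contracting the new edges via composition in $uCom_c$; since $uCom_c$ is terminal this contraction is forced, and unwinding definitions shows that if the first tree encodes $f$ and the second encodes $g$ then the composite tree encodes the set-theoretic composite of $f$ and $g$, which is precisely the composition law of the opposite category of finite sets --- the same computation as in Proposition~\ref{pr:comgamma}. The statements about the enveloping categories follow at once, since $uCom_c$ and $uCom'_c$ are terminal and therefore $\Tw_Q(A)\cong\mathcal{U}_Q(A)$ for terminal $A$. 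I expect the only step needing real care --- the main obstacle --- to be the bookkeeping around the colour $-1$: one must check that in $cOp$ the reduced cyclic trees with all non-source corollas unary are exactly those excluded by the ``at least one leaf'' condition, so that no empty target ever occurs, while in $cOp'$ these trees, together with the isolated vertex, produce exactly the empty set and the unique morphisms into it. Everything else is a transcription of the $\Gamma$ argument.
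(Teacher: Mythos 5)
Your proposal is correct and follows essentially the same route as the paper, whose proof is a one-line reference to the symmetric case (Proposition~\ref{pr:comgamma}): morphisms out of $\mu_n$ are reduced cyclic trees, read off as set maps by taking the leaf-index set of the $j$-th non-source vertex to be the fibre $f^{-1}(j)$. Your extra bookkeeping around the colour $-1$ and the empty set, distinguishing $cOp$ from $cOp'$, is accurate and simply makes explicit what the paper leaves implicit.
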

\begin{proof}
As in the symmetric case, the set of indices of leaves of the $j$-th vertex is the fibre $f^{-1}(j)$ of the corresponding map of sets.
\end{proof}

Recall that the inclusion of operads $pOp\to sOp$ induces the extension functor $Sym$, called symmetrization, from planar operads to symmetric operads. Thus for any planar operad $P$ the inclusion $pOp\to sOp$ induces the functors $\Tw_{pOp}(P)\to\Tw_{sOp}(Sym(P))$ and $\mathcal{U}_{pOp}(P)\to\mathcal{U}_{sOp}(Sym(P))$.

\begin{proposition}
\label{pr:symmetrizationeq}
For any planar operad $P$ the functor $\Tw_{pOp}(P)\to\Tw_{sOp}(Sym(P))$ is an equivalence of categories. In particular, the category $\Tw_{sOp}(uAs)$, i.e.\@ the category $\Tw_{sOp}(Sym(uAs_{pl}))$, is equivalent to the category $\Tw_{pOp}(uAs_{pl})=\Delta$.
\end{proposition}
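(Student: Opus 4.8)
The plan is to work with the explicit descriptions of both categories furnished by Corollary~\ref{crl:twsop}, and to establish the equivalence by constructing a two-sided inverse up to natural isomorphism. Recall that for a planar operad $P$ the symmetrization satisfies $Sym(P)(n)\cong P(n)\times S_n$ as $S_n$-sets, with the symmetric group acting freely on the second coordinate, composition being the planar composition of $P$ twisted by the evident block permutations, the unit lying in $Sym(P)(1)=P(1)$, and $p^\sigma=(p')^{\sigma'}$ precisely when $p=p'$ and $\sigma=\sigma'$. On the models of Corollary~\ref{crl:twsop} the functor $F\colon\Tw_{pOp}(P)\to\Tw_{sOp}(Sym(P))$ sends an operation $p$ to $p=p^{id}$, and sends a morphism, represented by a reduced planar tree with $P$-marked vertices, to the morphism represented by the same tree regarded in $sOp$ with each mark $p_i$ replaced by $p_i^{id}$.

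The first step is to record the ``reindexing'' isomorphisms. For an object $p^\sigma$ of $\Tw_{sOp}(Sym(P))$, the reduced symmetric tree with source vertex $p^\sigma$, every non-source vertex equal to $id_1\in Sym(P)(1)$, and global leaf order chosen so that the evaluation of the tree is exactly $p^{id}$ (possible and unique, since $id_1$ is a strict unit) represents a morphism $p^\sigma\to p^{id}=F(p)$. It is an isomorphism: the inverse is the analogous reindexing tree out of $p^{id}$, and the two composites contract their identity vertices to the identity morphisms. For $\sigma=id$ this isomorphism is the identity.

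Next I would construct a functor $G\colon\Tw_{sOp}(Sym(P))\to\Tw_{pOp}(P)$ and natural isomorphisms $GF\cong\mathrm{id}$ (in fact an equality) and $FG\cong\mathrm{id}$, realised by the reindexing isomorphisms above; two functors related in this way form an equivalence, so this suffices. On objects $G$ forgets the relabeling, $p^\sigma\mapsto p$. On a morphism represented by a reduced symmetric tree $q'$ with source mark $p^{id}$ and other marks $b_j^{\sigma_j}$, one planarizes: discard the relabelings $\sigma_j$, replace the ``within-vertex planar'' leaf order of $q'$ by a genuine planar leaf order, and use the relabeling of the root vertex to recover which edge of the root vertex is attached to the source vertex in the resulting planar reduced tree; this planar tree, with vertex marks $b_j$, represents $G$ of the morphism. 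Granting that this is well defined, $GF=\mathrm{id}$ is immediate from the construction, and $FG\cong\mathrm{id}$ follows by running an object $p^\sigma$ and a morphism through both functors and comparing via the reindexing isomorphism $p^\sigma\xrightarrow{\ \sim\ }p^{id}$ (which is the identity on objects of the form $p^{id}$, so that $F$ comes out full and faithful as well). Therefore $F$ is an equivalence, and the remaining assertion is the special case $P=uAs_{pl}$, where $Sym(uAs_{pl})=uAs$ and $\Tw_{pOp}(uAs_{pl})=\Delta$ by Proposition~\ref{pr:asdelta}, whence $\Tw_{sOp}(uAs)\simeq\Delta$.

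The main obstacle is the well-definedness of $G$ on morphisms: one must check that the planarization is insensitive to the choice of representative, i.e.\ that it is compatible with the two families of generating equivalences presenting the twisted arrow operads (composite expansion, and compatibility with the symmetric-group action). The point making this plausible is that $Sym(P)$ carries the relevant permutations freely, so each generating equivalence between symmetric representatives projects either to a generating equivalence between planar representatives or to an equality; but turning this into a proof requires careful bookkeeping with the composition law of $Sym(P)$, with the conventions for the root edge of $sOp$ and for the attaching edge at the root vertex, and with the combinatorics of reduced trees. Once this one non-formal ingredient is in hand, the rest of the argument is routine.
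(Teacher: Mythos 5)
Your overall strategy (explicit quasi-inverse $G$ by planarization, reindexing isomorphisms $p^\sigma\cong p^{id}$, then $GF=\mathrm{id}$ and $FG\cong\mathrm{id}$) is viable in principle, and your reindexing isomorphisms are exactly the ones the paper uses. But the proposal defers its central step: you write ``Granting that this is well defined\dots'' and close with ``turning this into a proof requires careful bookkeeping\dots Once this one non-formal ingredient is in hand, the rest of the argument is routine.'' That ingredient \emph{is} the proposition; without it you have a plan, not a proof. Checking that planarization respects the generating equivalences of the twisted arrow operad (composite expansion and symmetric-group compatibility), and that the resulting $G$ is functorial, is precisely the nontrivial content, and you have not carried it out.

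The paper avoids constructing $G$ altogether, and you could too. By Lemma~\ref{lem:candec} and Corollary~\ref{crl:twsop} every morphism already has a \emph{unique} canonical representative as a reduced symmetric tree, so there is no well-definedness issue to check. The paper restricts to the full subcategory $\mathcal{A}$ on objects $p\in P\subset Sym(P)$ (equivalent to the whole category by your reindexing isomorphisms) and shows directly that each hom-set of $\mathcal{A}$ is in bijection with the corresponding hom-set of $\Tw_{pOp}(P)$: given a morphism $f$ in $\mathcal{A}$ with canonical reduced symmetric representative $t'$, freeness of the $S_n$-action on $Sym(P)$ yields a unique height-$3$ symmetric tree $t$ representing $f$ whose non-source vertices are marked by elements of $P$ rather than $Sym(P)$; the target of $f$ is then $q^\sigma$ where $q$ is the planar evaluation of $t$ and $\sigma$ is the leaf permutation of $t$, and since both $q$ and $q^\sigma$ must lie in $P$ and the action on $P\subset Sym(P)$ is free, $\sigma$ is trivial and $t$ is a reduced \emph{planar} tree. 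Hence $\Tw_{pOp}(P)\to\mathcal{A}$ is bijective on hom-sets, i.e.\@ fully faithful, and essential surjectivity is your reindexing observation. This uses the freeness exactly where you correctly sensed it was needed, but applied to canonical representatives rather than to arbitrary ones, which eliminates the bookkeeping you left open.
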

  \begin{proof} 
    An object in $\Tw_{sOp}(Sym(P))$ is an operation of the form $p^\sigma ,\, p\in P$, and $p^\sigma $ is isomorphic to $p$ via the morphism with trivial non-source vertices and with leaves permuted by $\sigma^{-1}$. The full subcategory $\mathcal{A}$ of $\Tw_{sOp}(Sym(P))$ on objects $p\in P$, i.e.\@ on images of objects of the category $\Tw_{pOp}(P)$, is equivalent to the category $\Tw_{sOp}(Sym(P))$ itself. 
    
    Any morphism $f$ in $\mathcal{A}$ is represented by a reduced symmetric tree $t'$ with vertices marked by elements of $Sym(P)$, with the source vertex marked by an element of $P$. Since the action of symmetric groups in $Sym(P)$ is free, there is exactly one symmetric tree $t$ of height $3$ (that is not assumed to be reduced) that represents $f$ and whose non-source vertices are marked by operations $p_i$ from $P$, not merely from $Sym(P)$ (compare with Figure~\ref{fig:3leveltrees}). The tree $t$ is obtained from $t'$ by permutation of edges of non-source vertices. Denote by $\sigma$ the permutation of the leaves of $t$. The target of $f$ is equal to $q^{\sigma}$, where $q$ is the evaluation of the planar tree $t$, i.e.\@ of $t$ endowed with trivial permutation on leaves. Both $q$ and $q^\sigma$ are in $P$. Since the action of symmetric groups on $P$ is free, the permutation $\sigma$ is trivial, and $t$ is reduced. Any morphism in $\mathcal{A}$ is represented by unique reduced planar tree. The essentially surjective functor $\Tw_{pOp}(P)\to\mathcal{A}$ is fully faithful.
  \end{proof}

\begin{proposition}
Let $uAs_c$ be the cyclic associative operad. Then $\Tw_{cOp}(uAs_c)$ is equivalent to Connes' cyclic category $\Lambda$.
\end{proposition}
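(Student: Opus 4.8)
The plan is to run the same kind of argument as in Propositions~\ref{pr:asdelta} and~\ref{pr:comgamma} and in the proposition immediately above, now keeping track of the non-trivial cyclic symmetry carried by the operations of $uAs_c$; this extra structure is exactly what enlarges the (skeleton of the) category of finite sets to Connes' category $\Lambda$. Since $\Lambda$ is self-dual, the direction of the equivalence is immaterial.

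First I would fix an explicit combinatorial model of the cyclic operad $uAs_c$: its arity-$m$ operations are the cyclic orders (``necklaces'') on $\{0,1,\dots,m\}$, so there are $m!$ of them; operadic composition splices one necklace into a bead of another; the symmetric group $S_{m+1}$ acts transitively, and the standard necklace $\mu_m=(0,1,\dots,m)$ is fixed by, and only by, the cyclic subgroup $\langle\tau_m\rangle$. Since $uAs_c$ is a $cOp$-algebra and $cOp$ is canonically decomposable via reduced cyclic trees (Lemma~\ref{lem:graph-operads-canonically-decomposable}), Corollary~\ref{crl:twsop} gives a concrete description of $\Tw_{cOp}(uAs_c)$: its objects are the necklaces, and a morphism out of $\mu_m$ is canonically represented by a unique reduced cyclic tree whose source vertex is marked by $\mu_m$, whose non-source vertices are marked by necklaces, and whose target is the evaluation of the marked tree.

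Next I would pass to a skeleton exactly as in Proposition~\ref{pr:symmetrizationeq}: for every $\sigma\in S_{m+1}$ there is a relabelling isomorphism $\mu_m\to\mu_m^\sigma$ with trivial non-source vertices, so the full subcategory on the standard objects $[m]:=\mu_m$, $m\geq0$, is a skeleton. The difference with the symmetric case is that the stabilizer $\langle\tau_m\rangle$ is non-trivial, so these relabelling isomorphisms now contribute genuine automorphisms; I would check that $\mathrm{Aut}([m])$ is cyclic of order $m+1$, generated by the rotation $\tau_m$. The inclusion of operads $sOp\hookrightarrow cOp$ restricts $uAs_c$ to its underlying symmetric operad $uAs$ and induces a functor $\iota\colon\Delta\simeq\Tw_{sOp}(uAs)\to\Tw_{cOp}(uAs_c)$, which I claim realizes the standard embedding $\Delta\hookrightarrow\Lambda$. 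The key device is a normal form obtained by cutting each non-source necklace of a reduced cyclic tree at its inner edge: this turns the marked tree into an honest planar three-level tree, hence — via the symmetrization of Proposition~\ref{pr:symmetrizationeq} — into a morphism $\phi$ of $\Delta$, while the only residual datum, the lack of a preferred first bead on the target necklace, amounts to post-composing with a single power $\tau_n^{a}$. Thus every morphism of $\Tw_{cOp}(uAs_c)$ factors as $\tau_n^{a}\circ\iota(\phi)$; analysing when two such factorizations coincide recovers precisely the cocyclic relations between the $\tau$'s and the coface/codegeneracy maps, so that comparison with Connes' presentation of $\Lambda$ (as $\Delta$ with the operators $\tau_n$ of order $n+1$ adjoined subject to the standard relations, equivalently the self-dual model by weakly monotone degree-one maps $\mathbb{Z}\to\mathbb{Z}$ taken modulo $n+1$) identifies the skeleton with $\Lambda$. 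Finally I would check functoriality: composition in $\Tw_{cOp}(uAs_c)$ grafts the non-source vertices and then contracts them along substitution in $uAs_c$, i.e.\ concatenates the cut words cyclically, and this matches composition in $\Lambda$.

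I expect the main obstacle to be this last combinatorial identification — verifying that the residual ``starting-bead'' datum corresponds to a rotation $\tau_n^{a}$ of the target (and not, say, to a permutation of the source), and that the factorization $\tau_n^{a}\circ\iota(\phi)$ carries exactly the ambiguity prescribed by $\Lambda$ rather than more or less. The cyclic bookkeeping around the distinguished bead is delicate, and it is also where one must see that no ``dihedral'' relations creep in: reversing a cyclic order gives a genuinely different operation of $uAs_c$, so no reflection symmetry occurs, which is why the automorphism group is cyclic and not dihedral.
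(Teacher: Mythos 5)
Your overall strategy --- pass to the skeleton on the standard necklaces $\mu_m$ as in Proposition~\ref{pr:symmetrizationeq}, use canonical representatives from Corollary~\ref{crl:twsop}, and then match the residual cyclic data against a presentation of $\Lambda$ --- is the same as the paper's, and your preparatory steps (the count $|uAs_c(m)|=m!$, the stabilizer $\langle\tau_m\rangle$, $\mathrm{Aut}(\mu_m)\cong\mathbb{Z}/(m+1)$) are all correct. The error sits exactly at the point you flagged as delicate, and you resolved it the wrong way: the factorization $\tau_n^{a}\circ\iota(\phi)$ with $\tau_n$ an automorphism of the \emph{target} does not produce all morphisms. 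Counting: $|\Lambda([m],[n])|=\tfrac{(m+n+1)!}{m!\,n!}=(m+1)\cdot|\Delta([m],[n])|$, whereas your parametrization yields at most $(n+1)\cdot|\Delta([m],[n])|$ morphisms, which is too few whenever $m>n$. Concretely, $\mathrm{Hom}_{\Tw}(\mu_1,\mu_0)$ has two elements (cap either of the two half-edges of the source $\mu_1$ with the unit $\mu_0$ and graft the arity-one operation onto the other), corresponding to $\sigma_0$ and the extra degeneracy $\sigma_0\tau_1$ in $\Lambda$; but $\mathrm{Aut}(\mu_0)$ is trivial and $\Delta([1],[0])$ is a singleton, so only one of them has the form $\tau_0^{a}\circ\iota(\phi)$. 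The correct unique factorization --- the crossed simplicial group property of $\Lambda$ --- is $\iota(\phi)\circ\tau_m^{a}$, with the rotation acting on the \emph{source}.

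The confusion originates in the cutting step: a reduced cyclic tree has no root vertex, all $m+1$ corollas sit above the source, so cutting the non-source necklaces does not canonically yield a rooted planar tree in the sense of Proposition~\ref{pr:asdelta}. One must additionally choose which upper corolla becomes the root vertex, and that $(m+1)$-fold choice, cyclically permuted by precomposition with $\tau_m$, is exactly the source-rotation ambiguity your factorization omits. Equivalently, the shape data of a reduced cyclic tree is a composition of $n+1$ into $m+1$ cyclically arranged blocks together with a choice of which leaf is indexed $0$, giving $(n+1)\binom{m+n+1}{m}=|\Lambda([m],[n])|$ --- and that first factor is not $|\Delta([m],[n])|=\binom{m+n+1}{m+1}$. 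Once the rotation is placed on the correct side, the rest of your argument (verifying the cocyclic relations, or, as the paper does, simply observing that the leaves of a representative tree whose vertices are marked by standard necklaces carry a canonical cyclic order and then invoking the same formula as in Proposition~\ref{pr:asdelta}) goes through.
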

\begin{proof}
The proof is similar to that of Proposition~\ref{pr:symmetrizationeq}. Any object $\mu_n^\sigma$ is isomorphic to $\mu_n$ via permutation isomorphism. The leaves of a tree that represents a morphism in the corresponding subcategory, and whose vertices are marked by operations $\mu_{n_j}$, are always cyclically ordered. To get an isomorphism with $\Lambda$ one can proceed as in Proposition~\ref{pr:asdelta}: then $\mu_n\to\mu_m$ corresponds to morphisms $[n]\to[m]$; or as in Proposition~\ref{pr:comgamma}: then $\mu_n\to\mu_m$ corresponds to $([m]\to[n])^{op}$.
\end{proof}

\subsection{Additional examples}

The functors that appeared in the works of Pirashvili and Richter (\cite{pirashvili25hochschild}) and Loday (\cite{Loday1998cyclic}) that connect Hochschild and cyclic homology theory with functor homology are equivalent to canonical opfibrations $\Tw(P)\to\mathcal{U}(P)$. We describe a similar functor related to 2-dimensional cobordisms.

\begin{proposition}
The category $\mathcal{U}_{cOp}(uAs_c)$ is the opposite of the category $\mathcal{F}(as)$ of non-commutative sets (\cite[A10]{feigin1987additive}, see also \cite{fiedorowicz1991crossed,pirashvili25hochschild}), and the category $\mathcal{U}_{sOp}(uAs)$ is the opposite of the subcategory $\Gamma(as)\subset \mathcal{F}(as)$ of maps preserving $0$.
\end{proposition}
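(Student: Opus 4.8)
The strategy is to make both identifications explicit by the same bookkeeping used in Propositions~\ref{pr:asdelta} and~\ref{pr:comgamma}, upgrading the ``partition'' data there to ``ordered partition'' data. First I would spell out a concrete model of the two enveloping categories via Corollary~\ref{crl:twsop}: the objects of $\mathcal{U}_{cOp}(uAs_c)$ are the colours of $cOp$, i.e.\ finite leg-sets $[n]=\{0,\dots,n\}$, and a morphism out of $[n]$ is represented by a unique reduced cyclic tree whose source vertex has $n+1$ legs and is unmarked, with its non-source vertices marked by operations of the cyclic operad $uAs_c$, the target being the colour $[M-1]$ where $M$ is the total number of leaves; likewise $\mathcal{U}_{sOp}(uAs)$ uses reduced symmetric (rooted) trees with $uAs$-markings. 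On the other side, recall that a morphism $\phi\colon[m]\to[n]$ of $\mathcal{F}(as)$ is a function together with a linear order on each fibre $\phi^{-1}(j)$, composed by concatenating fibres lexicographically, and that $\Gamma(as)\subseteq\mathcal{F}(as)$ is the wide subcategory of maps preserving the basepoint $0$, where the fibre over $0$ always contains $0$ as its least element.

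Next I would define the comparison isomorphism. On objects it sends the colour $[n]$ to the $(n+1)$-element set $\{0,\dots,n\}$. Given a morphism $[n]\to[M-1]$ of $\mathcal{U}_{cOp}(uAs_c)$ presented by a marked reduced cyclic tree, the associated function $f\colon\{0,\dots,M-1\}\to\{0,\dots,n\}$ sends a target leaf to the leg of the source vertex it lies above; then $f^{-1}(j)$ is exactly the leaf-set of the non-source vertex grafted onto the $j$-th leg of the source, and the $uAs_c$-marking of that vertex --- a cyclic order on its legs in which the leg towards the source is distinguished --- is precisely a linear order on this leaf-set. So the morphism determines a well-defined morphism of $\mathcal{F}(as)^{op}$; uniqueness of canonical representatives (Lemma~\ref{lem:candec}) together with the evident fact that every function-with-ordered-fibres arises from exactly one marked reduced tree shows this is a bijection on Hom-sets. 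The same recipe applies verbatim to $sOp$/$uAs$, where additionally the rooted structure distinguishes the root leaf of the target; it lies above the $0$-th leg of the source and is the least element of its block, so the image consists precisely of the basepoint-preserving morphisms, giving the bijection with $\Gamma(as)^{op}$.

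Then I would verify functoriality. The composition rule of Corollary~\ref{crl:twsop} --- graft the non-source vertices of the second morphism onto the leaves of the first and contract every edge not incident to the source using composition in $uAs_c$ --- must be matched against composition in $\mathcal{F}(as)$. This reduces to the single fact that operadic composition of associative (resp.\ cyclic associative) operations is concatenation of linear orders, which is exactly the lexicographic rule governing ordered fibres: after grafting, the block of the composite over a source-leg $j$ is the disjoint union of the second morphism's blocks over the leaves lying in the first morphism's fibre $f_1^{-1}(j)$, concatenated in the order dictated by the first marking, i.e.\ $(f_2 f_1)^{-1}(j)$ with its $\mathcal{F}(as)$-order. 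Identities and the basepoint condition are immediate, and compatibility of the two identifications with the inclusion $sOp\hookrightarrow cOp$ (whose restriction functor takes $uAs_c$ to $uAs$) lets the $\Gamma(as)$ statement be deduced from the $\mathcal{F}(as)$ one.

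The only real work is the bookkeeping in the last two paragraphs: pinning down orientation conventions so that ``cyclic order at a vertex with one distinguished leg'' becomes ``linear order on the leaves'' with the correct handedness, and confirming that the grafting-and-contracting composition is literally the concatenation-of-fibres composition; there is no conceptual obstacle, but the conventions must be tracked carefully, and one should also check the degenerate colours, which govern whether the empty set occurs as an object and hence exactly which version of $\mathcal{F}(as)$ is meant here.
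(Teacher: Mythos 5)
Your proposal is correct in substance, but note that the paper's own ``proof'' is a single sentence: it observes that the $\Gamma(as)$ statement appears in \cite{fresse2014functor} and asserts that the proposition ``is easy to prove directly.'' So you are not diverging from the paper's route --- you are supplying the direct verification it declines to write down, and you do it exactly the way the paper's surrounding machinery intends: canonical representatives from Lemma~\ref{lem:candec} and Corollary~\ref{crl:twsop}, the dictionary between reduced (cyclic or rooted) trees with $uAs_c$- resp.\ $uAs$-markings and set maps with linearly ordered fibres, and the match between graft-and-contract composition and lexicographic concatenation of fibres, in parallel with Propositions~\ref{pr:asdelta} and~\ref{pr:comgamma}. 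One small inaccuracy to fix: in the rooted case you say the root leaf $0$ ``is the least element of its block.'' That is neither required by the definition of $\Gamma(as)$ (which only asks that $f(0)=0$, with arbitrary orders on all fibres) nor forced by the construction: the order on $f^{-1}(0)$ comes from the $uAs(m_0)$-marking of the root vertex, under which the element $0$ occupies the slot of the half-edge joining the root vertex to the source, and that slot can sit anywhere in the multiplication order. The basepoint-preservation conclusion follows already from the fact that the root leaf lies over the $0$-th leg of the source, so the slip is harmless, but the counts ($m_0!$ orders on an $m_0$-element fibre over $0$) only come out right once it is corrected. Your final caveat about degenerate colours resolves itself: $cOp$ has colours $n\geq 0$, so every object is a non-empty set $[n]$, matching the standard $\mathcal{F}(as)$.
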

\begin{proof}
The statement for $\Gamma(as)$ appears in \cite{fresse2014functor}, and is easy to prove directly.
\end{proof}

\begin{remark}
It is not clear whether the theorem of Pirashvili and Richter admits a generalization. This theorem has a simple proof based on existence of a certain strict factorization system on the enveloping categories of the operads $uAs$ and $uAs_c$ (\cite{slominska2003decompositions,zimmermann2004changement}).  Such strict factorization system does not exist 
for the symmetric operad $pOp$, or $sOp$, or any other operad $P$ such that there exist non-isomorphic objects of the same colour in $\Tw(P)$.
\end{remark}

The following commutative diagram is (transposed and opposite to) the diagram from \cite[section 1.4]{pirashvili25hochschild}. The right square is also described in that section. 

\begin{center}
\begin{tikzcd}
\Delta \arrow[r,"\hat{C}^{op}"] \arrow[d] & \Gamma(as)^{op} \arrow[d] \arrow[r] & sk(FinSet_*^{op}) \arrow[d] \\
\Lambda \arrow[r]          & \mathcal{F}(as)^{op} \arrow[r]         & sk(FinSet^{op}) 
\end{tikzcd}
\end{center}
The arrows in this diagram are the following functors. The two horizontal arrows on the left are equivalent to opfibrations $\Tw_{sOp}(uAs) \to \mathcal{U}_{sOp}(uAs)$ and $\Tw_{cOp}(uAs_c)\to\mathcal{U}_{cOp}(uAs_c)$. More precisely, the upper left arrow is the composition of the equivalence $\Tw_{pOp}(uAs)\to\Tw_{sOp}(Sym(uAs))$ with the canonical opfibration. The lower left arrow is also a similar composition, with $\Lambda$ being the twisted arrow category of the terminal ``planar cyclic'' operad. 

The inclusion of operads $sOp\to cOp$ induces the restriction functor. The restriction of $cOp$-algebra $uAs_c$ is $uAs$. This restriction induces the left and the middle vertical arrows. The left square is commutative due to functoriality of canonical opfibrations. 

The restriction of $uCom_c$ is $uCom$, and it induces the right vertical arrow. The restriction of the morphism $uAs_c\to uCom_c$ is the morphism $uAs\to uCom$, and it induces the right commutative square.
 
The outer square is also equivalent to the square consisting of twisted arrow categories of $uAs$, $uAs_c$, $uCom$ and $uCom_c$, with functors induced by the morphisms of operads, of cyclic operads, and by restriction. 

Notice also that the inclusion $\Delta\to\mathcal{F}(as)$ in \cite[Lemma 1.1]{pirashvili25hochschild} is equivalent to the composition $\Tw_{pOp}(uAs_{pl})\to\Tw_{cOp}(uAs_c)\to\Tw_{cOp}(uAs_c)^{op}\to\mathcal{U}_{cOp}(uAs_c)^{op}$, i.e.\@ the inclusion $\Delta\to\mathcal{F}(as)$ implicitly factors through the functor $\Lambda\to\Lambda^{op}$.

\paragraph{Cobordisms.} A similar diagram exists for the modular envelopes of cyclic operads $uCom_c$ and $uAs_c$. Modular envelope is the extension functor induced by the inclusion $cOp\to mOp$ or by the inclusion $cOp\to mOp_{(g,n)}$.

\begin{proposition}
Let $mOp$-algebra $uCom_m$ be the modular envelope of the terminal cyclic operad $uCom_c$. Let $mOp_{(g,n)}$-algebra $uCom'_m$ be the terminal $mOp_{(g,n)}$-algebra, which is also the modular envelope of $uCom_c$. The category $\mathcal{U}_{mOp}(uCom_m)$ is a subcategory of the category $Cob$ of orientable 2d-cobordisms. The categories $\Tw_{mOp}(uCom_m)$, $\Tw_{mOp_{(g,n)}}(uCom'_m)$, and $\mathcal{U}_{mOp_{(g,n)}}(uCom'_m)$ are all isomorphic to the full subcategory of $\emptyset/Cob$ on connected cobordisms from the empty set. Let $TmOp$ be the terminal $mOp$-algebra. The category $\Tw_{mOp}(TmOp)=\mathcal{U}_{mOp}(TmOp)$ is the subcategory of $\mathcal{U}_{mOp}(uCom_m)$ of cobordisms of total genus $0$.
\end{proposition}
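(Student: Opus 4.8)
The plan is to identify the underlying collection of the modular envelope $uCom_m$ (and of $uCom'_m$) with connected oriented surfaces with boundary, and then to read off all the categories in question from Corollary~\ref{crl:twsop} and from the lemma that $\Tw_Q(A)\cong\mathcal{U}_Q(A)$ when $A$ is a terminal $Q$-algebra. Since $uCom_c$ is the terminal cyclic operad, its value at every arity is a point, so the operadic left Kan extension computes $uCom_m(n)$ as $\pi_0$ of the groupoid of connected graphs with $(n+1)$ legs, i.e.\@ as the set of such graphs modulo contraction of inner edges; every such graph is determined up to this equivalence by its first Betti number, so $uCom_m(n)\cong\mathbb N$, the element of genus $g$ being represented by the connected oriented surface of genus $g$ with $(n+1)$ ordered boundary circles. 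Under this identification the $mOp$-algebra structure is gluing of surfaces along boundary circles, and the composite-genus formula $\sum_v g(v)+b_1$ of a composition graph is precisely the topological genus of the glued surface. The same computation applies over $mOp_{(g,n)}$: the groupoid of genus-$g$ graphs with $(n+1)$ legs is connected, so the terminal $mOp_{(g,n)}$-algebra $uCom'_m$ has the same underlying collection, now graded by the pair $(g,n)$, and it is the modular envelope of $uCom_c$. For $TmOp$ one simply forgets the genus.

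By Corollary~\ref{crl:twsop} a morphism $n_1\to n_2$ of $\mathcal{U}_{mOp}(uCom_m)$ is a reduced modular graph whose source vertex has degree $n_1+1$, whose $(n_2+1)$ leaves are the target, and whose non-source vertices are marked by elements of $uCom_m$, that is, by genera. Declaring the half-edges at the source vertex to be the incoming circles, the leaves to be the outgoing circles, each non-source vertex to be its marked surface, and each inner edge to be a gluing, we obtain a cobordism from $(n_1+1)$ to $(n_2+1)$ circles; because the graph is connected and reduced, every component of this cobordism meets the incoming boundary, so it lies in $Cob$, and conversely the graph-with-genera is recovered from the cobordism. Composition in $\mathcal{U}_{mOp}(uCom_m)$ is grafting the non-source vertices of the second morphism onto the leaves of the first and contracting the new inner edges (Corollary~\ref{crl:twsop}), which is exactly composition of cobordisms, and identities go to identities; hence $\mathcal{U}_{mOp}(uCom_m)$ is a subcategory of $Cob$. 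Running the same description over $mOp_{(g,n)}$, where now the source vertex also carries a genus, identifies $\mathcal{U}_{mOp_{(g,n)}}(uCom'_m)$ with the category whose objects are connected surfaces $(g,n)$ and whose morphisms $S\to S'$ are cobordisms $W$ with $S\cup W\cong S'$; and $\Tw_{mOp_{(g,n)}}(uCom'_m)=\mathcal{U}_{mOp_{(g,n)}}(uCom'_m)$ because $uCom'_m$ is terminal.

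The objects of $\Tw_{mOp}(uCom_m)$ are the elements of $uCom_m$, i.e.\@ the connected oriented surfaces with boundary, which are precisely the objects of the full subcategory of $\emptyset/Cob$ on connected cobordisms from $\emptyset$; by Corollary~\ref{crl:twsop} a morphism out of a surface $S$ is a reduced modular graph with source marked by $S$ and non-source vertices marked by surfaces, whose evaluation --- glue these surfaces onto $S$ along the inner edges --- is the target $S'$. This datum is exactly a cobordism $W\colon\partial S\to\partial S'$ with $S\cup W\cong S'$, i.e.\@ a morphism $S\to S'$ in $\emptyset/Cob$, the commuting triangle forcing $W$ to have no closed or purely outgoing component, which matches the reducedness condition; functoriality is checked as in the previous paragraph. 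Together these give the asserted isomorphisms for $\Tw_{mOp}(uCom_m)$, $\Tw_{mOp_{(g,n)}}(uCom'_m)$ and $\mathcal{U}_{mOp_{(g,n)}}(uCom'_m)$. Finally $TmOp$ is terminal, so $\Tw_{mOp}(TmOp)=\mathcal{U}_{mOp}(TmOp)$, whose morphisms are reduced modular graphs with no genus data; marking every non-source vertex with genus $0$ gives a faithful, identity-on-objects functor into $\mathcal{U}_{mOp}(uCom_m)$ whose image is exactly the cobordisms all of whose components have genus $0$, i.e.\@ the cobordisms of total genus $0$.

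The conceptual input is the first step, the (essentially folklore) identification of the modular envelope of the commutative cyclic operad with surfaces; the work, and the main obstacle, is the combinatorial matching in the last two paragraphs. One must check that the rigid order conventions on reduced modular graphs --- the orders on vertices, on leaves, and on the half-edges at each vertex --- correspond bijectively to diffeomorphism classes of cobordisms with ordered boundary circles, and that every degenerate piece is sent to the expected cobordism: the exceptional edge $\mu_0$ and a leaf at the source vertex to identity cylinders, an arity-$0$ vertex to a cap, and the nodeless loop $\bigcirc$ to the closed torus. The point that makes the two descriptions agree on the nose is that the image of $\mu_0$ in $uCom_m$ is the identity cylinder, so that attaching a $\mu_0$-marked vertex changes neither the morphism in $\Tw$ (it is absorbed via the relation for $0$-ary operations that underlies Lemma~\ref{lem:candec}) nor the underlying cobordism; this is what keeps the ``connected cobordisms from $\emptyset$'' description from over- or under-counting morphisms relative to the reduced-graph description.
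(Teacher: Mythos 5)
Your treatment of the main claims follows the paper's own route: identify $uCom_m(n)\cong\mathbb{N}$ via the genus of an operadic graph, then use canonical decomposability (Corollary~\ref{crl:twsop}) to read each reduced modular graph with genus-marked non-source vertices as the connected-component decomposition of a cobordism, with objects of $\emptyset/Cob$ given by a choice of arity and genus. The paper compresses the functoriality checks into ``the rest of the proof is trivial,'' so your extra detail there is welcome and consistent with it.

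There is, however, a genuine error in your last paragraph, on $TmOp$. The assignment ``mark every non-source vertex with genus $0$'' is not a functor $\mathcal{U}_{mOp}(TmOp)\to\mathcal{U}_{mOp}(uCom_m)$, and the class of cobordisms all of whose components have genus $0$ is not closed under composition in $Cob$, so it cannot be the image of any functor. Concretely, let $h\colon 0\to 1$ be the pair of pants (one non-source vertex of degree $3$, one edge to the source) and $h'\colon 1\to -1$ be the annulus regarded as a cobordism from two circles to none (one non-source vertex of degree $2$ joined to the source by both half-edges). Both have all components of genus $0$. Composing their images in $\mathcal{U}_{mOp}(uCom_m)$ contracts the subgraph spanned by the two non-source vertices, which has first Betti number $1$; the $uCom_m$-structure adds this to the genus marking, so $h'\circ h$ is the genus-one surface with one boundary circle --- exactly as the topology demands, since the two surfaces are glued along two circles. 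In $\mathcal{U}_{mOp}(TmOp)$ the contraction lands in a singleton and this Betti number is lost, so your proposed functor sends $h'\circ h$ to the genus-zero disk while the composite of the images has genus one. The upshot is that ``total genus'' in the statement cannot mean the sum of the genera of the components (your reading); it must be a quantity additive under composition, namely the graph genus $\sum_v g(v)+b_1(G)$ of the representing reduced graph, and the comparison of $\mathcal{U}_{mOp}(TmOp)$ with the corresponding subcategory has to be argued directly at the level of reduced graphs rather than through the marking-by-zero map.
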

\begin{proof}
The $mOp$-envelope of $uCom_c$ is the $mOp$-algebra freely generated by the elements of $uCom_c$ and factored by relations in $uCom_c$, i.e.\@ the elements of $uCom_m$ are equivalence classes of operadic graphs, with equivalence generated by contractions of subtrees and by permutations of edges adjacent to any single vertex. The map that computes the genus of an operadic graph induces isomorphisms between the sets $uCom_m(n)$ and $\mathbb{N}$.

Let $Cob$ be the category whose objects are finite sequences of circles and morphisms are orientable cobordisms between disjoint unions of circles. 
 The arity of objects of $\mathcal{U}_{mOp}(uCom_m)$ encodes the number of circles in objects of $Cob$, and the $j$-th non-source vertex of a morphism encodes the $j$-th connected component of the cobordism: the number of boundary components and the indices of the target and the source circles, and the genus of the surface attached to the $j$-th circle.  The subcategory $\mathcal{U}_{mOp}(uCom_m)$ of $Cob$ contains all morphisms of $Cob$ except the non-trivial morphisms from $\emptyset$.
In the remaining cases an object in $\emptyset/Cob$ corresponds to the choice of arity and genus. The rest of the proof is trivial.
\end{proof}

The operation $\beta$, the circle with two vertices, corresponds to cobordisms from (surfaces with) two circles to the empty set. This suggests that while the operad $mOp_{nc}$ (or $mOp_{st}$) is more common, the operads $mOp$ and $mOp_{(g,n)}$ also deserve attention.

The twisted arrow categories of modular envelopes of $uAs_c$ are related to ribbon graphs. It seems that for these categories there is no decomposition analogous to that of \cite{slominska2003decompositions,zimmermann2004changement}. The categories $\Tw_{mOp}(uCom_m)$ and $\Tw_{mOp}(uAs_m)$ are generalized Reedy, with the surface of genus $g$ with $n$ circles having degree $(2g+n)$. The category $Cob$ is not generalized Reedy. 

\subsection{Structure of twisted arrow categories of operads}
\label{sec:structure}
We describe the structure on twisted arrow categories and enveloping categories of symmetric and generalized operads that is used in the rest of this work. 

From now on we assume that operads are endowed with a nice grading, see Definition~\ref{def:graded-operad}. We start with a few lemmas. 

\begin{lemma}
\label{lem:splitepi}
Let $P$ be a planar, symmetric, cyclic operad or an $mOp$-algebra, and morphisms $e$ and $s$ in $\Tw(P)$ or in $\mathcal{U}(P)$ be such that $e\circ s = id$. Then the graphs of $e$ and $s$ are trees, non-source vertices of $s$ have arity at least $1$ (and thus grading at least $0$), and non-source vertices of $e$ have arity at most $1$ and grading at most $0$.
\end{lemma}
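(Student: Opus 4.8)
The plan is to unwind the hypothesis $e\circ s=id$ through the explicit model of composition in $\Tw(P)$ (and likewise in $\mathcal{U}(P)$) furnished by Corollary~\ref{crl:twsop}, and to compare the outcome against the canonical representative of $id_p$. Represent $s:p\to r$ and $e:r\to p$ by their canonical reduced trees (resp.\ reduced modular graphs, when $P$ is an $mOp$-algebra), with source vertices $p$ and $r$. By Corollary~\ref{crl:twsop}, $e\circ s$ is computed by grafting the non-source vertices of $e$ onto the corresponding leaves of the representative of $s$ and then contracting the edges not adjacent to $p$, and the result must be the canonical representative of $id_p$: the reduced representative with source vertex $p$, all of whose non-source vertices are the arity-one identity $id$. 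In particular the target is a tree, every non-source vertex of it has arity $1$, and after contraction each of its branches is an identity. All three assertions will be extracted by matching the composite against this rigid target.

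For the tree assertion, planar, symmetric and cyclic operads already have trees as canonical representatives (Lemma~\ref{lem:candec}), so the content is only the $mOp$-algebra case. Here I would argue with the first Betti number: grafting the pendant non-source vertices of $e$ onto distinct leaves of the connected graph of $s$ creates no new cycle, and replacing the source vertex $r$ inside the graph of $e$ by the connected graph of $s$ destroys no cycle of $e$ (a multiple edge, or loop, at $r$ becomes a cycle running through the graph of $s$). Hence $b_1$ of the composite equals $b_1(s)+b_1(e)$, and since edge contraction preserves $b_1$ the reduced composite has the same $b_1$; as the representative of $id_p$ is a tree, this forces $b_1(s)=b_1(e)=0$.

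With all three graphs now trees, decompose the reduced composite into branches, one per half-edge of $p$. Before contraction, a branch consists of exactly one non-source vertex of $s$ — the one adjacent to $p$ — carrying on its leaves some of the non-source vertices of $e$, plus, in the rooted cases, the root vertex $e_0$ of $e$ sitting below the root vertex $s_0$ of $s$ in the root branch; cf.\ Figure~\ref{fig:3leveltrees}. Each branch contracts to $id$. A branch attached at an input half-edge of $p$ therefore carries exactly one leaf, and this leaf count equals the sum of the arities of the $e$-vertices on its $s$-vertex, forcing that sum to be $1$; so each such $e$-vertex has arity $0$ or $1$, and since the contracted branch has arity $1$ and not $0$, the $s$-vertex has arity $\ge 1$. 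The root branch, when present, carries no leaves, which forces $e_0$ to have arity $1$, every $e$-vertex on $s_0$ to have arity $0$, and $s_0$ to have arity $\ge 1$ since it carries the edge to $p$. This establishes both arity statements.

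Finally, niceness (Definition~\ref{def:graded-operad}) gives $\psi\ge -1$ everywhere with grading $-1$ forcing arity $0$, so arity $\ge 1$ implies grading $\ge 0$, which is the grading claim for the non-source vertices of $s$. For $e$, applying additivity of $\psi$ to each branch yields $0=\psi(id)$ as the sum of the grading of the $s$-vertex and the gradings of the $e$-vertices in that branch; collapsing the arity-$0$ $e$-vertices into the $s$-vertex produces an arity-$1$ operation, which therefore has grading $\ge 0$ (not being of arity $0$, it is not of grading $-1$), and the unique remaining arity-$1$ $e$-vertex, being a one-sided inverse of it, has grading $\le 0$; the same argument applies to $e_0$. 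The step I expect to be the real obstacle is the remaining one: the branch relations alone do not bound the grading of the arity-$0$ non-source vertices of $e$, so one must verify separately that in each operad within the statement's scope an arity-$0$ operation that can occur as a vertex of a reduced representative has grading $\le 0$ (these are units of binary operations or low-degree corollas, of grading $-1$ or $0$). Everything else is a formal manipulation of the model in Corollary~\ref{crl:twsop}.
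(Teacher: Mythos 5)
Your argument follows the paper's proof in all essentials: establish the tree claim first (the paper compresses your Betti-number computation into the one-line observation that composition with a non-tree produces a non-tree, which rests on the same additivity $b_1(p\circ_i q)=b_1(p)+b_1(q)$), then read off the arity constraints branch by branch from the requirement that each branch of $e\circ s$ contract to an identity operation, and finally deduce the gradings from additivity of $\psi$ and the fact that arity $\geq 1$ rules out grading $-1$. All of that is sound and matches the paper.

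The one step you leave open --- the grading of the arity-$0$ non-source vertices of $e$ --- is not actually an obstacle, and your proposed fix (a separate verification ``in each operad within the statement's scope'') is the wrong move: the lemma is stated for an \emph{arbitrary} planar, symmetric, cyclic operad or $mOp$-algebra with a nice grading, not for a finite list of graph-substitution operads, so a case-by-case check cannot close it. The resolution is already in your parenthetical: each arity-$0$ vertex $u_j$ of $e$ sitting in a branch is a unit of a binary operation (compose the $s$-vertex of that branch with the arity-$1$ $e$-vertex and with all the \emph{other} arity-$0$ vertices; the result is a binary operation $\tilde q$ with $\tilde q\circ u_j=id$), and Definition~\ref{def:graded-operad} \emph{assumes} as part of niceness that units of binary operations have grading $(-1)$. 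This is exactly how the paper concludes, giving $\psi(u_j)=-1\leq 0$ with no further hypotheses. With that one-line substitution your proof is complete and is essentially the paper's.
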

\begin{proof}
In the case of $mOp$, a composition with a non-tree produces a non-tree, thus the graphs of $e$ and $s$ are trees.
  In general, if $s$ would have non-source vertices of arity $0$, then $e\circ s$ would have the same non-source vertices of arity $0$. If $e$ would have non-source vertices of arity greater than $1$, then $e\circ s$ would have non-source vertices of arity greater than $1$. Finally, let $p$ be any non-source vertex of $s$, of arity $n$. Then composition with $e$ grafts $(n-1)$ operations $q_j$ of arity $0$ to leaves of $p$ and one operation $q'$ of arity $1$ to a leaf or to the root of $p$. Since the result of this composition is an identity operation, the operations $q_j$ are units of binary operations, and, by the niceness of grading, have grading $(-1)$. Let $p'$ be the composition of $p$ with all the operations $q_j$. The composition of $p'$ with $q'$ is an identity operation, which has grading $0$, and since both $p'$ and $q'$ have arity $1$, and thus grading not less than $0$, they have grading $0$.
\end{proof}

\begin{corollary}
\label{crl:isos-in-tw} 
For any planar, symmetric, cyclic operad or $mOp$-algebra $P$ the isomorphisms in $\Tw(P)$ and in $\mathcal{U}(P)$ are precisely the morphisms $f$ such that all non-source vertices of $f$ have arity $1$, grading $0$ and are marked by isomorphisms in the category $P(1)$. Isomorphic objects of $\Tw(P)$ have the same arity and grading, and isomorphic objects of $\mathcal{U}(P)$ have the same arity.
\end{corollary}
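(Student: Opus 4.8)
The plan is to derive everything from Lemma~\ref{lem:splitepi} applied to the two composites of an isomorphism $f$ and its inverse $g$. Since $g\circ f = id$ and $f\circ g = id$, the lemma applies both with $(e,s)=(g,f)$ and with $(e,s)=(f,g)$. The first instance gives that the underlying graphs of $f$ and $g$ are trees and that every non-source vertex of $f$ has arity at least $1$, while the second gives that every non-source vertex of $f$ has arity at most $1$ and grading at most $0$; hence each non-source vertex of $f$ has arity exactly $1$. By niceness of the grading an arity-$1$ operation has grading $\geq 0$ (operations of grading $-1$ have arity $0$), so together with the bound $\leq 0$ its grading is $0$. The same conclusions hold for $g$. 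This settles the ``only if'' direction apart from the claim about the markings, and the case of $\mathcal U(P)$ is handled by the same argument with the source vertices left unmarked.

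Next I would record the structural consequence, read off from Corollary~\ref{crl:twsop}, that a morphism of $\Tw(P)$ all of whose non-source vertices have arity $1$ has underlying graph obtained from the source vertex $p$ by capping each external edge of $p$ (the root and each input leg in the rooted cases; each half-edge leaving $p$ in the cyclic and modular cases) with a single arity-$1$ operation of $P$. Hence the target is an operation with the same arity as $p$, and, since the grading is additive along the composition that defines the target and the capping operations have grading $0$, the target also has the same grading as $p$; this is the final assertion of the corollary (for $\mathcal U(P)$ only the arity statement is meaningful, as colours carry no grading). Moreover $id_p$ is represented by the reduced tree all of whose caps are identities of $P(1)$, as one sees for instance from the identification of $\Tw_{pOp}(uAs_{pl})$ with $\Delta$ in Proposition~\ref{pr:asdelta}, where $id_{[n]}$ corresponds to the reduced planar tree with all $m_j$ equal to $1$.

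It then remains to compute the composites $g\circ f$ and $f\circ g$ edge by edge. As $f$ and $g$ have the same arity, their external edges correspond canonically; under this correspondence the description of composition in Corollary~\ref{crl:twsop} replaces, at each external edge $e$, the cap $g_e$ of $f$ by the $P(1)$-composite of $g_e$ with the matching cap $h_e$ of $g$. Thus $g\circ f=id_p$ forces $h_e$ to be a one-sided inverse of $g_e$ in $P(1)$ for every $e$, and $f\circ g=id_r$ forces the other one-sided inverse, so each $g_e$ (and by symmetry each cap of $g$) is an isomorphism in $P(1)$. Conversely, if every cap $g_e$ of $f$ is an isomorphism in $P(1)$, then capping the external edges of the target $r$ by the inverses $g_e^{-1}$ defines a morphism $r\to p$ whose two composites with $f$ are, by the same edge-by-edge computation, identities; hence $f$ is an isomorphism. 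I expect the only delicate point to be this bookkeeping: one must check exactly how the contraction in $\Tw(P)$ orders the $P(1)$-composition at each edge --- in the rooted cases the root edge and the input legs are composed in opposite orders, and in the cyclic and modular cases an extra twist by $\tau_1$ can intervene --- but in every case the two composite identities together express precisely that the relevant arity-$1$ operations are mutually inverse, which is all that is required.
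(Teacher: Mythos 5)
Your proposal is correct and follows essentially the same route as the paper: Lemma~\ref{lem:splitepi} applied to both composites of $f$ with its inverse yields arity $1$ and grading $0$ for the non-source vertices, and the converse is obtained by capping with the inverse operations in $P(1)$. The only cosmetic difference is that the paper sidesteps your edge-by-edge bookkeeping for the converse by constructing just a \emph{left} inverse $g$, observing that $g$ again satisfies the hypotheses and hence itself has a left inverse, which forces $f$ to be a two-sided inverse of $g$.
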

\begin{proof}
A morphism $f$ that satisfies the properties above has a left inverse $g$: the non-source vertices of $g$ are the inverses of the non-source vertices of $f$, and the permutation on leaves of $g$ is the inverse of the permutation on leaves of $f$. The morphism $g$ itself satisfies the properties above, thus $g$ has a left inverse, and $f$ is an isomorphism.
In the opposite direction, the preceding lemma implies that non-source vertices of isomorphisms have arity $1$ and grading $0$. If $f$ is an isomorphism, the non-source vertices of $f^{-1}$ are the inverses in $P(1)$ of the non-source vertices of $f$.
\end{proof}

\begin{definition}
  For a planar, symmetric or cyclic operad $P$ a permutation isomorphism in $\Tw(P)$ or in $\mathcal{U}(P)$ is a morphism with all non-source vertices marked by identity operations.
\end{definition}

\begin{proposition}
A natural equivalence of operads induces natural equivalence between the corresponding twisted arrow categories, and also induces natural equivalence between the corresponding enveloping categories.
\end{proposition}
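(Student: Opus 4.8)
The plan is to promote the functoriality of $\Tw$ and $\mathcal U$ in morphisms of operads to functoriality in natural transformations, after which the statement follows formally, since a (pseudo)functor sends equivalences to equivalences. Recall that a natural equivalence $\Phi\colon P\to Q$ of operads comes with a pseudo-inverse, i.e. a morphism of operads $\Psi\colon Q\to P$ and natural isomorphisms of operad morphisms $\alpha\colon\Psi\Phi\cong\mathrm{id}_P$ and $\beta\colon\Phi\Psi\cong\mathrm{id}_Q$ (if one instead defines natural equivalences as operad morphisms that are bijective on operations of each fixed colour type and essentially surjective on colours, such a pseudo-inverse is produced exactly as a quasi-inverse of an equivalence of categories). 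So it suffices to establish the two points below and then apply them to $\alpha$ and $\beta$.

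\emph{Functoriality.} By the description in Corollary~\ref{crl:twsop}, a morphism of operads $\Phi\colon P\to Q$ induces functors $\Tw(\Phi)\colon\Tw(P)\to\Tw(Q)$ and $\mathcal U(\Phi)\colon\mathcal U(P)\to\mathcal U(Q)$: an object (an operation of $P$) is sent to its $\Phi$-image, and the canonical representative of a morphism --- a reduced tree or graph marked by operations of $P$ --- is sent to the tree or graph of the same underlying shape with every marking replaced by its $\Phi$-image (in the $\mathcal U$ case the colour of the unmarked source vertex is transported along the colour map of $\Phi$). The underlying shape is unchanged, so the result is again a canonical representative, and since composition in $\Tw(P)$ is grafting of non-source vertices followed by contraction via the operadic composition of $P$, which $\Phi$ commutes with, $\Tw(\Phi)$ and $\mathcal U(\Phi)$ are functors, strictly compatible with composition and identities of operad morphisms; this is also a formal consequence of the functoriality of $\TwOp$ and $\mathcal UOp$ proved above, by passing to underlying categories.

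\emph{Action on natural isomorphisms.} Let $\eta\colon F\cong G$ be a natural isomorphism of operad morphisms $P\to Q$, with components $\eta_c\in Q(Fc;Gc)$ satisfying $\eta_{c_0}\cdot F(p)=G(p)\cdot(\eta_{c_1},\dots,\eta_{c_n})$ for $p\in P(c_1,\dots,c_n;c_0)$. Define $\Tw(\eta)_p\colon F(p)\to G(p)$ to be the morphism of $\Tw(Q)$ whose canonical representative is the reduced tree with source vertex $F(p)$, with $i$-th upper vertex marked by $\eta_{c_i}^{-1}$, and with root vertex marked by $\eta_{c_0}$. Its target is the evaluation $\eta_{c_0}\cdot F(p)\cdot(\eta_{c_1}^{-1},\dots,\eta_{c_n}^{-1})$, which equals $G(p)$ by the naturality identity; it is an isomorphism by Corollary~\ref{crl:isos-in-tw}, since its non-source vertices are isomorphisms in $Q(1)$ and such isomorphisms have grading $0$ by niceness of the grading (an arity-$1$ operation has grading at least $0$ and $\psi(\eta_c)+\psi(\eta_c^{-1})=0$). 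Naturality of $(\Tw(\eta)_p)_p$ --- the equality $\Tw(\eta)_{p'}\circ\Tw(F)(\phi)=\Tw(G)(\phi)\circ\Tw(\eta)_p$ for $\phi\colon p\to p'$ in $\Tw(P)$ --- is checked by a diagram chase, sliding the arity-$1$ operations $\eta_c$ across the markings of the reduced tree representing $\phi$ through the grafting and contraction steps, using the naturality identity of $\eta$ at each non-source vertex. The construction of $\mathcal U(\eta)$ is identical with unmarked source vertices.

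Combining the two points, applying the second to $\alpha$ and $\beta$ yields $\Tw(\Psi)\circ\Tw(\Phi)=\Tw(\Psi\Phi)\cong\Tw(\mathrm{id}_P)=\mathrm{id}_{\Tw(P)}$ and $\Tw(\Phi)\circ\Tw(\Psi)\cong\mathrm{id}_{\Tw(Q)}$, so $\Tw(\Phi)$ is an equivalence of categories, and the same argument with $\mathcal U$ in place of $\Tw$ gives the assertion for enveloping categories. The main obstacle is the verification that $\Tw(\eta)$ is natural: it is a bookkeeping-heavy diagram chase that requires tracking colours, leaf orders and indices while applying the operadic naturality of $\eta$ at each non-source vertex. (Alternatively one can avoid constructing $\Psi$ and verify directly that $\Tw(\Phi)$ is fully faithful and essentially surjective, using that every half-edge colour occurring in a morphism of $\Tw(Q)$ between objects in the image of $\Phi$ lies in the image of the colour map of $\Phi$, so that all its non-source markings are $\Phi$ of unique operations of $P$.)
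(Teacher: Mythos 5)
Your proof is correct, but it takes a genuinely different route from the paper's. The paper simply unpacks its definition of natural equivalence (essential surjectivity of the underlying functor $f(1)\colon Q(1)\to Q'(1)$ together with bijectivity of $f$ on each set $Q(c_1,\dots,c_n;c_0)$) and checks the two halves of being an equivalence directly: every object of $\Tw(Q')$ is isomorphic, via conjugation by isomorphisms in $Q'(1)$, to one whose colours lie in the image of $f$, and that object is then hit by bijectivity, giving essential surjectivity of $f_*$; and bijectivity on operations gives bijectivity on reduced trees with fixed edge colours, giving full faithfulness. This is essentially the alternative you relegate to your final parenthesis. Your main argument instead upgrades $\Tw$ and $\mathcal{U}$ to 2-functors: you build a pseudo-inverse $\Psi$ with invertible 2-cells $\alpha,\beta$, show that $\Tw$ carries natural isomorphisms of operad morphisms to natural isomorphisms of functors, and conclude formally. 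That buys more (2-functoriality of $\Tw$ is of independent interest, and the enveloping-category statement comes along for free), at the cost of the two verifications you compress: that the fully-faithful-plus-essentially-surjective definition really produces an operad morphism $\Psi$ with $\alpha,\beta$, and the naturality of $\Tw(\eta)$. Both do go through --- the naturality square for $\phi\colon p\to p'$ reduces, one non-source vertex at a time, to $F(q)\circ(\eta^{-1},\dots,\eta^{-1})=\eta^{-1}\circ G(q)$ for upper vertices and $\eta\circ F(q_0)\circ(\eta^{-1},\dots,\mathrm{id},\dots,\eta^{-1})=G(q_0)\circ_i\eta$ for the root vertex, each an instance of the naturality identity of $\eta$ --- so there is no gap, only unexpanded bookkeeping; the paper's argument is shorter precisely because it never needs a quasi-inverse.
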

\begin{proof}
A natural equivalence is a map $f:Q\to Q'$ of coloured operads such that the map of underlying categories $f(1):Q(1)\to Q'(1)$ is essentially surjective and the maps $f:Q(c_1,\dots,c_n;c_0)\to Q'(f(c_1),\dots,f(c_n);f(c_0))$ of sets are bijective for any choice of colours $c_j$. Essential surjectivity implies that any object $q$ of $\Tw(Q')$ or of $\mathcal{U}(Q')$ is isomorphic to an object $q'$ with all colours belonging to the image of $f$. Bijectivity implies that the object $q'$ is in the image of $\Tw(f)$, and thus implies essential surjectivity of $f_*$. Bijectivity on operations also implies bijectivity on reduced symmetric trees with fixed set of colours of inner and leaf edges, and thus implies that $f_*$ is fully faithful.
\end{proof}

\paragraph{Ternary strict factorization system.}

Twisted arrow categories and universal enveloping categories of operads are endowed with two compatible strict factorization systems. The first factorization system often generates generalized Reedy category structure, with precise criterion given by Theorem~\ref{thm:groupoid-reedy}. The second factorization system exists only for planar and symmetric operads, is fundamental to Segal conditions, and should be seen as a refinement of $(Active, Inert)$ orthogonal factorization system.

\begin{definition} Let $P$ be a planar, symmetric or cyclic operad endowed with a nice grading $\psi$. The class of morphisms either in $\Tw(P)$ or in $\mathcal{U}(P)$ denoted by $R_{\psi=-1}$ consists of morphisms such that the permutation on leaves of their canonical representatives is trivial, and the non-source vertices are marked either by identity operations or by operations of grading $(-1)$. The class of morphisms  either in $\Tw(P)$ or in $\mathcal{U}(P)$ denoted by $R_{\psi\geq 0}$ consists of morphisms such that all non-source vertices of their canonical representatives have grading at least $0$.
\end{definition}

\begin{proposition}
For any planar, symmetric or cyclic operad $P$ the pair of subcategories $(R_{\psi=-1},R_{\psi\geq 0})$ of $\Tw(P)$ or $\mathcal{U}(P)$ is a strict factorization system.
\end{proposition}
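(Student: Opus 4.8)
The plan is to exhibit, for every morphism $f$ of $\Tw(P)$ (resp.\@ of $\mathcal{U}(P)$), a factorization $f = r\circ\ell$ with $\ell\in R_{\psi=-1}$ and $r\in R_{\psi\geq 0}$, to show it is unique, and to note that $R_{\psi=-1}$ and $R_{\psi\geq 0}$ are wide subcategories; by the standard description of strict factorization systems (Appendix~\ref{sec:appendix-cat-th}) this is exactly the required data. Throughout I would use the description of morphisms from Corollary~\ref{crl:twsop}: a morphism from an operation $p$ is a reduced planar/symmetric/cyclic tree whose source vertex carries $p$ (unmarked in the enveloping case) and whose non-source vertices carry operations of $P$, together with a permutation of the leaves; composition is grafting the non-source vertices of the second morphism onto the leaves of the first and then contracting the edges away from the source vertex via composition in $P$. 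Two structural facts drive everything: since $\psi$ is a grading, the operation produced at a vertex after contraction has grading equal to the sum of the gradings of the contracted vertices; and, by niceness of $\psi$ (Definition~\ref{def:graded-operad}), an operation of grading $-1$ is nullary — in particular, in the $\Tw$ and $\mathcal{U}$ cases the root non-source vertex, which has the source grafted into one of its inputs, has arity $\geq 1$ and hence grading $\geq 0$; the cyclic case has no root vertex and this remark is vacuous there.

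I would first check that $R_{\psi=-1}$ and $R_{\psi\geq 0}$ are wide subcategories. The identity of an object $p$ is represented by the reduced tree with an arity-$1$ identity operation on each non-source vertex and trivial leaf-permutation, and since identity operations have grading $0$ this lies in both classes. For closure of $R_{\psi\geq 0}$ under composition, every non-source vertex of a composite is a contraction of non-source vertices of the two factors, all of grading $\geq 0$, so its grading is a sum of nonnegative integers. For closure of $R_{\psi=-1}$, a non-source vertex of the first factor carrying a grading-$(-1)$ operation is nullary, so nothing is grafted onto it and it survives verbatim; a non-source vertex carrying an arity-$1$ identity has a single leaf onto which exactly one non-source vertex of the second factor is grafted, and $id\circ_1 w = w$ is again of the allowed form; and a composite of trivial leaf-permutations is trivial, since the grafting respects the planar/cyclic orders. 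Consequently both classes are wide subcategories, and $R_{\psi=-1}\cap R_{\psi\geq 0}$, whose morphisms must have trivial permutation and all non-source vertices both of grading $\geq 0$ and identities-or-grading-$(-1)$ (hence arity-$1$ identities), consists precisely of the identities.

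For existence, take the canonical representative of $f$, with non-source vertices $q_l$ and leaf-permutation $\sigma$. Let $\ell$ be the reduced tree with the same source $p$, carrying $q_l$ at the positions where $\psi(q_l)=-1$ and an arity-$1$ identity at every other non-source position, with trivial permutation; then $\ell\in R_{\psi=-1}$, and its evaluation $p'$ is $p$ with exactly the leaves lying under the grading-$(-1)$ operations capped off. Because grading-$(-1)$ operations are nullary, the leaves of the tree of $f$ are exactly those lying over the vertices $q_l$ with $\psi(q_l)\geq 0$ and over the root vertex, which are exactly the leaves of the reduced trees of morphisms out of $p'$; so the reduced tree with source $p'$ carrying the $q_l$ with $\psi(q_l)\geq 0$ (together with the root vertex, whose grading is always $\geq 0$) and permutation $\sigma$ is a well-defined morphism $r\in R_{\psi\geq 0}$. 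Applying the composition rule of Corollary~\ref{crl:twsop}, the arity-$1$ identities of $\ell$ are absorbed into the corresponding vertices of $r$ while the nullary vertices of $\ell$ are untouched, and one recovers the canonical representative of $f$; thus $f=r\circ\ell$. For uniqueness, in any such factorization each non-source vertex of $\ell$ is either a nullary grading-$(-1)$ operation, over which nothing is grafted and which therefore occurs verbatim in $f$ with grading $-1$, or an arity-$1$ identity, over which exactly the corresponding vertex of $r$ is grafted and which therefore contributes to $f$ a vertex of grading $\geq 0$; hence from the canonical representative of $f$ one reads off position by position which vertices come from $\ell$ (grading $-1$) and which from $r$ (grading $\geq 0$, plus the root vertex), and the requirement that $\ell$ have trivial permutation forces $r$ to carry all of $\sigma$. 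This pins down $\ell$ and $r$, and the cyclic case is identical save for the absence of a root vertex.

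The argument is structurally short; the only delicate point is the bookkeeping in the previous paragraph — verifying that the "graft and contract" composition really does absorb the arity-$1$ identities of $\ell$, leave its nullary vertices fixed, and reproduce the \emph{reduced} canonical representative of $f$ with leaves and positions tracked correctly. That is where one must argue carefully about how grafting interacts with the orders on half-edges and with the leaf-permutation, so it is the step I would spell out in detail.
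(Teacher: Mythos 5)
Your proof is correct and follows essentially the same route as the paper's (much terser) argument: verify that both classes are closed under composition, then obtain existence and uniqueness of the factorization by sorting the non-source vertices of $f$ by grading, with the grading-$(-1)$ (hence nullary) vertices going to $\ell$ and the grading-$\geq 0$ vertices and the leaf permutation going to $r$. The paper states this in three sentences; you supply the bookkeeping it omits, including the correct observation that niceness of $\psi$ forces the lower vertex of any morphism in $R_{\psi=-1}$ to be an identity.
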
  
\begin{proof}
Both $R_{\psi=-1}$ and $R_{\psi\geq 0}$ are closed under composition. For a factorization $r\circ l$ of a morphism $f$ the non-trivial vertices of $l$ are the vertices of $f$ of grading $(-1)$, and the vertices of $r$ are the vertices of $f$ of grading at least $0$. These vertices are determined uniquely by $f$.
\end{proof}

\begin{definition}
Let $P$ be a planar or symmetric operad. The class $Upper$ of \emph{upper} morphisms in $\Tw(P)$ or in $\mathcal{U}(P)$ consists of morphisms such that the lowest vertex in corresponding canonical representatives is marked by an identity operation. The class $Lower$ of \emph{lower} morphisms in $\Tw(P)$ or in $\mathcal{U}(P)$ consists of morphisms such that the upper vertices of canonical representatives are marked by identity operations, and the indices of all leaves of \emph{upper} vertices, considered together, increase in planar order (in general not sequentially).
\end{definition}

\begin{definition}
  Let $P$ be a planar or symmetric operad. An input map of an operation $p$ in $P$ is a \emph{lower} morphism from an identity operation to $p$. The output map $out$ of $p$ is the  unique \emph{upper} morphism from an identity operation to $p$. An operation of arity $n$ has exactly $n$ input maps. The $j$-th input map $in_j$ is the input map such that the leaf above the source vertex is indexed by $j$.
\end{definition}

\begin{proposition}
For any planar or symmetric operad $P$ the pair $(Upper, Lower)$ is a strict factorization system.
\end{proposition}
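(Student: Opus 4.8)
The plan is to verify the two clauses in the definition of a strict factorization system recalled in Appendix~\ref{sec:appendix-cat-th}: that $Upper$ and $Lower$ are wide subcategories, and that every morphism of $\Tw(P)$ or $\mathcal{U}(P)$ factors uniquely as $\ell\circ u$ with $u\in Upper$ and $\ell\in Lower$. I would work throughout with the canonical representatives supplied by Corollary~\ref{crl:twsop}: a morphism out of an operation $p$ of arity $n$ is a reduced tree whose source vertex is marked by $p$, whose root (lowest) vertex is marked by some operation $q'$, whose $n$ upper vertices are marked by operations $q_1,\dots,q_n$, together with the induced order on the leaves; composition is grafting the non-source vertices of the second morphism onto the leaves of the first and contracting the non-source edges via composition in $P$.

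First I would record wideness. The identity of $p$ has trivial root vertex, trivial upper vertices and trivial leaf order, so it lies in both classes, and hence $Upper$ and $Lower$ are wide. For closure under composition, the grafting-and-contracting recipe shows that the root vertex of a composite is the composition in $P$ of the two root vertices; in particular $id_1\circ id_1=id_1$, so $Upper$ is closed under composition. Dually, grafting identity upper vertices and contracting changes nothing, so a composite of two morphisms with trivial upper vertices again has trivial upper vertices; the one point requiring a short check is that the clause ``the indices of the leaves of the upper vertices, taken together, increase in planar order'' is preserved under composition, which then gives closure of $Lower$.

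Next, existence and uniqueness of the factorization. Given $f$ out of $p$ with canonical data $(q';q_1,\dots,q_n;\sigma)$, I would take $u$ to be the upper morphism out of $p$ with trivial root vertex, the same upper vertices $q_1,\dots,q_n$, and the order on its leaves that $\sigma$ induces on the leaves coming from the $q_j$ (so $u$ simply composes $p$ with the $q_j$ on its inputs and permutes the resulting leaves as in $f$), and $\ell$ to be the lower morphism out of the target $r$ of $u$, with root vertex $q'$ attached below by its first input, trivial upper vertices, and the unique leaf order that reinserts the leaves of $q'$ as prescribed by $\sigma$ while keeping the remaining leaves in planar order. Running the grafting-and-contracting recipe shows $\ell\circ u=f$: the composite's root vertex is $q'$ since $u$'s root is trivial, its upper vertices are the $q_j$ since $\ell$'s uppers are trivial, and its induced leaf order is $\sigma$. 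For uniqueness, if $f=\ell'\circ u'$ with $u'\in Upper$ and $\ell'\in Lower$, the same recipe forces the root vertex of $\ell'$ to equal that of $f$, the upper vertices of $u'$ to equal those of $f$, and --- since $\ell'$ keeps its upper leaves in planar order --- the leaf order of $u'$ to equal the relative order in which $f$ lists the leaves of its upper vertices; hence $u'$ and $\ell'$ are determined by $f$. The case of the enveloping category $\mathcal{U}(P)$ is the same, the only change being that source vertices of morphisms carry no label.

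I expect the only genuine difficulty to lie in the leaf-order bookkeeping: isolating precisely which part of the permutation $\sigma$ of a general morphism is the ``free'' shuffle carried by the upper factor and which part is the constrained reshuffle carried by the lower factor, and confirming that the sortedness clause in the definition of $Lower$ is exactly what makes this split unique --- without that clause, $\sigma$ could in general be distributed between $u$ and $\ell$ in more than one way. In the planar case $P=pOp$ there is no permutation and the argument reduces to the classical statement that a monotone map $[n]\to[m]$ in $\Delta$ factors uniquely as a consecutive-interval inclusion following an endpoint-preserving map, so the symmetric case is this together with the shuffle bookkeeping.
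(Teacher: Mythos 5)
Your proof is correct and follows essentially the same route as the paper's: the paper's (much terser) argument likewise observes that $Upper$ and $Lower$ are closed under composition, that the upper vertices of $u$ must be the upper vertices of $f$ and the lowest vertex of $\ell$ must be the lowest vertex of $f$, and that the permutations on leaves of $u$ and $\ell$ are then uniquely determined by the permutation on leaves of $f$. Your additional bookkeeping — in particular isolating the sortedness clause of $Lower$ as what pins down the split of the permutation — is exactly the detail the paper leaves implicit.
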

\begin{proof}
Both $Upper$ and $Lower$ are closed under composition. For a factorization $l\circ u$ of a morphism $f$ the upper vertices of $u$ are the upper vertices of $f$, and the lowest vertex of $l$ is the lowest vertex of $f$. The permutations on leaves of morphisms $l$ and $r$ are determined uniquely by the permutation on leaves of $f$.
\end{proof}

\begin{definition}
  The $(Active, Inert)$ orthogonal factorization system on twisted arrow category or enveloping category of an operad is the orthogonal factorization system generated by the corresponding $(Upper, Lower)$ strict factorization system.
\end{definition}

\begin{remark}
If $F:\mathcal{A}\to\mathcal{B}$ is a discrete (op)fibration of categories and $(L,R)$ is a strict factorization system on $\mathcal{B}$, then $(F^{-1}(L),F^{-1}(R))$ is a strict factorization system on $\mathcal{A}$. The strict factorization systems on $\Tw(P)$ described above arise in this manner from the strict factorization systems on $\mathcal{U}(P)$ via the canonical discrete opfibration $\Tw(P)\to\mathcal{U}(P)$.
\end{remark}

\begin{proposition}
For any planar or symmetric operad $P$ the strict factorization systems $(R_{\psi=-1},R_{\psi\geq 0})$ and $(Upper,Lower)$ form a ternary strict factorization system. When grading $\psi$ is different from the arity grading, this ternary factorization system is a part of quaternary strict factorization system.
\end{proposition}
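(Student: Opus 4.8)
\section*{Proof proposal}

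The plan is to derive both statements from the general principle that a chain of nested strict factorization systems assembles into a single higher strict factorization system; this is the kind of fact I would record in Appendix~\ref{sec:appendix-cat-th}. Precisely: if $(\mathcal{E}_1,\mathcal{M}_1),\dots,(\mathcal{E}_k,\mathcal{M}_k)$ are strict factorization systems on a fixed category with $\mathcal{E}_1\subseteq\mathcal{E}_2\subseteq\dots\subseteq\mathcal{E}_k$ --- equivalently $\mathcal{M}_k\subseteq\dots\subseteq\mathcal{M}_1$ --- then, writing $\mathcal{C}_0=\mathcal{E}_1$, $\mathcal{C}_i=\mathcal{M}_i\cap\mathcal{E}_{i+1}$ for $0<i<k$, and $\mathcal{C}_k=\mathcal{M}_k$, every morphism factors uniquely as a composite of morphisms from $\mathcal{C}_0,\dots,\mathcal{C}_k$ in that order, and collapsing two adjacent classes $\mathcal{C}_{i-1}\cdot\mathcal{C}_i$ exactly forgets the $i$-th of the original systems. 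With this in hand the whole problem reduces to establishing the right inclusions among $R_{\psi=-1}$, $R_{\psi\geq 0}$, $Upper$, $Lower$, together with the analogous classes for the arity grading $\phi$.

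For the ternary statement I would prove $R_{\psi=-1}\subseteq Upper$, equivalently $Lower\subseteq R_{\psi\geq 0}$. The canonical representative of a morphism out of an operation $p$ is a reduced tree whose source vertex is $p$, whose unique lowest vertex $v_2$ receives the output of $p$ as one of its inputs --- so $v_2$ always has arity at least $1$ --- and whose other non-source vertices lie above $p$. If $f\in R_{\psi=-1}$, each non-source vertex of its canonical representative is an identity or has grading $-1$; by niceness of the grading a grading-$(-1)$ operation has arity $0$, so the arity-$\geq 1$ vertex $v_2$ is forced to be an identity, which is precisely the defining condition of $Upper$. Dually, if $f\in Lower$ its upper vertices are identities (arity $1$, grading $0$) and $v_2$, having arity $\geq 1$, has grading $\geq 0$ by niceness; hence all non-source vertices of $f$ have grading $\geq 0$, i.e.\ $f\in R_{\psi\geq 0}$. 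Feeding $(R_{\psi=-1},R_{\psi\geq 0})$ and $(Upper,Lower)$ into the general principle yields the ternary strict factorization system $R_{\psi=-1}\cdot(Upper\cap R_{\psi\geq 0})\cdot Lower$; the statement for $\mathcal{U}(P)$ follows either by the identical argument or by pulling the whole picture back along the canonical discrete opfibration $\Tw(P)\to\mathcal{U}(P)$, since preimages along a discrete opfibration preserve strict factorization systems and intersections of classes.

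For the quaternary statement, note that the arity grading $\phi$ is itself a nice grading, so the earlier proposition also produces a strict factorization system $(R_{\phi=-1},R_{\phi\geq 0})$. Every operation that may appear as a non-source vertex of an $R_{\psi=-1}$-morphism is an identity or of grading $-1$, hence of arity $0$, hence admissible as a non-source vertex of an $R_{\phi=-1}$-morphism; thus $R_{\psi=-1}\subseteq R_{\phi=-1}$, and $R_{\phi=-1}\subseteq Upper$ by the $v_2$-argument above. This exhibits the chain $(R_{\psi=-1},R_{\psi\geq 0})\preceq(R_{\phi=-1},R_{\phi\geq 0})\preceq(Upper,Lower)$ of three nested strict factorization systems, and the general principle delivers the quaternary strict factorization system $R_{\psi=-1}\cdot(R_{\phi=-1}\cap R_{\psi\geq 0})\cdot(R_{\phi\geq 0}\cap Upper)\cdot Lower$, whose two middle classes collapse to $Upper\cap R_{\psi\geq 0}$ --- the collapse that forgets $(R_{\phi=-1},R_{\phi\geq 0})$ --- so the ternary system is a part of it. When $\psi$ is the arity grading the systems $(R_{\psi=-1},R_{\psi\geq 0})$ and $(R_{\phi=-1},R_{\phi\geq 0})$ literally coincide and this refinement is vacuous, which is why the hypothesis $\psi\neq\phi$ appears.

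The only genuine work I anticipate is the bookkeeping in the inclusion proofs, and in particular isolating the two facts that carry everything: that the lowest vertex $v_2$ of a canonical representative always has positive arity, and that niceness of the grading is exactly what upgrades this to ``$v_2$ is an identity'' for the classes $R_{\psi=-1}$ and $R_{\phi=-1}$. A point worth a remark is that the quaternary factorization is strictly finer than the ternary one precisely when $R_{\phi=-1}\cap R_{\psi\geq 0}$ contains a non-identity, i.e.\ when $P$ has an arity-$0$ operation of nonnegative $\psi$-grading --- for the graph-substitution operads this is caused by the nodeless loop $\bigcirc$ and by loop-edges, which is the situation in which $\psi$ differs from $\phi$ for the operads considered here.
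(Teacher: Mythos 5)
Your proof is correct and follows essentially the same route as the paper: the paper's entire proof consists of the chains of inclusions $R_{\psi=-1}\subseteq R_{\phi=-1}\subseteq Upper$ and $Lower\subseteq R_{\phi\geq 0}\subseteq R_{\psi\geq 0}$, combined with the appendix material on ternary strict factorization systems, and you supply the same inclusions with the justifications (positive arity of the lowest vertex, niceness of the grading) spelled out.

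One caveat. You assert that $R_{\psi=-1}\subseteq Upper$ is \emph{equivalent} to $Lower\subseteq R_{\psi\geq 0}$. For orthogonal factorization systems $L_1\subseteq L_2$ does imply $R_2\subseteq R_1$, but for \emph{strict} factorization systems it does not --- Appendix~\ref{sec:appendix-cat-th} gives an explicit two-object counterexample, which is precisely why the definition of a ternary strict factorization system demands both inclusions separately. You do verify both sides for the ternary pair, so that part is complete; but for the quaternary chain you only check the $L$-side inclusions $R_{\psi=-1}\subseteq R_{\phi=-1}\subseteq Upper$ and leave $Lower\subseteq R_{\phi\geq 0}\subseteq R_{\psi\geq 0}$ to the unavailable ``equivalently.'' Those inclusions do hold by the same arguments you already gave (identity operations and the lowest vertex have arity $\geq 1$, hence arity grading $\geq 0$; and niceness forces any operation of arity $\geq 1$ to have $\psi$-grading $\geq 0$), but they must be stated explicitly for the nested-chain principle to apply.
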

\begin{proof}
Let $\phi$ be the arity grading.
We have $R_{\psi=-1}\subseteq R_{\phi=-1}\subseteq Upper$ and $Lower\subseteq R_{\phi\geq 0}\subseteq R_{\psi\geq 0}$. 
\end{proof}

For any ternary strict factorization system $((L_1,R_1),(L_2,R_2))$ it is natural to consider the class of morphisms $R_2\circ L_1$. These are the morphisms in which the middle term in their ternary factorization is an identity morphism.  

\begin{proposition}
For any planar or symmetric set-operad $P$ the class of morphisms $Lower\circ R_{\psi=-1}$ in $\Tw(P)$ or in $\mathcal{U}(P)$ is a category. 
\end{proposition}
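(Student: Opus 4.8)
The plan is to deduce the statement from a single commutation inclusion between the two classes, and then verify that inclusion using the explicit ``graft and contract'' description of composition from Corollary~\ref{crl:twsop}. First I would record the abstract principle: if $X$ and $Y$ are wide subcategories of a category and $Y\circ X\subseteq X\circ Y$ (meaning every composite $y\circ x$ with $x\in X$, $y\in Y$ lies in $X\circ Y$), then $X\circ Y$ is again a wide subcategory, since it contains every identity $\mathrm{id}=\mathrm{id}\circ\mathrm{id}$ and, for composable $x_1\circ y_1$ and $x_2\circ y_2$, one has $(x_2\circ y_2)\circ(x_1\circ y_1)=x_2\circ(y_2\circ x_1)\circ y_1\in X\circ(X\circ Y)\circ Y\subseteq X\circ Y$ by closure of $X$ and $Y$ under composition. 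Both $Lower$ and $R_{\psi=-1}$ are wide subcategories: each contains every identity (whose canonical representative has no non-source vertex, so the defining conditions hold vacuously) and each is closed under composition, as shown in Section~\ref{sec:structure}. So it suffices to prove
\[ R_{\psi=-1}\circ Lower\ \subseteq\ Lower\circ R_{\psi=-1}. \]
Moreover it is enough to do this in $\mathcal{U}(P)$: the classes $Lower$ and $R_{\psi=-1}$ in $\Tw(P)$ are the preimages of the corresponding classes under the discrete opfibration $\Tw(P)\to\mathcal{U}(P)$, and a discrete opfibration $F$ satisfies $F^{-1}(A)\circ F^{-1}(B)=F^{-1}(A\circ B)$ (lift the $B$-factor first using the opfibration property, then lift the $A$-factor; uniqueness of lifts gives the result), so the case of $\Tw(P)$ follows from that of $\mathcal{U}(P)$.

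To prove the inclusion I would take composable $r\in Lower$ and $l\in R_{\psi=-1}$, say $r\colon p\to t$ and $l\colon t\to s$, and analyze $l\circ r$ via Corollary~\ref{crl:twsop}. By definition the canonical representative of $r$ has all upper vertices marked by identity operations and root vertex marked by some operation $q_0$, necessarily of arity at least $1$ since it carries an input slot receiving the rest of the tree; the canonical representative of $l$ has trivial permutation on leaves and all non-source vertices marked by identities or by operations of grading $-1$, hence of arity $0$ (in particular, the root vertex of $l$ is an identity, as an arity-$0$ operation has no slot to be a root vertex). Grafting the non-source vertices of $l$ onto the leaves of $r$ and contracting all edges not adjacent to $p$: the leaves of $r$ above its upper identities correspond to the inputs of $p$, and onto each we graft the operation $l$ assigns to the corresponding input of $t$ — an identity or an arity-$0$ operation — with the intervening identity of $r$ contracting to leave exactly that operation; grafting the remaining identities and arity-$0$ operations of $l$ onto the leaves of $q_0$ and onto the root, then contracting, merges them into a single operation $\widetilde{q}_0$, namely $q_0$ with some of its free leaves capped, still of arity at least $1$. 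Thus $l\circ r$ is represented by a reduced tree with source vertex $p$, root vertex $\widetilde{q}_0$, and upper vertices that are identities or arity-$0$ operations. Separating off the arity-$0$ upper vertices as a morphism $l'$ (identity root vertex, trivial leaf permutation, non-source vertices the arity-$0$ caps together with identities) and keeping $\widetilde{q}_0$ together with the surviving upper identities as a morphism $r'$, one obtains $l\circ r=r'\circ l'$ with $l'\in R_{\psi=-1}$ and $r'$ having all upper vertices marked by identities; and $r'$ lies in $Lower$ provided its planar-order condition holds. For the latter: the leaves of the upper identities of $r$ occur in increasing planar order among the leaves of $t$ because $r\in Lower$, and passing to $s$ only deletes those leaves that $l$ caps and reindexes the rest, which preserves ``increasing in planar order''; hence $r'\in Lower$, establishing the inclusion and the proposition.

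I expect the main obstacle to be essentially bookkeeping rather than conceptual: one must pin down the canonical-representative conventions precisely enough — which input slot of $q_0$ the source sits in, and how the leaf ordering and the $Lower$ planar-order condition interact under grafting and under the capping of leaves by arity-$0$ operations — to be confident that the split $l\circ r=r'\circ l'$ genuinely yields a morphism $l'$ with \emph{trivial} leaf permutation and a morphism $r'$ satisfying the planar-order condition in the sense of its definition. The same argument should go through uniformly for planar and symmetric $P$ and, via the discrete-opfibration reduction above, for both $\Tw(P)$ and $\mathcal{U}(P)$; the $\mathcal{U}(P)$ case only simplifies matters, since there the source vertex carries no marking.
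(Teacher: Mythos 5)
Your proof is correct, but it is organized differently from the paper's. The paper's proof works by giving an explicit characterization of the composite class: it asserts that $Lower\circ R_{\psi=-1}$ consists precisely of the morphisms whose canonical representatives have upper leaves increasing in planar order, upper vertices of non-zero arity trivial, and upper vertices of arity $0$ of grading $(-1)$, and then closure under composition is read off from this description. You instead avoid characterizing the class and prove the distributive-law inclusion $R_{\psi=-1}\circ Lower\subseteq Lower\circ R_{\psi=-1}$, feeding it into the standard fact that $X\circ Y$ is a wide subcategory whenever $X$, $Y$ are and $Y\circ X\subseteq X\circ Y$; your reduction from $\Tw(P)$ to $\mathcal{U}(P)$ via the discrete opfibration is also sound (the paper's remark in Section~\ref{sec:structure} already notes that both classes on $\Tw(P)$ are preimages of those on $\mathcal{U}(P)$). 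The underlying combinatorics — the graft-and-contract analysis showing that arity-$0$ vertices of grading $(-1)$ can be pushed past the upper identities and absorbed, while the planar-order condition survives deletion and monotone reindexing of leaves — is the same work one would do to justify either the paper's characterization or your commutation inclusion, so the two arguments cost about the same. What each buys: the paper's version leaves you with a usable explicit description of the morphisms in $Lower\circ R_{\psi=-1}$, whereas yours is more modular and makes transparent exactly which structural fact (a single commutation of the two classes) is responsible for the composite being a category.
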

\begin{proof}
This follows easily from the fact that the class $Lower\circ R_{\psi=-1}$ consists precisely of the morphisms such that the indices of upper leaves increase in planar order, the upper vertices of non-zero arity are marked by trivial operations, and the upper vertices of arity $0$ are marked by operations of grading $(-1)$.
\end{proof}

\paragraph{Lower-Upper factorizations.} We have shown that $(Upper, Lower)$ is a strict factorization system. For twisted arrow categories \emph{of categories} the pair $(Lower, Upper)$ is also a strict factorization system. For operads this factorization is unique up to contractible choice. 

\begin{definition}
For any operad $P$ the subcategory $Lower'$ of $\Tw(P)$ or $\mathcal{U}(P)$ is the subcategory of lower morphisms with trivial permutation on leaves.
\end{definition}

\begin{definition}
  Let $P$ be a symmetric operad and $f$ be a morphism in $\Tw(P)$ or in $\mathcal{U}(P)$. The canonical $(Lower', Upper)$-factorization of $f$ is the factorization $u\circ l$, where the lowest vertex of $l$ is the lowest vertex of $f$, the permutation of leaves of $l$ is trivial, the upper vertices of $u$ are the upper vertices of $f$ followed by trivial vertices, and the permutation on leaves of $u$ is the same as the permutation on leaves of $f$. 
\end{definition}

\begin{lemma}
\label{lem:lower-upper-factor}
Let $P$ be an operad and $f$ be a morphism in $\Tw(P)$ or in $\mathcal{U}(P)$. The canonical factorization of $f$ is a terminal object in the category of $(Lower,  Upper)$-factorizations of $f$. The morphisms from $(Lower, Upper)$-factorizations of $f$ to the canonical factorization of $f$ are upper morphisms.
\end{lemma}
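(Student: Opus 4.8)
The plan is to argue entirely in terms of the tree (or graph) representatives of Corollary~\ref{crl:twsop} and the description of composition by grafting subtrees and contracting edges given there; throughout, the case of $\mathcal{U}(P)$ is identical to that of $\Tw(P)$, with the source vertex of every tree left unmarked. Fix $f\colon p\to r$ whose canonical representative has source vertex $p$, root vertex $q_0$, upper vertices $q_1,\dots,q_m$ (with $m$ the arity of $p$) and leaf permutation $\sigma_f$, and let $l\colon p\to x$, $u\colon x\to r$ be the two halves of its canonical $(Lower',Upper)$-factorization, so $x=q_0\circ_1 p$. First one checks that $(l,u)$ is a $(Lower,Upper)$-factorization of $f$: by construction $l\in Lower'\subseteq Lower$ and $u\in Upper$, and composing $u$ after $l$ replaces the trivial upper vertices of $l$ over the inputs of $p$ by $q_1,\dots,q_m$ (the connecting identity edges contracting), while below $p$ the trivial upper vertices $u$ puts over the inputs of $q_0$ other than the first contract away and the root identity of $u$ composes trivially with $q_0$; what remains is exactly the canonical representative of $f$, with leaf permutation $\sigma_f$.

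Next, given any $(Lower,Upper)$-factorization $f=u'\circ l'$ through an object $y$, I would produce a morphism of factorizations $(l',u')\to(l,u)$. As $l'$ is lower, its upper vertices are trivial; write $q'_0$ for its root vertex and, for each input $i$ of $q'_0$ other than the first, $\delta_i$ for the upper vertex that $u'$ places over the corresponding input of $y$. Computing $u'\circ l'$ and comparing it with $f$ shows that $q_0$ is $q'_0$ with each $\delta_i$ substituted into its $i$-th input, that the remaining upper vertices of $u'$ (those over the inputs of $y$ coming from $p$) are $q_1,\dots,q_m$, and that the leaf permutations compose in the evident way. Let $g\colon y\to x$ be the upper morphism whose upper vertices are identities over the inputs of $y$ coming from $p$ and equal to $\delta_i$ over the input of $y$ coming from input $i$ of $q'_0$, with the inputs of $x$ distributed over these upper vertices accordingly and with leaf permutation determined by the requirement $g\circ l'=l$. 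Direct tree computations then give $g\circ l'=l$ (the $\delta_i$ are substituted back into $q'_0$, rebuilding $q_0$, and the identity vertices contract) and $u\circ g=u'$ (the leaves of each $\delta_i$ lie over inputs of $q_0$ other than the first, so $u$ grafts only identity vertices onto them, whereas the identity vertices of $g$ over the $p$-part acquire $q_1,\dots,q_m$, and the permutations agree). So $g$ is a morphism of factorizations, and it is upper by construction.

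For terminality, suppose $g'\colon y\to x$ satisfies $g'\circ l'=l$ and $u\circ g'=u'$. Because $u$ is upper, the root vertex of $u\circ g'$ coincides with that of $g'$; since $u'$ is upper this is an identity, so $g'$ is upper --- which already proves the final sentence of the lemma. From $g'\circ l'=l$ one gets that the upper vertices of $g'$ over the $p$-part of $y$ are identities (else $g'\circ l'$ would carry non-identity vertices over the inputs of $p$), that the leaves they carry are exactly the $p$-part of the inputs of $x$, and hence that each upper vertex $\gamma_i$ of $g'$ over input $i$ of $q'_0$ has all its leaves among the inputs of $q_0$ other than the first. Substituting this into $u\circ g'=u'$ shows each $\gamma_i$ acquires only identities from $u$, so $\gamma_i=\delta_i$; and $g'\circ l'=l$ then fixes the leaf permutation of $g'$ and the distribution of the inputs of $x$. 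Therefore $g'=g$, so $(l,u)$ is terminal.

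The step I expect to be the main obstacle is the combinatorial bookkeeping underlying all three parts: one must keep the identification of the leaves of each tree with the inputs of the relevant operation coherent through every grafting and contraction, so that the claimed equalities of trees are genuine equalities of canonical representatives --- including the per-vertex planar orders and the leaf permutations --- and not merely equalities of underlying abstract trees.
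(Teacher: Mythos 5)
Your overall strategy --- verify that the canonical factorization is a $(Lower,Upper)$-factorization, build the comparison morphism $g$ explicitly from the tree data of $u'$ and $l'$, and prove uniqueness by reading off the vertices of $g'$ from the two defining equations --- is essentially the route the paper takes, and your construction of $g$ agrees with the paper's.

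The one step that does not hold as written is the opening inference of your terminality paragraph: ``Because $u$ is upper, the root vertex of $u\circ g'$ coincides with that of $g'$.'' Upperness of $u$ only says that its own root vertex is trivial; the root vertex of the composite $u\circ g'$ is the root vertex of $g'$ composed with whatever upper vertices of $u$ sit over the non-first input leaves of the root vertex of $g'$. The first $m$ upper vertices of $u$ are $q_1,\dots,q_m$, which need not be identities, so if the root vertex of $g'$ had leaves in the $p$-part of $x$ the inference would fail (and in a general operad a non-identity operation can compose with others to give an identity, so one cannot cancel). What rescues the argument is exactly the fact you derive two sentences later from $g'\circ l'=l$: the upper vertices of $g'$ over the $p$-part of $y$ are trivial and absorb the whole $p$-part of the inputs of $x$, hence the leaves of the root vertex of $g'$ lie over the non-first inputs of $q_0$, where $u$ is trivial. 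You must establish that consequence of $g'\circ l'=l$ \emph{before} concluding that $g'$ is upper; as written, the claim ``which already proves the final sentence of the lemma'' is premature. (The paper sidesteps this by first reducing to $(Lower',Upper)$-factorizations via permutation isomorphisms, showing that any \emph{lower} morphism into the canonical factorization is trivial, and then invoking the inherited strict factorization system on the category of factorizations, Lemma~\ref{lem:fact-cat-inherit}, to conclude that every morphism into it is upper.) With that reordering, and with the leaf-permutation bookkeeping you already flag as the delicate point carried out, your proof goes through.
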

\begin{proof}
Let $f$ be a morphism and $i$ be the permutation isomorphism inverse to the permutation on leaves of $f$. The permutation on leaves of $i\circ f$ is trivial. Composition with $i$ gives an isomorphism between the categories of $(Lower,  Upper)$-factorizations of $f$ and of $i\circ f$, and sends the canonical factorization to the canonical factorization. Assume then that the permutation on leaves of $f$ is trivial.
Any $(Lower, Upper)$-factorization of $f$ is isomorphic via permutation isomorphism to a $(Lower', Upper)$-factorization. Let $u'\circ l'$ be such a factorization of $f$.

Any \emph{lower} morphism $l$ from the factorization $u'\circ l'$ to the canonical factorization $u''\circ l''$ of $f$ is trivial. Indeed, let $p$ be the source of $f$, an operation of arity $n$. The morphisms $l''=l\circ l'$ and $l'$ preserve the indices of the first $n$ leaves, thus $l$  preserves the indices of the first $n$ leaves. The non-source vertices of $u''$ that come after the first $n$ upper vertices are trivial, thus the lower vertex of $l$ coincides with the lower vertex of $u''\circ l$, and since $u''\circ l=u'$, this vertex is trivial, and $l$ is trivial.

Any morphism of $(Lower, Upper)$-factorizations can be factored uniquely as an upper morphism followed by a lower morphism, and these morphisms are themselves morphisms of $(Lower, Upper)$-factorizations, see Lemma~\ref{lem:fact-cat-inherit}. Since any lower morphism with target $u''\circ l''$ is trivial, any morphism of $(Lower, Upper)$-factorizations with target $u''\circ l''$ is an upper morphism. 

Next we construct a morphism $u$ from $u'\circ l'$ to $u''\circ l''$. Notice that the first $n$ upper vertices of $u'$ coincide with the first $n$ upper vertices of $f$, and of $u''$, and the permutation on the leaves above the first $n$ vertices of $u'$ is trivial. Since the remaining vertices of $u''$ are trivial, the morphism $u'$ is equal to $u''\circ u$, where $u$ is the upper morphism with first $n$ vertices trivial, with remaining vertices the same as in $u'$, with trivial permutation on the first $n$ leaves. The morphism $u\circ l'$ is a lower morphism with trivial permutation on leaves, and $u''\circ u\circ l'=f$, thus $u\circ l'=l''$, and $u$ is a morphism from $u'\circ l'$ to $u''\circ l''$. 

For the proof of uniqueness of the morphism $u$ from $u'\circ l'$ to $u''\circ l''$, notice that since $u\circ l'$ must be a lower morphism with trivial permutation on leaves, the first $n$ vertices of $u$ must be trivial, and $u$ must have trivial permutation on the first $n$ leaves. The condition $u'=u''\circ u$ determines the morphism $u$ uniquely.
\end{proof}

\begin{remark}
Categories of $(Lower, Upper)$-factorizations may have more than one terminal object. If one is to attempt to generalize \cite{bergner20182}, uniqueness in the definition of \emph{stable} double category should be replaced by essential uniqueness.
\end{remark}

\paragraph{Upper-Lower pushout.}

Twisted arrow categories of operads have pushouts of special form.
\begin{lemma}
\label{lem:upper-lower-pushouts}
Let $P$ be a symmetric or planar set-operad. Let $f$ be an upper and $g$ be a lower map that have a common source, in $\Tw(P)$ or in $\mathcal{U}(P)$. The pushout $D$ of $f$ and $g$ always exists, and the pushout maps $f'$ and $g'$ can be chosen so that $f'$ is an upper and $g'$ is a lower map, and $f'\circ g$ is a terminal $(Lower, Upper)$-factorization.
\begin{center}
\begin{tikzcd}
A \arrow[r, "f"] \arrow[d, "g"] & B \arrow[d, "g'"] \\
C \arrow[r, "f'"]               & D                
\end{tikzcd}
\end{center}
\end{lemma}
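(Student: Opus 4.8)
The plan is to construct the pushout explicitly and then verify the universal property directly, using the fact that $f$ is upper and $g$ is lower. Write $g$ as an input-type lower map: its canonical representative has the lowest vertex marked by some operation $p$ (the source $A$ of both maps, since $f$ is upper its source is also determined by the lowest vertex), the upper vertices of $g$ are marked by identity operations, and the indices of the upper leaves increase in planar order. Write $f$ as an upper map: its lowest vertex is marked by an identity operation, and its non-source vertices carry operations. First I would take the canonical $(Lower', Upper)$-factorization machinery of Lemma~\ref{lem:lower-upper-factor}: given the composite $f\circ g$ has no obvious meaning (they share a source, not compose), instead I form the ``obvious'' candidate for $D$ by grafting. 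Concretely, let $C$ be the target of $g$ and $B$ the target of $f$; the target $D$ should be obtained by applying to $B$ the same non-source decoration that $g$ applies to $A$, and applying to $C$ the same upper decoration that $f$ applies to $A$. In terms of reduced trees/graphs: the object $D$ is the evaluation of the tree whose source vertex is the source vertex of $f$ (equivalently of $g$), with the non-source vertices of $f$ grafted below/around it and the non-source vertices of $g$ grafted above, with the leaf orders and permutations combined in the canonical way.

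The key steps, in order, are: (1) define $g'\colon B\to D$ to be the lower map whose non-source decoration on the source vertex of $B$ equals that of $g$ (this makes sense because $f$, being upper, does not touch the leaves of the source vertex except by the permutation, which is trivial on an upper map's source leaves in the relevant sense), and define $f'\colon C\to D$ to be the upper map whose non-source vertices are exactly the non-source vertices of $f$; (2) check that $f'\circ g = g'\circ f$ as morphisms in $\Tw(P)$ (or $\mathcal U(P)$) — this is a direct computation comparing canonical representatives, where both sides are the reduced tree with source vertex $p$, the non-source vertices of $f$ grafted at the root/leaf and the non-source vertices of $g$ grafted at the upper leaves, and the two ways of building it agree by parallel associativity in $P$; (3) check that $f'\circ g$ is a terminal $(Lower,Upper)$-factorization of $f'\circ g$ — here I invoke Lemma~\ref{lem:lower-upper-factor}: the canonical $(Lower',Upper)$-factorization of $f'\circ g$ is precisely $f'$ followed by $g$ (since $g$ has the lowest vertex of the composite and $f'$ has the upper vertices), and Lemma~\ref{lem:lower-upper-factor} says this is terminal among $(Lower,Upper)$-factorizations; (4) verify the universal property of the pushout: given $h\colon B\to E$ and $k\colon C\to E$ with $h\circ f = k\circ g$, factor this common composite $h\circ f = k\circ g$ by its canonical $(Lower,Upper)$-factorization $u\circ l$; since $h\circ f$ factors through the lower map $g$ on one side and $k\circ f$... — more precisely, because $h\circ f$ already contains $g$ as a lower part (as $k\circ g$) and $k\circ g$ already contains $f$'s upper part (as $h\circ f$), the map $h\circ f=k\circ g$ receives a unique morphism from the canonical factorization $g'\circ f = f'\circ g$, and that morphism is the desired mediating map $D\to E$; its uniqueness follows from the uniqueness clause in Lemma~\ref{lem:lower-upper-factor} together with the strict $(Upper,Lower)$-factorization of any map $D\to E$.

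The main obstacle I expect is step~(4), matching the universal property of the pushout against the terminality statement of Lemma~\ref{lem:lower-upper-factor}: one must argue that a cone $(h,k)$ out of the span is \emph{exactly} the same data as a $(Lower,Upper)$-factorization of the fixed map $g'\circ f = f'\circ g$ equipped with a morphism to it, so that terminality of the canonical factorization translates into the pushout's universal property. This requires care because a cone $(h,k)$ with $h\circ f=k\circ g$ does not obviously split as a $(Lower,Upper)$-factorization unless one first applies the strict $(Upper,Lower)$-factorization to $h$ and to $k$ and checks the pieces glue; the compatibility $h\circ f=k\circ g$ and the fact that $f$ is upper while $g$ is lower is exactly what forces the lower part of $h$ and the upper part of $k$ to coincide over $A$, producing the factorization. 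Once this translation is set up cleanly, the existence and uniqueness of the mediating map are immediate from Lemma~\ref{lem:lower-upper-factor}, and the fact that $f'$ can be chosen upper and $g'$ lower is built into the construction. The case of $\mathcal U(P)$ is identical, or alternatively follows by transporting along the discrete opfibration $\Tw(P)\to\mathcal U(P)$, under which $(Upper,Lower)$ and the pushout square both pull back.
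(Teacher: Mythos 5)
Your construction in steps (1)--(3) is essentially the paper's: one builds $g'$ as the lower map with the same lowest vertex as $g$, and $f'$ as the upper map with the same upper vertices as $f$, then checks $g'\circ f=f'\circ g$ directly and observes that $f'\circ g$ is the canonical $(Lower',Upper)$-factorization. (Two small points of care that the paper handles and you gloss over: since $f$ is upper its nontrivial vertices are the \emph{upper} ones and since $g$ is lower its nontrivial vertex is the \emph{lowest} one, not the other way around as in your grafting description; and because $C$ has arity $n+k$ while $A$ has arity $n$, the map $f'$ must be padded with $k$ additional trivial upper vertices and a choice of ordering on the extra leaves, which is also where the normalization by permutation isomorphisms matters.)

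Step (4) contains a genuine gap. You propose to deduce the pushout's universal property from the terminality clause of Lemma~\ref{lem:lower-upper-factor}, by viewing a cone $(h,k)$ with $h\circ f=k\circ g$ as "a $(Lower,Upper)$-factorization of $g'\circ f=f'\circ g$ equipped with a morphism to it." But these are different kinds of data: a $(Lower,Upper)$-factorization of $f'\circ g$ is a diagram $A\to W\to D$ with fixed target $D$, and terminality classifies morphisms \emph{into} the middle object $C$ of such factorizations; the pushout property instead requires constructing a morphism \emph{out of} $D$ into an arbitrary object $Z$. A cone with target $Z\neq D$ is not a factorization of the morphism $A\to D$ at all, so the phrase "receives a unique morphism from the canonical factorization" does not parse, and no amount of pre-factoring $h$ and $k$ through the strict $(Upper,Lower)$ system turns the one universal property into the other. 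The paper does not use Lemma~\ref{lem:lower-upper-factor} here: it verifies the universal property by hand, reading off the upper vertices, the lower vertex, and the leaf permutation of the mediating map $h_D$ from those of $h_B$ and $h_C$, using the two identities among vertices that the relation $h_B\circ f=h_C\circ g$ forces (the first $n$ upper vertices of $h_C$ are the composites of the upper vertices of $f$ with those of $h_B$, and the lowest vertex of $h_B$ is the composite of the lowest vertex of $g$ with the lowest and last $k$ upper vertices of $h_C$). You would need to supply this direct argument, or an actual correct formal substitute, to complete the proof.
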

\begin{proof}
The planar case follows from the symmetric case by Proposition~\ref{pr:symmetrizationeq}. Let $P$ be a symmetric operad. By applying a permutation isomorphism to $C$ we can assume that $g$ has trivial permutation on leaves. Let $n$ be the arity of $A$, $(n+k)$ be the arity of $C$, and $m$ be the arity of $B$. Let $g'$ be the lower map with trivial permutation on leaves and with lower vertex the same as in $g$. Let $f'$ be the upper map with the first $n$ upper vertices the same as the upper vertices of $f$, with indices of leaves above these vertices the same as in $f$, and with the remaining $k$ upper vertices being the identity operations, and the remaining $k$ leaves ordered trivially. We have $g'\circ f=f'\circ g$. The factorization $f'\circ g$ is the canonical $(Lower', Upper)$-factorization. By applying essentially the same permutation isomorphism to $B$ and to $D$ we can further assume that $f$, $f'$ and $g'$ have trivial permutation on leaves.

To show that $D$ is a pushout, let $h_B:B\to Z$ and $h_C:C\to Z$ be maps such that $h_B\circ f=h_C\circ g$. Then (1) the first $n$ upper vertices of $h_C$ are equal to the composition of the $n$ upper vertices of $f$ with the $m$ upper vertices of $h_B$; (2) the lower vertex of $h_B$ is equal to the composition of the lower vertex of $g$ with the lower and with the last $k$ upper  vertices of $h_C$. By applying permutation isomorphism to $Z$, we can assume that $h_C$ has trivial permutation on leaves, which implies that $h_B$ has trivial permutation on leaves. The map $h_D:D\to Z$ such that $h_D\circ g'=h_B$ and $h_D\circ f'=h_C$ is determined uniquely: since $g'$ is a lower map with trivial permutation on leaves, the $m$ upper vertices of $h_B$ coincide with the first $m$ upper vertices of $h_D$; since $f'$ is an upper map with the last $k$ upper vertices trivial, the remaining $k$ upper vertices and the lower vertex of $h_D$ coincide with the last $k$ upper vertices and with the lower vertex of $h_C$; finally, the permutation of leaves of $h_D$ must be trivial. This determines $h_D$ uniquely, and, using properties (1) and (2), we have $h_D\circ g'=h_B$ and $h_D\circ f'=h_C$.
\end{proof}

\begin{corollary}
Let $P$ be a planar or symmetric set-operad, $f:A\to B$ be an active map  and $g:A\to C$ be an inert map in $\Tw(P)$ or in $\mathcal{U}(P)$. The pushout $D$ of $f$ and $g$ always exists, the map $f':C\to D$ is active, and the map $g':B\to D$ is inert.
\end{corollary}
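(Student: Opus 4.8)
The plan is to deduce the statement from Lemma~\ref{lem:upper-lower-pushouts} by absorbing the isomorphisms. The key input I would use is the standard description of the orthogonal factorization system $(Active, Inert)$ generated by the strict factorization system $(Upper, Lower)$: since $Upper$ and $Lower$ each contain all isomorphisms, the left class satisfies $Active = \mathrm{Iso}\circ Upper$ and the right class satisfies $Inert = Lower\circ\mathrm{Iso}$; equivalently, a morphism is active (resp.\ inert) precisely when, in its unique $(Upper, Lower)$-factorization, the lower (resp.\ upper) part is an isomorphism. These are general facts about strict factorization systems, which I would cite from Appendix~\ref{sec:appendix-cat-th}.

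First I would reduce to the situation of Lemma~\ref{lem:upper-lower-pushouts}. Write $g = \ell\circ\psi$ with $\psi\colon A\xrightarrow{\ \sim\ }A''$ an isomorphism and $\ell\colon A''\to C$ lower. Replacing the span $(f,g)$ by the span $(f\circ\psi^{-1},\ell)$ out of $A''$ changes neither the pushout nor the legs out of $B$ and $C$, since precomposing a span with an isomorphism induces a bijection on cocones. As $Active$ contains the isomorphisms and is closed under composition, $f\circ\psi^{-1}$ is again active, so I may write $f\circ\psi^{-1} = \phi\circ u$ with $u\colon A''\to B''$ upper and $\phi\colon B''\xrightarrow{\ \sim\ }B$ an isomorphism. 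Postcomposing the $B$-leg of a span with an isomorphism likewise induces a bijection on cocones, so the pushout of $(f,g)$ agrees with the pushout of the span $(u,\ell)$ consisting of an upper map and a lower map out of $A''$, with the leg out of $C$ unchanged and the leg out of $B$ obtained from the leg out of $B''$ by postcomposing with $\phi^{-1}$.

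Then I would apply Lemma~\ref{lem:upper-lower-pushouts}: the pushout $D$ of the upper map $u$ and the lower map $\ell$ exists, and its legs can be chosen so that the leg $C\to D$ is upper and the leg $B''\to D$ is lower. Unwinding the reduction, $D$ is the pushout of the original span $(f,g)$; the leg $f'\colon C\to D$ is exactly this upper map, hence active; and the leg $g'\colon B\to D$ is the lower map $B''\to D$ postcomposed with $\phi^{-1}$, hence lies in $Lower\circ\mathrm{Iso} = Inert$, because $Inert$ contains the isomorphisms and is closed under composition.

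The only genuine content beyond bookkeeping is the identification $Active = \mathrm{Iso}\circ Upper$ and $Inert = Lower\circ\mathrm{Iso}$; granting that, the rest is the routine observation that pushouts, together with the two classes of an orthogonal factorization system, are stable under composition with isomorphisms. I expect the main obstacle to be purely organizational: keeping straight which leg of which span is being modified by which isomorphism, and verifying that Lemma~\ref{lem:upper-lower-pushouts} is applied to a bona fide span of an upper map and a lower map with a common source.
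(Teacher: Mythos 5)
Your proposal is correct and is essentially the argument the paper intends: the corollary is stated without proof immediately after Lemma~\ref{lem:upper-lower-pushouts}, the implicit reasoning being exactly your reduction via $Active=I\circ Upper$ and $Inert=Lower\circ I$ (which is the content of Proposition~\ref{pr:ufs-are-ofs} and its corollary in Appendix~\ref{sec:appendix-cat-th}) together with the invariance of pushouts under pre- and post-composition with isomorphisms.
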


\subsection{Generalized Reedy structure}
\label{sec:reedy}

For operads that satisfy a simple criterion the corresponding twisted arrow categories, and often their enveloping categories, are generalized Reedy.

\begin{definition}
A pair of subcategories $(R_-, R_+)$ of a category $C$ together with a map $deg: Ob(C)\to \alpha$ to an ordinal $\alpha$, called the degree map, is a generalized Reedy structure  (\cite{berger2011extension}) for the category $C$ if:
\begin{enumerate}
  \item $(R_-, R_+)$ is an orthogonal factorization system for $C$,
  \item Non-isomorphisms in $R_-$ decrease degree, 
  \item Non-isomorphisms in $R_+$ increase degree,
  \item Isomorphisms preserve degree,
  \item For any morphism $f\in R_-$ and any isomorphism $\theta$, if $\theta\circ f = f$, then $\theta = id$.
\end{enumerate}
This structure is called \emph{dualizable} if for any morphism $f\in R_+$ and any isomorphism $\theta$, if $f\circ\theta = f$, then $\theta=id$.
\end{definition}

For any operad $P$ let $I$ be the class of all isomorphisms in $\Tw(P)$ or $\mathcal{U}(P)$. Denote by $R_-$ the class $I\circ R_{\psi=-1}$ and by $R_+$ the class $R_{\psi\geq 0}\circ I=R_{\psi\geq 0}$. By Proposition~\ref{pr:ufs-are-ofs} the pair $(R_-, R_+)$ is an orthogonal factorization system.

\begin{theorem}
\label{thm:groupoid-reedy}
For a planar, symmetric or cyclic operad $P$ with a nice grading $\psi$ let the degree map on $\Tw(P)$ be equal to $\psi$. With this choice of the degree map, the pair $(R_-, R_+)$ is a generalized Reedy structure for $\Tw(P)$ if and only if all operations of $P$ of grading $0$ have arity $1$ and the category $P(\psi=0)$ of these operations is a groupoid. 
\end{theorem}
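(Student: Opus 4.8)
The plan is to verify the five axioms of a generalized Reedy structure for the pair $(R_-,R_+)$, where $R_-=I\circ R_{\psi=-1}$ and $R_+=R_{\psi\ge 0}$. Axiom (1) is Proposition~\ref{pr:ufs-are-ofs}. Throughout I would use two facts: for a morphism $f\colon p\to q$ in $\Tw(P)$ whose canonical representative (Lemma~\ref{lem:candec}, Corollary~\ref{crl:twsop}) has non-source vertices $q_0,q_1,\dots$, additivity and $S$-invariance of $\psi$ give $\psi(q)=\psi(p)+\sum_i\psi(q_i)$; and by Corollary~\ref{crl:isos-in-tw} isomorphic objects have equal grading, while the isomorphisms of $\Tw(P)$ are exactly the morphisms all of whose non-source vertices are arity-$1$ isomorphisms of $P(1)$. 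The first fact already gives axiom (4): isomorphisms preserve $\psi$.

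Next I would dispose of axioms (2) and (5), which hold for every nice grading. A morphism in $R_{\psi=-1}$ has trivial permutation on leaves and each $\psi(q_i)\le 0$, so it is degree non-increasing, and equality forces all its non-source vertices to be identity operations, i.e.\@ forces $f$ to be an identity morphism; since isomorphisms preserve degree, a morphism of $R_-=I\circ R_{\psi=-1}$ preserves degree iff it is an isomorphism, which is axiom (2). For axiom (5), given $f\in R_-$ and an isomorphism $\theta$ with $\theta\circ f=f$, replacing $\theta$ by a conjugate reduces to $f\in R_{\psi=-1}$; now comparing the canonical representatives of $f$ and $\theta\circ f$ via the grafting-and-contracting description of composition (Corollary~\ref{crl:twsop}), the non-identity non-source vertices of $f$ are untouched while its identity non-source vertices absorb the corresponding arity-$1$ non-source vertices of $\theta$, and the permutation of $\theta$ passes to $\theta\circ f$; so $\theta\circ f=f$ forces every non-source vertex of $\theta$ to be an identity and its permutation to be trivial, whence $\theta=id$ by Corollary~\ref{crl:isos-in-tw}.

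It remains to treat axiom (3), which is where the hypothesis enters. If every grading-$0$ operation has arity $1$ and $P(\psi=0)$ is a groupoid, then a degree-preserving $f\in R_+=R_{\psi\ge 0}$ satisfies $\sum_i\psi(q_i)=0$ with each $\psi(q_i)\ge 0$, so every non-source vertex of $f$ has grading $0$, hence arity $1$, hence is an isomorphism of $P(1)$; by Corollary~\ref{crl:isos-in-tw} $f$ is then an isomorphism, so non-isomorphisms in $R_+$ strictly raise degree and axiom (3) holds, giving together with the previous paragraphs a generalized Reedy structure. Conversely, suppose some operation $p$ of grading $0$ has arity $\ne 1$ or is not an isomorphism of $P(1)$. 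I would produce a degree-preserving non-isomorphism in $R_+$: for planar or symmetric $P$, the output map $out\colon id_{c_0}\to p$, whose canonical representative has lowest vertex $id$ and single upper vertex $p$; for cyclic $P$, the morphism from an identity operation whose canonical representative has all non-source vertices identities except one marked by $p$ (or a symmetry of $p$). All non-source vertices of this morphism have grading $0$, so it lies in $R_+$, and its source and target both have grading $\psi(p)=0$, so it is degree-preserving; but one of its non-source vertices has arity $\ne 1$, or has arity $1$ but is not an isomorphism of $P(1)$, so by Corollary~\ref{crl:isos-in-tw} it is not an isomorphism. Hence axiom (3) fails.

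The conceptual content is that among the five axioms only (3) can fail, and it fails precisely when $R_+$ contains a degree-preserving non-isomorphism, i.e.\@ when $P$ has a grading-$0$ operation that is not an arity-$1$ isomorphism. The main work is bookkeeping: one must spell out, in each of the planar, symmetric and cyclic flavours, how composition acts on canonical representatives in order to run the argument for axiom (5) and to pin down the output-type morphism used in the ``only if'' direction. The one extra ingredient is that in the cyclic case the $S_2$-action on $P(1)$ coming from the cyclic structure is an anti-automorphism of categories (from the axioms $(p\circ_1 q)^{\tau_1}=q^{\tau_1}\circ_1 p^{\tau_1}$ and $id^{\tau_1}=id$), so it preserves the property of being an isomorphism, which is what lets one conclude that the chosen non-source vertex — a symmetry of $p$ — is still a non-isomorphism.
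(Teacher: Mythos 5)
Your proof is correct and follows essentially the same route as the paper's: verify the five axioms using Corollary~\ref{crl:isos-in-tw}, with the output map of a grading-$0$ operation supplying the degree-preserving non-isomorphism in $R_+$ for the ``only if'' direction. The only differences are refinements --- you note that axioms (2), (4) and (5) hold for any nice grading (the paper's axiom-(5) argument invokes the groupoid hypothesis, though after conjugating down to $R_{\psi=-1}$ as you do it is not needed), and you treat the cyclic case of the ``only if'' direction explicitly, where the paper's appeal to ``the output map'' is, strictly speaking, only defined for planar and symmetric operads.
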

\begin{proof}
Here we implicitly use Corollary~\ref{crl:isos-in-tw}. Suppose that $(R_-,R_+)$ is a generalized Reedy structure. For any operation $p$ of degree $0$ the output map of $p$ is in $R_+$ and preserves degree, i.e.\@ it is an isomorphism, and thus the operation $p$ has arity $1$ and is invertible as the morphism in the category $P(1)$.

In the opposite direction, the non-source vertices of degree preserving morphisms in $R_+$ are marked by elements of $P(\psi=0)$, thus these morphisms are isomorphisms. The same holds for $R_-$. For the last property, let $f\in R_-$ and $\theta\in I$ be such that $\theta\circ f = f$. The permutation of leaves of $\theta$ must be trivial. For any non-source vertex $a_i$ of $f$ of arity $1$ and corresponding non-source vertex $b_j$ of $\theta$ we have $b_j\circ a_i=a_i$ (or $a_0\circ b_0=a_0$ for the lower vertex), and, since $P(\psi=0)$ is a groupoid, $b_j=id$. Thus $\theta=id$.
\end{proof}

\begin{example}
Twisted arrow categories of graph-substitution operads are generalized Reedy. Notice that for operads $mOp$, $ciuAs$ and similar operads we have to use the previously described grading $\psi$ (see Definition~\ref{def:graded-operad}), and not the arity grading.
\end{example}

In general the analogue of Theorem~\ref{thm:groupoid-reedy}  does not hold for enveloping categories: it is not clear how to endow these categories with the degree map. Still, we have the following theorem.

\begin{theorem}
\label{thm:enveloping-image-reedy}
Let $P$ be a planar, symmetric or cyclic operad with a nice grading $\psi$ such that operations with the same multiset of input and output colours have the same grading. Then the image $\mathcal{V}(P)$ of the canonical discrete opfibration $\Tw(P)\to\mathcal{U}(P)$ can be endowed with the degree map. The orthogonal factorization system $(R_-,R_+)$ on $\mathcal{U}(P)$ restricts to $\mathcal{V}(P)$. This gives a generalized Reedy structure for $\mathcal{V}(P)$ if and only if all operations of $P$ of grading $0$ have arity $1$ and the category $P(\psi=0)$ of these operations is a groupoid. In this case the discrete opfibration $\Tw(P)\to\mathcal{V}(P)$ is a map of generalized Reedy categories.
\end{theorem}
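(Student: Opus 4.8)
The plan is to run the proof of Theorem~\ref{thm:groupoid-reedy} one level down, transporting everything along the canonical discrete opfibration $\alpha\colon\Tw(P)\to\mathcal{U}(P)$, whose image is $\mathcal{V}(P)$. First I would pin down $\mathcal{V}(P)$ precisely. Since $\alpha$ is a discrete opfibration, any morphism of $\mathcal{U}(P)$ whose source colour-profile is realised by an operation of $P$ lifts to $\Tw(P)$, so its target profile is realised too; hence $\mathcal{V}(P)$ is the \emph{full} subcategory of $\mathcal{U}(P)$ spanned by the realised profiles, and it is closed under taking targets of morphisms out of it. The degree map is then forced: put $\deg(v)=\psi(p)$ for any operation $p$ of profile $v$. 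This is well defined precisely because of the hypothesis — two operations with the same profile have the same multiset of input and output colours, hence equal grading — and, using that $\psi$ is invariant under the symmetric group action, the same hypothesis guarantees that $\deg$ is constant on permutation-isomorphism classes, which is what the Reedy axioms will need for isomorphisms.

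Next I would check that $(R_-,R_+)$ restricts. Because $\mathcal{V}(P)$ is full in $\mathcal{U}(P)$ and closed under targets, the $(R_-,R_+)$-factorization $v\to u\to w$ of any morphism of $\mathcal{V}(P)$ has its middle object $u$ realised, so both factors lie in $\mathcal{V}(P)$; and the unique diagonal filler of any lifting square with corners in $\mathcal{V}(P)$ again lies in $\mathcal{V}(P)$ by fullness. Hence $(R_-\cap\mathcal{V}(P),R_+\cap\mathcal{V}(P))$ is an orthogonal factorization system on $\mathcal{V}(P)$. The one computation everything rests on is the degree-change formula: if a morphism $g$ of $\mathcal{V}(P)$ has canonical representative with non-source vertices $a_1,\dots,a_k$, then lifting $g$ along $\alpha$ to a morphism $p\to q$ of $\Tw(P)$ and using additivity and permutation-invariance of $\psi$ gives $\deg(\mathrm{tgt}\,g)-\deg(\mathrm{src}\,g)=\sum_i\psi(a_i)$.

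Everything else is then a transcription of Theorem~\ref{thm:groupoid-reedy}. For the ``if'' direction: a morphism in $R_+=R_{\psi\geq 0}$ has all $\psi(a_i)\geq 0$, so it cannot decrease degree, and it preserves degree only if every $\psi(a_i)=0$, which by the hypothesis (grading-$0$ operations have arity $1$ and $P(\psi=0)$ is a groupoid) makes every $a_i$ an arity-$1$ isomorphism of $P(1)$, whence $g$ is an isomorphism by Corollary~\ref{crl:isos-in-tw}; a morphism in $R_-=I\circ R_{\psi=-1}$ has, up to absorbing arity-$1$ isomorphisms into its vertices, all $\psi(a_i)\in\{-1,0\}$, so it cannot increase degree and the symmetric argument applies; isomorphisms have $\psi(a_i)=0$ for all $i$ and so preserve degree; and axiom~(5) is verified exactly as in Theorem~\ref{thm:groupoid-reedy}, the argument there involving only the non-source vertices and the leaf permutation, which $\alpha$ leaves untouched. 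For the ``only if'' direction: given an operation $p$ with $\psi(p)=0$, the $\alpha$-image of the output map of $p$ lies in $R_+$ (its non-source vertices, an identity and $p$ itself, have grading $0$) and preserves degree (both endpoints have degree $0$), hence is an isomorphism, hence by Corollary~\ref{crl:isos-in-tw} $p$ has arity $1$ and is invertible in $P(1)$; thus $P(\psi=0)$ is a groupoid of arity-$1$ operations.

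Finally, $\alpha\colon\Tw(P)\to\mathcal{V}(P)$ is a map of generalized Reedy categories: $\deg\circ\alpha=\psi$ is exactly the degree map chosen for $\Tw(P)$ in Theorem~\ref{thm:groupoid-reedy}, and since $R_\pm$ on $\Tw(P)$ are the $\alpha$-preimages of $R_\pm$ on $\mathcal{U}(P)$ and $\alpha$ preserves isomorphisms, $\alpha$ sends $R_\pm$ of $\Tw(P)$ into $R_\pm$ of $\mathcal{V}(P)$. I expect the only real friction to be in the first step — establishing that $\mathcal{V}(P)$ is a full, target-closed subcategory of $\mathcal{U}(P)$, so that isomorphisms, the factorization system, and Corollary~\ref{crl:isos-in-tw} may all be imported from $\mathcal{U}(P)$ — after which the rest follows the pattern of Theorem~\ref{thm:groupoid-reedy} essentially verbatim.
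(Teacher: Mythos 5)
Your proposal is correct and follows essentially the same route as the paper: the paper's proof likewise records that $\mathcal{V}(P)$ is a full subcategory of $\mathcal{U}(P)$ closed under the relevant factorizations, defines the degree of an object as the common grading of the operations in its (non-empty) fiber, and then declares the remaining verifications "similar to Theorem~\ref{thm:groupoid-reedy}" — exactly the transcription you carry out. Your observation that $\mathcal{V}(P)$ is closed under targets of morphisms (via the opfibration) is a slightly cleaner way to see that the $(R_-,R_+)$-factorizations stay inside, but it is the same argument in substance.
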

\begin{proof}
As an image of discrete opfibration the category $\mathcal{V}(P)$ is a full subcategory of $\mathcal{U}(P)$ and contains all objects of $\mathcal{U}(P)$ that are isomorphic to objects of $\mathcal{V}(P)$. Factorizations of morphisms in $\mathcal{V}(P)$ via strict or orthogonal factorization system in $\mathcal{U}(P)$ lie in $\mathcal{V}(P)$. For any object $B$ in $\mathcal{V}(P)$ elements in the corresponding non-empty fiber over $B$ have the same degree, which we define to be the degree of $B$.
The proof is similar to that of Theorem~\ref{thm:groupoid-reedy}.
\end{proof}

\begin{example}
In all graph-substitution operads operations with the same multisets of colours have the same grading. For the operads with arity grading this is trivial. For the operad $mOp$ this is due to the fact that the first Betti number of an operadic graph is determined by the number of vertices and the number of inner edges. The latter is determined by the number of leaves and the number of half-edges, both in turn determined by the colours of the corresponding operation.
\end{example}

\paragraph{Dualizability.} In the setting of Theorem~\ref{thm:groupoid-reedy} and Theorem~\ref{thm:enveloping-image-reedy} the group of operations in $P(\psi=0)$ of a colour $c$ acts on the right by operadic composition on the set of operations in $P$ with the $j$-th input color $c$, and, in planar and symmetric cases, acts on the left on the set of operations in $P$ with the output colour $c$.

\begin{lemma}
In the setting of Theorem~\ref{thm:groupoid-reedy} or of Theorem~\ref{thm:enveloping-image-reedy} if the group actions above on operations of non-zero arity are free, then (and, in the latter case, only then) for any $f\in R_+$ with all non-source vertices of non-zero arity and any $\theta\in I$ such that $f\circ\theta=f$ we have $\theta=id$.
\end{lemma}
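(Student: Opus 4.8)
The plan is to compute the canonical representative of $f\circ\theta$ from those of $f$ and $\theta$ via Corollary~\ref{crl:twsop}, compare it term by term with the canonical representative of $f$, and feed the resulting equations into the freeness hypothesis. I will write the argument for a symmetric operad; the planar case drops the permutation and the cyclic case treats all neighbours of the source vertex uniformly, and both go through the same way.

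Fix $f$ and $\theta$ as in the statement. Since $f\circ\theta$ is defined and equals $f$, the isomorphism $\theta$ is an automorphism of the object $p=$ source of $f$. Write the canonical representative of $f$ with source vertex $p$ (of arity $n$), upper vertices $u_1,\dots,u_n$ over the leaves of $p$ and root vertex $u_0$, all of non-zero arity by hypothesis, and let $B_i$ denote the set of leaf-labels carried by the leaves of $u_i$; then $B_1,\dots,B_n$ are pairwise disjoint and, as each $u_i$ with $i\ge 1$ has arity $\ge 1$, each is non-empty. By Corollary~\ref{crl:isos-in-tw} the canonical representative of $\theta$ has source vertex $p$, non-source vertices $\theta_0,\theta_1,\dots,\theta_n$ all of arity $1$ and marked by isomorphisms of $P(1)$, together with a permutation $\sigma$ of the $n$ leaves of $p$.

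Next I would carry out the grafting-and-contracting of Corollary~\ref{crl:twsop}. Because each $\theta_j$ has arity $1$, the outcome is again a reduced tree with source vertex $p$ that has as its upper vertex over the $i$-th leaf of $p$ the operation $\theta_i\circ_1 u_{\sigma(i)}$, whose leaves carry the label block $B_{\sigma(i)}$, and has root vertex $u_0\circ_1\theta_0$. Uniqueness of canonical representatives together with $f\circ\theta=f$ then give, for every $i$, the equalities $B_{\sigma(i)}=B_i$ and $\theta_i\circ_1 u_{\sigma(i)}=u_i$ (for $1\le i\le n$) together with $u_0\circ_1\theta_0=u_0$. Since the non-empty blocks $B_1,\dots,B_n$ are pairwise disjoint, $B_{\sigma(i)}=B_i$ forces $\sigma=id$, and this is the one step that genuinely uses that the non-source vertices have non-zero arity. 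With $\sigma=id$ the remaining equations read $\theta_i\circ_1 u_i=u_i$ and $u_0\circ_1\theta_0=u_0$: the former says that the arity-$1$ isomorphism $\theta_i$ fixes the non-zero-arity operation $u_i$ under the left action of $P(\psi=0)$ on operations with that output colour, the latter says $\theta_0$ fixes $u_0$ under the right action at its first input colour. Freeness of these actions gives $\theta_i=id$ for all $i$, hence $\theta=id$.

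For the converse in the setting of Theorem~\ref{thm:enveloping-image-reedy} I would reverse this construction: a failure of freeness provides a non-zero-arity operation $u$ and a non-trivial $g\in P(\psi=0)$ fixing it, and then the morphism in $\mathcal{V}(P)$ with single upper vertex $u$ and trivial root, together with the isomorphism whose single upper vertex is $g$, witnesses $f\circ\theta=f$ with $\theta\neq id$ (the right-action case is analogous). This is unavailable in $\Tw(P)$, where $\theta$ would additionally have to be an automorphism of a prescribed source operation, which in general is not fixed by $g$, so no converse is asserted in the setting of Theorem~\ref{thm:groupoid-reedy}. The only delicate point I anticipate is the bookkeeping in the grafting-and-contracting step — checking that the outcome is already in canonical form and identifying precisely which label block and which composite operation sit over the $i$-th leaf of $p$; once that is settled, the permutation argument and the appeal to freeness are immediate.
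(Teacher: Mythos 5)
Your argument is correct and is essentially the paper's own proof, just spelled out in more detail: the paper likewise first observes that since every upper vertex of $f$ has arity at least $1$ (hence carries at least one leaf label), $f\circ\theta=f$ forces the permutation of $\theta$ to be trivial, and then invokes freeness of the left/right actions to kill the non-source vertices of $\theta$, with the converse obtained from a fixed point of the action exactly as you describe. The bookkeeping you flag (which composite sits over which leaf) works out as you anticipate, so there is nothing to fix.
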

\begin{proof}
Each upper vertex of $f$ has arity at least $1$ and is adjacent to at least one leaf. Composition with $\theta$ does not permute the indices of leaves of $f$, thus the indices of leaves of $\theta$ are ordered trivially. Since the action is free, non-source vertices of $\theta$ are trivial. 

In the opposite direction, a fixed point of the action on $P(n)$ can be used to obtain non-trivial morphisms $f$ and $\theta$ such that $f\circ\theta= f$.
\end{proof}

\begin{example}
Most of the graph-substitution operads, including the operads $cuAs$, $iuAs_{iTr}$ and $iuAs_{Tr}$, satisfy these conditions. The exceptions are precisely the operads $mOp$, $mOp_{(g,n)}$, $mOp_{st}$, $mOp_{nc}$ and $ciuAs$. These operads contain a graph with one vertex of degree either $2$ or $4$ and with one loop, and a graph with one vertex of the same degree without loops, which together give a fixed point of the action of groups on operations. 

The twisted arrow categories of the operads $mOp$, $mOp_{(g,n)}$ and $ciuAs$ are not dualizable: these categories contain non-trivial automorphism of $\mu_0$ and at the same time have only one map $\mu_0\to\bigcirc$ in $R_+$. For the operads $mOp_{st}$ and $mOp_{nc}$ the \emph{left} actions on operations of non-zero arity are free: such an action permutes the indices of leaves of operadic graphs. Thus for any $f\in R_+$ and any $\theta\in I$ such that $f\circ\theta=f$ the upper vertices of $\theta$ are trivial and the permutation on leaves of $\theta$ is trivial. Thus the left action of the lower vertex of $\theta$ on the source of $\theta$ is trivial, and $\theta$ itself is either trivial or an automorphism of $\mu_0$, the only operation of arity $0$. In the latter case $f\circ\theta=f$ still implies that $\theta=id$. 

For twisted arrow categories of graph-substitution operads any morphism $f\in R_+$ with an upper vertex of arity $0$ is isomorphic to the output map $id_{-1}\to\bigcirc$, and again $f\circ\theta=f$ implies that $\theta=id$.  This shows that twisted arrow categories of all graph-substitution operads except $mOp$, $mOp_{(g,n)}$ and $ciuAs$ are dualizable generalized Reedy categories.
\end{example}

\section{Segal and 2-Segal presheaves}
\label{sec:segal}

\subsection{Single-object Segal presheaves} 

Algebras over a set-operad $Q$ correspond to special presheaves over $\Tw(Q)$.

\begin{definition}
  Let $P$ be an operad and $p$ and $q$ be composable operations in $P$. The partial composition pushout of $p$ and $q$ is the $Upper$-$Lower$ pushout of the $i$-th input map of $p$ and of the output map of $q$:
  \begin{center}
\begin{tikzcd}
id_c \arrow[r,"out"] \arrow[d,"in_i"'] & q \arrow[d] \\
p \arrow[r] & p\circ_i q
\end{tikzcd}
\end{center}
\end{definition}

\begin{definition}
Let $P$ be a $C$-coloured symmetric operad. A presheaf $X$ on $\Tw(P)$ satisfies single-object Segal condition if for any $p\in P(c_1,\dots,c_n;c_0)$ the product map $(X(in_j))_{1\leq j\leq n}: X(p)\to\prod_{j=1}^n X(id_{c_j})$ is bijective. In particular, for any operation $p$ of arity $0$ the set $X(p)$ is a point. Such presheaves $X$ will be called single-object Segal $P$-presheaves.
\end{definition}

\begin{theorem}
\label{thm:algebras-as-segal}
For any set-operad $P$ the category of single-object Segal $P$-presheaves is equivalent to the category of $P$-algebras.
\end{theorem}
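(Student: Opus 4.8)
The plan is to exhibit explicit functors in both directions and check that they are mutually inverse, with essentially all the content living in the dictionary between the operad structure of $P$ and the reduced-tree description of $\Tw(P)$ from Corollary~\ref{crl:twsop}. From a $P$-algebra $A$ I would build a presheaf $X_A$ on $\Tw(P)$ by setting $X_A(p)=\prod_{j=1}^{n}A(c_j)$ for $p\in P(c_1,\dots,c_n;c_0)$, a point when $n=0$. For a morphism $f\colon p\to q$, with unique canonical representative the reduced tree having source vertex $p$, root vertex $q_0$, the vertex $q_j$ grafted onto the $j$-th input leaf of $p$ for $1\le j\le n$, and leaf-permutation $\sigma$, the $j$-th component of $X_A(f)\colon X_A(q)\to X_A(p)$ is declared to be $q_j$ evaluated in $A$ on the coordinates of $X_A(q)$ indexing the leaves of $q$ that belong to $q_j$ (in $q_j$'s input order, after undoing $\sigma$). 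Uniqueness of canonical representatives (Lemma~\ref{lem:candec}) makes this well defined; contravariant functoriality will be exactly associativity, unitality and equivariance of the $A$-action played off against the graft-and-contract description of composition in $\Tw(P)$; and since $X_A(in_j)$ is the $j$-th projection, $X_A$ is single-object Segal. This construction is evidently functorial in $A$.

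In the other direction, given a single-object Segal presheaf $X$, I would set $A_X(c)=X(id_c)$ and define the structure map of $p\in P(c_1,\dots,c_n;c_0)$ to be $\prod_jA_X(c_j)\xrightarrow{\ \sim\ }X(p)\xrightarrow{X(out)}X(id_{c_0})=A_X(c_0)$, where the first map inverts the Segal bijection and $out$ is the output map of $p$. Unitality of $A_X$ holds because the output map of $id_c$ is the identity of $\Tw(P)$ and the Segal bijection at $id_c$ is the identity; equivariance follows by comparing the input and output maps of $p$ with those of $p^\sigma$ through the permutation isomorphism $p\cong p^\sigma$ (Corollary~\ref{crl:isos-in-tw}), which $X$ inverts; and associativity $(p\circ_iq)_{A_X}=p_{A_X}\circ_iq_{A_X}$ is the crux. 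For this I would use the partial composition pushout: $p\circ_iq$ is the $Upper$-$Lower$ pushout of $in_i^p$ and $out^q$, with pushout legs the grafting map $p\to p\circ_iq$ and the inclusion $\iota_q\colon q\to p\circ_iq$; one has the identities $out^{p\circ_iq}=(\text{graft})\circ out^p$ and $(\text{graft})\circ in_i^p=\iota_q\circ out^q$ in $\Tw(P)$, and the Segal condition forces $X(\text{graft})$ and $X(\iota_q)$, read through the Segal bijections, to be respectively ``apply $q_{A_X}$ on the $i$-th input block'' and ``project onto the $q$-block''; combining these yields the associativity identity. Functoriality of $X\mapsto A_X$ is straightforward: a morphism of Segal presheaves restricts to a $P$-algebra map on the sets $X(id_c)$, compatible with structure maps by naturality at the output maps.

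It then remains to see the two functors are inverse. One direction, $A_{X_A}\cong A$, is immediate: the Segal bijection for $X_A$ is the identity and $X_A(out^p)$ is by construction $(a_1,\dots,a_n)\mapsto p_A(a_1,\dots,a_n)$, so the recovered structure maps are the original ones. For the other direction, the Segal bijections assemble into a map $X\to X_{A_X}$ that is bijective on every object; naturality at a morphism $f\colon p\to q$ is checked componentwise, using that $f\circ in_j^p$ factors in $\Tw(P)$ as $out^{q_j}$ followed by the lower map that wraps the remainder of $q$ around $q_j$, so that $X(f\circ in_j^p)$ becomes, through the Segal bijections, $(q_j)_{A_X}$ composed with the projection onto the $q_j$-block — which is precisely the $j$-th component of $X_{A_X}(f)$. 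This produces the desired equivalence of categories.

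I expect the main obstacle to be exactly these last two items: verifying the associativity axiom for $A_X$ and recovering $X$ from $A_X$. Both hinge on decomposing an arbitrary morphism of $\Tw(P)$ through input maps, output maps, grafting maps and permutation isomorphisms — invoking the $(Upper,Lower)$ factorization system and the $Upper$-$Lower$ pushouts of Lemma~\ref{lem:upper-lower-pushouts} — and on tracking how the single-object Segal condition turns the resulting identities in $\Tw(P)$ into the operad axioms for $P$. Everything else (well-definedness, the two functoriality statements, the triangle identities) is routine once this translation is in place.
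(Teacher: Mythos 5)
Your proposal is correct and follows essentially the same route as the paper: both directions of the construction (the product presheaf $X_A(p)=\prod_j A(c_j)$ with transition maps given by the upper vertices of canonical representatives, and the algebra $A_X$ obtained by inverting the Segal bijection and composing with the output map) coincide with the paper's, and your associativity argument via the partial composition pushout is exactly the commutative diagram the paper uses. The only difference is presentational — you spell out the mutual-inverse check a bit more explicitly than the paper does.
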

\begin{proof}
Let $X$ be a single-object Segal $P$-presheaf. The corresponding  algebra $A$ is the $C$-coloured algebra with $A(c)=X(id_c)$ and with the algebra structure maps $A(p)$ equal to $\prod_{j=1}^n X(id_{c_j})\xleftarrow{\cong} X(p)\xrightarrow{out} X(id_{c_0})$. Next we prove that $A$ is a $P$-algebra.

First we show that the structure maps of $A$ are compatible with composition in $P$. Take any $p\in P(c_1,\dots,c_n;c_0)$ and $q\in P(d_1,\dots,d_m;c_i)$. We have to show that for any elements $a_j$ of $A$ of appropriate colour the element $(p\circ_i q)(a_1,\dots,a_{n+m-1})$ is equal to the element $p(a_1,\dots,a_{i-1},q(a_i,\dots,a_{i+m-1}),a_{i+m},\dots,a_{n+m-1})$. Observe that there exists the following commutative diagram in $\Tw(P)$, where $j\neq i$:

\begin{center}
\begin{tikzcd}
id_{c_j} \arrow[rd, "in_{j}"'] \arrow[rr, "in_{j'}"] &  & p\circ_i q & & id_{d_k} \arrow[ll, "in_{k'}"'] \arrow[ld, "in_k"] \\
& p \arrow[ru] & id_{c_0} \arrow[u, "out"] \arrow[l, "out"']    & q \arrow[lu] & \\
& & id_{c_i} \arrow[ru, "out"'] \arrow[lu, "in_i"] &&
\end{tikzcd}
\end{center}

Thus the following diagram is also commutative. 

\begin{center}
\begin{tikzcd}[column sep=0.4cm]
& & \prod_{c=c_j,d_k} X(id_c) \arrow[rrd]\arrow[lld] \arrow[dd, dashed, bend left=60] & & \\
\prod_{j\neq i} X(id_{c_j}) & & X(p\circ_i q) \arrow[ll] \arrow[ld] \arrow[d] \arrow[rr] \arrow[rd] \arrow[u,"\cong"] &  & \prod X(id_{d_k}) \arrow[lldd, dashed, bend left] \\
 & X(p) \arrow[ld, "\cong"] \arrow[lu] \arrow[rd] \arrow[r] & X(id_{c_0})  & X(q) \arrow[ru, "\cong"'] \arrow[ld] &  \\
\prod_{j=1}^n X(id_{c_j}) \arrow[uu] \arrow[rr] \arrow[rru, dashed, bend right] & & X(id_{c_i})  &  &
\end{tikzcd}
\end{center}

The two upper horizontal maps in this diagram are the projection maps of a product. The upper dashed arrow is the map $A(p\circ_i q)$, the lower right dashed arrow is the map $A(q)$, and the lower left dashed arrow is the map $A(p)$. This diagram implies compatibility of the algebra structure maps with operadic composition: the map from the upper product to the lower left product is given by the identity maps on factors $X(c_j)$, $j\neq i$, and by the map $A(q)$ on the product of remaining factors. 

Since the input and the output maps of $id_c$ are the identity maps, the structure map corresponding to $id_c$ is the identity map. Finally, compatibility with symmetric group action is implied by the following commutative diagram:

\begin{center}
\begin{tikzcd}
& p \arrow[dd, "\sigma"] &\\
id_{c_j} \arrow[ru, "in_j"] \arrow[rd, "in_{\sigma^{-1}(j)}"'] & & id_{c_0} \arrow[lu, "out"'] \arrow[ld, "out"] \\
& p^\sigma &      
\end{tikzcd}
\end{center}

In the opposite direction, any $P$-algebra $A$ produces single-object Segal $P$-presheaf $X$ as follows. For any $p\in P(c_1,\dots,c_n;c_0)$ let $X(p)$ be equal to $\prod_{j=1}^n A(c_j)$. For a morphism $f:p\to r$ the map $X(f):X(r)\to X(p)$ is defined as follows. Let $q_j,\, j>0,$ be the upper vertices of $f$, and  $q_j\to r$ be the corresponding maps. For $X$ to be a single-object Segal $P$-presheaf the corresponding maps $X(r)\to X(q_j)$ have to be projections onto the subproducts of factors of $X(r)$ given by the indices of leaves above the vertices $q_j$ in $f$. For $X$ to be a presheaf the map $X(r)\to X(q_j)\xrightarrow[]{out^*} X(id_c)$ should coincide with $X(r)\to X(p)\xrightarrow[]{in_j^*} X(id_c)$. Thus we define the map $X(r)\to X(p)$ to be the composition of the permutation of factors (according to the permutation on leaves of $f:p\to r$), of the projection to the factors indexed by the leaves of the upper vertices, and of the product over $j$ of the algebra structure maps $A(q_j)=out^*:X(q_j)\xrightarrow[]{} X(id_c)$. 

Functoriality can be checked as follows. Let $s\to p$ and $p\to r$ be maps in $\Tw(P)$. The composition $X(r)\to X(p)\to X(s)$ is the composition of permutation of factors, projection, product of the algebra structure maps,  permutation, projection, and product of the algebra structure maps. It can be simplified to the composition of permutation of factors, projection, product of the algebra structure maps, and product of the algebra structure maps. The permutation and the projection are the same as in the tree $s\to p\to r$. Since $A$ is an algebra, the compositions of the algebra structure maps are the structure maps of compositions, i.e.\@ the structure maps of the upper vertices in $s\to p\to r$. 

Under this correspondence morphisms of single-object Segal presheaves coincide with morphisms of $P$-algebras.
\end{proof}

\begin{theorem}
\label{thm:evaluation-functor}
Let $P$ be an operad, $A$ be a $P$-algebra, and $X$ be the single-object Segal $P$-presheaf corresponding to the $P$-algebra $A$. There exists functor $ev(P,A):\Tw(P)/X\to \Tw_P(A)$.
\end{theorem}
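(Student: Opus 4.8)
The plan is to define $ev(P,A)$ by hand, reading off objects and morphisms from the description of $\Tw(P)$ in Corollary~\ref{crl:twsop} and from the construction of $\Tw_P(A)$ as the underlying category of $\TwOp_P(A)$, and then to verify functoriality by tracing representatives through the generating equivalences of $TO(A)$. By the single-object Segal condition an object of $\Tw(P)/X$ is a pair $(p,\vec a)$ with $p\in P(c_1,\dots,c_n;c_0)$ and $\vec a=(a_1,\dots,a_n)\in X(p)=\prod_{j=1}^n A(c_j)$; I would send it to $ev(P,A)(p,\vec a):=p(\vec a)\in A(c_0)$, its evaluation in $A$, which is an object (a colour) of $\Tw_P(A)$.

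\emph{Morphisms.} A morphism $(p,\vec a)\to(r,\vec b)$ of $\Tw(P)/X$ is a morphism $f\colon p\to r$ of $\Tw(P)$ with $X(f)(\vec b)=\vec a$. By Corollary~\ref{crl:twsop} the morphism $f$ is given by a canonical reduced tree (or graph) with source vertex marked by $p$, lower vertex marked by an operation $q_0$, and upper vertices marked by operations $q_1,\dots,q_n$, with $r$ the evaluation of this tree; and by the formula for $X(f)$ in the proof of Theorem~\ref{thm:algebras-as-segal}, the condition $X(f)(\vec b)=\vec a$ says exactly that $a_j=q_j(\vec b_{q_j})$ for all $j$ (up to the permutation of leaves recorded by $f$), where $\vec b_{q_j}$ is the subtuple of $\vec b$ on the leaves of $r$ lying above $q_j$ in $f$. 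I would then define $ev(P,A)(f)$ to be the morphism of $\Tw_P(A)$ represented by the arity-$1$ operation $[\,q_0((p\vec a)',\vec b_{q_0})\,]$ of $\TwOp_P(A)$, with the variable $(p\vec a)'$ placed in first position by the required permutation of the inputs of $q_0$. Its source is $p(\vec a)=ev(P,A)(p,\vec a)$, and its target is $q_0(p(\vec a),\vec b_{q_0})=q_0((p\circ(q_1,\dots,q_n))(\vec b),\vec b_{q_0})=r(\vec b)=ev(P,A)(r,\vec b)$, using $p(\vec a)=p(q_1(\vec b_{q_1}),\dots,q_n(\vec b_{q_n}))$ and the defining description of $r$ as the evaluation of $f$. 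For $f=id_p$ all of $q_0,\dots,q_n$ are identities and $\vec b_{q_0}$ is empty, so $ev(P,A)(id_p)=[\,id_{c_0}((p\vec a)')\,]=id_{p(\vec a)}$.

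\emph{Functoriality.} For a second morphism $g\colon(r,\vec b)\to(t,\vec c)$ with lower vertex $r_0$ and upper vertices $r_1,\dots,r_m$, the composition rule in $\Tw_P(A)$ gives $ev(P,A)(g)\circ ev(P,A)(f)=[\,(r_0\circ q_0)((p\vec a)',\vec b_{q_0},\vec c_{r_0})\,]$ (suppressing the permutations that put variables first), obtained by substituting the variable of $ev(P,A)(g)$, whose value is $q_0(p(\vec a),\vec b_{q_0})=r(\vec b)$, by $ev(P,A)(f)$. On the other hand, by the grafting-and-contraction description of composition in $\Tw(P)$ (Corollary~\ref{crl:twsop}) the lower vertex $R_0$ of $g\circ f$ is the composite of $q_0$ with $r_0$ in the output-of-$p$ slot and with those $r_i$ sitting above the leaves of $q_0$, whence $ev(P,A)(g\circ f)=[\,R_0((p\vec a)',\vec c_{R_0})\,]$. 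Since $g$ is a morphism of $\Tw(P)/X$ we have $b_i=r_i(\vec c_{r_i})$, so each entry of $\vec b_{q_0}$ in the first expression may be rewritten as $r_i(\vec c_{r_i})$ via a single application of the generating equivalence $[\,q(\dots,b_i,\dots)\,]=[\,q(\dots,r_i(\vec c_{r_i}),\dots)\,]$ on second-copy elements of $TO(A)$; these rewritings convert $(r_0\circ q_0)((p\vec a)',\vec b_{q_0},\vec c_{r_0})$ into $R_0((p\vec a)',\vec c_{R_0})$, so the two morphisms coincide.

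\emph{Main obstacle.} The delicate point is precisely this last reconciliation. Since $\Tw_P(A)$, unlike $\Tw(P)$, admits no canonical representatives of morphisms, one cannot compare $ev(P,A)(g)\circ ev(P,A)(f)$ with $ev(P,A)(g\circ f)$ by reduction to a normal form, and must instead track the generating equivalences of $TO(A)$ explicitly, feeding in both Segal conditions $X(f)(\vec b)=\vec a$ and $X(g)(\vec c)=\vec b$ together with the grafting-and-contraction rule for composition in $\Tw(P)$. Everything else — well-definedness on canonical trees, the identity law, and the matching of sources and targets — is routine bookkeeping.
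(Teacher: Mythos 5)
Your proposal is correct and follows essentially the same route as the paper: objects go to their evaluations $p(\vec a)$, a morphism goes to the class of its lower‑vertex corolla $[q_0((p\vec a)',\vec b_{q_0})]$ in $TO(A)$, and functoriality is checked by observing that absorbing the upper vertices of the second morphism that land on leaves of $q_0$ is exactly a sequence of generating equivalences of $TO(A)=F(U(A),U(A))/Rel\,A$. The point you flag as the main obstacle is precisely the one the paper's proof addresses, and your resolution matches it.
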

\begin{proof}
Objects $g:p\to X$ of $\Tw(P)/X$ correspond to tuples of the form $(p,a_1,\dots,a_n)$, where the elements $a_j$ of $A$ are the images of $(g:p\to X)\in X(p)$ under the maps $in_j^*:X(p)\to X(id_{c_j})$. The functor $ev(P,A)$ sends a tuple $(p,a_1,\dots,a_n)$ to $p(a_1,\dots,a_n)\in A$, an object in $\Tw_P(A)$. 

A morphism $p\to r\to X$ in $\Tw(P)/X$ can be represented as a reduced symmetric tree corresponding to $p\to r$ with leaves marked by the same elements of $A$ as in the tuple corresponding to $r\to X$. Let $q_j$ be the $j$-th non-source vertex of $p\to r$. The output $q_j(a_{j,1},\dots,a_{j,k_j}),\,j>0,$ of the corolla $q_j\to X=q_j\to r\to X$ is equal to the $j$-th input of the corolla $p\to X$, and the input of the corolla $q_0\to X$ that is grafted into the output of $p\to X$ is equal to the output of $p\to X$. In this manner every edge in the tree $p\to r\to X$ can be marked by an element of $A$.

The functor $ev(P,A)$ sends the morphism $p\to r\to X$ to the corolla $q_0$, i.e.\@ to the morphism represented by $q_0(a',a_{0,1},\dots,a_{0,k_0})$, with $a'=p(a_1,\dots,a_n)$, $a_j=q_j(a_{j,1},\dots,a_{j,k_j})$. This map respects the composition of morphisms. Indeed, the lowest tree grafted into $s$ via $s\to p\to r\to X$ is obtained by grafting some of the corollas $q_i$ to the lower vertex $p_0$ of $s\to p$, and this tree, as an element of $TO(A)=F(U(A),U(A))/Rel\, A$, is equal to the tree obtained by grafting only $q_0$ into $p_0$, since each grafting of $q_i,\,i>0,$ into $p_0$ corresponds to generating equivalence of $TO(A)$. 
\end{proof}

Intuitively, the category of elements of the single-object Segal presheaf corresponding to an algebra $A$ describes the ways to decompose elements of the algebra $A$. The functor $ev$ can be seen as the evaluation.

\paragraph{Single-object Segal presheaves over categories $\mathcal{U}(P)$.} For any operad $P$ let $\alpha:\Tw(P)\to\mathcal{U}(P)$ be the canonical opfibration.  One can try to define single-object Segal presheaves over $\mathcal{U}(P)$ as presheaves $X$ such that for any operation $p$ in $P$ the map $(X(\alpha(in_j)))_{1\leq j\leq n}: X(\alpha(p))\to\prod X(\alpha(id_{c_j}))$ induced by the images of input maps of $p$ is a bijection. And while it seems possible to reconstruct a $P$-algebra from such a presheaf, this definition is incorrect, unless we additionally require that the map $(X(\alpha(in_j)))_{1\leq j\leq n}: X(\alpha(p))\to\prod X(\alpha(id_{c_j}))$ depends only on the image of $p$ in $\mathcal{U}(P)$, i.e.\@ the map is determined by the colours of $p$. With this additional requirement, we get the subcategory of presheaves over $\mathcal{U}(P)$ that is equivalent to the category of $P$-algebras. 

For any $C$-coloured operad $P$ and $P$-algebra $A$ we get the following commutative square.

\begin{center}
\begin{tikzcd}
\Tw_{sOp_C}(P)/A \arrow[r,"ev"] \arrow[d] & \Tw_P(A)\arrow[d,"\alpha"]\\
\mathcal{U}_{sOp_C}(P)/A \arrow[r,"ev"]& \mathcal{U}_P(A)
\end{tikzcd}
\end{center}
The presheaves denoted by $A$ are both presheaves corresponding to the algebra $A$, and the presheaf over $\Tw(P)$ is the restriction of the presheaf over $\mathcal{U}(P)$ along the canonical opfibration $\alpha:\Tw(P)\to\mathcal{U}(P)$.

\subsection{Segal conditions}

As first observed by Grothendieck, the category of small categories is equivalent to the category of Segal presheaves over $\Delta$ (\cite{segal1968classifying}). Two subcategories of $\Delta$ are used to define Segal conditions: the category $\Delta_0$ of lower (or inert) morphisms of $\Delta$ and the category $El(\Delta_0)$ of elementary objects, the full subcategory of $\Delta_0$ on objects $[0]$ and $[1]$. Segal condition for presheaves over $\Delta$ is the condition on the restriction of a presheaf over $\Delta$ to the subcategory $\Delta_0$. This condition has two equivalent versions: the condition to be a right Kan extension along the inclusion $El(\Delta_0)\to\Delta_0$, and the condition to be a sheaf for a certain coverage on $\Delta_0$.

For any set-operad $P$ there are two versions of Segal condition for presheaves over $\Tw(P)$. These generalize the two versions of Segal condition for presheaves over $\Delta$. For a rather general class of operads these two versions are equivalent. 

Three subcategories of the category $\Tw(P)$ can play the role of the category $\Delta_0$: the subcategories $Lower$, $Lower\circ Perm$ and $Inert$, where $Perm$ is the category of permutation isomorphisms. Both versions of Segal condition do not depend on the choice of one of these three subcategories. We have to consider these three subcategories since, while the choice of $Inert$ seems to be the most natural, it does not give the desirable notion of a free Segal $P$-presheaf.

\subsubsection{Condition on restrictions to be right Kan extensions}

The following form of Segal condition is a particular case of Segal condition of Chu and Haugseng (\cite{chu2019homotopy}). 

\begin{definition}
Let $P$ be a planar or symmetric operad endowed with a nice grading~$\psi$. An elementary object in $\Tw(P)$ is an identity operation or an operation $q$ such that $\psi(q)=-1$. The category $El(P, \psi, Lower)$, or $El(P, \psi, Lower\circ Perm)$, is the full subcategory of $Lower$ (equally, of $Lower\circ Perm$) on elementary objects.  The category $El(P, \psi, Inert)$ is the full subcategory of $Inert$ on elementary objects.
\end{definition}

\begin{definition}
  Let $P$ be a planar, symmetric or cyclic operad endowed with a nice grading. A petal map to an operation $t$ in $P$, or simply a petal of $t$, is a morphism $q\to t$ in $\Tw(P)$ from an operation $q$ of grading $(-1)$. For a presheaf $X$ on $\Tw(P)$ and a petal map $f:q\to p$ the petal of $x\in X(p)$ corresponding to $f$ is the element $f^*(x)\in X(q)$.
\end{definition}

Any morphism from an operation of arity $0$, thus any petal map, is a lower morphism. Petal maps are analogous to maps from $[0]$ in $\Delta$. For a Segal presheaf $X$ petals of an element in $X(id_c)$ are analogous to objects of a morphism in a category and to colours of an operation of a coloured operad.

\begin{definition}
  For a planar or symmetric set-operad $P$ let $\Tw(P)_0$ be the subcategory $Lower$, $Lower\circ Perm$ or $Inert$ of the category $\Tw(P)$. A Segal $P$-presheaf is a presheaf $X$ over $\Tw(P)$ such that the restriction of $X$ to $\Tw(P)_0$ is the right Kan extension of the restriction of $X$ to $El(\Tw(P)_0)$ along the inclusion $i:El(\Tw(P)_0)\to \Tw(P)_0$.
\end{definition}

This notion does not depend on the choice of the category $\Tw(P)_0$.

\begin{lemma}
For any operad $P$ and operation $p$ in $P$ the categories $El(P,\psi,Lower)/p$ and $El(P,\psi,Lower\circ Perm)/p$ are equal, and their inclusion into $El(P,\psi,Inert)/p$ is an equivalence of categories.
\end{lemma}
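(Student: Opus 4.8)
The first assertion will be essentially formal: the two subcategories $Lower$ and $Lower\circ Perm$ of $\Tw(P)$ already agree on morphisms out of \emph{elementary} objects, so the comma categories over $p$ coincide. For the second assertion the plan is to check directly that the inclusion $J\colon El(P,\psi,Lower)/p\hookrightarrow El(P,\psi,Inert)/p$ is faithful, full and essentially surjective; fullness will fall out of uniqueness of the $(Upper,Lower)$-factorization, while essential surjectivity requires the structure theory of the generated $(Active,Inert)$-factorization system together with the description of composition in $\Tw(P)$ from Corollary~\ref{crl:twsop}.

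\textbf{Step 1: the two comma categories coincide.} An elementary object is either an identity operation (arity $1$) or an operation of grading $-1$ (arity $0$); in both cases the relevant symmetric group $S_{0}$ or $S_{1}$ is trivial, so the only permutation isomorphism with an elementary source is the identity. Consequently, for an elementary object $q$ every morphism $q\to r$ in $Lower\circ Perm$ (a composite $q\xrightarrow{\pi}q^{\sigma}\xrightarrow{l}r$) has $\pi=\mathrm{id}$, hence already lies in $Lower$. Thus $El(P,\psi,Lower)$ and $El(P,\psi,Lower\circ Perm)$ have the same objects and the same hom-sets, so they are literally the same category, and so are their slices $El(P,\psi,Lower)/p$ and $El(P,\psi,Lower\circ Perm)/p$.

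\textbf{Step 2: $J$ is faithful and full.} Faithfulness is immediate, since a lower morphism is the same morphism of $\Tw(P)$ whether viewed in $Lower$ or in $Inert$. For fullness, take objects $(q_{1},f_{1}),(q_{2},f_{2})$ of $El(P,\psi,Lower)/p$ and an inert morphism $g\colon q_{1}\to q_{2}$ with $f_{2}\circ g=f_{1}$; I want $g$ to be lower. Factor $g=l\circ u$ via the strict factorization system $(Upper,Lower)$. Then $f_{1}=(f_{2}\circ l)\circ u$, where $f_{2}\circ l$ is a composite of lower morphisms, hence lower, and $u$ is upper. Since $f_{1}$ is lower, its $(Upper,Lower)$-factorization is $f_{1}\circ\mathrm{id}$, so by uniqueness of strict factorizations $u=\mathrm{id}$ and $g=l$ is lower; thus $g$ is a morphism of $El(P,\psi,Lower)/p$.

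\textbf{Step 3: $J$ is essentially surjective.} Let $(q,f)$ be an object of $El(P,\psi,Inert)/p$. If $\psi(q)=-1$ then $q$ has arity $0$, so every morphism out of $q$ is lower; in particular $f$ is lower and $(q,f)$ lies in the image of $J$. If $q=id_{c}$, I will use that, since $(Active,Inert)$ is the orthogonal factorization system generated by the strict system $(Upper,Lower)$ (Appendix~\ref{sec:appendix-cat-th}), the $(Upper,Lower)$-factorization $f=l\circ u$ of an inert $f$ has $u$ an isomorphism, say $u\colon id_{c}\xrightarrow{\sim}\tilde q$. As $\tilde q$ is isomorphic to $id_{c}$ in $\Tw(P)$ it is an arity-$1$ operation invertible in $P(1)$, with input colour $c'$, and the single non-source vertex $A$ of $u$ lies in $\mathrm{Iso}_{P(1)}(c',c)$. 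Using the grafting-and-contracting description of composition from Corollary~\ref{crl:twsop}, I will verify that $f$ equals $in_{i}\circ\gamma$, where $in_{i}\colon id_{c'}\to p$ is an input map of $p$ (at the input of colour $c'$ produced by plugging $A$ into the lower vertex of $l$) and $\gamma\colon id_{c}\xrightarrow{\sim}id_{c'}$ is the isomorphism with non-source vertices $A$ and $A^{-1}$; concretely both $f$ and $in_{i}\circ\gamma$ have source $id_{c}$, upper non-source vertex $A$, the same lower non-source vertex, and target $p$. Then $\gamma$ is an isomorphism $(id_{c},f)\to(id_{c'},in_{i})$ in $El(P,\psi,Inert)/p$ (isomorphisms are inert), and $(id_{c'},in_{i})$ lies in the image of $J$. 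Combining Steps 1--3, $J$ is an equivalence.

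\textbf{Expected main obstacle.} The delicate point is the identification $f=in_{i}\circ\gamma$ in Step~3: it requires tracking the canonical reduced-tree representatives carefully through composition and checking that the index of the leaf above the source vertex of $l$ matches the input of $p$ singled out by $\gamma$, so that no residual permutation is left over. This in turn relies on the characterization of inert morphisms (iso upper part of the $(Upper,Lower)$-factorization) proved in Appendix~\ref{sec:appendix-cat-th}; the rest of the argument is formal.
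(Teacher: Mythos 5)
Your proof is correct and follows essentially the same route as the paper: you identify the objects as petal maps and input maps, and you obtain essential surjectivity from the same isomorphism $id_c\to id_d$ built from the upper vertex $v$ of an inert map together with its inverse $v^{-1}$ as lower vertex. The only cosmetic difference is that you establish fullness via uniqueness of $(Upper,Lower)$-strict factorizations, whereas the paper directly observes that inert morphisms between input maps are trivial; the two arguments come to the same thing.
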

\begin{proof}
Here $El(P,\psi,C)/p$ is the comma category with respect to the inclusion of $p$ and $El(P,\psi,C)$ into $C$. The categories $El(P,\psi,Lower)/p$ and $El(P,\psi,Lower\circ Perm)/p$ are equal: their objects $f:q\to p$ are the input maps and the petal maps of $p$, and their morphisms $f_1\to f_2$ are the lower maps $g:q_1\to q_2$ such that $f_1=f_2\circ g$. The objects of the category $El(P,\psi,Inert)/p$ are the petal maps of $p$ and the inert maps from identity operations to $p$, and its morphisms $f_1\to f_2$ are the inert maps $g:q_1\to q_2$ such that $f_1=f_2\circ g$. Any object of $El(P,\psi,Inert)/p$ given by an inert map $id_c\to p$ with upper vertex $v$ is isomorphic to an input map $id_d\to p$ via the morphism $id_c\to id_d$ with upper vertex $v$ and lower vertex $v^{-1}$. Thus $El(P,\psi,Inert)/p$ is equivalent to its full subcategory $\mathcal{D}$ on petal and input maps of $p$. Since any inert morphism between input maps of $p$ is trivial, the category $\mathcal{D}$ coincides with the category $El(P,\psi,Lower)/p$.
\end{proof}

\begin{corollary}
\label{crl:segal-well-defined} 
For any operad $P$ and presheaf $X$ over $El(P,\psi,Inert)$ the restriction to $Lower$ of the right Kan extension of $X$ along the inclusion $El(P,\psi,Inert)\to Inert$ is the right Kan extension along the inclusion $El(P,\psi,Lower)\to Lower$ of the restriction of $X$ to $El(P,\psi,Lower)$. The same holds with $Inert$ or $Lower$ replaced by $Lower\circ Perm$.
\end{corollary}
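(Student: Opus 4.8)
The plan is to reduce the statement to the preceding lemma by computing both right Kan extensions pointwise. Recall that for a fully faithful inclusion $i\colon\mathcal{E}\hookrightarrow\mathcal{C}$ and a presheaf $Y$ on $\mathcal{E}$, the right Kan extension $\mathrm{Ran}_i Y$ exists (presheaves of sets are complete) and is computed at an object $c$ of $\mathcal{C}$ as the limit $\lim\bigl(Y\circ\pi_c\bigr)$ over the comma category $i/c$, where $\pi_c\colon i/c\to\mathcal{E}$ is the projection $(e, e\to c)\mapsto e$. I would apply this to $i$ the inclusion $El(P,\psi,Inert)\hookrightarrow Inert$ and to $i'$ the inclusion $El(P,\psi,Lower)\hookrightarrow Lower$. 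Since an object $p$ of $Lower$ is also an object of $Inert$ ($Lower\subseteq Inert$ by construction of the $(Active,Inert)$ factorization system; cf.\ Proposition~\ref{pr:ufs-are-ofs}), both formulas apply at such a $p$, giving the two sets $\lim\bigl(X\circ\pi_p^{Inert}\bigr)$ and $\lim\bigl(X\circ\pi_p^{Lower}\bigr)$ with projections $\pi_p^{Inert}\colon El(P,\psi,Inert)/p\to El(P,\psi,Inert)$ and $\pi_p^{Lower}\colon El(P,\psi,Lower)/p\to El(P,\psi,Lower)$.

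Next I would invoke the lemma: the inclusion $j_p\colon El(P,\psi,Lower)/p\to El(P,\psi,Inert)/p$ is an equivalence of categories, and by construction $X\circ\pi_p^{Inert}\circ j_p=X\circ\pi_p^{Lower}$ (both functors send an object $(q, q\to p)$ to $X(q)$ and a lower morphism $g\colon q_1\to q_2$ over $p$ to $X(g)\colon X(q_2)\to X(q_1)$). Since an equivalence of categories is both coinitial and cofinal, precomposition of a diagram with $j_p$ leaves its limit unchanged; concretely, a quasi-inverse of $j_p$ together with the defining natural isomorphisms induces a natural bijection between cones over $X\circ\pi_p^{Inert}$ and cones over $X\circ\pi_p^{Lower}$, hence a bijection $\lim\bigl(X\circ\pi_p^{Inert}\bigr)\xrightarrow{\ \cong\ }\lim\bigl(X\circ\pi_p^{Lower}\bigr)$, namely the canonical comparison map. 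This bijection is natural in $p$ with respect to morphisms of $Lower$: both sides, together with the comparison map, are assembled functorially from $X$ and the comma-category data, and the naturality squares commute because the projections $\pi_p$ and the equivalences $j_p$ are themselves natural in $p$. Thus the restriction to $Lower$ of $\mathrm{Ran}_i X$ is canonically identified with $\mathrm{Ran}_{i'}\bigl(X|_{El(P,\psi,Lower)}\bigr)$, which is the first assertion.

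For the variant with $Lower\circ Perm$ in place of $Lower$, the same argument applies verbatim: $Lower\circ Perm\subseteq Inert$ because permutation isomorphisms are isomorphisms, hence inert, and $Inert$ is closed under composition; and the lemma gives $El(P,\psi,Lower)/p=El(P,\psi,Lower\circ Perm)/p$, so the relevant comma category and projection are literally unchanged. This yields the equality for $Lower\circ Perm$, and comparing the two identifications also gives the statement for $Lower$ versus $Lower\circ Perm$ directly. I expect the only point requiring genuine care to be the pointwise-ness of the Kan extensions and the naturality of the comparison map in $p$; both are routine here, since everything takes place in $\mathbf{Set}$ and every construction used is functorial in $p$, so there is no real obstacle beyond bookkeeping.
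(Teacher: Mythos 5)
Your proposal is correct and follows essentially the same route as the paper's proof: compute both right Kan extensions pointwise as limits over the comma categories, observe that the diagram for the $Lower$ (or $Lower\circ Perm$) case is the restriction of the $Inert$-case diagram along the inclusion $El(P,\psi,Lower)/p\to El(P,\psi,Inert)/p$, and conclude from the preceding lemma that this inclusion is an equivalence, so the limits agree. You merely spell out the naturality in $p$ and the cone-bijection in more detail than the paper does.
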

\begin{proof}
The diagrams $G'$ computing the pointwise right Kan extension of the restriction of $X$ are equal to diagrams $G\circ F$, where $G$ is the diagram that computes the pointwise right Kan extension of $X$, and $F$ is the inclusion of the form $El(P,\psi,Lower)/p\to El(P,\psi,Inert)/p$. The inclusion $F$ is an equivalence, thus the limits of $G$ and $G'$ coincide.
\end{proof}

\begin{example}
The category of single-object Segal presheaves is equivalent to the category of Segal presheaves $X$ such that $X(q)$ is a singleton for all operations $q$ of grading $(-1)$: a single-object Segal presheaf $X$ over $\Tw(P)$ is a Segal presheaf, and $X(q)$ is a singleton for all operations $q$ of arity $0$, and in particular, for operations of grading $(-1)$; if $Y$ is a Segal presheaf over $\Tw(P)$ such that  $Y(q)$ is a singleton for all $q$ such that $\psi(q)=-1$, then $Y(q)$ is a singleton for all operations of arity $0$, since the comma categories $El(\Tw(P)_0)/p$ for operations $p$ of arity $0$ are either empty or consist only of petal maps.
\end{example}

\begin{example}
The category of Segal presheaves over $\Tw_{sOp}(iuAs)$ is the category of small categories with anti-involution, and not the category of dagger categories. This also shows that the set of colours of a coloured \emph{cyclic} operad  should be endowed with involution.
\end{example}

\subsubsection{Condition on restrictions to be sheaves} 

We will use the following notation for coverages and sheaves (\cite{johnstone2002sketches}).

\begin{definition}
  A coverage $T$ on a category $C$ is an assignment, for each object $U$ of $C$, of a collection $T(U)$ of families $(f_i:U_i\to U,\, i\in I)$ of morphisms with common codomain $U$, called $T$-covering families, such that for any $T$-covering family $(f_i:U_i\to U,\, i\in I)$ and any morphism $g:V\to U$ there exists a $T$-covering family $(h_j:V_j\to V,\,j\in J)$ such that each composition $g\circ h_j$ factors through some morphism $f_i$ via a map $V_j\to U_i$. 
\end{definition}

\begin{definition}
  A presheaf $X$ on a category $C$ satisfies the sheaf axiom for a family of morphisms $(f_i:U_i\to U,\,i\in I)$ if for any \emph{compatible} family of elements $s_i\in X(U_i)$ there is exactly one element $s\in X(U)$ such that $f_i^*(s)=s_i$ for all $i$ in $I$. \emph{Compatibility} is the condition that for any $g:V\to U_i$ and $h:V\to U_j$ (where $i$ and $j$ might be equal) such that $f_i\circ g=f_j\circ h$ we have $g^*(s_i)=h^*(s_j)$. A presheaf $X$ on a category $C$ is a sheaf for a coverage $T$ on the category $C$ if it satisfies the sheaf axiom for all $T$-covering families.
 \end{definition}

There is a coverage $S'$ on $\Delta_0$ that assigns a single family of morphisms to each object. The family $S'([0])$ consists of the identity map, and the families $S'([n])$, $n\neq 0$, consist of all input maps of $[n]$. A Segal presheaf over $\Delta$ is a presheaf whose restriction to $\Delta_0$ is a sheaf. The coverage $S_\psi$ on $\Tw(P)_0$ that plays the same role as (but is not a generalization of) the coverage $S'$ on $\Delta$ is defined as follows.

\begin{definition}
Let $P$ be a planar or symmetric operad with a nice grading $\psi$. The Segal coverage $S_\psi$ on $\Tw(P)_0$ assigns to each operation $p$ of $P$ a single family $S_\psi(p)$ of morphisms. The family $S_\psi(p)$ is the union of all input and  petal maps of $p$. 
\end{definition}

\begin{proposition}\label{pr:segal-cov-correct}
Segal coverage is a coverage for the categories $Lower$, $Lower\circ Perm$ and $Inert$.
\end{proposition}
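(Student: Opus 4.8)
The plan is to verify the coverage axiom directly from the definitions, handling the three candidate categories $Lower$, $Lower\circ Perm$, and $Inert$ for $\Tw(P)_0$ essentially uniformly. Recall that $S_\psi(p)$ is the single family consisting of all input maps $in_j\colon id_{c_j}\to p$ ($1\le j\le n$, where $n$ is the arity of $p$) together with all petal maps $q\to p$ (from operations $q$ of grading $(-1)$); if $n=0$ this family consists only of petal maps, and when $p=id_c$ the only member is the identity (an input map, counted with $n=1$), so the family is never empty for operations of positive arity and is the identity family when $p$ is elementary of grading $\geq 0$. The task is: given $p$, its covering family $S_\psi(p)$, and any morphism $g\colon r\to p$ in $\Tw(P)_0$, to produce a covering family of $r$ — necessarily $S_\psi(r)$ itself, since $S_\psi$ assigns exactly one family per object — such that every composite $g\circ h$ (for $h$ ranging over $S_\psi(r)$) factors through some member of $S_\psi(p)$.

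The key step is the analysis of how a morphism $g\colon r\to p$ interacts with the input and petal maps of $r$. Here I would use the canonical representative of $g$ from Corollary~\ref{crl:twsop}: a reduced (planar or symmetric) tree whose source vertex is marked by $r$ and whose non-source vertices, marked by operations of $P$, compose to $p$. Composing an input map $in_j\colon id_{c_j}\to r$ with $g$ yields a morphism $id_{c_j}\to p$: in the canonical picture this is obtained by taking the non-source vertices of $g$, keeping only those sitting above the $j$-th input edge of $r$ (together with the one below, if $g$ is not upper), and these assemble into an inert (or lower, or lower-composed-with-permutation) morphism $id_{c_j}\to p$. Since $p$ has finitely many input edges, and leaves of the tree of $g$ above the $j$-th input of $r$ map to a subset of the leaves of $p$, the morphism $g\circ in_j$ visibly factors through the input map $in_{j'}\colon id_{c_{j'}}\to p$ corresponding to one of those leaves, or through a petal of $p$ if the relevant component has grading $(-1)$. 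The same bookkeeping, now with the leaf-ordering conventions of the definitions of $Lower$ versus $Lower\circ Perm$ versus $Inert$, shows the factoring map $id_{c_j}\to id_{c_{j'}}$ lies in $\Tw(P)_0$ in each of the three cases: it is a morphism in $P(1)$ built from the relevant non-source vertices, which are of grading $0$ hence (for inert/lower purposes) identities or permutations as appropriate. The petal maps of $r$ are handled identically — a petal $q\to r$ composed with $g$ gives a petal of $p$ — in fact petal maps compose to petal maps with no choices to make.

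The routine part is checking the three variants case by case, and checking that $S_\psi(p)$ is well-typed for each of $Lower$, $Lower\circ Perm$, $Inert$ — i.e. that input and petal maps genuinely lie in these subcategories — which follows immediately from the definitions since input maps are lower by definition and petal maps are always lower (any morphism out of an arity-$0$ operation is lower). I expect the only real subtlety, and hence the main obstacle, to be getting the leaf-index bookkeeping right in the $Inert$ case: inert morphisms in $\Tw(P)$ carry the $(Active, Inert)$ factorization-system constraints on how leaf indices are permuted, so when I restrict the tree of $g$ to the part above a given input edge of $r$ I must check that the induced morphism $id_{c_j}\to p$, and the factoring map to $in_{j'}$, respect those constraints; this is where one uses that non-source vertices of grading $0$ are forced to be identities-up-to-permutation (Corollary~\ref{crl:isos-in-tw}) so that the factorization stays inert. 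Once that is set up, the factorization condition of the coverage axiom is immediate, and the proof concludes.
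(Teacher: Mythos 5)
Your proposal takes essentially the same route as the paper: verify the coverage axiom directly, note that petal maps compose with anything to give petal maps and that input maps compose with lower maps (or maps in $Lower\circ Perm$) to give input maps, and isolate the $Inert$ case as the only one requiring work, where $g\circ in_j$ must be factored through an input map of $p$ via a map between identity operations. One justification is wrong as written, however: you claim, citing Corollary~\ref{crl:isos-in-tw}, that non-source vertices of grading $0$ are forced to be identities-up-to-permutation. The corollary says no such thing — it characterizes isomorphisms of $\Tw(P)$, and grading-$0$ operations in $P(1)$ need not be identities, nor even invertible, since this proposition does not assume the hypotheses of Theorem~\ref{thm:groupoid-reedy}. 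What actually makes the inert case close is that $Inert=Lower\circ(I\cap Upper)$, so the upper vertex $v$ of $g\circ in_j$ is an isomorphism in $P(1)$, and the factoring map $id_c\to id_d$ with upper vertex $v$ and lower vertex $v^{-1}$ is an isomorphism of $\Tw(P)$, hence inert. With that substitution your argument is the paper's. (The aside about $g\circ in_j$ possibly factoring through a petal of $p$ is spurious but harmless: its source is an identity operation, so when $r$ has positive arity so does $p$, and the composite always factors through an input map.)
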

\begin{proof}
The case of $Lower$ and of $Lower\circ Perm$ is simple: a composition of a petal map with any morphism is a petal map, and a composition of an input map with a lower morphism or a morphism in $Lower\circ Perm$ is an input map. For the case of $Inert$, let $h:id_c\to V$ be an input map and $g:V\to U$ be an inert map. Let $v$ be the operation that marks the upper vertex of $g\circ h$. Then $v$ is an isomorphism in the category $P(1)$, and there is isomorphism $k:id_c\to id_d$ with upper vertex $v$ and lower vertex $v^{-1}$. The morphism $g\circ h$ is equal to the composition of $k$ with the $j$-th input map of $U$, where $j$ is the index of the leaf above the source vertex of $g\circ h$. 
\end{proof}

\begin{lemma}
\label{lem:factor-at-most-one-input}
An inert morphism $f:p\to t$ from an operation of arity $1$ can factor in $\Tw(P)$ through at most one input map of $t$.
\end{lemma}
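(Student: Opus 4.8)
The plan is to argue by contradiction using the structure of canonical representatives of morphisms in $\Tw(P)$ as reduced symmetric trees (Corollary~\ref{crl:twsop}) together with the description of input and inert maps. Suppose $f:p\to t$ is inert with $p$ of arity $1$, and suppose $f$ factors through two input maps $in_j:id_{c_j}\to t$ and $in_k:id_{c_k}\to t$, say $f = in_j\circ g_j = in_k\circ g_k$ with $g_j:p\to id_{c_j}$ and $g_k:p\to id_{c_k}$. Since $p$ has arity $1$ and $id_{c_j}$, $id_{c_k}$ have arity $1$, the morphisms $g_j$ and $g_k$ have canonical representatives with a single (source) vertex marked by $p$, a single non-source vertex that is the lowest vertex, and no upper vertices — in other words $g_j$ and $g_k$ are determined by the operation marking their lowest vertex, an operation of arity $1$ whose composite with $p$ is an identity operation. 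By Lemma~\ref{lem:splitepi} applied to these split epimorphisms (each $g_\bullet$ is a split epi onto an identity, with section the corresponding unit-type input map... more precisely we only need that $g_\bullet$ has arity-$1$ source and target), the lowest vertex of $g_\bullet$ has arity $1$.

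Next I would read off what the composite $in_j\circ g_j$ looks like as a canonical representative. Grafting the non-source vertices of $g_j$ onto the leaves of $in_j$ and contracting: $in_j$ is the lower map from $id_{c_j}$ to $t$ whose single leaf above the source vertex is indexed by $j$; composing with $g_j$ replaces the source vertex $id_{c_j}$ of $in_j$ by $p$ and composes the lowest vertex of $g_j$ into $in_j$. The key point is then that the leaf of the source vertex $p$ of $f$ — there is exactly one, since $p$ has arity $1$ — is carried, under $f$, to (the subtree grafted at) the $j$-th leaf-slot of $t$, in the sense of Corollary~\ref{crl:twsop}'s description of targets. Concretely, because $g_j$ has no upper vertices and $f$ is inert (so all non-source vertices of $f$ have arity $1$ and grading $0$), the unique leaf of $p$ in $f$ sits above exactly the $j$-th non-source vertex of the canonical representative of $f$. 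But this index is an invariant of the morphism $f$ itself, not of the chosen factorization: it is the index of the (unique) leaf above the source vertex in the canonical representative of $f$. The same computation with $in_k\circ g_k$ forces this index to equal $k$. Hence $j = k$, and moreover $g_j = g_k$ (the lowest vertex of $g_\bullet$ is then determined as the inverse in $P(1)$ of the corresponding vertex of $f$, using that inert non-source vertices are invertible, Corollary~\ref{crl:isos-in-tw}), so the factorization is unique.

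The main obstacle I anticipate is bookkeeping: making precise the claim that "the unique leaf of the arity-$1$ source $p$ goes to the $j$-th slot of $t$" in terms of the grafting-and-contracting recipe of Corollary~\ref{crl:twsop}, and checking that no permutation on leaves can intervene to identify the $j$-th and $k$-th slots — this is where the inertness of $f$ and the fact that input maps are \emph{lower} morphisms (so their leaf-permutations are controlled, and in the factorization the permutation of $f$ is forced onto $g_\bullet$) both get used. I would isolate this as a short lemma about how composition $in_j\circ g$ acts on the index of the source leaf, and then the uniqueness statement follows immediately. Everything else is the routine translation between morphisms of $\Tw(P)$ and their tree representatives, which Corollary~\ref{crl:twsop} and Lemma~\ref{lem:splitepi} already package for us.
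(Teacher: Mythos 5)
Your argument is correct and is essentially the paper's proof, which is a single sentence: if $f$ factors through the $j$-th input map of $t$, then the index of the leaf above the source vertex of $f$ must equal $j$, and this index is an invariant of the canonical representative of $f$, so at most one $j$ can occur. The surrounding machinery you invoke (Lemma~\ref{lem:splitepi}, the claim that $g_j$ has no upper vertices --- which is not quite accurate, since $g_j$ may have a nontrivial upper vertex, ruled out only by inertness of $f$) is not needed for the statement, but it does not affect the core argument.
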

\begin{proof}
If $f$ factors through the $j$-th input map of $t$, then the index  of the leaf above the source vertex of $f$ is $j$.
\end{proof}

\begin{lemma}
Let $P$ be a planar or symmetric set operad with a nice grading $\psi$, $X$ be a presheaf over $Inert$, $U$ be an operation of $P$, the family $(f_i:U_i\to U,\,i\in I)$ be the covering family $S_\psi(U)$, and $s_i$ be a family of sections of $X(U_i)$. Then the family of sections $s_i$ is compatible over $Inert$ if and only if it is compatible over $Lower$. The same holds with $Lower$ or $Inert$ replaced by $Lower\circ Perm$.
\end{lemma}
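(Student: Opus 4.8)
The plan is to prove the two implications separately, the backward one being immediate. Since $Lower\subseteq Lower\circ Perm\subseteq Inert$, a family compatible over $Inert$ is compatible over $Lower\circ Perm$, which in turn is compatible over $Lower$; hence it suffices to show that compatibility over $Lower$ implies compatibility over $Inert$, after which all three notions coincide and the variant with $Lower\circ Perm$ in the middle follows by sandwiching. So I would fix a $Lower$-compatible family $(s_i)$ and two inert morphisms $g\colon V\to U_i$, $h\colon V\to U_j$ with $f_ig=f_jh$, and aim at $g^*s_i=h^*s_j$.

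First I would collect the structural facts I need. An inert morphism is precisely one whose canonical representative has all non-source upper vertices marked by isomorphisms in $P(1)$: this is exactly what the probing-by-input-maps argument in the proof of Proposition~\ref{pr:segal-cov-correct} yields. Consequently, since the lower vertex of any morphism has arity at least $1$ and hence, by niceness of $\psi$, grading at least $0$, the grading of an inert morphism (the sum of the gradings of its non-source vertices) equals that of its unique lower vertex and is $\ge 0$; moreover every inert morphism factors uniquely as $\theta\circ l$ with $\theta$ an isomorphism and $l$ a lower morphism with trivial permutation on leaves. I would also use that a morphism whose source has arity $0$ is automatically lower (its canonical representative has no upper vertices), Lemma~\ref{lem:factor-at-most-one-input} (an inert morphism out of an arity-$1$ operation factors through at most one input map of its target), and the computation in the proof of the lemma comparing $El(P,\psi,Lower)/p$ with $El(P,\psi,Inert)/p$ — in particular that the only inert morphisms of $\Tw(P)$ between two input maps of $U$, seen as objects over $U$, are identities, and that any inert morphism from an identity operation to $U$ is an input map precomposed with an isomorphism of identity operations.

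Now I would argue by the arity of the common source $V$. Since the grading of $g$ is $\psi(U_i)-\psi(V)\ge 0$, if $U_i$ or $U_j$ is an operation of grading $-1$ then $\psi(V)=-1$, so $V$ has arity $0$; then $g$ and $h$ are lower, and $g^*s_i=h^*s_j$ is the $Lower$-compatibility hypothesis. Otherwise $\psi(V)\le 0$, so either $V$ has arity $0$ and again $g,h$ are lower and we are done, or $V$ has arity $1$. In the latter case $U_i$ and $U_j$ are identity operations, and the composite $\phi=f_ig=f_jh$ — which is inert and issues from the arity-$1$ operation $V$ — factors through both input maps $f_i$ and $f_j$, so Lemma~\ref{lem:factor-at-most-one-input} forces $f_i=f_j$ and hence $i=j$, $s_i=s_j$. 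It then remains to show $g^*s_i=h^*s_i$ for inert $g,h\colon V\to id_{c_i}$ with $f_ig=f_ih$; here I would compose with the unique input map of $V$, use the factorizations $g=\theta_g\circ l_g$, $h=\theta_h\circ l_h$, and the triviality of inert morphisms between input maps of $U$ to see that the isomorphism parts $\theta_g,\theta_h$ must agree and can be absorbed, after which the lower parts $l_g,l_h$ are compared by the $Lower$-compatibility hypothesis.

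The step I expect to be the main obstacle is precisely this last reduction in the arity-$1$ case: in the presence of nontrivial automorphisms in $P(1)$, two inert morphisms $V\to id_{c_i}$ with the same composite into $U$ need not be literally equal, so one has to argue at the level of their canonical $\theta\circ l$ decompositions that the isomorphic parts cancel against each other — which is where Lemma~\ref{lem:factor-at-most-one-input} and the comparison lemma for the $El$-categories do the real work. Everything else is routine bookkeeping with canonical representatives and with the niceness of the grading.
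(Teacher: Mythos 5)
Your overall strategy (reduce everything to the single nontrivial implication, then do a case analysis on the source $V$) is workable up to the last step, but the last step — which is the only place where any real work happens — has a genuine gap. You factor an inert $g\colon V\to id_{c_i}$ as $\theta_g\circ l_g$ with $l_g\colon V\to id_b$ \emph{lower} and $\theta_g\colon id_b\to id_{c_i}$ an isomorphism, and you propose to "absorb" $\theta_g=\theta_h$ and then compare $l_g,l_h$ "by the $Lower$-compatibility hypothesis". But the compatibility hypothesis only speaks about pairs of lower morphisms whose targets are sources $U_{i'}$ of members of the covering family $S_\psi(U)$ and whose composites with those $f_{i'}$ agree. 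Here $l_g,l_h$ land in $id_b$, where $b$ is the input colour of $V$; this object is in general not the source of any input map of $U$, and even when it accidentally is, there is no reason for $f_{i'}\circ l_g=f_{j'}\circ l_h$ to hold (what you know is $U\circ_i q_0^g=U\circ_i q_0^h$ for the lower vertices of $g,h$, which says nothing about composites through a different slot $i'$ after precomposing with $q_1$). So the hypothesis simply does not apply to the pair $(l_g,l_h)$, and the argument does not close. The fix is to factor in the \emph{other} order, using the strict $(Upper,Lower)$ factorization system: write $g=g'\circ u_g$ and $h=h'\circ u_h$ with $u$ upper and $g',h'$ lower. Since $f_i,f_j$ are lower, $(f_i\circ g')\circ u_g$ and $(f_j\circ h')\circ u_h$ are the strict factorizations of the equal morphisms $f_ig=f_jh$, so $u_g=u_h=:u$ and $f_i\circ g'=f_j\circ h'$; now $g',h'$ are lower morphisms out of the common intermediate object with targets $U_i,U_j$ themselves, so $Lower$-compatibility gives $g'^*s_i=h'^*s_j$, and pulling back along $u$ (an isomorphism, hence in $Inert$) finishes. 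This is exactly the paper's proof, and it is uniform — it makes your entire case analysis by arity, the use of Lemma~\ref{lem:factor-at-most-one-input}, and the comparison of $El$-categories unnecessary.

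Two smaller points. First, in the case "$\psi(V)\le 0$" you infer that $V$ has arity $0$ or $1$ from the grading alone; the paper explicitly does \emph{not} assume that operations of grading $0$ have arity $1$, so this inference is unjustified as stated (it can be repaired by observing that inert morphisms do not decrease arity, so a target of arity $1$ forces source arity $\le 1$, but that is a different argument). Second, the reduction of the $Lower\circ Perm$ variant by sandwiching between $Lower$ and $Inert$ is fine.
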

\begin{proof}
Implication of compatibility from $Inert$ to $Lower$ is obvious. Assume the family $s_i$ is compatible over $Lower$. Let $g:V\to U_i$ and $h:V\to U_j$ be inert morphisms such that $f_i\circ g=f_j\circ h$. Let $g'\circ u$ and $h'\circ u$ be the $(Upper,Lower)$-factorizations of $g$ and $h$. Since $f_i$ and $f_j$ are lower morphisms, the morphisms $g$ and $h$ have the same upper morphism $u$ in their factorization.  The lower morphisms $f_i\circ g'$ and $f_j\circ h'$ are equal, and thus by compatibility assumption $g'^*(s_i)=h'^*(s_j)$, and $g^*(s_i)=h^*(s_j)$.
\end{proof}

\begin{corollary}
For any planar or symmetric set-operad $P$ a presheaf $X$ over  $Inert$ is a sheaf if and only if the restriction of $X$ to $Lower$ is a sheaf. The same holds with $Lower$ or $Inert$ replaced by $Lower\circ Perm$.
\end{corollary}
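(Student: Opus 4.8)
The plan is to observe that the two coverages in play --- $S_\psi$ on $Inert$ and $S_\psi$ on $Lower$ (or on $Lower\circ Perm$) --- have literally the same covering families. Indeed, $S_\psi$ assigns to each operation $U$ the single family consisting of all input and petal maps of $U$, and these are lower morphisms, hence lie in $Lower$, in $Lower\circ Perm$ and in $Inert$ alike (the containment $Lower\subseteq Inert$ being immediate, since $Inert$ is the orthogonal factorization class generated by $Lower$; its being a coverage in all three cases is Proposition~\ref{pr:segal-cov-correct}). So the statement reduces to checking, for one fixed operation $U$ and the family $(f_i:U_i\to U)_{i\in I}=S_\psi(U)$, that $X$ satisfies the sheaf axiom for this family over $Inert$ exactly when the restriction of $X$ to $Lower$ does.

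For that I would split the sheaf axiom into its hypothesis and its conclusion. The conclusion --- that a compatible family $(s_i)$ with $s_i\in X(U_i)$ glues to a unique $s\in X(U)$ satisfying $f_i^*(s)=s_i$ for all $i$ --- mentions only the pullback maps $X(f_i)$ along the $f_i$, which are lower morphisms and are therefore recorded identically by $X$ and by its restriction to $Lower$ (respectively $Lower\circ Perm$); so this part of the equivalence transfers for free. The hypothesis --- compatibility of $(s_i)$ --- is the only place where the two ambient categories could in principle differ, since compatibility over $Inert$ quantifies over \emph{inert} test morphisms $g:V\to U_i$, $h:V\to U_j$, whereas compatibility over $Lower$ only quantifies over \emph{lower} ones. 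This is exactly the content of the lemma immediately preceding the corollary, which I would invoke directly: for a family of the form $S_\psi(U)$, compatibility over $Inert$, over $Lower$ and over $Lower\circ Perm$ all agree. Combining the two observations yields the equivalence of the sheaf axiom for $S_\psi(U)$ in either setting.

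Finally, since being a sheaf for a coverage means precisely satisfying the sheaf axiom for every one of its covering families, and the covering families of $S_\psi$ on $Inert$, on $Lower$ and on $Lower\circ Perm$ coincide, the per-family equivalence above upgrades to the asserted equivalence of sheaf conditions, with $Lower\circ Perm$ treated by the same argument. I do not expect a genuine obstacle: the one subtle point --- that enlarging the ambient category from $Lower$ to $Inert$ does not secretly strengthen the compatibility requirement --- has already been dispatched by the preceding lemma, and what remains is bookkeeping about which morphisms the restricted presheaf remembers.
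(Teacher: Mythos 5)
Your argument is correct and matches the paper's: the corollary is stated as an immediate consequence of the preceding compatibility lemma, and your proof simply makes explicit the routine bookkeeping (identical covering families, gluing data seen only through lower morphisms, compatibility transferred by the lemma) that the paper leaves implicit. No issues.
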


\subsubsection{Connection between the two versions}

\begin{proposition}
For any planar or symmetric operad $P$ with a nice grading the right Kan extension $i_*:Psh(El(\Tw(P)_0))\to Psh(\Tw(P)_0)$ is fully faithful, and any presheaf in its image is a sheaf.
\end{proposition}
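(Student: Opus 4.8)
The plan is to treat the two assertions in turn, writing $E := El(\Tw(P)_0)$, $D := \Tw(P)_0$, $i\colon E\hookrightarrow D$ for the inclusion, and $i^*, i_*$ for the restriction functor and its right adjoint (the right Kan extension) between the presheaf categories.

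\emph{Full faithfulness of $i_*$.} Since $E$ is a full subcategory of $D$, the functor $i$ is fully faithful, and I would invoke the standard fact that a right adjoint whose counit is invertible is fully faithful, together with the equally standard fact that the counit of the adjunction $i^*\dashv i_*$, a natural transformation $i^*i_*\Rightarrow\mathrm{id}_{Psh(E)}$, is an isomorphism whenever $i$ is fully faithful. Concretely: for an elementary operation $p$ the slice $El(\Tw(P)_0)/p$ has $\mathrm{id}_p$ as a terminal object, because any morphism $f\colon q\to p$ with $q$ elementary is the unique morphism $(q,f)\to(p,\mathrm{id}_p)$ in that slice; hence the limit computing $(i_*Y)(p)$ collapses to $Y(p)$, naturally in $Y$ and in the elementary object $p$. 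This is precisely the statement that $i^*i_*\cong\mathrm{id}$, so $i_*$ is fully faithful.

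\emph{Presheaves in the image of $i_*$ are sheaves.} Fix $Y\in Psh(E)$ and put $X:=i_*Y$. For an operation $p$ let $S_\psi(p)=(f_i\colon U_i\to p)_{i\in I}$ be its Segal covering family, the family of all input and petal maps of $p$; I must verify the sheaf axiom for this family. Each source $U_i$ is an elementary object — input maps issue from identity operations and petal maps from operations of grading $(-1)$ — so by the previous paragraph $X(U_i)=Y(U_i)$, the value being read off at the terminal object $\mathrm{id}_{U_i}$ of $El(\Tw(P)_0)/U_i$. The key step, which I would isolate as a lemma, is that \emph{every} object $h\colon q\to p$ of $El(\Tw(P)_0)/p$ factors through some $f_i\in S_\psi(p)$. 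When $\Tw(P)_0$ is $Lower$ or $Lower\circ Perm$ this is immediate, since such an $h$ is itself an input map (if $q$ is an identity operation) or a petal map (if $\psi(q)=-1$). When $\Tw(P)_0=Inert$, a petal map is again a member of the family, and an inert map $h\colon id_c\to p$ factors through an input map by the argument already used in the proof of Proposition~\ref{pr:segal-cov-correct}: extract the isomorphism $k\colon id_c\to id_d$ carrying the necessarily invertible upper vertex of $h$, so that $h=in_j\circ k$ for the appropriate index $j$.

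Granting the lemma, the descent is forced. An element $s$ of $X(p)$ is a compatible family $(x_h)_{h\colon q\to p}$ of elements $x_h\in Y(q)$ indexed by $El(\Tw(P)_0)/p$, and the requirement $f_i^*(s)=s_i$ reads $x_{f_i}=s_i$. Uniqueness follows because any $h$ can be written $h=f_i g$, forcing $x_h=Y(g)(s_i)$; for existence I would set $x_h:=Y(g)(s_i)$ for such a factorization and check that this is independent of the choice of $i$ and $g$ — two factorizations $h=f_i g=f_j g'$ supply maps $g,g'$ with $f_i g=f_j g'$, so $g^*(s_i)=g'^*(s_j)$ by the compatibility hypothesis on $(s_i)$ — and that functoriality of $Y$ then makes $(x_h)$ a compatible family with $x_{f_i}=s_i$. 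The only point that is not pure adjunction and limit bookkeeping is the factorization lemma, and even that is essentially the observation already recorded when checking that $S_\psi$ is a coverage, so I expect no serious obstacle.
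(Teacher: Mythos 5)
Your proof is correct and follows essentially the same route as the paper's: full faithfulness from full faithfulness of the inclusion, and the sheaf axiom by identifying sheaf-compatible families over the Segal covering family with compatible cones over the slice $El(\Tw(P)_0)/p$ computing the pointwise Kan extension. You are merely more explicit than the paper about the $Inert$ case (where the covering family is only cofinal in the slice, via the factorization through input maps) and about why $i^*i_*\cong\mathrm{id}$.
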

\begin{proof}
The inclusion of $El(\Tw(P)_0)$ into $\Tw(P)_0$ is fully faithful, thus the right Kan extension $i_*$ is fully faithful. Let $Y$ be a presheaf over $El(\Tw(P)_0)$ and $p$ be an operation with $S$-covering family $(p_i\to p)$. Any family $s_i$ of sections of $Y$ over objects $p_i$ corresponds to a choice of elements $s_i$ in the objects indexed by the diagram that computes $i_*Y(p)$. If this family of sections is compatible, then the choice of elements $s_i$ is compatible with the mentioned diagram, and thus corresponds to an element in the limit indexed by this diagram, i.e.\@ corresponds to an element in $i_*Y(p)$. Since any element in $i_*Y(p)$ gives a compatible family of sections and is determined uniquely by this family, the presheaf $i_*Y$ is a sheaf.
\end{proof}

\paragraph{When petals factor through inputs.} There is a property of operads that ensures that Segal presheaves over corresponding twisted arrow categories behave particularly well, and that generalized Reedy structure on these categories is particularly nice. Analogous property appears in a different place and plays a more fundamental role in the generalization of the present theory to Lawvere theories.

\begin{definition}
  Let $P$ be a symmetric or planar operad. We will say that `petals factor through inputs' in $\Tw(P)$ if for any petal map $f:q\to t$, \emph{with $t$ having arity at least $1$,} the morphism $f$ can be factored as $q\to id_c\to t$, where $id_c\to t$ is an input map. 
\end{definition}

\begin{example}
Let $P$ be a graph-substitution operad and $p$ be an operation in $P$ of non-zero arity. Input maps to $p$ correspond to vertices of $p$. Petal maps to $p$ correspond to edges of $p$, endowed with an orientation if $P(1;1)$ is non-trivial. A petal $f:q\to p$ factors through an input map $g:id_c\to p$ if and only if the edge corresponding to $f$ is adjacent to the vertex corresponding to $g$. In twisted arrow categories of graph-substitution operads petals factor through inputs.
\end{example}

This property is related to Segal conditions as follows. Notice that the Segal coverage on $\Tw(uAs_{pl})_0$ is slightly different from the coverage $S'$ on $\Delta$. The proper generalization appears in the following proposition.

\begin{proposition}
Let $P$ be a planar or symmetric operad with a nice grading $\psi$, and $S'_\psi$ be the function assigning to each object $p$ the following single family of morphisms in $\Tw(P)_0$: all input maps of $p$ if $p$ has non-zero arity, and all petal maps of $p$ if $p$ has arity $0$. Petals factor through inputs in $\Tw(P)$ if and only if $S'_\psi$ is a coverage of $Lower$ (or equivalently, a coverage of $Inert$, or of $Lower\circ Perm$). In this case the Segal coverage $S_\psi$ is equivalent to the coverage $S'_\psi$, i.e.\@ the corresponding sheaf conditions coincide.
\end{proposition}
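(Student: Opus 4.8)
The plan is to establish the biconditional by proving its two implications separately and then to derive the equivalence of the two coverages as a formal consequence. For the forward implication, that $S'_\psi$ being a coverage forces petals to factor through inputs: let $f\colon q\to t$ be a petal map with $t$ of arity at least $1$. Since $t$ has positive arity, $S'_\psi(t)$ is exactly the family of input maps of $t$, and I would apply the coverage axiom to this family together with the morphism $g=f\colon q\to t$. As $q$ has grading $-1$ and hence arity $0$, the family $S'_\psi(q)$ consists of the petal maps of $q$, and in particular contains $\mathrm{id}_q$. The coverage axiom then forces $f\circ\mathrm{id}_q=f$ to factor through one of the input maps of $t$, which is precisely the statement that petals factor through inputs in $\Tw(P)$. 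Since $\mathrm{id}_q$ lies in all of $Lower$, $Lower\circ Perm$ and $Inert$, the argument is uniform in the choice of $\Tw(P)_0$; equivalently, failure of the factorization would already falsify the coverage axiom for the pair $(S'_\psi(t),f)$.

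For the converse, assume petals factor through inputs and verify the coverage axiom for $S'_\psi(U)$ against an arbitrary morphism $g\colon V\to U$ of $\Tw(P)_0$, taking $S'_\psi(V)$ itself as the required covering family of $V$. One first records that a morphism of $\Tw(P)_0$ cannot go from a positive-arity operation to an arity-$0$ one, because the non-lowest vertices of a lower morphism are identities and so keep the input leaves of the source as leaves of the target. Hence if $U$ has arity $0$ then so does $V$: then $S'_\psi(V)$ is the family of petal maps of $V$, each composite $g\circ h$ is again a petal map, hence an element of $S'_\psi(U)$. If $U$ has positive arity, split on $V$: when $V$ has positive arity the generators of $S'_\psi(V)$ are input maps of $V$, and the composition of an input map with a morphism of $Lower$ (respectively $Lower\circ Perm$, $Inert$) is again an input map of $U$, possibly after precomposing with an isomorphism in the $Inert$ case, exactly as computed in the proof of Proposition~\ref{pr:segal-cov-correct}; when $V$ has arity $0$ the generators of $S'_\psi(V)$ are petal maps, their composites with $g$ are petal maps of $U$, and the hypothesis factors each such composite through an input map of $U$ lying in $S'_\psi(U)$. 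This last subcase is the one that genuinely uses the factorization hypothesis.

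For the equivalence of coverages under the factorization hypothesis, I would show that $S_\psi(p)$ and $S'_\psi(p)$ generate the same sieve in $\Tw(P)_0$ for every operation $p$. When $p$ has arity $0$ the two families agree literally, since $p$ has no input maps. When $p$ has positive arity, $S'_\psi(p)$ is the family of input maps of $p$, which are among the generators of $S_\psi(p)$, while the remaining generators of $S_\psi(p)$, namely the petal maps of $p$, factor through input maps by hypothesis and so already lie in the sieve generated by $S'_\psi(p)$. Since the sheaf axiom for a family of morphisms depends only on the sieve it generates, a presheaf over $\Tw(P)_0$ is an $S_\psi$-sheaf if and only if it is an $S'_\psi$-sheaf, so the two coverages are equivalent.

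The main obstacle is not conceptual but bookkeeping: making the arity-$0$ degenerate cases and the three-way choice of $\Tw(P)_0$ completely precise. Concretely, one must be sure that $\mathrm{id}_q$ is genuinely a generator of $S'_\psi(q)$ for $q$ of grading $-1$, that no morphism of $\Tw(P)_0$ carries a positive-arity operation to an arity-$0$ one, and that the composition rules for input and petal maps quoted from the proof of Proposition~\ref{pr:segal-cov-correct} apply verbatim in each of the three settings; once these are pinned down, all of the real content is in the two short arguments above.
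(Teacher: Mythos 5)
Your proof is correct and follows essentially the same route as the paper: the forward implication via $\mathrm{id}_q\in S'_\psi(q)$, the converse by a case split on the arities of source and target with the arity-$0$-to-positive-arity case being the one that uses the hypothesis, and the coverage equivalence via equality of generated sieves. Your explicit observation that morphisms in $\Tw(P)_0$ cannot decrease arity to $0$ fills in a case the paper leaves implicit, but the argument is otherwise the same.
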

\begin{proof}
We will denote $S_\psi$ and $S'_\psi$ simply by $S$ and $S'$. Assume $S'$ is a coverage, and let $f:q\to p$ be a petal map to an operation $p$ of non-zero arity. The morphism $id_q$ is in $S'(q)$, thus $f\circ id_q=f$ factors through a morphism in the family $S'(p)$ of input maps of $p$. 

In the opposite direction, assume that petals factor through inputs and let $g:p\to t$ be an inert morphism. If $p$ and $t$ both have arity $0$, then for any morphism $f$ in $S'(p)$ the composition $g\circ f$ is in $S'(t)$. If $p$ and $t$ both have non-zero arity, then any morphism $f$ in $S'(p)$ is an input map, and, as in the proof of Proposition~\ref{pr:segal-cov-correct}, the composition $g\circ f$ factors through an input map of $t$. Finally, if $p$ has arity $0$ and $t$ has non-zero arity, then any morphism $f$ in $S'(p)$ is a petal, and thus $g\circ f$ is a petal, and this petal factors though an input of $t$ by assumption.

If petals factor through inputs, then the coverages $S$ and $S'$ generate the same sieves, and thus their sheaves coincide.
\end{proof}

\begin{proposition}
Let $P$ be a planar or symmetric operad such that: (1) either any morphism in the category $P(1)$ has at most one left inverse (2) or petals of operations of arity $1$ factor through inputs. Then the right Kan extension $i_*:Psh(El(\Tw(P)_0)\to Sh(\Tw(P)_0)$ is an equivalence of categories.
\end{proposition}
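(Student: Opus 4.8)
The plan is to build on the preceding proposition, which already establishes that $i_*$ is fully faithful and that every presheaf of the form $i_*Y$ is a sheaf; what remains is to prove that $i_*$ is essentially surjective onto $Sh(\Tw(P)_0)$. Equivalently, for every sheaf $X$ on $\Tw(P)_0$ the unit $\eta_X\colon X\to i_*i^*X$ should be an isomorphism, that is, for each operation $p$ the comparison map $X(p)\to\lim_{(q\to p)\in El(\Tw(P)_0)/p}X(q)$ should be a bijection. I would treat the case $\Tw(P)_0=Lower$ (the cases $Lower\circ Perm$ and $Inert$ being analogous, and related to it through the equivalences of comma categories in the lemma preceding Corollary~\ref{crl:segal-well-defined}); then the objects of $El(\Tw(P)_0)/p$ are precisely the members of the Segal covering family $S_\psi(p)$, namely the input maps and petal maps of $p$.

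The first step is to rephrase the sheaf condition. By the standard identification of matching families for a covering family with matching families for the sieve it generates, $X$ is a sheaf if and only if $X(p)\cong\lim_{(r\colon V\to p)\in R_p}X(V)$ for every $p$, where $R_p$ is the sieve on $p$ generated by $S_\psi(p)$, viewed as a full subcategory of $Lower/p$. Since $El(\Tw(P)_0)/p$ is exactly the full subcategory of $R_p$ on the objects whose domain is elementary, the statement reduces to showing that the inclusion $El(\Tw(P)_0)/p\hookrightarrow R_p$ is an initial functor, so that restriction of diagrams along it preserves limits: concretely, for every $r\colon V\to p$ in $R_p$ the comma category of factorizations of $r$ through an object of $El(\Tw(P)_0)/p$ must be nonempty and connected.

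The key structural ingredient is an arity reduction. A lower morphism whose codomain is elementary — an identity operation, of arity $1$, or (by niceness of the grading) an operation of grading $-1$, of arity $0$ — must have domain of arity at most $1$; this comes from an arity count on canonical representatives, in the spirit of Lemma~\ref{lem:splitepi}. Hence every $r\colon V\to p$ in $R_p$ has $V$ of arity at most $1$. If $V$ is elementary, the identity factorization of $r$ is an initial object of the comma category, so it is connected; the remaining arity-$0$ case, the nodeless-loop type operations, is handled using the output maps from Section~\ref{sec:structure}. The delicate case is when $V$ has arity $1$: there $V$ admits no lower morphism to any operation of grading $-1$, a lower morphism $V\to id_c$ is exactly post-composition with a left inverse of $V$ in the category $P(1)$, and the input map of $p$ appearing in a factorization of $r$ is pinned down by the index of the unique leaf above $V$ in the canonical representative of $r$. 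Consequently the comma category is discrete, its objects being in bijection with the left inverses $q_0$ of $V$ for which $p\circ_k q_0$ equals the lower vertex of $r$, where $k$ is that fixed index.

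This last point is where the hypotheses enter, and the connectedness it requires is the heart — and the main obstacle — of the argument: an operation $V\in P(1)$ with two distinct left inverses can, over a suitable $p$, render this comma category disconnected, which is precisely the obstruction to essential surjectivity, so some assumption is genuinely necessary. Under hypothesis (1) the set of left inverses of $V$ has at most one element, so the comma category has at most one object and is connected. Under hypothesis (2), every petal of an arity-$1$ operation factors through its input map, which constrains the elementary factorizations of $r$ to be organized through that input map and again forces connectedness. In either case the inclusion $El(\Tw(P)_0)/p\hookrightarrow R_p$ is initial, whence $X(p)\cong\lim_{El(\Tw(P)_0)/p}X$ for every sheaf $X$; the unit $\eta_X$ is therefore an isomorphism, and $i_*\colon Psh(El(\Tw(P)_0))\to Sh(\Tw(P)_0)$ is an equivalence of categories.
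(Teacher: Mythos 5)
Your overall strategy is sound and largely parallels the paper's: reduce to comparing $\lim_{R_p}X$ with $\lim_{El(\Tw(P)_0)/p}X$, observe that sources of morphisms in the sieve have arity at most $1$, identify lower maps $V\to id_a$ from an arity-$1$ operation $V$ with left inverses of $V$ in $P(1)$, and use Lemma~\ref{lem:factor-at-most-one-input} to pin down the input map. Your treatment of hypothesis (1) is correct and is essentially the paper's argument: uniqueness of the left inverse makes the (discrete) comma category a singleton, so the cofinality argument goes through.

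The gap is in hypothesis (2). You correctly observe that a $V\in P(1)$ with two distinct left inverses $q_1\neq q_2$ satisfying $p\circ_k q_1=p\circ_k q_2$ makes the comma category over $r$ disconnected, but you then assert without justification that (2) ``forces connectedness.'' It does not: (2) is a condition on petal maps of arity-$1$ operations (and is, for instance, vacuously true when $P$ has no operations of grading $-1$), and it places no constraint on the number of left inverses of $V$ nor on the morphisms of the comma category, whose objects through a fixed input map can only be related by the identity of $id_a$. So under (2) the inclusion $El(\Tw(P)_0)/p\hookrightarrow R_p$ need not be final, and a pure cofinality argument cannot close the case; disconnectedness of the comma category obstructs your argument, not essential surjectivity itself. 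The paper's proof at this point abandons cofinality and uses the sheaf condition of $X$ at the arity-$1$ operation $V\in P(a;b)$ itself: hypothesis (2) implies that the covering sieve of $V$ is controlled by its single input map $in:id_a\to V$, so $in^*:X(V)\to X(id_a)$ is injective for any sheaf $X$; since each lower map $g:V\to id_a$ satisfies $g\circ in=id$, the map $in^*$ is also surjective, hence bijective, and every such $g$ induces $g^*=(in^*)^{-1}$. Thus the two restriction maps coincide on sheaves even when the two factorizations differ, which is exactly the weaker statement needed for the limits to agree. You would need to incorporate this sheaf-theoretic step to repair case (2).
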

\begin{proof}
We have to show that $i_*$ is essentially surjective. It suffices to show that for any sheaf $X$ over $\Tw(P)_0$ and any family of morphisms $(f_i:p_i\to p)$ in $S$ a family of sections over $p_i$ that is compatible with respect to morphisms in $El(\Tw(P)_0)$ is compatible (with respect to morphisms in the sieve generated by the family $(f_i:p_i\to p)$). Sources of morphisms in this sieve have arity $0$ or arity $1$. Morphisms from operations of arity $0$ to operations of grading $(-1)$ and to identity operations are necessary petals. Morphisms in the sieve and not in $El(\Tw(P)_0)$ are precisely the lower (or inert) morphisms from non-identity operations of arity $1$ to identity operations. Lower morphisms from an operation $t\in P(a; b)$ to the operation $id_a$ correspond to left inverses of $t$ in $P(1)$. Any inert morphism $g:t\to id_c$ is a composition of a lower morphism $t\to id_a$,  determined uniquely by $g$, and an isomorphism $id_a\to id_c$,  determined uniquely by the upper part of $g$.

Take any two morphisms $g:t\to p_j$ and $h:t\to p_k$ in the sieve and not in $El(\Tw(P)_0)$ and such that $f_j\circ g=f_k\circ h$. By Lemma~\ref{lem:factor-at-most-one-input} we have $j=k$. The upper parts of $g$ and $h$ coincide. If condition (1) holds, then the lower parts of $g$ and $h$ coincide, and then compatibility with respect to $El(\Tw(P)_0)$ implies compatibility with respect to the sieve.

The condition (2) and the sheaf condition for $X$ imply that for any operation $t\in P(a;b)$ the map $in^*:X(t)\to X(id_a)$ induced by the input map of $t$ is injective. Let $g:t\to p_j$ and $h:t\to p_k$ be two inert maps such that $f_j\circ g=f_k\circ h$. Again $j=k$, and we have to show that $g^*=h^*$, and since the upper parts of $g$ and $h$ coincide, we can assume that $g$ and $h$ are lower maps. The composition $g\circ in$ with the input map of $t$ is the identity map of $id_a$, thus the map $in^*:X(t)\to X(id_a)$ is surjective, and bijective, and the map $g^*$ is the inverse of $in^*$. Similarly, the map $h^*$ is the inverse of $in^*$, and $g^*=h^*$. This implies compatibility with respect to the sieve.
\end{proof}

\begin{example}
Graph-substitution operads satisfy both of the conditions above. The first condition is implied by the following property: any morphism in $P(1)$ that has a left inverse is an isomorphism. This property holds for operads that satisfy the condition of Theorem~\ref{thm:groupoid-reedy}. Indeed, for a planar or symmetric operad $P$ with a nice grading any morphism in $P(1)$ that has a left inverse has grading $0$. 
\end{example}

\paragraph{Segal presheaves as multi-object algebras.} Consider the following informal idea. ``A multi-object algebra'' is a partial algebra endowed with maps to sets of ``objects'' such that (1) composability of elements of such a partial algebra (2) and the objects of this composition are both determined by the objects of composed elements. 

Let $P$ be an operad such that petals factor through inputs in $\Tw(P)$ and $X$ be a Segal presheaf over $\Tw(P)$. Then $X$ is a multi-object algebra in the following way. The first condition for multi-object algebras is satisfied in the sense that for any operation $p$ the elements of $X(p)\cong \lim_{s\to p} X(s)$ correspond to tuples of elements in the sets $X(id_c)$ that have compatible petals, and by definition these tuples are precisely the tuples of composable elements in the partial algebra $X$, and their composition via $p$ is the output of the corresponding element in $X(p)$. The second condition for multi-object algebras is satisfied since each petal of the output of $p$ is a petal of $p$, and thus is a petal of some input of $p$.

\subsubsection{Some examples}

\paragraph{Nodeless loop and graphical categories.} The use of grading in Segal conditions is motivated by graph-substitution operads containing the operation $\bigcirc$. First we describe maps to and from $\bigcirc$ in twisted arrow categories of graph-substitution operads. For all graph-substitution operads except $iuAs_{iTr}$ the only morphism from $\bigcirc$ is the trivial morphism, and there is only one morphism $id_{-1}\to\bigcirc$, since the only element with at least one input colour equal to $(-1)$ is $id_{-1}$, the vertex without leaves. The operad $iuAs_{iTr}$ additionally contains the operation $\theta$, thus in $\Tw(iuAs_{iTr})$ there is exactly one non-trivial morphism $f$ from $\bigcirc$, the morphism $f$ is an involutive automorphism, and there are two morphisms from $id_{-1}$ and two morphisms from $\theta$ to $\bigcirc$.

For any graph-substitution operad $P$ let $f:p\to \bigcirc$ be a morphism in $\Tw(P)$, with $p$ different from $id_{-1}$ and $\theta$. The morphism $f$ is defined uniquely by the morphisms $l$ and $u$ in the factorization $l\circ u$ of $f$ into upper and lower morphisms. The target of $u$ has arity $0$, and is either $\mu_0$ or $\bigcirc$. The morphism $u$ exists if and only if all vertices of $p$ have degree $2$, and the morphism $u$ is determined uniquely by $p$. We have already described endomorphisms of $\bigcirc$. Suppose then that the source of $l$ is $\mu_0$. For operads $iuAs_{Tr}$ and $iuAs_{iTr}$ there are exactly two morphisms from $\mu_0$ to $\bigcirc$, and for the remaining graph-substitution operads there is only one such morphism.  

\begin{remark}
This suggests that the definition of the graphical category $\widetilde{U}$ should be changed (\cite[Remark 4.8]{hackney2019graphical}). Any twisted arrow category that has two endomorphisms of $\bigcirc$ has two morphisms from $id_{-1}$ to $\bigcirc$. It is not clear if the operad that contains both $mOp$ and $iAs_{Tr}$ exists, unless the vertices of the corresponding graphs are endowed with genus map.
\end{remark}

Let $P$ be a graph-substitution operad containing $\bigcirc$, and different from $iuAs_{Tr}$ and $iuAs_{iTr}$. The operad $P$ has two natural gradings: the grading that was denoted by $\psi$ and the arity grading $\phi$. Segal condition for the grading $\psi$ implies that the map $X(\mu_0\to\bigcirc)$ is an isomorphism. In Segal condition for the arity grading $\phi$ the map $id_\bigcirc$ is a petal map, and there are no restrictions on the set $X(\bigcirc)$. The choice of the grading $\psi$ here is more natural, and in the case of $cuAs$ recovers familiar structure.

\paragraph{Cyclic nerve and trace.} Any object in the category $\Tw(cuAs)$ is isomorphic either to $\mu_n$, to $\nu\circ\mu_n$ or to $id_{-1}$, thus the full subcategory $\mathfrak{C}$ on these objects is equivalent to $\Tw(cuAs)$. The subcategory of $\mathfrak{C}$ on objects $\mu_n$ is isomorphic to $\Delta$, and any morphism in $\mathfrak{C}$ to $\mu_n$ is in $\Delta$. There is only trivial morphism to $id_{-1}$, and there is exactly one morphism from $id_{-1}$ to $\nu\circ\mu_n$ for all $n$. The target of any morphism from $\nu\circ\mu_n$ is of the form $\nu\circ\mu_m$ for some $m$. The full subcategory of $\mathfrak{C}$ on objects $\nu\circ\mu_n$ is isomorphic to the category $\Lambda$ endowed with additional terminal object $\bigcirc$. Indeed, any morphism in this subcategory is an upper morphism, and has unique representative with vertices marked by $\mu_n$. These representatives correspond to representatives of morphisms in the full subcategory of $\Tw_{cOp}(uAs_c)$ on operations $\mu_n$, with indices of leaves shifted by $1$. Any morphism from $\mu_n$ to $\nu\circ\mu_m$ uniquely factors as $i\circ f\circ u$, where $u$ is as an upper morphism in $\Delta$, $f:\mu_k\to\nu\circ\mu_m$ is a lower morphism  such that the lower vertex of $f$ is marked by $\nu\circ\mu_{m-n+1}$ and is connected to the source vertex by the first edge, and the permutation on leaves of $f$ is trivial, and $i$ is an automorphism of $\nu\circ\mu_m$.

Recall that the trace of a category $C$ is the quotient map $Tr:End(C)\to End(C)/{\sim}$, with the equivalence generated by $fg\sim gf$ for all $f:x\to y$ and $g:y\to x$ in $C$.

\begin{proposition}
Segal presheaves $X$ over $\Tw(cuAs)$ correspond to small categories $C$ endowed with a map $t$ from $End(C)$ that factors through the trace. The restriction of $X$ to $\Delta$ is the nerve of the corresponding category $C$, the restriction of $X$ to $\Lambda$ is the cyclic nerve of $C$, the set $X(\bigcirc)$ is the set $X(\mu_0)=Obj(C)$, the set $X(\nu)$ is the set $End(C)$, and the map $X(\nu)\to X(id_{-1})$ is the map $t$.
\end{proposition}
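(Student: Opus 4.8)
The plan is to unpack what a Segal presheaf $X$ over $\Tw(cuAs)$ amounts to, using the description of the skeleton category $\mathfrak{C}$ already given, and then to match the data against (category $C$ + trace-like map $t$). I would work with $\Tw(cuAs)_0 = Lower$ and the sheaf formulation of the Segal condition, since $cuAs$ is a graph-substitution operad and hence petals factor through inputs, so the Segal coverage $S_\psi$ agrees with $S'_\psi$: a Segal presheaf is precisely a presheaf on $\mathfrak{C}$ whose value at an operation is the limit of its values on inputs (vertices) with compatible petals (edges).

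First I would handle the ``$\Delta$-part''. The full subcategory of $\mathfrak{C}$ on the objects $\mu_n$ is isomorphic to $\Delta$, and every morphism of $\mathfrak{C}$ with target $\mu_n$ lies in this copy of $\Delta$; moreover the elementary objects relevant here are $\mu_0$ (the object-like petal, of grading $-1$) and $\mu_1$ (identity operation). Hence the restriction of $X$ to $\Delta$ is a presheaf on $\Delta$ satisfying the classical Segal condition, i.e.\ the nerve of a small category $C$ with $Obj(C)=X(\mu_0)$ and $Mor(C)=X(\mu_1)$; this is just Theorem~\ref{thm:algebras-as-segal} (or the classical Segal/Grothendieck statement) applied to $uAs_{pl}$, since $\Tw(uAs_{pl})=\Delta$. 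Next I would treat the objects $\nu\circ\mu_n$: the full subcategory on these is $\Lambda$ with an added terminal object $\bigcirc$, and under the Segal condition for the grading $\psi$ the map $X(\mu_0\to\bigcirc)$ is an isomorphism, so $X(\bigcirc)\cong X(\mu_0)=Obj(C)$. The inputs of $\nu\circ\mu_n$ are one copy of $\nu$ and $n$ copies of $\mu_1$, glued along shared petals/edges exactly as in the cyclic nerve; therefore $X$ restricted to $\Lambda$ is the cyclic nerve of $C$ with the extra datum of the set $X(\nu)$, and the Segal/limit description forces $X(\nu)\cong End(C)$ via the petal map identifying the two (coinciding) endpoints. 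Finally the unique morphism $\nu\to id_{-1}$, i.e.\ the petal of $\nu$, induces $t:=X(\nu\to id_{-1})\colon End(C)\to X(id_{-1})$, and $t$ lands in a set independent of which endomorphism we started from.

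The substantive step is showing that $t$ factors through the trace, i.e.\ that $t(fg)=t(gf)$ for $f\colon x\to y$, $g\colon y\to x$ in $C$. This is where the cyclic structure is used: the cyclic action $\tau_n$ on the operations of $uAs$ (described in the subsection on cyclic operads, and implemented inside $cuAs$ as cyclic rotation of the marked vertices of the circle) gives an isomorphism in $\Tw(cuAs)$ between the object $\nu\circ\mu_n$ and itself that cyclically permutes the $n$ marked segments, and this isomorphism is compatible with the single petal map $\nu\circ\mu_n\to\nu\to id_{-1}$ because $\nu$'s petal is invariant under the rotation. Concretely, the element of $X(\nu\circ\mu_2)$ with the two segments labelled by $f$ and $g$ (composable into the loop $fg$, equivalently $gf$, after rotation) maps to $fg$ under one upper morphism $\nu\circ\mu_2\to\nu$ and to $gf$ under the rotated one, while its further image in $X(id_{-1})$ is the same; hence $t(fg)=t(gf)$. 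Conversely, given $(C,t\colon End(C)\to S)$ with $t$ factoring through the trace, I would build $X$ by the evident assignment: $X(\mu_n)$ = composable $n$-chains in $C$, $X(\nu\circ\mu_n)$ = composable $n$-chains forming a loop, $X(\bigcirc)=X(id_{-1})$ on objects with the copies identified, $X(\nu)=End(C)$, $X(\nu\to id_{-1})=t$; verifying functoriality under $\mathfrak{C}$ uses only the cyclic relations and the factorization $i\circ f\circ u$ of a morphism $\mu_n\to\nu\circ\mu_m$ already recorded before the proposition, and the trace condition is exactly what is needed for well-definedness on the automorphisms $i$ of $\nu\circ\mu_m$. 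Matching morphisms of Segal presheaves with the natural morphisms of such pairs $(C,t)$ is then routine.

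I expect the main obstacle to be the bookkeeping of the cyclic/rotational isomorphisms of the objects $\nu\circ\mu_n$ and checking that the petal map to $id_{-1}$ is genuinely invariant under them, together with the precise interplay of $\psi$-grading (which collapses $X(\mu_0)\to X(\bigcirc)$) versus the factorization $i\circ f\circ u$; once those are pinned down, identifying the restrictions to $\Delta$ and $\Lambda$ with the nerve and cyclic nerve is immediate from the descriptions of $\mathfrak{C}$ already established, and the equivalence of categories follows formally.
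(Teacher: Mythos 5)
Your proposal is correct and follows essentially the same route as the paper, whose entire proof consists of the one observation you isolate: the two upper morphisms $\nu\to\nu\circ\mu_2$ have equal composites with the petal map $id_{-1}\to\nu$ (since $\nu\circ_1\mu_2=\nu\circ_1\mu_2^{(12)}$), which forces $t(fg)=t(gf)$; the identifications of the restrictions to $\Delta$ and $\Lambda$ with the nerve and cyclic nerve are read off from the description of $\mathfrak{C}$ exactly as you do. Two small slips to fix: the relevant morphisms of $\Tw(cuAs)$ run $\nu\to\nu\circ\mu_2$, inducing $X(\nu\circ\mu_2)\to X(\nu)$ by contravariance (not ``upper morphisms $\nu\circ\mu_2\to\nu$''), and in the converse construction the Segal condition forces $X(\bigcirc)=X(\mu_0)=Obj(C)$ while $X(id_{-1})$ is the codomain $S$ of $t$, so you should not set $X(\bigcirc)=X(id_{-1})$.
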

\begin{proof}
The proof is straightforward. Endomorphisms $fg$ and $gf$ are sent to the same element in $X(id_{-1})$ since the compositions of the two maps $\nu\to\nu\circ\mu_2$ with $id_{-1}\to\nu$ are equal.
\end{proof}

\subsection{2-Segal presheaves, or decomposition spaces}
\label{sec:decomposition-spaces}

Existence of Upper-Lower pushouts in twisted arrow categories of operads allows to generalize the notion of (discrete) decomposition space (\cite{decompositionspaces1,decompositionspaces2,decompositionspaces3}), or 2-Segal space (\cite{Dyckerhoff2019higher}), to presheaves over twisted arrow categories of operads. For any operad $P$ discrete decomposition spaces over $\Tw(P)$ correspond to special morphisms of operads into $P$.

\begin{definition}
  Let $P$ be a planar or symmetric operad. A $P$-decomposition space, or a 2-Segal presheaf, is a presheaf $X$ over $\Tw(P)$ such that the pushout of any pair $(f,g)$ of maps with common source, with $f$ active and $g$ inert, is mapped by $X$ to a pullback.
\end{definition}

\begin{lemma}
For any operad $P$ a presheaf $X$ over $\Tw(P)$ is a discrete decomposition space if and only if for all composable operations $p$ and $q$ in $P$ the restriction of $X$ to the partial composition pushout of $p$ and $q$ is a pullback. 
\end{lemma}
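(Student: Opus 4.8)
The plan is to prove the two implications separately. For the forward direction, note that in the partial composition pushout of $p$ and $q$ at $i$ the map $in_i$ is lower, hence inert, and the map $out$ is upper, hence active, so this square is a pushout of an active map along an inert map; a $2$-Segal presheaf therefore sends it to a pullback.

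For the converse, assume $X$ sends every partial composition pushout to a pullback. I would first reduce to an essential special case. Every active morphism of $\Tw(P)$ is a composite of isomorphisms and upper maps of the form $\iota_{t,i,q}\colon t\to t\circ_i q$ (``graft $q$ onto the $i$-th leaf of $t$''), and every inert morphism is a composite of isomorphisms and lower maps of the form $\lambda_{t,j,q'}\colon t\to q'\circ_j t$ (``graft $q'$ onto the root of $t$, into its $j$-th input''); this follows from the description of canonical representatives of upper and lower morphisms and from $(Active,Inert)$ being the orthogonal factorization system generated by $(Upper,Lower)$. A pushout square one of whose sides is an isomorphism has two parallel isomorphic sides, and $X$ sends such a square to a pullback automatically. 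Hence, using the existence of active--inert pushouts (Lemma~\ref{lem:upper-lower-pushouts} and its corollary) and the pasting law for pushouts to tile an arbitrary active--inert pushout by squares whose active side is an isomorphism or some $\iota_{t,i,q}$ and whose inert side is an isomorphism or some $\lambda_{t,j,q'}$, together with the pasting law for pullbacks, the claim reduces to showing that $X$ sends the pushout of $\iota_{t,i,q}$ along $\lambda_{t,j,q'}$ to a pullback.

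For this, let $\sigma$ be that pushout square; by Lemma~\ref{lem:upper-lower-pushouts} its pushout maps are the lower map $\lambda'\colon t\circ_i q\to q'\circ_j(t\circ_i q)$ and the upper map $\iota'\colon q'\circ_j t\to q'\circ_j(t\circ_i q)$. Let $\pi$ be the partial composition pushout of $q'$ and $t$ at $j$, whose pushout maps are $\lambda_{t,j,q'}$ and $\iota_{q',j,t}\colon q'\to q'\circ_j t$, and let $\pi'$ be the partial composition pushout of $q'$ and $t\circ_i q$ at $j$, whose pushout maps are $\lambda'$ and $\iota_{q',j,\,t\circ_i q}\colon q'\to q'\circ_j(t\circ_i q)$. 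Using Corollary~\ref{crl:twsop} and the uniqueness of output maps one verifies the identities $\iota_{t,i,q}\circ out_t=out_{t\circ_i q}$ and $\iota'\circ\iota_{q',j,t}=\iota_{q',j,\,t\circ_i q}$; these say exactly that $\pi$ and $\sigma$, pasted along their common edge $t\xrightarrow{\lambda_{t,j,q'}}q'\circ_j t$, have $\pi'$ as their composite rectangle. Applying $X$ yields a pasting of the squares $X(\pi)$ and $X(\sigma)$ with composite rectangle $X(\pi')$, in which $X(\pi)$ is the square containing the vertex $X(id_c)$, which is the apex of the cospan defining the pullback $X(\pi')$. By the pasting law for pullbacks --- if the composite rectangle and the square adjacent to the cospan apex are pullbacks, then so is the remaining square --- and since $X(\pi)$ and $X(\pi')$ are pullbacks by hypothesis, $X(\sigma)$ is a pullback.

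The main obstacle is arranging the reduction so that the admissible direction of the pasting law for pullbacks applies: one must paste $X(\sigma)$ against the partial composition pushouts $\pi$ and $\pi'$ that build the ``$q'$-part'' on the two sides of the common edge, rather than against the ones assembled from $q$ and $t$; with the latter choice $X(\sigma)$ ends up in the position of the square that cannot be deduced to be a pullback. The remaining verifications --- the two grafting identities, and that Lemma~\ref{lem:upper-lower-pushouts} produces the stated canonical pushout maps up to the permutation isomorphisms already absorbed into the isomorphism steps of the reduction --- are routine and can be carried out with the graphical description of morphisms in $\Tw(P)$.
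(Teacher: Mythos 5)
Your proof is correct, but it is organized differently from the paper's. The paper, after the same initial reduction (replace the active--inert pair by an upper map $p\to\gamma(p,q_1,\dots,q_n)$ and a lower map $p\to q_0\circ_1 p$ with trivial permutations), keeps the full multi-leaf graft and computes all three values $X(\gamma(p,q_1,\dots,q_n))$, $X(q_0\circ_1 p)$ and $X(q_0\circ_1\gamma(p,q_1,\dots,q_n))$ explicitly as fibered products of $X(p)$ with products of the $X(q_j)$ over products of the $X(id_{c_j})$, and then reads off the required pullback from an identity of fibered products. You instead decompose the active map into single grafts, tile the pushout square accordingly, and handle the basic tile by pasting it against the two partial composition pushouts that build the lower vertex $q'$ on either side of the common edge, concluding with the pasting law for pullbacks; your observation that one must paste against the $q'$-side pushouts (so that $X(\sigma)$ sits in the deducible position of the pasting law) is exactly the point where a careless version of this argument would fail, and you have it right. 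The two proofs encode the same mathematics --- the paper's iterated fibered-product computation is itself an implicit induction on single grafts, and its final fibered-product identity is the associativity fact underlying the pullback pasting law --- but yours trades explicit limit formulas for a purely diagrammatic bookkeeping, at the cost of having to verify the tiling and the two grafting identities $\iota_{t,i,q}\circ out_t=out_{t\circ_i q}$ and $\iota'\circ\iota_{q',j,t}=\iota_{q',j,\,t\circ_i q}$, which, as you say, follow from uniqueness of output maps and the canonical form of upper morphisms.
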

\begin{proof}
The \emph{only if} part is trivial: a partial composition pushout is a pushout of upper and lower maps. In the opposite direction, the assumption is that for all composable $p$ and $q$ we have $X(p\circ_i q)=X(p)\times_{X(id_c)} X(q)$. Let $f$ and $g$ be an active and an inert maps with a common source. By replacing these with isomorphic maps we can assume that $f$ and $g$ are upper and lower maps with trivial permutation on leaves, and these have the form $f:p\to\gamma(p,q_1,\dots,q_n)$ and $g:p\to q_0\circ_1 p$, where $\gamma$ is the operadic composition. The assumption implies that $X(\gamma(p,q_1,\dots,q_n))$ is the fibered product of $X(p)$ with $\prod_{j>0}X(q_j)$ over $\prod_{j>0}X(id_{c_j})$, that $X(q_0\circ_1 p)$ is the fibered product of $X(p)$ with $X(q_0)$ over $X(id_{c_0})$, and that $X(q_0\circ_1\gamma(p,q_1,\dots,q_n))$ is the fibered product of $X(p)$ with $\prod_{j\geq 0} X(q_j)$ over $\prod_{j\geq 0} X(id_{c_j})$. This implies that $X(q_0\circ_1\gamma(p,q_1,\dots,q_n))$ is the fibered product of $X(\gamma(p,q_1,\dots,q_n))$ and $X(q_0\circ_1 p)$ over $X(p)$.
\end{proof}

\begin{example}
For any operad $P$ \emph{single-object} Segal presheaves over $\Tw(P)$ are discrete decomposition spaces. 
\end{example}

Recall that for any small category $B$ the category of presheaves over  $B$ is equivalent to the category of discrete fibrations from small categories to the category $B$. A discrete fibration $G:E\to B$ generates the presheaf $Y$ such that $Y(b)=G^{-1}(b)$ and $Y(f:b'\to b)(e)=e'$, where $e'$ is the source of the unique lift $h:e'\to e$ with target $e$ of a morphism $f$. The category $E$ is isomorphic to the category $B/Y$ of elements of $Y$, and this isomorphism induces isomorphism between the functor $G$ and the projection functor $B/Y\to Y$. Any morphism in the category of discrete fibrations over $B$ is a discrete fibration. Morphisms $f:X\to Y$ of presheaves over the category $B$ correspond to discrete fibrations $f:B/X\to B/Y$ over $B$. The presheaf $X'$ over $B/Y$ corresponding to a discrete fibration $f:B/X\to B/Y$ over $B$ is the subpresheaf of the restriction of $X$ to $B/Y$ such that $X'(y)=f^{-1}(y)$. The category $(B/Y)/X'$ of elements of $X'$ is isomorphic to the category $B/X$ of elements of $X$.

\begin{lemma}
\label{lem:discrete-criterion}
Let $f:P_1\to P$ be a morphism of operads. The induced functor $f_*:\Tw(P_1)\to \Tw(P)$ is a discrete fibration if and only if for any $p$ and $q$ in $P$ and $z$ in $P_1$ such that $f(z)=p\circ_i q$ there exist unique $x$ and $y$ in $P_1$ such that $x\circ_i y=z$, $f(x)=p$, and $f(y)=q$. 
\end{lemma}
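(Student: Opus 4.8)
The plan is to unwind the correspondence between discrete fibrations over a category and presheaves, and to observe that "being a discrete fibration" for the functor $f_*$ is exactly a statement about unique lifting of morphisms, which in $\Tw(P)$ are (by Corollary~\ref{crl:twsop}) described by reduced trees/graphs with vertices marked by operations. Since a general morphism in $\Tw(P)$ decomposes via the factorization systems into basic pieces built from single partial compositions $p\circ_i q$, the unique lifting condition for arbitrary morphisms should reduce to the unique lifting condition for the partial composition pushouts, which is precisely the stated condition on $x\circ_i y=z$.

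\medskip

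\noindent\textbf{Key steps.} First I would recall that $f_*:\Tw(P_1)\to\Tw(P)$ is a discrete fibration iff for every object $z$ of $\Tw(P_1)$ (i.e.\ operation $z$ in $P_1$) and every morphism $g:w\to f(z)$ in $\Tw(P)$ there is a unique morphism $\tilde g:\tilde w\to z$ in $\Tw(P_1)$ with $f_*(\tilde g)=g$. Using the canonical representatives of Corollary~\ref{crl:twsop}, a morphism $g:w\to f(z)$ is a reduced tree (or modular graph) whose source vertex is marked by $w$ and whose evaluation is $f(z)$; lifting $g$ along $f_*$ means marking the non-source vertices by operations of $P_1$ lying over the given operations of $P$ and whose composite evaluates to $z$. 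Next I would reduce to the "one non-source vertex" case: a reduced tree can be built up by grafting one corolla at a time, and the general lifting problem factors as an iterated sequence of lifting problems each of which grafts a single operation, i.e.\ each of which is a single partial composition $p\circ_i q$ (this is where the Upper-Lower pushout / partial composition pushout structure of Section~\ref{sec:structure} enters: the basic generating morphisms of $\Tw(P)$ are input maps and output maps, and composing an input map of $p$ with an output map of $q$ gives exactly $p\circ_i q$). Then I would observe that lifting a single such partial-composition morphism $p\to p\circ_i q$ (with source vertex mapped to a fixed $z_1$ over $p$... more precisely: given $z$ over $p\circ_i q$, finding the lift amounts to finding $x$ over $p$ and $y$ over $q$ with $x\circ_i y=z$) is literally the displayed condition. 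Finally I would note that compatibility of the iterated lifts (associativity, independence of the order of grafting, and triviality of lifts of permutation isomorphisms — which is automatic since $f$ is a map of operads and the symmetric group action is respected) guarantees that uniqueness at each step assembles to global uniqueness, and existence at each step assembles to global existence; conversely, taking $z$ to be any operation of $P_1$ with $f(z)=p\circ_i q$ and $g$ the partial composition pushout morphism shows the condition is necessary.

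\medskip

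\noindent\textbf{Main obstacle.} The delicate point is the bookkeeping in the reduction from arbitrary morphisms to single partial compositions: one must check that the intermediate operations $x,y$ produced by lifting one corolla at a time are genuinely independent of the order in which corollas are grafted, and that a reduced tree's canonical form does not obstruct the inductive lift (e.g.\ that reducing the tree after each grafting step is compatible with the lift, since reduction uses composition in $P$ and the hypothesis gives unique decompositions in $P_1$ over those in $P$). In other words, the hard part is verifying that the "local" unique lifting hypothesis — phrased only for a single $\circ_i$ — propagates coherently through iterated graftings and through the ($Upper$,$Lower$) and permutation-isomorphism structure, so that the lift of a general morphism exists and is unique. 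I expect this to go through cleanly because each generating step is a partial composition pushout and the hypothesis is exactly designed to make pullback/pushout lifting work, but it requires care to state the induction over the number of non-source vertices precisely.
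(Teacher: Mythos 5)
Your plan follows the paper's proof essentially verbatim: the forward direction lifts the partial composition pushout morphism $p\to p\circ_i q$ with prescribed target $z$, and the converse decomposes a morphism with trivial permutation on leaves into a composite of morphisms each having at most one non-trivial non-source vertex, lifting one grafting step at a time. One small correction: the triviality of lifts of permutation isomorphisms is not automatic from $f$ being an operad map — it requires that only identity operations of $P_1$ lie over identity operations of $P$, which is itself a consequence of the stated hypothesis (apply it with $p=q=id_c$), as the paper's proof observes.
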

\begin{proof}
Assume that $f_*$ is a discrete fibration. Given $p$ and $q$ as above, let $g:p\to p\circ_i q$ be the morphism in the partial composition pushout of $p$ and $q$. Let $g':x\to z$ be the unique lift of $g$ with target $z$. Since at most one non-source vertex of $g$ is non-trivial, uniqueness of $g'$ implies that at most one non-source vertex of $g'$ is non-trivial, and $g'$ is of the form $x\to x\circ_i y$, with $f(y)=q$, and with $x$ and $y$ defined uniquely by $g$. This also implies that partial composition pushouts lift to partial composition pushouts.

In the opposite direction, the condition implies that only identity operations are mapped to identity operations, that only permutation isomorphisms are mapped to permutation isomorphisms, and that lifts of the morphisms with exactly one non-trivial non-source vertex exist, are unique, and have exactly one non-trivial non-source vertex. It suffices to show that any morphism with trivial permutation on leaves has a unique lift. Such a morphism is a unique composition of morphisms with trivial permutation on leaves and with only $j$-th non-source vertex being possibly non-trivial, with $j$ decreasing from $n$ to $0$, where $n$ is the arity of the source of the morphism. The morphisms in the composition have unique lifts, and the composition itself has unique lift.
\end{proof}

\begin{proposition}
Let $P$ be a planar or symmetric operad and $Y$ be a presheaf over $\Tw(P)$. The presheaf $Y$ corresponds to discrete fibration $f_*:\Tw(P_1)\to \Tw(P)$ induced by a morphism of operads $f:P_1\to P$ if and only if $Y$ is a discrete decomposition space.
\end{proposition}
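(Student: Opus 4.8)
The plan is to prove both implications by combining two facts already established: the criterion recalled just above, that a presheaf $X$ over $\Tw(P)$ is a discrete decomposition space exactly when $X$ carries every partial composition pushout to a pullback, and Lemma~\ref{lem:discrete-criterion}, which says precisely when a functor induced by a morphism of operads is a discrete fibration. For the ``only if'' direction I would start from a morphism of operads $f\colon P_1\to P$ with $f_*$ a discrete fibration generating $Y$. Fixing composable $p,q$ in $P$ with common colour $c$ (the $i$-th input colour of $p$, the output colour of $q$), I would identify $Y(p),Y(q),Y(id_c),Y(p\circ_i q)$ with the fibres of $f$ and apply Lemma~\ref{lem:discrete-criterion}: every $z$ over $p\circ_i q$ factors uniquely as $z=x\circ_i y$ with $x$ over $p$ and $y$ over $q$, and, from the proof of that lemma, the unique lift of the pushout map $p\to p\circ_i q$ with target $z$ is $x\to x\circ_i y$, so $Y$ of the pushout square sends $z$ to $(x,y)$. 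A one-line check that $z\mapsto(x,y)$ is a bijection onto $Y(p)\times_{Y(id_c)}Y(q)$ --- surjectivity by ``compose then factor'', injectivity since $z=x\circ_i y$ --- shows $Y$ sends the partial composition pushout to a pullback, so $Y$ is $2$-Segal.

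For the ``if'' direction I would reconstruct the operad. Given a discrete decomposition space $Y$, take the colours of $P_1$ to be the pairs $(c,\xi)$ with $\xi\in Y(id_c)$, the operations to be the pairs $(p,x)$ with $x\in Y(p)$ (with input and output colours read off via $Y(in_j)$ and $Y(out)$), the symmetric action transported along the permutation isomorphisms of $\Tw(P)$, and --- the crucial point --- the partial composition $(p,x)\circ_i(q,y):=(p\circ_i q,z)$, where $z$ is the unique element of $Y(p\circ_i q)$ mapping to $x$ and $y$ under the images of the two pushout maps; such $z$ exists and is unique exactly because $Y$ carries the partial composition pushout to a pullback. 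The forgetful assignment $(p,x)\mapsto p$ is the morphism $f$. Then I would verify, in order: (i) $P_1$ is an operad; (ii) $f$ is a morphism of operads, which is immediate from (i); (iii) $f_*$ is a discrete fibration, via Lemma~\ref{lem:discrete-criterion}, since the only pair of operations of $P_1$ over $p,q$ composing to $(p\circ_i q,z)$ is $\bigl(p,Y(f')(z)\bigr)\circ_i\bigl(q,Y(g')(z)\bigr)$, which indeed equals $(p\circ_i q,z)$ by the pullback property; (iv) the presheaf generated by $f_*$ is $Y$ --- clear on objects, and on morphisms it reduces, by Corollary~\ref{crl:twsop} together with the decomposition of a morphism into one-nontrivial-vertex pieces used in the proof of Lemma~\ref{lem:discrete-criterion}, to checking agreement on permutation isomorphisms (true by the construction of the action) and on partial composition pushout maps (true by the construction of composition, since the $f_*$-lift of $f'\colon p\to p\circ_i q$ with target $(p\circ_i q,z)$ has source $\bigl(p,Y(f')(z)\bigr)$).

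I expect the main obstacle to be step (i), the operad axioms for $P_1$. Unitality is easy: the partial composition pushout of $p$ with an identity operation is the identity square (its bottom map is $id_p$), so $(p,x)\circ_i id_{(c_i,\xi)}=(p,x)$, and similarly on the left. Equivariance is also routine: the block-permutation identities of $P$ are commuting diagrams in $\Tw(P)$ built from permutation isomorphisms and partial composition pushout maps, and the functoriality of $Y$ forces the transported $x$-components to match. The real work is associativity, in both its sequential and parallel forms, where the two sides compute a distinguished element of a twice-iterated fibre product by nesting the universal property of a pullback in two different orders. I would handle this by pasting: for composable $p,q,r$ the triple composite in $P$ is realised by gluing two partial composition pushout squares of $\Tw(P)$ along an edge, and applying $Y$ together with the pasting law for pullbacks identifies $Y$ of the outer square with the iterated fibre product of $Y(p),Y(q),Y(r)$ over the relevant copies of $Y(id_{-})$; both associations then pick out the unique element lying over $x,y,w$, hence coincide. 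This coherence of nested pullback constructions is the only genuinely non-formal step, and it is precisely the content that the $2$-Segal (decomposition space) hypothesis is designed to supply.
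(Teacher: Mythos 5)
Your proposal is correct and follows essentially the same route as the paper: the forward direction uses the unique-factorization criterion of Lemma~\ref{lem:discrete-criterion} to identify $Y(p\circ_i q)$ with the pullback $Y(p)\times_{Y(id_c)}Y(q)$, and the converse constructs $P_1$ with operations the elements of the sets $Y(p)$, composition defined by the inverse of that pullback isomorphism, and associativity checked by comparing the two factorizations of the maps into $(p\circ_i q)\circ_j r$. Your write-up is somewhat more explicit than the paper's (notably in verifying that the presheaf generated by $f_*$ is $Y$), but no new idea is involved.
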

\begin{proof}
The first part of the proof continues the preceding lemma. Assume that $Y$ corresponds to the discrete fibration $f_*$ induced by a morphism of operads $f$. Since only identity operations are mapped by $f$ to identity operations, the set of identity operations of $P_1$ is the set $\bigsqcup_c f^{-1}(id_c)=\bigsqcup_c Y(id_c)$. Let $p$ and $q$ be composable operations in $P$, and $id_c\to p$ be the $i$-th input map of $p$ and $id_c\to q$ be the output map of $q$. The set $Y(p)\times_{Y(id_c)} Y(q)$ consists of all composable operations $x$ and $y$ in $P_1$ lying over $p$ and $q$ respectively. By preceding lemma the map $\circ_i:Y(p)\times_{Y(id_c)} Y(q)\to Y(p\circ_i q)$, the restriction of the partial composition map, is a bijection. For all pairs $(x,y)$ in $Y(p)\times_{Y(id_c)} Y(q)$ we have $x=Y(p\to p\circ_i q)(x\circ_i y)$ and $y=Y(q\to p\circ_i q)(x\circ_i y)$, i.e.\@ the bijection above is the isomorphism of pullbacks. This shows that $Y$ sends partial composition pushouts to pullbacks, i.e.\@ $Y$ is a discrete decomposition space.

In the opposite direction, define $\bigsqcup_c Y(id_c)$-coloured operad $P_1$ as follows. Operations in $P_1$ of arity $n$ are the elements of $Y(p)$ for all operations $p$ in $P$ of arity $n$. The $i$-th input and the output colour of $x$ in $Y(p)$ are the $i$-th input and the output of $x$, i.e.\@ the images of $x$ under the maps $Y(p)\to Y(id_c)$. Again, the set $Y(p)\times_{Y(id_c)} Y(q)$ consists of all composable operations $x$ and $y$ in $P_1$ lying over $p$ and $q$ respectively. The partial composition $x\circ_i y$ is computed via the canonical isomorphism $Y(p)\times_{Y(id_c)} Y(q)\to Y(p\circ_i q)$ between pullbacks. Operad axioms for $P_1$ follow from relations in $\Tw(P)$, e.g.\@ associativity of composition can be proved by factoring the obvious morphisms from operations $p$, $q$, $r$ to the operations $(p\circ_i q)\circ_j r$ in two ways.
\end{proof}

\begin{example}
For an operad $P$ and a \emph{single-object} Segal $P$-presheaf $X$ the operad over $P$ corresponding to the discrete decomposition space $X$ is the operad of $P$-algebras over the $P$-algebra corresponding to $X$, i.e.\@ the Baez--Dolan plus construction $(P, X)^+$.
\end{example}

\begin{definition}
  A morphism of operads $f:P_1\to P$ is a decomposition morphism if the induced functor $f_*:\Tw(P_1)\to \Tw(P)$ is a discrete fibration.
\end{definition}

\begin{example}
A morphism of categories $C_1\to C$ is a decomposition morphism if and only if it is a discrete Conduch\'e fibration (\cite{MR1667312}).
\end{example}

\begin{lemma}
Decomposition morphisms form a wide subcategory of the category of operads. For any decomposition morphisms $P_i\to P$ over the same base $P$ morphisms $P_1\to P_2$ over $P$ are decomposition morphisms, and any morphism $h:\Tw(P_1)\to \Tw(P_2)$ of categories over $\Tw(P)$ is induced by a decomposition morphism $f:P_1\to P_2$. 
\end{lemma}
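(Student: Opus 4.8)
The plan is to reduce the whole statement to two formal facts about discrete fibrations: a cancellation property, and the fact (recalled just before this lemma) that morphisms of presheaves over a base $B$ are the same thing as functors between the corresponding categories of elements lying over $B$.

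First I would record the cancellation property: if $G\colon\mathcal{E}\to\mathcal{B}$ and $H\colon\mathcal{B}\to\mathcal{D}$ are functors such that $H$ and $H\circ G$ are discrete fibrations, then $G$ is a discrete fibration. Indeed, given an object $e$ of $\mathcal{E}$ and a morphism $u\colon b\to G(e)$ in $\mathcal{B}$, lift $H(u)$ uniquely along $H\circ G$ to a morphism $\widetilde{u}\colon e'\to e$; then $G(\widetilde{u})$ and $u$ have common target $G(e)$ and the same image $H(u)$ under $H$, hence coincide since $H$ is a discrete fibration, so $\widetilde{u}$ is the required lift of $u$, and its uniqueness is inherited from the lift along $H\circ G$. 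Dual bookkeeping shows that a composite of discrete fibrations is a discrete fibration. Since $\mathrm{id}_P$ induces the identity functor of $\Tw(P)$, which is a discrete fibration, and $(g f)_*=g_*\circ f_*$ by functoriality of $\Tw$, decomposition morphisms contain all identities and are closed under composition, hence form a wide subcategory of operads. Moreover, if $g_1\colon P_1\to P$ and $g_2\colon P_2\to P$ are decomposition morphisms and $f\colon P_1\to P_2$ satisfies $g_2\circ f=g_1$, then $(g_2)_*\circ f_*=(g_1)_*$ exhibits $f_*$ as a discrete fibration by the cancellation property, so $f$ is a decomposition morphism.

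For the last assertion I would argue as follows. Let $Y_1$ and $Y_2$ be the discrete decomposition spaces over $\Tw(P)$ corresponding, by the proposition above identifying such presheaves with decomposition morphisms into $P$, to $g_1$ and $g_2$; thus $\Tw(P_i)$ is identified over $\Tw(P)$ with the category of elements $\Tw(P)/Y_i$, and the operations of $P_i$ lying over an operation $p$ of $P$ are exactly the elements of $Y_i(p)$. A functor $h\colon\Tw(P)/Y_1\to\Tw(P)/Y_2$ over $\Tw(P)$ is automatically a discrete fibration, by the cancellation property applied to $(g_2)_*\circ h=(g_1)_*$; hence, by the recollection preceding this lemma, $h$ comes from a morphism of presheaves $\psi\colon Y_1\to Y_2$, with $h(c,y)=(c,\psi_c(y))$ on objects. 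I then define $f\colon P_1\to P_2$ to be $\psi_{\mathrm{id}_c}$ on colours and $\psi_p$ on the operations lying over $p$, for every operation $p$ of $P$. Naturality of $\psi$ with respect to the input and output maps of operations of $P$ shows that $f$ respects input and output colours and sends identity operations to identity operations; naturality with respect to permutation isomorphisms shows that $f$ is equivariant; and naturality with respect to the four maps of a partial composition pushout, together with the fact that $Y_1$ and $Y_2$ send partial composition pushouts to pullbacks, shows that $f(x\circ_i y)=f(x)\circ_i f(y)$, since by construction the partial composition of $P_i$ is read off the canonical isomorphism $Y_i(p)\times_{Y_i(\mathrm{id}_c)}Y_i(q)\xrightarrow{\sim}Y_i(p\circ_i q)$. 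Thus $f$ is a morphism of operads. Finally, $f_*$ is a functor over $\Tw(P)$ acting on objects over $p$ by $\psi_p$, so it induces the presheaf morphism $\psi$; by the same recollection $f_*=h$, and in particular $f_*$ is a discrete fibration, i.e.\@ $f$ is a decomposition morphism.

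The only step that is not formal manipulation of discrete fibrations is the verification that the family of maps $\psi_p$ is compatible with operadic composition, symmetry and units, i.e.\@ that $f$ is a genuine morphism of operads; this is where the decomposition space hypothesis on $Y_1$ and $Y_2$ is used, and it parallels the verification of the operad axioms for $P_1$ in the proof of the proposition above, so I would carry it out by the same bookkeeping — for instance obtaining associativity by factoring the evident morphisms from $p$, $q$, $r$ to the operations $(p\circ_i q)\circ_j r$ in $\Tw(P)$ in two ways.
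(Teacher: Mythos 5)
Your proposal is correct and follows essentially the same route as the paper: the cancellation property for discrete fibrations handles the first two claims, and the final claim is obtained by observing that $h$, being a lift along the discrete fibration $\Tw(P_2)\to\Tw(P)$, must preserve input maps, output maps, permutation isomorphisms and partial composition pushouts, which is exactly the data needed to read off an operad morphism $P_1\to P_2$. The paper states this preservation directly for $h$ while you route it through the presheaf morphism $\psi$ and its naturality, but the content is identical.
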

\begin{proof}
A composition of discrete fibrations is a discrete fibration. Morphisms $E_1\to E_2$ between discrete fibrations $E_i\to B$ over the same base are discrete fibrations. Let $P_i\to P$ be decomposition morphisms, and $h:\Tw(P_1)\to \Tw(P_2)$ be a morphism over $\Tw(P)$. Since the functor $\Tw(P_1)\to \Tw(P)$ and lifts along the discrete fibration $\Tw(P_2)\to \Tw(P)$ preserve the classes of input maps, output maps,  partial composition pushouts and permutation isomorphisms, the functor $h$ preserves the aforementioned classes of maps and diagrams, and thus $h$ is induced by a morphism of operads.
\end{proof}

\begin{corollary}
For any operad $P$ the category of discrete decomposition spaces over $\Tw(P)$ is equivalent to the category of decomposition morphisms over $P$.
\end{corollary}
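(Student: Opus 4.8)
The plan is to realize the asserted equivalence as the functor $\Tw(-)$, suitably restricted, and to extract full faithfulness and essential surjectivity directly from the preceding Proposition and Lemma; no genuinely new argument is needed. Concretely, I would define a functor $D$ from the category of decomposition morphisms over $P$ to the category of discrete decomposition spaces over $\Tw(P)$ as follows. On objects, $D$ sends a decomposition morphism $f\colon P_1\to P$ to the presheaf $Y_f$ over $\Tw(P)$ corresponding, under the equivalence between presheaves and discrete fibrations recalled above, to the discrete fibration $f_*\colon\Tw(P_1)\to\Tw(P)$. On morphisms, $D$ sends an operad morphism $g\colon P_1\to P_2$ over $P$ to the morphism of presheaves $Y_{f_1}\to Y_{f_2}$ corresponding to the functor $g_*\colon\Tw(P_1)\to\Tw(P_2)$ over $\Tw(P)$. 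That $Y_f$ is indeed a discrete decomposition space — so that $D$ lands in the claimed subcategory — is exactly the forward implication of the Proposition relating discrete decomposition spaces and discrete fibrations induced by operad morphisms; functoriality of $D$ is inherited from functoriality of $\Tw(-)$.

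Essential surjectivity of $D$ is the converse implication of that Proposition: an arbitrary discrete decomposition space $Y$ over $\Tw(P)$ corresponds to a discrete fibration over $\Tw(P)$, which by the Proposition has the form $f_*$ for some morphism of operads $f$, necessarily a decomposition morphism by definition; hence $Y\cong Y_f=D(f)$. For fullness, a morphism $Y_{f_1}\to Y_{f_2}$ of presheaves over $\Tw(P)$ corresponds to a discrete fibration $\Tw(P_1)\to\Tw(P_2)$ over $\Tw(P)$, and the Lemma on decomposition morphisms states precisely that every such functor over $\Tw(P)$ is induced by a morphism of operads $P_1\to P_2$ over $P$, which is then automatically a decomposition morphism; this operad morphism is a $D$-preimage of the given morphism of presheaves. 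For faithfulness, it suffices to observe that $\Tw(-)$ is faithful on operad morphisms: the objects of $\Tw(P_i)$ are the operations of $P_i$, and $g_*$ restricts on objects to $g$ on operations, so $g$ is recovered from $g_*$; hence $D(g)=D(g')$ forces $g=g'$. Combining, $D$ is fully faithful and essentially surjective, so it is an equivalence of categories.

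I do not expect a serious obstacle: the statement is bookkeeping layered on top of the Proposition and the Lemma. The one point that merits a line of care is faithfulness, i.e.\@ the remark that a morphism of operads over $P$ is determined by the associated functor on twisted arrow categories. A secondary point worth making explicit — although not strictly required for the statement — is that $D$ reflects isomorphisms, so the equivalence is compatible with the identifications of isomorphic objects on both sides: an isomorphism $\Tw(P_1)\cong\Tw(P_2)$ over $\Tw(P)$ is, by the Lemma, induced by operad morphisms in both directions, and since these are again determined by their effect on objects, their composites are the identities, so the two decomposition morphisms $P_1\to P$ and $P_2\to P$ are isomorphic over $P$.
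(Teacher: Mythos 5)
Your proposal is correct and is exactly the intended argument: the paper states this as an immediate corollary of the preceding Proposition (which gives well-definedness and essential surjectivity of $Y\mapsto f$) and Lemma (which gives fullness), with no separate proof. Your added observation that faithfulness follows because $g_*$ restricts on objects to $g$ on operations is a worthwhile detail the paper leaves implicit, but it does not change the route.
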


\subsection{Segal presheaves as algebras}

We introduce a class of operads called \emph{palatable}. For a palatable operad $P$ Segal $P$-presheaves can be seen as algebras over operads related to $P$.

\begin{definition}
  Let $P$ be a planar or symmetric operad with a nice grading. The operad $P$ is \emph{palatable} if any Segal presheaf over $\Tw(P)$ is 2-Segal. For a palatable operad $P$ and a 2-Segal presheaf over $\Tw(P)$ the operad $P_X$ is the operad over $P$ that corresponds to $X$.
\end{definition}

\begin{definition}
  For a planar or symmetric operad $P$ with a nice grading $\psi$ the category $\mathcal{P}_\psi$ is the full subcategory of $\Tw(P)$ on objects $q$ of grading $(-1)$. Let $X$ be a presheaf over $\Tw(P)$ and $k$ be the inclusion of $\mathcal{P}_\psi$ into $\Tw(P)$. The presheaf $Pl(X)$ over $\Tw(P)$ is the presheaf $k_*k^*X$, and $\eta_X:X\to Pl(X)$ is the unit of the adjunction $k^*\dashv k_*$. In other words, presheaves $Pl(X)$ are determined by their petals, presheaves $X$ and $Pl(X)$ have the same petals, and $Pl(X)$ is terminal among presheaves with petals the same as in $X$.
\end{definition}

\begin{proposition}
For any operad $P$ presheaves $Pl(X)$ over $\Tw(P)$ are Segal.
\end{proposition}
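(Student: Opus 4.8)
The plan is to unwind the definitions and reduce the Segal condition to a statement about the essential image of a Kan extension. Write $k\colon\mathcal{P}_\psi\hookrightarrow\Tw(P)$ for the inclusion used in the definition of $Pl$, write $m\colon\Tw(P)_0\hookrightarrow\Tw(P)$ for the inclusion of (any admissible choice of) $\Tw(P)_0$, and write $i\colon El(\Tw(P)_0)\hookrightarrow\Tw(P)_0$ for the inclusion appearing in the definition of Segal presheaf. First I would observe that $\mathcal{P}_\psi$ is a full subcategory of $El(\Tw(P)_0)$: its objects are elementary, and every morphism of $\Tw(P)$ out of an operation of grading $(-1)$ is a petal map, hence a lower morphism, and such morphisms lie in each of $Lower$, $Lower\circ Perm$, $Inert$ — this is precisely the robustness that makes the Segal notion independent of the choice of $\Tw(P)_0$. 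Thus $k$ factors as $k=m\circ i\circ k_1$ with $k_1\colon\mathcal{P}_\psi\hookrightarrow El(\Tw(P)_0)$, and $Pl(X)=k_*k^*X$. Since the Segal condition for $Pl(X)$ is exactly that the unit $m^*Pl(X)\to i_*i^*m^*Pl(X)$ is an isomorphism, it suffices to show that $m^*Pl(X)$ lies in the essential image of $i_*$.

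The key step is the identity of functors $m^*\circ k_*\;\cong\;(i\,k_1)_*\colon Psh(\mathcal{P}_\psi)\to Psh(\Tw(P)_0)$, i.e. that restricting the right Kan extension along $k$ back down to $\Tw(P)_0$ agrees with the right Kan extension taken entirely inside $\Tw(P)_0$. I would prove this pointwise: writing $Z:=k^*X$, the value $(k_*Z)(p)$ for an object $p$ of $\Tw(P)_0$ is the limit of the diagram $(q,g)\mapsto Z(q)$ over the comma category $k\downarrow p$, whose objects are pairs $(q,g\colon q\to p)$ with $q$ of grading $(-1)$ and whose morphisms are the morphisms of $\mathcal{P}_\psi$. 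Since every such $q$ has arity $0$, both the structure morphisms $g$ and the connecting morphisms are petal maps and hence lie in $\Tw(P)_0$; consequently $k\downarrow p$ is literally the same category, with the same diagram, as the comma category $(i\,k_1)\downarrow p$ formed inside $\Tw(P)_0$. Taking limits yields the asserted isomorphism, naturally in $p$. Combined with $(i\,k_1)_*=i_*(k_1)_*$ this gives $m^*Pl(X)\cong i_*\bigl((k_1)_*k^*X\bigr)$, which is manifestly in the essential image of $i_*$.

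Finally, since $El(\Tw(P)_0)$ is a \emph{full} subcategory of $\Tw(P)_0$, the functor $i$ is fully faithful, so $i_*$ is fully faithful and the counit $i^*i_*\Rightarrow\mathrm{id}$ is an isomorphism; by a triangle identity this forces the unit $i_*W\to i_*i^*i_*W$ to be an isomorphism for every presheaf $W$ on $El(\Tw(P)_0)$, in particular for $W=(k_1)_*k^*X$. By naturality of the unit this shows that $m^*Pl(X)\to i_*i^*m^*Pl(X)$ is an isomorphism, which is the Segal condition, and the argument is uniform in the choice of $\Tw(P)_0$. I do not anticipate a genuine obstacle; the single point that must be checked carefully is the claim that every morphism of $\Tw(P)$ out of a grading-$(-1)$ operation is lower, hence lies in every admissible $\Tw(P)_0$. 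This is immediate from the canonical-representative description of morphisms in $\Tw(P)$ (Corollary~\ref{crl:twsop}): a grading-$(-1)$, hence arity-$0$, source operation has no input edges, so the reduced tree or graph representing such a morphism has no upper vertices and the morphism is lower.
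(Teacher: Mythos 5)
Your proof is correct and follows essentially the same route as the paper's: both arguments identify the restriction of $k_*k^*X$ to $\Tw(P)_0$ with a right Kan extension computed entirely inside $\Tw(P)_0$ (via the observation that every morphism out of a grading-$(-1)$ operation is lower, so the relevant comma categories coincide), and then factor the inclusion $\mathcal{P}_\psi\hookrightarrow\Tw(P)_0$ through $El(\Tw(P)_0)$ to exhibit the result as a right Kan extension along $i$. Your closing paragraph merely spells out the standard fact that presheaves in the essential image of the fully faithful $i_*$ satisfy the unit-isomorphism form of the Segal condition, which the paper leaves implicit.
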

\begin{proof}
Morphism in $\Tw(P)$ from objects of $\mathcal{P}_{\psi}$ are lower morphisms. Thus for any presheaf $Y$ over $\mathcal{P}_\psi$ the restriction $j^*k_*Y$ of the right Kan extension of $Y$ along $k$ to the subcategory $Lower$ and the right Kan extension of $Y$ along the inclusion of $\mathcal{P}_\psi$ into $Lower$ are computed by the same formula, i.e.\@ the two presheaves are isomorphic. The inclusion of $\mathcal{P}_\psi$ into $Lower$ is the composition of the inclusion of $\mathcal{P}_\psi$ into $El(Lower)$ with the inclusion $i:El(Lower)\to Lower$, thus $j^*k_*Y$ is a right Kan extension along $i$, and the presheaf $k_*Y$ is Segal.
\end{proof}

\begin{proposition}
Let $P$ be an operad and $X$ be a presheaf over $\Tw(P)$ such that $Pl(X)$ is 2-Segal. If $X$ is Segal, then the corresponding presheaf $X'$ over $\Tw(P_{Pl(X)})$ is single-object Segal. This correspondence gives an equivalence between the category of single-object Segal presheaves over $\Tw(P_{Pl(X)})$ and the category of Segal presheaves $Y$ over $\Tw(P)$ such that $Pl(Y)=Pl(X)$ and petal-preserving morphisms.
\end{proposition}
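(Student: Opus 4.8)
The plan is to pin down $X'$ concretely, check the single-object Segal condition by a limit computation, and then read off the equivalence. Write $k\colon\mathcal P_\psi\hookrightarrow\Tw(P)$ for the inclusion, so $Pl(X)=k_*k^*X$ with unit $\eta_X\colon X\to Pl(X)$. Since $Pl(X)$ is $2$-Segal, it is the discrete decomposition space corresponding to a decomposition morphism $P_{Pl(X)}\to P$, so $\Tw(P_{Pl(X)})\cong\Tw(P)/Pl(X)$ over $\Tw(P)$. By the recalled correspondence between morphisms of presheaves and discrete fibrations between their categories of elements, $\eta_X$ induces a discrete fibration $\Tw(P)/X\to\Tw(P)/Pl(X)\cong\Tw(P_{Pl(X)})$ over $\Tw(P)$, and $X'$ is the associated presheaf, with $X'(z)=\eta_{X,p}^{-1}(z)\subseteq X(p)$ for $z\in Pl(X)(p)$ an operation of $P_{Pl(X)}$ lying over the operation $p$ of $P$. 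The colours of $P_{Pl(X)}$ above a colour $c$ are the elements of $Pl(X)(id_c)$, and the $j$-th input colour of $z$ is $\gamma_j:=Pl(X)(in_j)(z)$, the restriction of the compatible family of petals $z$ along the $j$-th input map of $p$.

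First I would prove that $X'$ is single-object Segal. Fix $z\in Pl(X)(p)$ with $p$ of arity $n$, take $\Tw(P)_0=Lower$, and use that, $X$ being Segal, $X(p)\cong\lim_{(s\to p)\in El(Lower)/p}X(s)$. The comma category $El(Lower)/p$ has as objects the $n$ input maps $in_1,\dots,in_n$ of $p$ together with the petal maps of $p$; no lower morphism out of an $id_{c_j}$ reaches an operation of arity $0$, so there is no morphism from an $in_j$ to a petal object, and the only non-identity morphisms into $in_j$ are the petals of $id_{c_j}$ post-composed with $in_j$. Hence an element of the limit lying over $z$ is a tuple $(y_1,\dots,y_n)$ with $y_j\in X(id_{c_j})$, constrained only by the morphisms into the $in_j$; collecting these for fixed $j$ says precisely that $\eta_{X,id_{c_j}}(y_j)=\gamma_j$, i.e.\ $y_j\in X'(id_{\gamma_j})$, and any constraint coupling $y_j$ and $y_k$ via a petal of $p$ factoring through both $in_j$ and $in_k$ is then automatic. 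Since the discrete fibration $\Tw(P_{Pl(X)})\to\Tw(P)$ sends the input maps of $z$ to those of $p$, the map $(X'(in_j))_j\colon X'(z)\to\prod_{j=1}^n X'(id_{\gamma_j})$ becomes $(\{x_q\},\{y_j\})\mapsto(y_j)$ under these identifications, hence a bijection. Reading the same computation backwards shows that the assignment $Z\mapsto\widetilde Z$, $\widetilde Z(p)=\bigsqcup_{z\in Pl(X)(p)}Z(z)$ (the presheaf of the composite discrete fibration $(\Tw(P)/Pl(X))/Z\to\Tw(P)/Pl(X)\to\Tw(P)$), sends single-object Segal presheaves over $\Tw(P_{Pl(X)})$ to Segal presheaves over $\Tw(P)$, since by single-object Segalness of $Z$ the Segal limit at $p$ reassembles $\bigsqcup_z\prod_j Z(id_{\gamma_j})\cong\bigsqcup_z Z(z)$.

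Then I would check that $Y\mapsto Y'$ and $Z\mapsto\widetilde Z$ are mutually inverse equivalences between single-object Segal presheaves over $\Tw(P_{Pl(X)})$ and Segal presheaves $Y$ over $\Tw(P)$ with $Pl(Y)=Pl(X)$ (the latter with petal-preserving morphisms). The functor $Y\mapsto Y'$ is the first part applied to $Y$, and is functorial for petal-preserving morphisms since, by naturality of $\eta$, such a morphism lies over $Pl(X)$ and so gives a map of discrete fibrations over $\Tw(P)/Pl(X)$. For $Z\mapsto\widetilde Z$: operations of $P_{Pl(X)}$ over an operation $q$ of grading $-1$ have arity $0$, so $Z$ is a point on each, $\widetilde Z(q)=Pl(X)(q)$, $k^*\widetilde Z=k^*X$, and $Pl(\widetilde Z)=k_*k^*Pl(X)=Pl(X)$; with the previous paragraph this places $\widetilde Z$ in the target category. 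Finally $\widetilde{Y'}\cong Y$, because $(\Tw(P)/Pl(X))/Y'\cong\Tw(P)/Y$ over $\Tw(P)$ (the recalled isomorphism $(B/Y)/X'\cong B/X$ applied to $\Tw(P)/Y\to\Tw(P)/Pl(X)$), and dually $(\widetilde Z)'\cong Z$.

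The main obstacle I expect is the explicit analysis of the comma category $El(Lower)/p$ behind the Segal limit: verifying that no lower morphism from an $id_{c_j}$ has arity-$0$ target, that the morphisms into $in_j$ are exactly the petals of $id_{c_j}$ followed by $in_j$, and that these exhaust the constraints. Granting this, identifying the $z$-fiber of the limit with $\prod_j X'(id_{\gamma_j})$ and matching the bijection with $(X'(in_j))_j$ is a formal consequence of $X$ being Segal, and the remaining points (functoriality, compatibility of the various identifications over $\Tw(P)$, naturality of $\eta$) are routine.
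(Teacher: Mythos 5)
Your proposal is correct and follows essentially the same route as the paper's proof: identify $X'(z)$ as the fiber of $\eta_{X,p}$ over $z$, decompose the Segal limit over $El(Lower)/p$ into the petal components (pinned down by $z$) and the input components, conclude the bijection onto $\prod_j X'(id_{\gamma_j})$, and run the same computation backwards for the inverse functor together with the observation that $Y'$ being a singleton on arity-$0$ operations forces $Pl(Y)=Pl(X)$. Your write-up is simply more explicit than the paper's about the structure of $El(Lower)/p$ and about why the cross-constraints between inputs sharing a petal are automatic.
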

\begin{proof}
For any operation $p'$ in $P_{Pl(X)}$ that lies over an operation $p$ in $P$ the set $X'(p')$ consists of elements in $X(p)$ that have petals equal to petals of $p'$. If $X$ is Segal, then $X'(p')$ is the subset of the limit taken over $El(Lower)/p$, the subset of the product indexed by petals and inputs of $p$, with values over petals determined uniquely by $p'$. This implies that the map $X'(p')\to\prod X'(id_{c'})$ is a bijection, and $X'$ is single-object Segal.

Let $Y'$ be a single-object Segal presheaf over $\Tw(P_{Pl(X)})$ and $Y$ be the corresponding presheaf over $\Tw(P)$. For operations $q'$ of arity $0$ in $P_{Pl(X)}$ the sets $Y'(q')$ are singletons, thus $Pl(X)=Pl(Y)$. By the same reasoning as in the first half of the proof, $Y$ is Segal.
\end{proof}

If an operad $P$ is palatable, then for any presheaf $X$ over $\Tw(P)$ Segal presheaf $Pl(X)$ is 2-Segal. The opposite is also true.

\begin{proposition}
Let $P$ be an operad such that presheaves $Pl(X)$ over $\Tw(P)$ (i.e.\@ right Kan extensions of presheaves over $\mathcal{P}_\psi$) are 2-Segal. Then $P$ is palatable. 
\end{proposition}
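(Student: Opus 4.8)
The plan is to use the previous proposition (the one stating that, for a Segal presheaf $X$ with $Pl(X)$ 2-Segal, the associated presheaf $X'$ over $\Tw(P_{Pl(X)})$ is single-object Segal) to convert an arbitrary Segal presheaf into a single-object Segal presheaf over a larger operad, invoke the already-proven fact that single-object Segal presheaves are discrete decomposition spaces, and then transport this back along a composite of discrete fibrations. Concretely, let $X$ be any Segal presheaf over $\Tw(P)$. By hypothesis $Pl(X)$ is 2-Segal, so by the corollary identifying discrete decomposition spaces over $\Tw(P)$ with decomposition morphisms over $P$ it corresponds to a decomposition morphism $\rho\colon R\to P$, where $R:=P_{Pl(X)}$; in particular $\rho_*\colon \Tw(R)\to\Tw(P)$ is a discrete fibration and $\Tw(R)$ is identified, over $\Tw(P)$, with the category of elements $\Tw(P)/Pl(X)$. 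The unit $\eta_X\colon X\to Pl(X)$ is a morphism of presheaves over $\Tw(P)$, hence induces a discrete fibration $\Tw(P)/X\to\Tw(P)/Pl(X)\cong\Tw(R)$ over $\Tw(P)$; this exhibits $\Tw(P)/X$ as the category of elements of a presheaf $X'$ over $\Tw(R)$, and this $X'$ is exactly the presheaf appearing in that previous proposition. Since $X$ is Segal and $Pl(X)$ is 2-Segal, the proposition gives that $X'$ is single-object Segal over $\Tw(R)$.

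Next I would apply the earlier result that single-object Segal presheaves are discrete decomposition spaces: $X'$ is a discrete decomposition space over $\Tw(R)$, so it corresponds to a decomposition morphism $\sigma\colon S\to R$ (with $S=(R,X')^+$ the Baez--Dolan plus construction), and $\Tw(S)$ is identified with $\Tw(R)/X'$ over $\Tw(R)$, hence over $\Tw(P)$. Now $\rho\circ\sigma\colon S\to P$ is a composition of decomposition morphisms, and decomposition morphisms form a wide subcategory of the category of operads, so $\rho\circ\sigma$ is a decomposition morphism; therefore $(\rho\sigma)_*=\rho_*\circ\sigma_*\colon\Tw(S)\to\Tw(P)$ is a discrete fibration induced by a morphism of operads. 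Chasing the identifications $\Tw(S)\cong\Tw(R)/X'\cong\Tw(P)/X$, all over $\Tw(P)$, this discrete fibration is precisely the projection $\Tw(P)/X\to\Tw(P)$ associated with the presheaf $X$. By the proposition characterizing those presheaves $Y$ over $\Tw(P)$ that correspond to discrete fibrations induced by morphisms of operads, $X$ is a discrete decomposition space, i.e.\ 2-Segal; as $X$ was an arbitrary Segal presheaf, $P$ is palatable.

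The only delicate point, which I would treat carefully, is the bookkeeping of the two-stage discrete fibration $\Tw(S)\to\Tw(R)\to\Tw(P)$: one must check that the isomorphism $\Tw(P)/X\cong\Tw(R)/X'$ coming from the general correspondence between morphisms of presheaves and discrete fibrations over a base, and the isomorphism $\Tw(R)/X'\cong\Tw(S)$ coming from $X'$ being a discrete decomposition space, are both \emph{over} $\Tw(P)$ (the first because $\eta_X$ is a morphism of presheaves over $\Tw(P)$, the second because $\sigma_*$ sits over $\rho_*$), so that the composite really recovers the projection $\Tw(P)/X\to\Tw(P)$. Granting that compatibility, the statement is an immediate assembly of the cited results, requiring no further computation.
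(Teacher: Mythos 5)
Your proposal is correct and follows essentially the same route as the paper's proof: pass from the Segal presheaf $X$ to the single-object Segal (hence 2-Segal) presheaf $X'$ over $\Tw(P_{Pl(X)})$, identify $\Tw(P)/X$ with the composite of the two discrete fibrations $\Tw(P_{Pl(X)})/X'\to\Tw(P_{Pl(X)})\to\Tw(P)$ induced by decomposition morphisms, and conclude by closure of decomposition morphisms under composition. Your version merely makes explicit the bookkeeping of the identifications over $\Tw(P)$ that the paper leaves implicit.
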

\begin{proof}
Let $X$ be a Segal presheaf. The corresponding presheaf $X'$ over $\Tw_{Pl(X)}$ is 2-Segal as a single-object Segal presheaf. The presheaf $X$ corresponds to the composition $\Tw(P_{Pl(X)})/X'\to \Tw(P_{Pl(X)})\to \Tw(P)$ of discrete fibrations induced by decomposition morphisms. Composition of decomposition morphisms is a decomposition morphism, i.e.\@ the presheaf $X$ is 2-Segal.
\end{proof}

\begin{proposition}
Let $P$ be a palatable operad and $X$ be a presheaf over $\Tw(P)$. Then the operad $P_{Pl(X)}$ is palatable. Presheaves over $\Tw(P)$ that correspond to Segal presheaves over $\Tw(P_{Pl(X)})$ are Segal. Let $Z'$ be a presheaf over $\Tw(P_{Pl(X)})$ and $Z$ be the corresponding presheaf over $\Tw(P)$. Then the presheaf $Pl(Z)$ corresponds to $Pl(Z')$. 
\end{proposition}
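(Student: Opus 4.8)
The plan is to prove the three assertions in the order: the last one (about $Pl$), then palatability of $P_{Pl(X)}$, then the Segal statement, since the last assertion is purely formal and feeds the other two. First I would equip $P_{Pl(X)}$ with the grading pulled back along the decomposition morphism $g\colon P_{Pl(X)}\to P$ corresponding to $Pl(X)$ (which is a discrete decomposition space, being Segal for any operad and hence $2$-Segal since $P$ is palatable): an operation of $P_{Pl(X)}$ over $p$ is declared to have grading $\psi(p)$. This grading is nice because $g$ preserves identities, arities and the symmetric action, an operation of grading $(-1)$ lies over one of grading $(-1)$ hence has arity $0$, and a unit of a binary operation is sent by $g$ to a unit of a binary operation. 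The discrete fibration $g_*\colon\Tw(P_{Pl(X)})\to\Tw(P)$ identifies $\Tw(P_{Pl(X)})$ with $\Tw(P)/Pl(X)$; I write objects of $\Tw(P_{Pl(X)})$ as pairs $(p,x)$ with $x\in Pl(X)(p)$.

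For the last assertion I would first observe that, since $g_*$ is a discrete fibration preserving the grading of objects and preserving petal maps, for each object $(p,x)$ the functor $g_*$ restricts to an isomorphism of comma categories $\mathcal{P}^{P_{Pl(X)}}_{\psi}/(p,x)\xrightarrow{\ \cong\ }\mathcal{P}^{P}_{\psi}/p$, sending a petal $q\to p$ to its unique lift, whose source is $(q,x|_q)$ with $x|_q\in Pl(X)(q)=X(q)$ the restriction of $x$. Hence for any $Z'$ over $\Tw(P_{Pl(X)})$,
\[
 Pl(Z')(p,x)\;=\;\lim_{q\to p} Z'\bigl(q,x|_q\bigr),
\]
the limit taken over petal maps of $p$. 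On the other hand $Pl(Z)(p)=\lim_{q\to p}Z(q)=\lim_{q\to p}\bigsqcup_{y\in X(q)}Z'(q,y)$, and unravelling the compatibility condition shows that an element of the right-hand side is precisely a compatible family $q\mapsto y_q$, i.e.\@ an element $x\in Pl(X)(p)$, together with a compatible family $q\mapsto z_q\in Z'(q,x|_q)$, i.e.\@ an element of $Pl(Z')(p,x)$. This identifies $Pl(Z)(p)$ with $\bigsqcup_{x\in Pl(X)(p)}Pl(Z')(p,x)$ naturally in $p$, and by construction the latter is the value of the presheaf over $\Tw(P)$ corresponding to $Pl(Z')$; so $Pl(Z)$ corresponds to $Pl(Z')$. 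This is just the distributivity of limits over the fibrewise disjoint union along $g_*$.

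For palatability of $P_{Pl(X)}$ I would invoke the palatability criterion above: it suffices to check that $Pl(W)$ is $2$-Segal for every presheaf $W$ over $\Tw(P_{Pl(X)})$. Let $W^{\flat}$ be the presheaf over $\Tw(P)$ corresponding to $W$; by the last assertion the presheaf over $\Tw(P)$ corresponding to $Pl(W)$ is $Pl(W^{\flat})$, which is $2$-Segal since $P$ is palatable. Therefore $\Tw(P)/Pl(W^{\flat})\to\Tw(P)$ is a discrete fibration induced by a morphism of operads, and under the identification $\Tw(P)/Pl(W^{\flat})\cong\Tw(P_{Pl(X)})/Pl(W)$ it factors as $\Tw(P_{Pl(X)})/Pl(W)\to\Tw(P_{Pl(X)})\xrightarrow{g_*}\Tw(P)$. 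By the lemma that a morphism between twisted arrow categories of decomposition morphisms over a common base is induced by a decomposition morphism, the first map here is induced by a morphism of operads, so $Pl(W)$, the presheaf it represents over $\Tw(P_{Pl(X)})$, is a discrete decomposition space, i.e.\@ $2$-Segal.

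Finally, for the Segal statement, let $Z'$ be Segal over $\Tw(P_{Pl(X)})$ and $Z$ the corresponding presheaf over $\Tw(P)$. Since $P_{Pl(X)}$ is palatable, $Pl(Z')$ is $2$-Segal, so the equivalence above applied to the operad $P_{Pl(X)}$ and the Segal presheaf $Z'$ shows that the presheaf over $\Tw\bigl((P_{Pl(X)})_{Pl(Z')}\bigr)$ corresponding to $Z'$ is single-object Segal; by the last assertion $(P_{Pl(X)})_{Pl(Z')}\cong P_{Pl(Z)}$ over $P$ (both twisted arrow categories being $\Tw(P)/Pl(Z)$), so this is a single-object Segal presheaf over $\Tw(P_{Pl(Z)})$. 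Applying the same equivalence to $P$ and the presheaf $Z$ (whose $Pl$-completion is $2$-Segal since $P$ is palatable), it corresponds to a Segal presheaf $Y$ over $\Tw(P)$ with $Pl(Y)=Pl(Z)$; and since the ``corresponding presheaf'' assignment is transitive along the composite of discrete fibrations $\Tw\bigl((P_{Pl(X)})_{Pl(Z')}\bigr)\to\Tw(P_{Pl(X)})\to\Tw(P)$, the presheaf $Y$ equals $Z$. Hence $Z$ is Segal. The main obstacle I expect is the bookkeeping in the last assertion --- the comma-category isomorphism $\mathcal{P}^{P_{Pl(X)}}_{\psi}/(p,x)\cong\mathcal{P}^{P}_{\psi}/p$ and the naturality in $p$ of the resulting identification of presheaves --- together with making sure the three uses of the earlier propositions do not become circular; everything else is formal once that assertion and the palatability criterion are in hand.
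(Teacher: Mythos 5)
Your proof is correct and follows essentially the same route as the paper's: establish the last claim first by comparing the limits over the (isomorphic) comma categories of petals, deduce palatability of $P_{Pl(X)}$ from the palatability criterion together with the lemma that morphisms of decomposition morphisms over $P$ are induced by decomposition morphisms, and then obtain the Segal statement from the equivalence with single-object Segal presheaves over $\Tw(P_{Pl(Z)})=\Tw((P_{Pl(X)})_{Pl(Z')})$. The extra care you take with the pulled-back grading on $P_{Pl(X)}$ and the explicit identification $Pl(Z)(p)\cong\bigsqcup_{x}Pl(Z')(p,x)$ only makes explicit what the paper leaves implicit.
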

\begin{proof}
We start with the last claim. Let $Z_1$ be the presheaf over $\Tw(P)$ that corresponds to $Pl(Z')$. Pairs of presheaves $Pl(Z)$ and $Z$, $Z$ and $Z'$, $Z'$ and $Pl(Z')$, $Pl(Z')$ and $Z_1$ have the same petals (i.e.\@ the same sections over operations of grading $(-1)$), thus $Pl(Z)$ and $Z_1$ have the same petals. For any operation $p$ in $P$ the sets $Z_1(p)$ and $Pl(Z)(p)$ are computed by essentially the same limit over $\mathcal{P}_\psi/p$, thus presheaves $Z_1$ and $Pl(Z)$ are isomorphic.

For the first claim, for any presheaf $Z'$ the presheaf $Pl(Z')$ corresponds to the morphism of presheaves $Pl(Z)\to Pl(X)$, and since $P$ is palatable, this morphism is a morphism between 2-Segal presheaves, and thus corresponds to a decomposition morphism $P_{Pl(Z)}\to P_{Pl(X)}$. This implies that the presheaf $Pl(Z')$ is 2-Segal, and $P_{Pl(X)}$ is palatable.

For the second claim, let $Y''$ be the presheaf over $\Tw(P_{Pl(Y)})=\Tw((P_{Pl(X)})_{Pl(Y')})$ corresponding to a Segal presheaf $Y'$. Since $Y'$ is Segal, $Y''$ is single-object Segal, and $Y$ is Segal.
\end{proof}

To check if an operad is palatable we use the following notion.

\begin{definition}
  An operad $P$ endowed with a nice grading is \emph{strongly palatable} if for any composable operations $p$ and $q$ in $P$ the set of petals of $p\circ_i q$ is the pushout of the sets of petals of $p$ and $q$.
\end{definition}

\begin{example}
The operad $uCom$ is strongly palatable. The operad $mOp_{nc}$ and its graph-substitution suboperads, and the operads $cuAs$, $iuAs_{Tr}$ and $iuAs_{iTr}$, all graded by $\psi$, are strongly palatable: if a composition $p\circ_i q$ has arity greater than $0$, then the set of edges of $p\circ_i q$ is the pushout of the sets of edges of $p$, $q$ and $id_{c_i}$; if arity of $p\circ_i q$ is $0$, the condition can be checked directly. 
\end{example}

\begin{example}
Some of the graph-substitution operads are not strongly palatable, which is the reason why in general the corresponding presheaves $Pl(X)$ are not 2-Segal. In graph-substitution operads containing $\bigcirc$ and endowed with the arity grading $\phi$ the set of petals of the operation $\bigcirc=\nu\circ_1\mu_0$ is not the pushout of the sets of petals of $\mu_0$ and of $\nu$, since the map $id_\bigcirc$, a petal of $\bigcirc$, is not in the pushout. In the operads $mOp$ and $ciuAs$ endowed with grading $\psi$ the operation $\bigcirc$ has only one petal, while the pushout corresponding to the composition $\nu\circ_1\mu_0$ consists of two petals. 
\end{example}

\begin{proposition}
Let $P$ be an operad with a nice grading $\psi$. If $P$ is strongly palatable then $P$ is palatable. If the category $\mathcal{P}_\psi$ is a groupoid and $P$ is palatable, then $P$ is strongly palatable.
\end{proposition}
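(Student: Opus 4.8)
The plan is to reduce both implications to the characterisation of palatability obtained just above. Combining the observation that every presheaf $Pl(X)$ is Segal with the preceding proposition, an operad $P$ is palatable if and only if every $Pl(X)$ is $2$-Segal; since $k$ is fully faithful every presheaf $Y$ on $\mathcal{P}_\psi$ satisfies $Y=k^{*}(k_{*}Y)$ and $Pl(k_{*}Y)=k_{*}Y$, so this is in turn equivalent to: $k_{*}Y$ is $2$-Segal for every presheaf $Y$ over $\mathcal{P}_\psi$. By the lemma characterising discrete decomposition spaces through partial composition pushouts, this says that for all composable $p,q$ the square obtained by applying $k_{*}Y$ to the partial composition pushout of $p$ and $q$ is a pullback. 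Since $k$ is the inclusion of a full subcategory we may write $(k_{*}Y)(p)=\lim_{k\downarrow p}(Y\circ\pi)$, the limit over the category $k\downarrow p$ of petals of $p$, with $\pi$ sending a petal to its source; write $\mathcal{Q}(p)$ for this category. The partial composition pushout of $p$ and $q$ induces a commutative square of petal categories $\mathcal{Q}(id_{c})\to\mathcal{Q}(p)$, $\mathcal{Q}(id_{c})\to\mathcal{Q}(q)$, $\mathcal{Q}(p),\mathcal{Q}(q)\to\mathcal{Q}(p\circ_{i}q)$, where $c$ is the $i$-th input colour of $p$. Because limits convert pushouts of index categories into pullbacks, $k_{*}Y$ will send the partial composition pushout to a pullback for every $Y$ as soon as this square of petal categories is a pushout of categories, so the whole statement reduces to comparing this square of petal categories with its restriction to object sets, the latter being exactly the definition of strong palatability.

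For the first implication I would assume $P$ strongly palatable, so that the object set of $\mathcal{Q}(p\circ_{i}q)$ is the pushout of those of $\mathcal{Q}(p)$ and $\mathcal{Q}(q)$ over that of $\mathcal{Q}(id_{c})$, and then show that the comparison functor $F\colon\mathcal{Q}(p)\sqcup_{\mathcal{Q}(id_{c})}\mathcal{Q}(q)\to\mathcal{Q}(p\circ_{i}q)$ is an isomorphism of categories. A morphism of $\mathcal{Q}(p\circ_{i}q)$ is a morphism of $\mathcal{P}_\psi$ between the sources of two of its petals; writing each such petal as the postcomposition of a petal of $p$, or of a petal of $q$, with the corresponding leg $p\to p\circ_{i}q\leftarrow q$ of the partial composition pushout (strong palatability, uniquely up to the identification of petals of $\mathcal{Q}(id_{c})$ inside $\mathcal{Q}(p)$ and $\mathcal{Q}(q)$), one checks that the morphism is the image under $F$ of a morphism coming from $\mathcal{Q}(p)$ or from $\mathcal{Q}(q)$, and that two such morphisms have the same image under $F$ only when they descend from a common morphism of $\mathcal{Q}(id_{c})$. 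Together with the object-set pushout this exhibits $F$ as an isomorphism, whence $\lim_{\mathcal{Q}(p\circ_{i}q)}=\lim_{\mathcal{Q}(p)}\times_{\lim_{\mathcal{Q}(id_{c})}}\lim_{\mathcal{Q}(q)}$, so every $k_{*}Y$ is $2$-Segal and $P$ is palatable.

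For the converse I would exploit the groupoid hypothesis to make the relevant limits explicit. When $\mathcal{P}_\psi$ is a groupoid each $\mathcal{Q}(p)$ is a groupoid, a limit over it is a product, over connected components, of fixed-point sets, and grouping the components according to the isomorphism class $[r]$ of the source of the petal gives a natural identification
\[
(k_{*}Y)(p)\ \cong\ \prod_{[r]}\operatorname{Hom}_{\operatorname{Aut}(r)}\bigl(\operatorname{Hom}_{\Tw(P)}(r,p),\,Y(r)\bigr),
\]
with $\operatorname{Aut}(r)$ acting on the set of petals of $p$ with source $r$ and on $Y(r)$, the identification being compatible with restriction maps and with the product decomposition. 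Palatability gives $(k_{*}Y)(p\circ_{i}q)=(k_{*}Y)(p)\times_{(k_{*}Y)(id_{c})}(k_{*}Y)(q)$ for all $Y$ and all composable $p,q$; restricting to the factor indexed by a fixed $[r]$, and using that $\operatorname{Hom}_{\operatorname{Aut}(r)}(-,Z)$ carries pushouts of $\operatorname{Aut}(r)$-sets to pullbacks, this says that the canonical map of $\operatorname{Aut}(r)$-sets
\[
\operatorname{Hom}_{\Tw(P)}(r,p)\sqcup_{\operatorname{Hom}_{\Tw(P)}(r,id_{c})}\operatorname{Hom}_{\Tw(P)}(r,q)\longrightarrow\operatorname{Hom}_{\Tw(P)}(r,p\circ_{i}q)
\]
induces a bijection on $\operatorname{Hom}_{\operatorname{Aut}(r)}(-,Z)$ for every $\operatorname{Aut}(r)$-set $Z$ (as $Y$ varies, $Y(r)$ realises every $\operatorname{Aut}(r)$-set). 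By Yoneda such a map is an isomorphism, and taking the disjoint union over all sources of grading $(-1)$ recovers the assertion that the set of petals of $p\circ_{i}q$ is the pushout of the sets of petals of $p$ and of $q$ over that of $id_{c}$; thus $P$ is strongly palatable.

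I expect the main obstacle to be the promotion, in the first implication, of the set-level pushout of petals to a pushout of the petal \emph{categories}: concretely, checking that no morphism between petals of $p\circ_{i}q$ links a petal lying over $p$ to one lying over $q$ except through a petal of $id_{c}$, and that postcomposition with the input, output and partial composition maps is injective on petals. Both facts are routine given the explicit description of partial composition pushouts in Lemma~\ref{lem:upper-lower-pushouts} and the structure of morphisms in $\Tw(P)$ from Section~\ref{sec:structure}, but this is precisely where the hypothesis is genuinely used. In the converse direction the groupoid assumption on $\mathcal{P}_\psi$ discharges this difficulty for free, since it presents each $(k_{*}Y)(p)$ as a product of sets of equivariant maps on which the $2$-Segal identity can be extracted factor by factor.
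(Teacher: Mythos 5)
Your proof is correct and follows the same skeleton as the paper's: reduce palatability to the statement that $k_*Y$ is $2$-Segal for every presheaf $Y$ on $\mathcal{P}_\psi$, express the relevant square of sets as limits over the square of petal (comma) categories $\mathcal{P}_\psi/{id_c}\to\mathcal{P}_\psi/p,\,\mathcal{P}_\psi/q\to\mathcal{P}_\psi/{(p\circ_i q)}$, and compare that square of categories with its object-level shadow, which is exactly the strong palatability condition. The two executions diverge only in how the supporting steps are discharged. For the forward direction, the paper outsources what you call the ``main obstacle'' to Lemma~\ref{lem:pushouts-of-categories}: a square of comma categories of this shape is a pushout of categories as soon as it is a pushout on objects, because the postcomposition functors $\mathcal{P}_\psi/p\to\mathcal{P}_\psi/(p\circ_i q)$ are automatically discrete fibrations (a morphism of $\mathcal{P}_\psi/(p\circ_i q)$ whose target lies over $p$ lifts uniquely to $\mathcal{P}_\psi/p$); this single observation is what makes your deferred verification routine, and no further hypothesis on $P$ is consumed there, so your worry about extra morphisms linking petals of $p$ to petals of $q$ evaporates. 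For the converse, the paper's Lemma~\ref{lem:pushout-part3} tests against two specific presheaves (a two-point constant presheaf to detect connected components, and a coinduced presheaf $\mathrm{Hom}(\mathrm{Hom}(s,-),\mathbb{Z})$ to detect stabilizers), whereas you decompose the limit over the groupoid $\mathcal{P}_\psi/p$ into products of equivariant hom-sets and run Yoneda in the category of $\mathrm{Aut}(r)$-sets; the arguments are equivalent and yours is arguably the cleaner packaging. The one point to make explicit there is that realizing an arbitrary $\mathrm{Aut}(r)$-set as $Y(r)$ for some presheaf $Y$ on $\mathcal{P}_\psi$ itself uses that $\mathcal{P}_\psi$ is a groupoid (transport the action along chosen isomorphisms within the component of $r$), which is precisely where the hypothesis of the second implication enters.
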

\begin{proof}
For any presheaf $X$ over $\Tw(P)$ the presheaf $Pl(X)$ is 2-Segal if and only if the following square is a pullback for all composable operations $p$ and $q$.
\begin{center}
\begin{tikzcd}
Pl(X)(p\circ_i q) \arrow[r] \arrow[d] & Pl(X)(p) \arrow[d] \\
Pl(X)(q) \arrow[r] & Pl(X)(id_c)
\end{tikzcd}
\end{center}
The above square is obtained by taking limits of the form $\lim_{s\to t}X(s)$ over the categories in the following commutative square.
\begin{center}
\begin{tikzcd}
\mathcal{P}_\psi/{id_c} \arrow[r] \arrow[d] & \mathcal{P}_\psi/p \arrow[d]  \\
\mathcal{P}_\psi/q \arrow[r]& \mathcal{P}_\psi/{p\circ_i q}
\end{tikzcd}
\end{center}
If $P$ is strongly palatable, then the lower square is a pushout, and by Lemma~\ref{lem:pushout-part2} the upper square is a pullback. For any presheaf $X$ over $\mathcal{P}_\psi$ the square of limits of the form $\lim_{s\to t} X(s)$ over the categories in the lower square is equivalent to the upper square. If $P_\psi$ is a groupoid and the upper square is always a pullback, then by Lemma~\ref{lem:pushout-part3} the lower square is a pushout.
\end{proof}

\begin{remark}
It is not clear whether the requirement above to form a groupoid is necessary. Notice that for operads satisfying the conditions of Theorem~\ref{thm:groupoid-reedy} that $P(\psi=0)$ or $P(1)$ is a groupoid, which is very likely satisfied by all operads of any interest, the category $\mathcal{P}_\psi$ is a groupoid. For this case the proposition gives a criterion of palatability.
\end{remark}

\begin{remark}
Let $P$ be a palatable operad such that $P(\psi=0)$ is a groupoid and petals factor through inputs in $\Tw(P)$. Let $X$ be a presheaf over $\Tw(P)$. An operation $p'$ in $P_{Pl(X)}$ over an operation $p$ of non-zero arity in $P$ corresponds to a corolla with vertex marked by $p$  and with leaves coloured by the petals of the input or the output of $p'$. Since petals factor through inputs, the correspondence between operations of non-zero arity in $P_{Pl(X)}$ and the corollas above is bijective. The operations of arity $0$ in $P_{Pl(X)}$ that can be composed with operations of arity greater than $1$ are determined by the petals of their outputs. For these operations composition in $P_{Pl(X)}$ works as composition in $P$. Palatability ensures that this composition is well-defined.  The only operations that do not fit this description are the operations of arity $0$ that can be composed only with operations of arity $1$.
\end{remark}

\subsection{The dendroidal category \texorpdfstring{$\Omega$}{Omega}}

\begin{proposition}
The category of Segal presheaves over $\Tw(sOp)$ is equivalent to the category of coloured operads.
\end{proposition}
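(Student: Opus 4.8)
The plan is to organise Segal presheaves over $\Tw(sOp)$ according to their restriction to the subcategory $\mathcal{P}_\psi$ of grading $(-1)$ operations, and to convert each such ``layer'' into a category of algebras by using that $sOp$ is palatable together with the equivalence between single-object Segal presheaves and algebras (Theorem~\ref{thm:algebras-as-segal}). Concretely, for each set $C$ I would identify the category of Segal $sOp$-presheaves whose petal presheaf is the constant presheaf $C$, with petal-preserving morphisms, with the category of $C$-coloured operads and colour-preserving morphisms; then glue these layers over all $C$ to account for colour-changing morphisms.

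First, since $sOp$ consists only of trees, its nice grading $\psi$ is the arity grading, and the unique operation of grading $(-1)$ is the exceptional edge $\mu_0$, which has arity $0$ and no non-root leaf, so one checks from Corollary~\ref{crl:isos-in-tw} that its only automorphism in $\Tw(sOp)$ is the identity; hence $\mathcal{P}_\psi$ is the terminal category, a presheaf on it is just a set, and for any presheaf $X$ over $\Tw(sOp)$ the presheaf $Pl(X)$ is the right Kan extension of the constant presheaf $C := X(\mu_0)$. Moreover $sOp$ is a graph-substitution suboperad of $mOp_{nc}$, hence strongly palatable, hence palatable, and in particular $Pl(X)$ is $2$-Segal for every $X$. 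Applying the proposition that identifies, when $Pl(X)$ is $2$-Segal, single-object Segal presheaves over $\Tw(sOp_{Pl(X)})$ with Segal $sOp$-presheaves $Y$ satisfying $Pl(Y)=Pl(X)$ (and petal-preserving morphisms), and then Theorem~\ref{thm:algebras-as-segal}, I obtain: for each set $C$, the category of Segal $sOp$-presheaves $X$ with $X(\mu_0)\cong C$ and petal-preserving morphisms is equivalent to the category of $sOp_{Pl(X)}$-algebras.

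Next I would identify $sOp_{Pl(X)}$ with $sOp_C$. Since $sOp(\psi=0)$ is a groupoid (it is a disjoint union of symmetric groups) and petals factor through inputs in $\Tw(sOp)$, the operad $sOp_{Pl(X)}$ admits the explicit description recalled earlier: its colours are $\bigsqcup_n Pl(X)(id_n)=\bigsqcup_n C^{\,n+1}$, that is, the $C$-profiles, because the petals of the $n$-corolla $id_n$ are its $n+1$ ordered edges; and an operation lying over a tree $p\in sOp$ is an element of $Pl(X)(p)$, i.e.\@ a $C$-colouring of the leaf-edges and inner edges of $p$ — exactly a $C$-coloured rooted operadic tree — with operadic composition given by grafting of such trees on both sides. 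This is precisely the operad $sOp_C$ of the defining remark, so $sOp_{Pl(X)}$-algebras are $C$-coloured operads and colour-preserving morphisms.

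It remains to glue the layers into a single equivalence accommodating colour-changing morphisms. A morphism $\varphi:X\to Y$ of Segal $sOp$-presheaves restricts to a map $g:=\varphi_{\mu_0}:X(\mu_0)\to Y(\mu_0)$, and $g$ recolours $sOp_{X(\mu_0)}$-algebras to $sOp_{Y(\mu_0)}$-algebras along the induced map $sOp_{X(\mu_0)}\to sOp_{Y(\mu_0)}$ exactly as it recolours coloured operads; one checks that $X\mapsto X(\mu_0)$ on Segal $sOp$-presheaves and the colour-set functor on coloured operads are Grothendieck fibrations over $Sets$ whose fibres and whose base-change functors correspond under the layerwise equivalences of the previous paragraphs, so the layerwise equivalence upgrades to an equivalence of total categories, which is the claim. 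I expect the identification $sOp_{Pl(X)}\cong sOp_C$ to be the only step carrying genuine content — it amounts to matching the two combinatorial descriptions of $C$-coloured rooted trees — while the palatability of $sOp$, Theorem~\ref{thm:algebras-as-segal}, and the fibration bookkeeping in the last paragraph are routine given the machinery already developed.
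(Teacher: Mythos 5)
Your proposal is correct and follows essentially the same route as the paper: identify $sOp_{Pl(X)}$ with $sOp_{X(\mu_0)}$, pass through single-object Segal presheaves over $\Tw(sOp_{Pl(X)})$ via palatability, and invoke Theorem~\ref{thm:algebras-as-segal}. You simply spell out more of the supporting details (triviality of $\mathcal{P}_\psi$, strong palatability of $sOp$, and the fibration bookkeeping for colour-changing morphisms) that the paper's terse proof leaves implicit.
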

\begin{proof}
Let $X$ be a presheaf over $\Tw(sOp)$. Then $sOp_{Pl(X)}$ is the operad $sOp_{X(\mu_0)}$ of $X(\mu_0)$-coloured operads: the petals of the $j$-th input of an element in $Pl(X)$ are the $X(\mu_0)$-colours of the edges adjacent to the $j$-th vertex in the corresponding operadic tree in $sOp_{X(\mu_0)}$.

A Segal presheaf $X$ over $\Tw(sOp)$ corresponds to a single-object Segal presheaf $X'$ over $\Tw(sOp_{X(\mu_0)})$, i.e.\@ to an $sOp_{X(\mu_0)}$-algebra, or to an $X(\mu_0)$-coloured operad. Any $X(\mu_0)$-coloured operad, or a single-object Segal $sOp_{Pl(X)}$-presheaf $Y'$, corresponds to a Segal presheaf $Y$ over $\Tw(sOp)$, and $X(\mu_0)=Y(\mu_0)$. Morphisms of Segal $sOp$-presheaves correspond to morphisms of coloured operads. 
\end{proof}

\begin{definition}
  Let $P$ be a palatable operad and $X$ be a Segal $P$-presheaf. The twisted arrow category $\Tw_P(X)$ of $X$ is the twisted arrow category of the corresponding $P_{Pl(X)}$-algebra $X'$.
\end{definition}

\begin{remark}
It is possible to define categories $\Tw_P(X)$ for Segal presheaves $X$ over non-palatable graph-substitution operads $P$,  since these operads satisfy palatability for all operations $p$ and $q$ except when $p\circ_i q=\bigcirc$. The operation $\bigcirc$ is not used in the construction of corresponding twisted arrow categories.
\end{remark}

\begin{remark}
If $P$ is additionally such that petals factor through inputs, then morphisms in $\Tw_P(X)$ are given by equivalence classes of expressions $p(x',x_1,\dots,x_n)$, where $p$ is an operation of $P$ and the elements $x',x_1,\dots,x_n$ from the sets $X(id_c)$  have petals compatible with respect to $p$. This observation is used in the following propositions.
\end{remark}

\begin{proposition}
The Yoneda embedding of $\Tw(sOp)$ consists of Segal presheaves.
\end{proposition}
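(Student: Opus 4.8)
The plan is to show that for every object $t$ of $\Tw(sOp)$ the representable presheaf $\mathrm{Hom}_{\Tw(sOp)}(-,t)$ satisfies the Segal condition, i.e. its restriction to $Lower$ (equivalently to $Inert$) is the right Kan extension along $i:El(Lower)\to Lower$ of its restriction to elementary objects. Concretely, writing $h_t = \mathrm{Hom}_{\Tw(sOp)}(-,t)$, I must check that for every operation $p$ of $sOp$ the canonical map
\[
h_t(p)\;=\;\mathrm{Hom}(p,t)\;\longrightarrow\;\lim_{(q\to p)\in El(Lower)/p} h_t(q)
\]
is a bijection, where the limit is over petal maps and input maps of $p$. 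First I would unwind the right-hand side: by the lemma identifying $El(P,\psi,Lower)/p$ with the category of input and petal maps of $p$, and using that for graph-substitution operads petals factor through inputs, the limit is the set of families consisting of a morphism $p_j\to t$ for each input $id_{c_j}\to p$ (i.e.\ for each vertex $v_j$ of the tree $p$) together with a petal $e\to t$ for each edge of $p$, subject to the compatibility that the petal attached to an edge adjacent to $v_j$ is the corresponding petal of the morphism $p_j\to t$.

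The key step is then a direct combinatorial identification: a morphism $f:p\to t$ in $\Tw(sOp)$ is, by Corollary~\ref{crl:twsop}, a reduced symmetric tree whose source vertex is marked by $p$ and whose non-source vertices are marked by operations of $sOp$, with the evaluation equal to $t$. Restricting $f$ along the $j$-th input map $id_{c_j}\to p$ gives the sub-configuration of non-source vertices grafted onto the $j$-th leaf (or, for the lowest leaf, onto the root) of the source vertex, together with the relevant leaf-permutation — this is precisely a morphism $p_j\to t$ for the operation $p_j$ of $sOp$ whose tree is that sub-configuration. Conversely, because $sOp$ is canonically decomposable (Lemma~\ref{lem:graph-operads-canonically-decomposable}) and the non-source data of a reduced symmetric tree is exactly a disjoint choice of such sub-configurations, one over each leaf of the source vertex, a compatible family in the limit reassembles uniquely into a single reduced symmetric tree over $p$: the edges of $p$ index the leaves of the source vertex, the petal data fixes the colours of those edges, and the vertex data fixes the grafted sub-configurations. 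Compatibility of petals with the vertex morphisms is exactly what guarantees the colours match up at the grafting points, so the reassembly is well-defined, and uniqueness of canonical representatives makes it a bijection.

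The main obstacle — and the point that needs care rather than cleverness — is the bookkeeping of orders and permutations: a reduced symmetric tree carries a permutation on its total set of leaves, and one must verify that the permutation of $f$ is correctly recovered from the permutations of the restrictions $f|_{in_j}$ together with the (trivial, since petals come from arity-$0$ operations) data on petal maps, and that the "indices of upper leaves increase in planar order" condition built into the definition of $Lower$ is compatible with the independent leaf-orders on the sub-configurations. This is handled exactly as in the proof of Proposition~\ref{pr:symmetrizationeq}: since symmetric groups act freely on operations of $sOp$, the permutation is determined and no choices collide. I would also note that it suffices to treat morphisms with trivial leaf-permutation and then transport by permutation isomorphisms, as in Lemma~\ref{lem:lower-upper-factor}, which reduces the verification to the planar skeleton where the claim is the evident statement that a reduced planar tree is the same data as its source vertex together with the planar trees grafted at each of its leaves.
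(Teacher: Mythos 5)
Your setup is right: you reduce to showing that $\mathrm{Hom}(p,t)\to\lim_{El(Lower)/p}\mathrm{Hom}(-,t)$ is a bijection, and your description of the easy direction (restricting $f$ along $in_j$ picks out the $j$-th upper vertex of $f$) is essentially the paper's. But the converse — reassembling a compatible family into a unique morphism $p\to t$ — is the actual content of the proposition, and your justification for it does not work. First, a small but symptomatic confusion: the restriction of $f$ along $in_j$ is a morphism $id_{c_j}\to t$, not a morphism ``$p_j\to t$''; such a morphism is, concretely, a choice of subtree of the tree $t$ (its upper vertex) together with the contraction of that subtree (its lower vertex). The compatibility imposed by the petal maps is therefore not that ``colours match up at the grafting points'' — it is that specific \emph{edges of $t$} match up: whenever an edge of $p$ is shared by two vertices $v_j,v_k$, the corresponding boundary half-edges of the two chosen subtrees of $t$ must be the two halves of one and the same internal edge of $t$. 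The crux of the paper's proof is then the geometric observation that this forces the $n$ chosen subtrees of $t$ to be pairwise disjoint and to have union a subtree of $t$, whose contraction supplies the lower vertex of the unique lift $f:p\to t$. Your proposal never recovers the lower vertex and never addresses disjointness or the union being a subtree.

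This is not a bookkeeping issue that canonical decomposability or reduction to the planar skeleton can absorb. Canonical decomposability gives uniqueness of representatives of a morphism once you have one; it does not produce the morphism from the compatible family. And the step you skip is exactly where tree-ness of the operations of $sOp$ is used: for $\Tw(mOp)$ the identical ``reassembly'' reasoning would go through word for word, yet the very next proposition in the paper exhibits a compatible family of vertex-maps into a circle-shaped graph $r$ (coming from a line-shaped $p$) that does \emph{not} assemble into a morphism $p\to r$, so representables of $\Tw(mOp)$ are not Segal. A correct proof must therefore isolate and prove the gluing statement about subtrees of a tree; as written, your argument has a genuine gap there.
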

\begin{proof}
For an operation $r$ in $sOp$ the presheaf $Hom(-,r)$ is Segal if and only if for any operation $p$ in $sOp$ of arity $n$ with input colours $c_j$ and for any collection of maps $g_j:id_{c_j}\to r$ in $\Tw(sOp)$ compatible with petals of $p$ there is unique map $f:p\to r$ in $\Tw(sOp)$ such that $g_j=f\circ in_j$. A map $id_{c_j}\to r$, or its upper vertex, corresponds to a choice of subtree of $r$ endowed with order on leaves, and its lower vertex corresponds to the tree obtained by contraction of the chosen subtree. 
An element in the limit in Segal condition for $Hom(-,r)$ over $p$ corresponds to a choice of $n$ subtrees in $r$, endowed with order on leaves of subtrees, such that for any petal of $p$ that factors through any two inputs of $p$ the corresponding leaves of the two subtrees $id_{c_j}\to r$ are the two different half-edges in the same internal edge of $r$. This implies that the $n$ subtrees do not intersect each other, and their union is a subtree of $r$. There is unique morphism $f:p\to r$ with upper vertices corresponding to the chosen subtrees of $r$, and with the lower vertex corresponding to the tree obtained by contraction of the chosen subtrees into a single vertex.
\end{proof}

\begin{proposition}
The twisted arrow category $\Tw(sOp)$ is equivalent to Moerdijk--Weiss dendroidal category $\Omega$.
\end{proposition}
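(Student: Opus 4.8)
The plan is to identify $\Tw(sOp)$, by means of the two preceding propositions, with a full subcategory of the category of coloured symmetric operads, and then to recognize that subcategory as the standard presentation of $\Omega$ through the free-operad-on-a-tree construction of Moerdijk and Weiss.

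\emph{Constructing the functor.} By the proposition that the Yoneda embedding of $\Tw(sOp)$ consists of Segal presheaves, the Yoneda embedding $\Tw(sOp)\hookrightarrow Psh(\Tw(sOp))$ factors through the subcategory of Segal presheaves; composing with the equivalence between Segal presheaves over $\Tw(sOp)$ and coloured operads (which also identifies morphisms on both sides) yields a functor $\Phi\colon\Tw(sOp)\to\{\text{coloured operads}\}$. Being the composite of a fully faithful functor with an equivalence, $\Phi$ is fully faithful. On objects it sends an operation $T$ of $sOp$, i.e.\@ a rooted operadic tree, to the coloured operad associated with the representable Segal presheaf $\Tw(sOp)(-,T)$.

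\emph{Identifying $\Phi(T)$ with the free operad $\Omega(T)$.} The colour set of $\Phi(T)$ is the set of petals of $T$, namely $\Tw(sOp)(\mu_0,T)$, which is the set of edges of $T$. For edges $c_0,c_1,\dots,c_n$ of $T$, the set of operations of $\Phi(T)$ with input colours $c_1,\dots,c_n$ and output colour $c_0$ is, by the construction of a coloured operad from a single-object Segal presheaf, the set of morphisms $id_n\to T$ in $\Tw(sOp)$ whose petals are $c_0,\dots,c_n$. By the analysis of such morphisms used in the proof that representables over $\Tw(sOp)$ are Segal, these correspond exactly to subtrees of $T$ equipped with a linear order on their $n$ leaves, having root edge $c_0$ and leaf edges $c_1,\dots,c_n$ — the leaf ordering recording the permutation part of the morphism, which is well defined because the symmetric group acts freely on operations of $sOp$ (cf.\@ the argument of Proposition~\ref{pr:symmetrizationeq}). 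Operadic composition in $\Phi(T)$ is computed through the partial-composition pushouts of $sOp$, which under this dictionary becomes the grafting of subtrees of $T$. Thus $\Phi(T)$ is precisely the free coloured operad $\Omega(T)$ generated by the tree $T$, and this identification is natural in $T$ by functoriality of $\Phi$.

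\emph{Conclusion.} By definition, the dendroidal category $\Omega$ is the full subcategory of the category of coloured operads spanned by the operads $\Omega(T)$ for trees $T$, so that $\mathrm{Hom}_\Omega(S,T)$ is the set of maps $\Omega(S)\to\Omega(T)$ of coloured operads. Since every tree occurs as (the underlying tree of) an operation of $sOp$ — including the edge $\mu_0=\eta$ and the corollas $id_n$ — the fully faithful functor $\Phi$ is essentially surjective onto this full subcategory, hence corestricts to an equivalence $\Tw(sOp)\xrightarrow{\sim}\Omega$ carrying the operation $T$ to the tree $T$. The main work is the middle step: one must carefully match the coloured operad extracted from the representable Segal presheaf with the classical free operad $\Omega(T)$ — colours with edges, operations with leaf-ordered subtrees, operadic composition with grafting — and in particular keep track of the leaf-ordering bookkeeping, which is the only genuinely delicate point and is handled by the freeness of the symmetric-group actions on the operations of $sOp$.
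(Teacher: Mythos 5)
Your proposal is correct and follows essentially the same route as the paper: both use the two preceding propositions to view the Yoneda image of $\Tw(sOp)$ inside coloured operads and then identify the representable at a tree $T$ with the free coloured operad on $T$, concluding that $\Tw(sOp)$ and $\Omega$ are the same full subcategory of coloured operads. The only (immaterial) difference is in the middle step: the paper establishes freeness via the universal property, computing $Hom(Hom(-,T),X)=X(T)=\lim_{q\to T}X(q)$ for Segal $X$, whereas you compute the operad $\Phi(T)$ directly (colours as edges, operations as leaf-ordered subtrees, composition as grafting).
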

\begin{proof}
For any Segal presheaf $X$ over $\Tw(sOp)$ the set $Hom(Hom(-,p),X)=X(p)$ is isomorphic to the set $\lim_{q\to p} X(q)=\lim_{q\to p}Hom(Hom(-,q),X)$ of tuples of maps from inputs of $p$ to $X$ that are compatible with respect to petals of $p$. Thus $Hom(-,p)$ is the free coloured operad generated by inputs of $p$. The category $\Omega$ and (the Yoneda embedding of) the category $\Tw(sOp)$ are both full subcategories of the category of coloured operads on free operads generated by trees.
\end{proof}

\begin{proposition}
The presheaves in the images of Yoneda embeddings of $\Tw(pOp)$ and of $\Tw(cOp)$ are Segal, but those of $\Tw(mOp)$ are in general not Segal presheaves.
\end{proposition}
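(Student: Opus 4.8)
The plan is to dispatch all three statements with the same mechanism used in the preceding proposition for $\Tw(sOp)$, the only genuinely new ingredient being a counterexample in the modular case. Throughout I use Corollary~\ref{crl:twsop}: the objects of $\Tw(pOp)$, $\Tw(cOp)$, $\Tw(mOp)$ are the operations of the respective operad, and a morphism out of $p$ is encoded by a reduced planar tree, cyclic tree, resp. symmetric tree, with vertices marked by operations and source marked by $p$. In particular a morphism $id_c\to r$ is the same as a connected subtree (subgraph) $S$ of $r$ with $c+1$ boundary half-edges together with the appropriate order on them, the complementary contraction being recorded automatically as the lower vertex.

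For $pOp$ and $cOp$ I would argue verbatim as in the $sOp$ case. Fix an operation $r$ (a planar rooted tree, resp. an unrooted tree with at least one leaf). Then $\mathrm{Hom}(-,r)$ is Segal iff for every operation $p$ with input colours $c_1,\dots,c_n$ and every compatible family $(g_j\colon id_{c_j}\to r)$ indexed by the inputs and petals of $p$ there is a unique $f\colon p\to r$ with $f\circ in_j=g_j$. The compatibility conditions coming from the internal edges and leaves of $p$ say exactly that the boundary half-edges of the subtrees $S_j$ paired up by $p$'s internal edges fit together as genuine internal edges of $r$, and that the remaining boundary half-edges land on leaves (resp. the root) of $r$. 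Since $p$ is connected and $r$ is a tree, these constraints force the $S_j$ to be pairwise disjoint and their glued union $S=\bigcup_j S_j$ to be a subtree of $r$; the unique realizing $f$ then has upper vertices the $S_j$ and lower vertex the contraction $r/S$. The $S_n$-bookkeeping of the $sOp$ proof is replaced here by the planar order (which is forced) or the cyclic order, which is routine.

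For $mOp$ the step ``disjoint subtrees glue to a subtree'' fails as soon as $b_1(r)>0$, and I would make this explicit. Take $r=\nu\in mOp(1;-1)$, the one-vertex graph carrying a single loop, and $p=\beta\in mOp(1,1;-1)$, the two-vertex graph with two parallel edges, so that $\beta\circ_1\mu_0=\nu$. A morphism $\beta\to\nu$ in $\Tw(mOp)$ is a reduced tree with source $\beta$ evaluating to $\nu$; since $\psi(\beta)=3$, $\psi(\nu)=2$ and $\psi$ is additive, the non-source vertices have total grading $-1$, hence consist of the root vertex $id_{-1}$, one upper vertex $\mu_0$, and the other upper vertex in $\{id_1,\xi\}$ — four morphisms in all, all landing on $\nu$. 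On the other hand the objects of $El(\Tw(mOp)_0)/\beta$ are the two input maps $id_1\to\beta$ and the petal maps $\mu_0\to\beta$ (the oriented edges of $\beta$), and a compatible cone valued in $\mathrm{Hom}(-,\nu)$ is a pair $(g_1,g_2)\in\mathrm{Hom}(id_1,\nu)^2$ whose petals, matched by the two edges of $\beta$, agree; because $\nu$ has a single loop edge this pins both petals of each $g_j$ to the two half-edges of that loop, and a direct case analysis shows the comparison map $\mathrm{Hom}(\beta,\nu)\to\lim_{(s\to\beta)}\mathrm{Hom}(s,\nu)$ is not a bijection. Hence $\mathrm{Hom}(-,\nu)$ is not Segal.

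The hard part will be precisely this modular case analysis: describing $El(\Tw(mOp)_0)/\beta$ with its oriented petal maps, enumerating $\mathrm{Hom}(id_1,\nu)$ (the four elements sorted by whether the corolla's vertex survives or is collapsed), and tracking the $\mu_0/\xi$-twists carefully enough to read off that the Segal comparison fails to be bijective rather than merely non-surjective or non-injective. For $pOp$ and $cOp$ there is no new content beyond checking that the combinatorial core of the $sOp$ argument — connectedness of $p$, acyclicity of $r$ — is untouched by the planar or cyclic decorations, which it is; the analogous remarks apply to $mOp_{(g,n)}$, which behaves exactly as $mOp$.
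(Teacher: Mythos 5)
Your treatment of $\Tw(pOp)$ and $\Tw(cOp)$ matches the paper, which likewise just transports the $\Tw(sOp)$ argument (disjoint subtrees of an acyclic $r$ glue to a subtree) and notes that the planar/cyclic bookkeeping is forced. That half is fine.

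The gap is in the $mOp$ case: you propose a counterexample ($p=\beta$, $r=\nu$) but explicitly defer the decisive computation, calling the ``modular case analysis'' the hard part and never carrying it out. As written, nothing establishes that the comparison map $\mathrm{Hom}(\beta,\nu)\to\lim_{(s\to\beta)}\mathrm{Hom}(s,\nu)$ fails to be a bijection. Indeed, if one enumerates, $\mathrm{Hom}(\beta,\nu)$ has four elements (one upper vertex $\mu_0$, the other in $\{id_1,\xi\}$), $\mathrm{Hom}(id_1,\nu)$ has four elements, and the restriction map along $(in_1,in_2)$ is \emph{injective}; so the only possible failure is non-surjectivity, i.e.\@ you must exhibit a compatible pair $(g_1,g_2)$ in which \emph{both} vertices of $\beta$ land on the unique vertex of $\nu$. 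Whether such a pair is actually compatible depends on exactly the oriented-petal conventions (the factorizations of a petal $\mu_0\to\beta$ through $in_1$ versus $in_2$ see the same edge from opposite ends, and $mOp(1;1)=\{id_1,\xi\}$ makes petals oriented) that you acknowledge you have not tracked. So the counterexample is plausible but unverified, and the proof is incomplete.

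The paper's counterexample sidesteps all of this: it takes $p$ to be a line with three vertices of degree $4$ and $r$ a circle with two vertices of degree $4$, and the compatible family sending the first and third vertices of $p$ to the first vertex of $r$ and the second to the second. This family is manifestly petal-compatible, yet cannot be realized by any morphism $p\to r$, because in any composite $q'(p,q_1,\dots,q_m)$ the vertices of $p$ survive as distinct vertices of the target, so a morphism $p\to r$ embeds the three vertices of $p$ injectively into the two vertices of $r$ --- impossible. This is a pure counting obstruction, independent of half-edge orders and of the $\mu_0/\xi$ twists, and I would recommend either adopting it or completing your own case analysis in full.
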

\begin{proof}
For $\Tw(pOp)$ and $\Tw(cOp)$ the proofs are analogous to that of $\Tw(sOp)$. Let $p\in mOp$ be an operadic graph given by ``a line'' with three vertices of degree 4 and $r\in mOp$ be ``a circle'' with two vertices of degree $4$. If $Hom(-,r)$ is  Segal, then the morphisms from $Hom(-,p)$ to $Hom(-,r)$ correspond to morphisms from vertices of $p$ to $r$ that are compatible with respect to petals of $p$. However, there in no morphism in $\Tw(mOp)$ from $p$ to $r$ that sends the first and the third vertex of $p$ to the first vertex of $r$ and the second vertex of $p$ to the second vertex of $r$.
\end{proof}

\begin{proposition}
For any operad $P$ there is a sequence of categories 
\[ \Omega/P\to\Tw(P)\to(\Gamma^+_P)^{op}.\]
By restricting to a category $C$ we get the sequence
\[ \Delta/C\to\Tw(C)\to C^{op}\times C.\]
\end{proposition}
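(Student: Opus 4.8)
The plan is to build the first sequence by concatenating three constructions already available in the paper --- the equivalence $\Tw(sOp)\simeq\Omega$ together with the equivalence between Segal $sOp$-presheaves and coloured operads, the evaluation functor of Theorem~\ref{thm:evaluation-functor}, and the canonical discrete opfibration of Lemma~\ref{lem:op-opfibration} --- and then to obtain the second sequence by specialising the first to the symmetric operad underlying a category.

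First I would fix the colour set $C$ of $P$, regard $P$ as an $sOp_C$-algebra, and let $X$ be the associated single-object Segal $sOp_C$-presheaf furnished by Theorem~\ref{thm:algebras-as-segal}; by definition $\Tw(P)=\Tw_{sOp_C}(P)$. Since coloured operads are the same as Segal $sOp$-presheaves and $\Tw(sOp)\simeq\Omega$, the dendroidal nerve of $P$ is a Segal $sOp$-presheaf, its category of elements is $\Omega/P$, and an unravelling of the definitions identifies this category with the category of elements $\Tw(sOp_C)/X$ (on either side an object is a $C$-coloured tree together with a compatible labelling of its vertices by operations of $P$, and morphisms correspond). Theorem~\ref{thm:evaluation-functor}, applied to the operad $sOp_C$ and its algebra $P$, produces the evaluation functor $ev(sOp_C,P)\colon\Tw(sOp_C)/X\to\Tw_{sOp_C}(P)=\Tw(P)$, and the composite with the preceding identifications is the first functor $\Omega/P\to\Tw(P)$.

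For the second functor I would take the canonical discrete opfibration of operads $\TwOp_{sOp_C}(P)\to\mathcal{U}Op_{sOp_C}(P)$ from Lemma~\ref{lem:op-opfibration} and pass to underlying categories, obtaining the discrete opfibration $\alpha\colon\Tw(P)=\Tw_{sOp_C}(P)\to\mathcal{U}_{sOp_C}(P)=\mathcal{U}(P)$. By the identification of Fresse recalled in the Introduction, $\mathcal{U}(P)=(\Gamma^+_P)^{op}$, so concatenation gives $\Omega/P\to\Tw(P)\to(\Gamma^+_P)^{op}$. To get the restricted sequence I would take $P=\mathcal{C}$ a small category, viewed as a symmetric operad concentrated in arity $1$ with $\mathcal{C}(c_1;c_2)=\mathrm{Hom}_{\mathcal{C}}(c_1,c_2)$ and $\mathcal{C}(c_1,\dots,c_n;c_0)=\emptyset$ for $n\neq 1$, and identify the three terms. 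A morphism of operads $\Omega(T)\to\mathcal{C}$ must send every vertex of $T$ to a unary operation, hence $T$ must be linear; so the dendroidal nerve of $\mathcal{C}$ is carried by linear trees and $\Omega/\mathcal{C}\simeq\Delta/\mathcal{C}$. By Corollary~\ref{crl:twsop} a morphism of $\Tw(\mathcal{C})$ is a reduced symmetric tree with vertices labelled by operations of $\mathcal{C}$, all of which are unary, so the tree is linear with a source vertex $p\colon a\to b$, one upper vertex $u\colon a'\to a$ and one root vertex $v\colon b\to b'$, its target being $v\circ p\circ u$; reading off the pair $(u,v)$ identifies $\Tw(\mathcal{C})$ with Quillen's twisted arrow category of $\mathcal{C}$ and the grafting-contraction composition of Corollary~\ref{crl:twsop} with the usual one. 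Finally $\mathcal{U}(\mathcal{C})=(\Gamma^+_{\mathcal{C}})^{op}=\mathcal{C}^{op}\times\mathcal{C}$, the source-target functor, and under all these identifications $ev$ becomes the classical functor $\Delta/\mathcal{C}\to\Tw(\mathcal{C})$ and $\alpha$ becomes $(s,t)\colon\Tw(\mathcal{C})\to\mathcal{C}^{op}\times\mathcal{C}$.

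The argument is mostly bookkeeping; the genuine content is the unravelling in the category case --- that for a category $\mathcal{C}$ the dendroidal nerve is supported on linear trees, that $\Tw(\mathcal{C})$ agrees with Quillen's twisted arrow category with the correct source and target functors, and that the evaluation functor restricts to the classical $\Delta/\mathcal{C}\to\Tw(\mathcal{C})$. A secondary subtlety is the colour-set juggling between $sOp$ and $sOp_C$ when $P$ is genuinely multi-coloured: the clean remedy is to run everything over $sOp_C$ from the outset and to record the equivalence $\Omega/P\simeq\Tw(sOp_C)/X$ of categories of elements explicitly, as above.
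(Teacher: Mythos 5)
Your construction is correct and matches the paper's own proof, which likewise obtains the first functor as the evaluation functor $\Tw(sOp)/P\to\Tw_{sOp}(P)$ of Theorem~\ref{thm:evaluation-functor} and the second as the canonical discrete opfibration $\Tw(P)\to\mathcal{U}(P)$ together with Fresse's identification $\mathcal{U}(P)\cong(\Gamma^+_P)^{op}$. The only cosmetic difference is in the category case: the paper realises $\Delta/C\to\Tw(C)$ as $\Tw(uAs)/C\to\Tw_{uAs}(C)$, treating $C$ as a Segal presheaf over $\Tw(uAs)\simeq\Delta$, whereas you specialise the operadic sequence to an operad concentrated in arity $1$ --- the two routes are equivalent.
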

\begin{proof}
The functor $\Omega/P\to\Tw(P)$ is equivalent to the functor $\Tw(sOp)/P\to\Tw_{sOp}(P)$. The functor $\Delta/C\to\Tw(C)$ is equivalent to the functor $\Tw(uAs)/C\to\Tw_{uAs}(C)$. The map $\Tw(P)\to(\Gamma^+_P)^{op}$ is the map $\Tw(P)\to\mathcal{U}(P)$, where the category $\Gamma^+_P$ is the category defined in \cite{fresse2014functor}.  The map $\Tw(C)\to C^{op}\times C$ is the map $\Tw(C)\to\mathcal{U}(C)$.
\end{proof}

\begin{proposition}
For any operad $P$ and $P$-algebra $A$ the following square is a pullback:
\begin{center}
\begin{tikzcd}
\Omega/(P,A)\arrow[r] \arrow[d] & \Tw(P)/A \arrow[d]\\
\Omega/P\arrow[r] & \Tw(P)
\end{tikzcd}
\end{center}
\end{proposition}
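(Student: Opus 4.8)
The plan is to deduce the statement from the stability of the ``category of elements'' construction under base change, once the three non-obvious corners of the square are identified correctly. Write $C$ for the colour set of $P$, let $X_A$ be the single-object Segal $P$-presheaf on $\Tw(P)=\Tw_{sOp_C}(P)$ corresponding to the algebra $A$ under Theorem~\ref{thm:algebras-as-segal}, and let $X_P$ be the single-object Segal $sOp_C$-presheaf on $\Tw(sOp_C)\simeq\Omega$ corresponding to the $sOp_C$-algebra $P$. By the conventions behind the sequences \eqref{seq:omega} and \eqref{seq:twa} one has $\Tw(P)/A=\Tw(P)/X_A$, the category of elements of $X_A$, with projection $\pi\colon \Tw(P)/X_A\to\Tw(P)$; this projection is a discrete fibration (and hence an isofibration). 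Moreover $\Omega/P\simeq \Tw(sOp_C)/X_P$, and under this equivalence the bottom functor $\Omega/P\to\Tw(P)$ of our square is $ev:=ev(sOp_C,P)$ from Theorem~\ref{thm:evaluation-functor} --- this is exactly the content of the previous proposition.

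Next I would record the elementary fact that for any functor $F\colon\mathcal D\to\mathcal C$ and any presheaf $X$ on $\mathcal C$ the square
\begin{center}
\begin{tikzcd}
\mathcal D/F^{*}X \arrow[r] \arrow[d] & \mathcal C/X \arrow[d] \\
\mathcal D \arrow[r,"F"] & \mathcal C
\end{tikzcd}
\end{center}
is a pullback in $\mathbf{Cat}$, where $F^{*}X=X\circ F^{op}$ and the verticals are the canonical projections: an object of the strict pullback is a pair $(d,x)$ with $x\in X(Fd)=(F^{*}X)(d)$, and the same holds on morphisms, so the pullback is literally $\mathcal D/F^{*}X$. Applying this with $\mathcal C=\Tw(P)$, $\mathcal D=\Omega/P$, $F=ev$ and $X=X_A$ shows that $\Omega/P\times_{\Tw(P)}(\Tw(P)/A)$ is the category of elements of $ev^{*}X_A$ over $\Omega/P$. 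Since $\pi$ is an isofibration, the strict pullback computes the homotopy pullback, so nothing is lost by having replaced $\Omega/P$ by the equivalent category $\Tw(sOp_C)/X_P$; equivalently, one works directly with $\Tw(sOp_C)/X_P$ throughout and the functor $ev$.

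It then remains to identify $\Omega/(P,A)$ with this category of elements of $ev^{*}X_A$ over $\Omega/P$, which I would do combinatorially. The pair $(P,A)$ is encoded by the Baez--Dolan plus construction $(P,A)^{+}$, and $\Omega/(P,A)$ is the category of elements of the dendroidal nerve of the operadic algebra $A$, i.e.\ of $N_d((P,A)^{+})$: an object is an operadic tree $T$ together with a morphism of operads $\Omega(T)\to(P,A)^{+}$, that is, a labelling of the vertices of $T$ by operations of $P$ and of the edges of $T$ by elements of $A$, such that at each vertex the labelling operation carries the labels of its input edges to the label of its output edge. Because the vertex labels are operations of $P$ and $A$ is a $P$-algebra, the label of every non-leaf edge is forced: it is the value of the operation of $P$ obtained by contracting the subtree below that edge, evaluated on the labels of the leaves above. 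Hence such data is the same as a $P$-labelled operadic tree $\xi$ (an object of $\Omega/P\simeq\Tw(sOp_C)/X_P$) together with a choice of element of $A$ on each leaf of $\xi$, i.e.\ an element of $X_A(ev(\xi))=(ev^{*}X_A)(\xi)$. The identical bookkeeping matches morphisms, and the two projections of $\Omega/(P,A)$ onto $\Omega/P$ and onto $\Tw(P)/X_A$ coincide with the maps in the proposition (the latter sends a decorated tree to the pair consisting of its evaluation in $P$ and the induced tuple of leaf-decorations, which is precisely the map to $\Tw(P)/A=\Tw(P)/X_A$). This yields the required isomorphism over the square and finishes the proof.

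The hard part is not the formal base-change step, which is routine, but the identification of $\Omega/(P,A)$: one must unwind precisely what ``the dendroidal nerve of the operadic algebra $A$'' is, and then verify that the edge-decorations of a tree labelled in $(P,A)^{+}$ are redundant data once the vertices are labelled by operations of $P$ and the leaves are decorated by elements of $A$. Granting that, the compatibility of the two projection functors and the identification of all four maps of the square with those of the base-change square is a direct check, and the pullback property follows.
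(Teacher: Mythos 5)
Your proposal is correct and in substance follows the paper's own (much terser) argument: both identify $\Omega/(P,A)$ as $P$-labelled trees with compatible $A$-labels on edges, observe that the forgetful map to $\Omega/P$ is a discrete fibration whose fibre over a tree is the set of leaf-labellings, i.e.\ $X_A$ evaluated at the tree's evaluation, and conclude the pullback property. You merely make explicit the base-change lemma for categories of elements and the redundancy of the inner-edge labels, which the paper leaves as ``easy to check.''
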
 
\begin{proof}
Here $(P,A)$ is the dendroidal nerve of the algebra $A$, and its elements are the trees in $P$ with edges labeled by elements of $A$, so that the labeling is compatible with the algebra structure of $A$. The map $\Omega/(P,A)\to \Tw(P)/A$ evaluates the trees, preserving the labeling of inputs. The map $\Omega/(P,A)\to \Omega/P$ forgets the labeling, and is a discrete fibration. The pullback property is easy to check.
\end{proof}

\section{Twisted arrow categories and \texorpdfstring{$\infty$}{infinity}-localizations}
\label{sec:final}

For any operad $P$ and $P$-algebra $A$ we introduce a strict 2-category $\mathbb{T}_P(A)$. The localization of $\mathbb{T}_P(A)$ by all 2-morphisms is the $(\infty, 1)$-localization of $\Tw(P)/A$ by the upper morphisms. The homotopy category of $\mathbb{T}_P(A)$ is equivalent to $\Tw_P(A)$. This shows that, up to equivalence of categories, $\Tw_P(A)$ is the localization of $\Tw(P)/A$ by the upper morphisms. The 2-category $\mathbb{T}_P(A)$ is similar to bicategories of correspondences.

For the model of $(\infty,1)$-localization we will use the hammock localization $L^H$. 

\begin{lemma}
\label{lem:infty-localization}
Let $C$ be a category endowed with:
\begin{itemize}
    \item a pair of subcategories $L$ and $R'$,
    \item a subset $S$ of objects of $C$, such that for any object $X$ in $C$ there exists unique morphism in $L$ from an object in $S$ to $X$,
    \item for any morphism $f$ in $C$, a choice of terminal $(R', L)$-factorization $l\circ r$ of $f$, such that morphisms of factorizations of $f$ into $l\circ r$ are in $L$; for any morphism $r$ in $R'$ the factorization is chosen to be $id\circ r$,
    \item for any morphisms $r$ in $R'$ and $l$ in $L$ with a common source, a choice of the pushout maps $r'$ in $R'$ and $l'$ in $L$, so that $r'\circ l= l'\circ r$,
    \item and such that for any morphisms $f$ in $C$ and $l$ in $L$, if $f\circ l$ is in $L$, then $f$ is in $L$.
\end{itemize}
Then for any objects $s$ and $s'$ in $S$  the nerve of the category of $(R', L)$-cospans (with morphisms in $L$) from $s$ to $s'$ is a deformation retract of the simplicial set $L^H(C, L)(s, s')$.
\end{lemma}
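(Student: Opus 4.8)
The plan is to realise $N(\mathrm{Cospan})$, the nerve of the category of $(R',L)$-cospans from $s$ to $s'$, as a sub-simplicial set of $L^H(C,L)(s,s')$ and then to construct an explicit strong deformation retraction onto it, in the style of Dwyer--Kan's computations with hammock localizations. Recall that an $n$-simplex of $L^H(C,L)(s,s')$ is a reduced hammock of some width with $n+1$ rows, whose left-pointing columns and whose vertical maps lie in $L$. A cospan $s\xrightarrow{r}X\xleftarrow{l}s'$ with $r\in R'$ and $l\in L$ is the same datum as a reduced hammock of width two whose right-pointing column is $r$ and whose left-pointing column is $l$; a morphism of cospans is, by definition, a map of $C$ in $L$ compatible with the legs, hence exactly a vertical map of such hammocks; and a string of $n$ composable cospan-morphisms is an $n$-simplex of $L^H(C,L)(s,s')$ of this shape. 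These identifications respect faces and degeneracies, so they assemble into an inclusion $\iota\colon N(\mathrm{Cospan})\hookrightarrow L^H(C,L)(s,s')$; the hypothesis that every object of $C$ receives a \emph{unique} $L$-morphism from $S$, together with $s,s'\in S$, is what pins down the shape of these width-two hammocks.

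For the retraction, I first straighten a single reduced hammock $s=Y_0\leftrightarrow\cdots\leftrightarrow Y_w=s'$ into a cospan $\sigma(Y_\bullet)$ by a deterministic procedure using only the chosen data in the hypotheses. First replace each right-pointing arrow by its chosen terminal $(R',L)$-factorization, so that the zigzag becomes an alternating composite of $R'$-arrows and $L$-arrows; then repeatedly use the chosen pushout of an $R'$-arrow against an $L$-arrow (Lemma~\ref{lem:upper-lower-pushouts}) to interchange any $L$-arrow lying to the left of an $R'$-arrow, thereby sliding every $L$-arrow to the right of every $R'$-arrow; finally use that $R'$ and $L$ are subcategories to fuse the string of $R'$-arrows and the string of $L$-arrows separately, arriving at a single cospan $s\xrightarrow{r}X\xleftarrow{l}s'$. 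Uniqueness of the $L$-morphism from $S$ guarantees that the endpoints $s,s'$ are never displaced, so $\sigma(Y_\bullet)$ really is a cospan from $s$ to $s'$; the normalisations built into the hypotheses (the factorization of an $R'$-arrow is $\mathrm{id}\circ r$, and the pushout square $r'\circ l=l'\circ r$ of Lemma~\ref{lem:upper-lower-pushouts} is a terminal factorization) guarantee that $\sigma$ fixes cospans. Performing this row-wise on an $n$-simplex, and using that all choices are canonical and hence compatible with the vertical $L$-maps, $\sigma$ induces a simplicial map $\rho\colon L^H(C,L)(s,s')\to N(\mathrm{Cospan})$ with $\rho\iota=\mathrm{id}$.

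It remains to homotope $\iota\rho$ to the identity relative to $N(\mathrm{Cospan})$. Each elementary step of the straightening is witnessed by a short zigzag of $L$-morphisms of hammocks: an interchange is witnessed by the two $L$-arrows $l,l'$ of the pushout square $r'\circ l=l'\circ r$; a factorization step is witnessed by the $L$-morphisms of Lemma~\ref{lem:lower-upper-factor} from the old factorization to the chosen terminal one; and composing or deleting co-directional or identity columns is an equality of reduced hammocks. Concatenating these witnesses along the whole procedure, naturally in the input hammock, and running the construction on all rows of an $n$-simplex at once, produces a simplicial homotopy $H\colon L^H(C,L)(s,s')\times\Delta^1\to L^H(C,L)(s,s')$ from $\mathrm{id}$ to $\iota\rho$ which is constant on $N(\mathrm{Cospan})$; hence $\iota$ exhibits $N(\mathrm{Cospan})$ as a strong deformation retract of $L^H(C,L)(s,s')$.

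The main obstacle is the last step: one must verify that every move of the straightening procedure is genuinely realised by vertical $L$-maps of \emph{reduced} hammocks, not merely by an identification in $\pi_0$, and that the resulting assignment is strictly functorial and simplicial rather than only homotopy-coherent. This is precisely why the hypotheses fix the terminal factorizations and the $R'$-against-$L$ pushouts once and for all and demand the normalisations above; the cancellation hypothesis ``$f\circ l\in L$ and $l\in L$ imply $f\in L$'' is exactly what keeps all the arrows produced by the procedure inside $L$, so that the witnessing zigzags are legitimate simplices of $L^H(C,L)(s,s')$. Careful bookkeeping of the width changes and of the ``no identity column'' normal form is the only remaining delicate point.
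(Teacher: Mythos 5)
Your overall strategy --- normalize each hammock to a cospan using the chosen factorizations and pushouts, and realize the normalization as a deformation --- is the same as the paper's, but your straightening procedure skips the step that makes the paper's argument work, and as a result your central ``interchange'' move is not actually realized inside the hammock localization. The paper's first move is to replace every peak $B_{k,i}$ of the zigzag by the object $b_i\in S$ admitting the unique $L$-morphism $b_i\to B_{k,i}$; after this, every left-pointing arrow is the canonical $L$-morphism out of an object of $S$. This is what allows the forward arrows to be factored \emph{in place} (the backward arrow $b_{i+1}\to A_{k,i+1}$ lifts along the $L$-part $A'_{k,i+1}\to A_{k,i+1}$ of the factorization precisely because, by uniqueness, the unique $L$-morphism $b_{i+1}\to A'_{k,i+1}$ composes to it), and it is what makes the pushout step a genuine morphism of hammocks: the new row keeps the same width and the same column directions (the backward column merely becomes an identity column, deleted on reduction), and the vertical component at the peak is the composite $b_i\to A'_{k,i}\to A''_{k,i}$ of morphisms in $L$. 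You invoke the hypothesis on $S$ only to ``pin down the endpoints,'' never in the straightening itself --- a sign that the key idea is missing.

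Concretely, two of your steps fail as stated. Replacing a right-pointing arrow by its factorization changes the width of the hammock, and the naive vertical witness would have to be the $R'$-part of the factorization, which is not in $L$; inserting the factorization is harmless only because the reduced hammock is literally unchanged, i.e.\@ it is a no-op rather than a homotopy. More seriously, the interchange converts a span $A\xleftarrow{l}B\xrightarrow{r}A''$ into a cospan $A\xrightarrow{r'}P\xleftarrow{l'}A''$: the column directions flip, so these two rows cannot be stacked into a hammock, and the only candidate vertical map at the peak is $r'\circ l=l'\circ r$, which need not lie in $L$. The arrows $l$ and $l'$ alone do not assemble into a morphism of reduced hammocks, so the claimed witnesses for the homotopy do not exist as described. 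One could repair this with auxiliary intermediate zigzags (and would then still have to address confluence of the ``repeated'' interchanges and the fact that a zigzag of homotopies does not immediately give a deformation retraction), but that is exactly the bookkeeping you defer to ``the main obstacle'' and never carry out; in the paper it is rendered unnecessary by the peak normalization. As written, the proposal has a genuine gap at its key step.
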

\begin{proof}
We will deform the simplicial set $L^H(C,L)(s,s')$ in several steps. Due to the nature of hammock localization, it suffices to describe the deformation retraction on 1-simplices. The 1-simplices are represented by diagrams in which the horizontal arrows pointing left and the vertical arrows pointing down belong to the subcategory $L$. The deformation corresponds to a sequence of morphisms of diagrams. At the first step a simplex
\begin{center}
\begin{tikzcd}
{} \arrow[r, dotted] & {A_{0,i}} \arrow[d] & {B_{0,i}} \arrow[l] \arrow[r] \arrow[d] & {A_{0,i+1}} \arrow[d] & {} \arrow[l, dotted] \\
{} \arrow[r, dotted] & {A_{1,i}}           & {B_{1,i}} \arrow[r] \arrow[l]           & {A_{1,i+1}}           & {} \arrow[l, dotted]
\end{tikzcd}
\end{center}
is deformed into the 1-simplex
\begin{center}
\begin{tikzcd}
{} \arrow[r, dotted] & {A_{0,i}} \arrow[d] & b_i \arrow[l] \arrow[r] \arrow[d,"id"'] & {A_{0,i+1}} \arrow[d] & {} \arrow[l, dotted] \\
{} \arrow[r, dotted] & {A_{1,i}}           & b_i \arrow[r] \arrow[l]           & {A_{1,i+1}}           & {} \arrow[l, dotted]
\end{tikzcd}
\end{center}
via the morphisms of diagrams induced by the unique morphisms $b_i\to B_{k,i}$ in $L$ from objects $b_i$ in $S$ and the identity morphisms on $A_{k,i}$. This simplex is deformed into 1-simplex
\begin{center}
\begin{tikzcd}
{} \arrow[r, dotted] & {A'_{0,i}} \arrow[d] & b_i \arrow[l] \arrow[r,"R'"] \arrow[d,"id"'] & {A'_{0,i+1}} \arrow[d] & {} \arrow[l, dotted] \\
{} \arrow[r, dotted] & {A'_{1,i}}           & b_i \arrow[r,"R'"] \arrow[l]           & {A'_{1,i+1}}           & {} \arrow[l, dotted]
\end{tikzcd}
\end{center}
via the morphisms in $L$ from the canonical $(R',L)$-factorizations $b_i\to A'_{k,i+1}\to A_{k,i+1}$. The morphism $A'_{0,i+1}\to A'_{1,i+1}$ is the morphism in $L$ from the $(R',L)$-factorization $b_i\to (A'_{0,i+1}\to A_{0,i+1}\to A_{1,i+1})$ to the terminal $(R', L)$-factorization $b_i\to A'_{1,i+1}\to A_{1,i+1}$. The morphisms $b_i\to A'_{k,i}$ are determined uniquely. 

If the rows in the diagram that represents the obtained 1-simplex start with morphisms pointing left, then, since $s$ is in $S$, these morphisms are identity morphisms, and we replace this diagram with the diagram in which the first morphisms point to the right. If the last morphisms in the rows point right, then we add identity morphisms pointing left. Finally, the obtained simplex is deformed into the 1-simplex 
\begin{center}
\begin{tikzcd}
{} \arrow[r, dotted] & {A''_{0,i}} \arrow[d] & {A''_{0,i}} \arrow[l, "id"'] \arrow[r, "R'"] \arrow[d] & {A''_{0,i+1}} \arrow[d] & {} \arrow[l, dotted] \\
{} \arrow[r, dotted] & {A''_{1,i}}           & {A''_{1,i}} \arrow[r, "R'"] \arrow[l, "id"']           & {A''_{1,i+1}}           & {} \arrow[l, dotted]
\end{tikzcd}
\end{center}
via the morphisms $A'_{k,i}\to A''_{k,i}$ constructed by induction, and the morphisms $b_i\to A''_{k,i}$ being the obvious composition. The first maps $A'_{k,0}\to A''_{k,0}$ are the identity maps. Once the map $A'_{k,i}\to A''_{k,i}$ is defined, the map $b_i\to A''_{k,i}$ is also defined. The maps $A'_{k,i+1}\to A''_{k,i+1}$ in $L$ and $A''_{k,i}\to A''_{k,i+1}$ in $R'$ constructed at the next step are the maps in the canonical pushout of $b_i\to A'_{k,i+1}$ and $b_i\to A''_{k,i}$. The map $A''_{0,i+1}\to A''_{1,i+1}$ is the unique map from the pushout $A''_{0,i+1}$, induced by the maps $A'_{0,i+1}\to A'_{1,i+1}\to A''_{1,i+1}$ and $A''_{0,i}\to A''_{1,i}\to A''_{1,i+1}$. And since the composition  $A'_{0,i+1}\to A''_{0,i+1}\to A''_{1,i+1}$ is equal to $A'_{0,i+1}\to A'_{1,i+1}\to A''_{1,i+1}$, which is in $L$, and $A'_{0,i+1}\to A''_{0,i+1}$ is in $L$, the map $A''_{0,i+1}\to A''_{1,i+1}$ is in $L$. 

Since the left arrows except the last one are trivial, the final simplex is represented by the diagram of the following form.
\begin{center}
\begin{tikzcd}
s \arrow[r, "R'"] \arrow[d, "id"] & {A''_{0,n}} \arrow[d, "L"] & s' \arrow[l, "L"'] \arrow[d, "id"] \\
s \arrow[r, "R'"]                 & {A''_{1,n}}                & s' \arrow[l, "L"']                
\end{tikzcd}    
\end{center}
This simplex does not change during the deformation.
\end{proof}

\begin{remark}
Let $t$ and $t'$ be objects in $C$, and $s$ and $s'$ be the objects in $S$ corresponding to $t$ and $t'$. The morphisms $s\to t$ and $s'\to t'$ in $L$ induce homotopy equivalence between the simplicial sets $L^H(C,L)(t,t')$ and $L^H(C,L)(s,s')$. Thus the theorem describes the homotopy type of all simplicial sets $L^H(C,L)(t,t')$.
\end{remark}

\begin{definition}
  Let $P$ be an operad and $A$ be a $P$-algebra. The strict 2-category $\mathbb{T}_P(A)$ has the elements of $A$ as objects. For any elements $s$ and $s'$ in $A$ the category $Hom_{\mathbb{T}_P(A)}(s, s')$ is the category of $(Lower', Upper)$-cospans from $id_c/s$ to $id_{c'}/s'$ in $\Tw(P)/A$. Notice that these cospans are determined uniquely by their $Lower'$ part, and that morphisms between these cospans are upper morphisms with first vertex trivial, with index above the first vertex equal to $1$. The horizontal composition of cospans corresponding to $Lower'$ morphisms $r_1$ and $r_2$ is the cospan corresponding to the $Lower'$ morphism $r$, where the lower vertex of $r$ is the composition of the lower vertices of $r_1$ and $r_2$. The horizontal composition of upper morphisms $f_1$ and $f_2$ is the upper morphism $f$, with the first vertex trivial, followed by the second to the last vertices of $f_1$, followed by the second to the last vertices of $f_2$; the permutation on leaves is the obvious permutation in $S_1\times S_{n-1}\times S_{m-1}\subset S_{n+m-1}$. The composition is associative.
  
  The strict locally groupoidal $(2, 1)$-category $\mathbb{T}'_P(A)$ is the strict $2$-category obtained from $\mathbb{T}_P(A)$ by groupoidification of categories of morphisms.
\end{definition}

\begin{theorem}
\label{thm:infty-localization}
Let $P$ be an operad and $A$ be a $P$-algebra. The $(2, 1)$-category $\mathbb{T}'_P(A)$ is the $(\infty, 1)$-localization of $\Tw(P)/A$ by the upper morphisms.
\end{theorem}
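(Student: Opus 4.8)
The plan is to deduce the statement from Lemma~\ref{lem:infty-localization}, applied to $C=\Tw(P)/A$ with $L$ the wide subcategory of upper morphisms, $R'$ the subcategory $Lower'$, and $S$ the set of objects of $\Tw(P)/A$ lying over identity operations. Under the identification of $\Tw(P)/A$ with the category of elements of the single-object Segal presheaf attached to $A$, the set $S$ is $\bigsqcup_{c}A(c)$, the set of objects of $\mathbb{T}_P(A)$, and by construction a $(R',L)$-cospan from $id_c/s$ to $id_{c'}/s'$ with morphisms in $L$ is exactly an object of $\mathrm{Hom}_{\mathbb{T}_P(A)}(s,s')$; moreover the horizontal composition of $\mathbb{T}_P(A)$, which by its definition is read off from the lowest vertices of the $Lower'$-legs, is precisely the composition of cospans produced by the $Upper$-$Lower$ pushouts. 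So once the hypotheses of Lemma~\ref{lem:infty-localization} are in place, that lemma together with the remark following it will give, compatibly with composition, a homotopy equivalence between the hom-spaces of $L^H(\Tw(P)/A,Upper)$ and the nerves of the hom-categories of $\mathbb{T}_P(A)$.

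I would check the hypotheses of Lemma~\ref{lem:infty-localization} as follows. (i) For an object over an operation $p$, the unique morphism in $Upper$ into it from an object of $S$ is the output map $out\colon id_{c_0}\to p$ with its forced labelling; uniqueness is the defining property of the output map, and inspection of canonical representatives (Corollary~\ref{crl:twsop}) shows there is no upper map into $p$ from an identity operation of any other colour. (ii) The terminal $(R',L)$-factorization of $f$ is the canonical $(Lower',Upper)$-factorization of Lemma~\ref{lem:lower-upper-factor}, which that lemma shows to be terminal with all morphisms into it upper, and which for $f\in Lower'$ is $id\circ f$. (iii) For $r\in Lower'$ and $l\in Upper$ with a common source, Lemma~\ref{lem:upper-lower-pushouts} provides the pushout, with the map out of the target of $l$ chosen lower with trivial leaf permutation, hence in $Lower'$, and the map out of the target of $r$ upper; there $l'\circ r$ is also the terminal $(Lower',Upper)$-factorization, so the choices in (ii) and (iii) are compatible. (iv) If $l$ is upper and $f\circ l$ is upper, then $f$ is upper, because composing $f$ after $l$ does not change the lowest vertex of $f$ (the lowest vertex of $l$ being an identity operation), so triviality of the lowest vertex of $f\circ l$ forces triviality of that of $f$.

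With the hypotheses verified, Lemma~\ref{lem:infty-localization} gives for $s,s'\in S$ a deformation retraction of $L^H(\Tw(P)/A,Upper)(s,s')$ onto $N\,\mathrm{Hom}_{\mathbb{T}_P(A)}(s,s')$, and the remark after the lemma identifies, via the unique upper maps $s\to t$, $s'\to t'$, the mapping space between arbitrary $t,t'$ with the mapping space between the corresponding objects of $S$. So $L^H(\Tw(P)/A,Upper)$ is Dwyer--Kan equivalent to its full simplicial subcategory on $S$, which, the retraction being compatible with composition, is the simplicial category underlying the strict $2$-category $\mathbb{T}_P(A)$ (hom-spaces the nerves of the hom-categories, composition the horizontal composition). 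To present this as an $(\infty,1)$-category I would replace the hom-spaces by Kan complexes by freely inverting the morphisms of the hom-categories, which turns $\mathbb{T}_P(A)$ into $\mathbb{T}'_P(A)$; the $2$-nerve of $\mathbb{T}'_P(A)$ then has mapping spaces $N\,\mathrm{Hom}_{\mathbb{T}'_P(A)}(s,s')$, the nerves of the free groupoids on the cospan categories.

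The hard part will be the final comparison: showing that freely inverting the morphisms of a cospan category $\mathrm{Hom}_{\mathbb{T}_P(A)}(s,s')$ does not change the homotopy type of its nerve, equivalently that this nerve is $1$-truncated. On $\pi_0$ this is automatic and recovers the identification of the homotopy category of the localization with $\Tw_P(A)$. When $P$ is canonically decomposable it follows immediately from the localization lemma (the corollary of Proposition~\ref{prp:canonical-decomp-initial-objects}): the cospan categories are then contractible and the localization is the ordinary category $\Tw_P(A)$. In general I would argue by analysing the category of representatives of a fixed morphism $g\in\Tw_P(A)$ — objects the syntactic trees evaluating to $g$, morphisms the expansions obtained by grafting: with no relations this is a poset with a maximal (fully expanded) object, hence contractible, and the only identifications among representatives and among these expansions come from the relations of $P$ and from compatibility with the symmetric groups, with no further relations among them, so each connected component is a classifying space of a groupoid and is therefore $1$-truncated. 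Combining this with the simplicial-category equivalence above identifies the $2$-nerve of $\mathbb{T}'_P(A)$ with $L^H(\Tw(P)/A,Upper)$, i.e.\ with the $(\infty,1)$-localization of $\Tw(P)/A$ by the upper morphisms.
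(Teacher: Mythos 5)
Your core argument is exactly the paper's proof: apply Lemma~\ref{lem:infty-localization} to $C=\Tw(P)/A$ with $L=Upper$, $R'=Lower'$ and $S$ the objects of the form $id_c/a$, citing Lemma~\ref{lem:lower-upper-factor} and Lemma~\ref{lem:upper-lower-pushouts} for the third and fourth hypotheses and checking the rest by inspection. Your verifications (i)--(iv) are correct and somewhat more detailed than the paper's ``the remaining assumptions hold trivially''; the identification of the $(Lower',Upper)$-cospan categories with the Hom-categories of $\mathbb{T}_P(A)$, and of cospan composition with horizontal composition, is likewise how the paper reads its own definitions. Up to this point there is nothing to object to.

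The divergence is in your final paragraph. The paper's proof simply ends with ``the $2$-category $\mathbb{T}'_P(A)$ is equivalent to $L^H(\Tw(P)/A,Upper)$''; it does not separately argue that groupoidifying the Hom-categories preserves the weak homotopy type of their nerves. You are right that this passage from $\mathbb{T}_P(A)$ to $\mathbb{T}'_P(A)$ is exactly the statement that the nerves $N\,\mathrm{Hom}_{\mathbb{T}_P(A)}(s,s')$ are $1$-truncated (the map to the nerve of the groupoidification is always the $1$-truncation map), and it is a genuine point: for a general category $N(C)\to N(G(C))$ is \emph{not} a weak equivalence. However, the argument you offer for $1$-truncatedness does not work as written. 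First, ``with no relations this is a poset with a maximal (fully expanded) object'' is false: units permit unbounded expansion (e.g.\ $x=\mu_2(x,\mu_0)$ in $\Tw_{uAs}$), so there is no terminal representative; in the canonically decomposable case the components have \emph{initial}, not terminal, objects (Proposition~\ref{prp:canonical-decomp-initial-objects}). Second, the inference ``the only identifications come from the relations of $P$ \dots\ with no further relations among them, so each connected component is a classifying space of a groupoid'' is not an argument, and is literally false as stated: in Example~\ref{ex:properads-not-decomposable} a component of a Hom-category is the category $\mathcal{D}$ with objects $\mathbb{N}$ and morphisms $(a,b):n\to n+a+b$, which is not equivalent to a groupoid (even if its classifying space happens to be a $1$-type). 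So either you should prove $1$-truncatedness of the cospan categories by a real argument, or you should follow the paper and stop where it stops, in which case what Lemma~\ref{lem:infty-localization} delivers directly is the identification of the localization with the simplicial category underlying $\mathbb{T}_P(A)$, the passage to $\mathbb{T}'_P(A)$ being an additional claim that the paper does not justify either.
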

\begin{proof}
Let $S$ be the set of objects of $\Tw(P)/A$ of the form $id_c/a$, and the categories $L$ and $R'$ be the categories $Upper$ and $Lower'$. Lemma~\ref{lem:infty-localization} holds: its third and fourth assumptions have been proved in Lemma~\ref{lem:lower-upper-factor} and Lemma~\ref{lem:upper-lower-pushouts}, and the remaining assumptions hold trivially. The $2$-category $\mathbb{T}'_P(A)$ is equivalent to $L^H(\Tw(P)/A, Upper)$.
\end{proof}

\begin{proposition}
\label{prp:canonical-decomp-initial-objects}
An operad $P$ is canonically decomposable if and only if for any $P$-algebra $A$ \emph{connected components} of Hom-categories in $\mathbb{T}_P(A)$ have initial objects.
\end{proposition}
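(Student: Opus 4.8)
The plan is to translate the combinatorial definition of canonical decomposability into the statement about initial objects in connected components of $Hom_{\mathbb{T}_P(A)}(s,s')$, using the explicit description of $\mathbb{T}_P(A)$ and the description of morphisms in $\Tw(P)$ via reduced trees or graphs (Corollary~\ref{crl:twsop}). Recall that an object of $Hom_{\mathbb{T}_P(A)}(s,s')$ is a $(Lower',Upper)$-cospan, determined uniquely by its $Lower'$ part, i.e.\@ by a lower morphism $r$ in $\Tw(P)/A$ with trivial permutation on leaves whose lowest vertex $q$ is marked by an operation of $P$ with a distinguished input equal to $1$ (the slot through which $s$ maps), and whose remaining data specifies the images $a_0,\dots,a_m\in A$ attached to the other slots; the target is the evaluation, which must equal $s'$. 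Morphisms of such cospans are the upper morphisms with trivial first vertex and index $1$ above it, i.e.\@ compositions of $q$ with operations plugged into the non-first slots. So $Hom_{\mathbb{T}_P(A)}(s,s')$ is exactly the category whose objects are expressions $q(a',a_0,\dots,a_m)$ evaluating to $s'$ (with $a'=s$ the distinguished variable) and whose morphisms act by operadic substitution into the non-distinguished slots.

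First I would make precise that two such expressions lie in the same connected component precisely when they represent the same element of $TO(A)$, equivalently the same morphism $s\to s'$ in $\Tw_P(A)=\TwOp_Q(A)$ applied at arity $1$ — indeed, connectedness in this Hom-category is generated exactly by the partial-composition generating equivalences on representatives, which are the same relations that define $TO(A)$. Then the claim becomes: $P$ is canonically decomposable via some $Q'\subseteq P$ if and only if for every $P$-algebra $A$ and every morphism $f\colon s\to s'$ in $\Tw_P(A)$, the set of representatives $q(a',a_0,\dots,a_m)$ of $f$ has an initial object under the "substitute into non-first slots" ordering. The "only if" direction is essentially Lemma~\ref{lem:candec}: if $P$ is canonically decomposable via $Q'$, then by that lemma every morphism $f$ has a unique canonical representative $q'(a',a_0,\dots,a_m)$ with $q'\in Q'$, and the canonical decomposition of any operation $q$ shows that every representative $q(a',b_1,\dots,b_l)$ admits a (unique) morphism to the canonical one — so the canonical representative is the initial object of the connected component.

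For the "if" direction I would run the argument through the \emph{terminal} $P$-algebra $A=T$ (for which $\Tw_P(T)=\mathcal{U}_P(T)$, by the lemma comparing twisted arrow and enveloping categories of a terminal algebra), or more flexibly through the generic/free situation. Take an arbitrary operation $q$ of $P$ of non-zero arity; view it as a morphism $f_q$ in $\Tw_P(A)$ for a suitably generic $A$ (e.g.\@ a free $P$-algebra on enough generators, so that $q$ with its slots filled by distinct generators is "tautologically" a representative of a morphism and no accidental collapses occur). Let $q'(a',q_0(\dots),\dots,q_m(\dots))$ be the initial representative of $f_q$ supplied by the hypothesis; freeness forces each $a_l$ to be of the form $q_l(\text{generators})$ and forces the indices to be strictly increasing within each $q_l$-block (otherwise a symmetry would give a strictly smaller representative, contradicting initiality). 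This produces, for each $q$, operations $q'$, $q_0,\dots,q_m$ and indices $j_{li}$ with $q=q'(id,q_0(j_{0\bullet}),\dots,q_m(j_{m\bullet}))$; one then defines $Q'$ to be the set of all such $q'$ and checks that uniqueness of the initial object forces uniqueness of this decomposition, so that $P$ is canonically decomposable via $Q'$. The compatibility of $Q'$ across different choices of $A$ (i.e.\@ that the $q'$ extracted from the terminal algebra really works for all algebras) follows because the generating equivalences on $TO(A)$ are natural in $A$, so the initial-representative map is natural and agrees with the one obtained over the free/terminal algebra.

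The main obstacle I expect is the "if" direction, specifically verifying that the initial objects extracted algebra-by-algebra assemble into a single subset $Q'\subseteq P$ satisfying the \emph{uniqueness} clause of the definition of canonical decomposability, rather than merely an existence statement. Concretely one must rule out the possibility that initial objects exist in every $Hom$-category but are "incompatible" — e.g.\@ that the putative $q'$ depends on $A$, or that for some $q$ there are two genuinely different ways to write it as $q'(id, q_\bullet(j_{\bullet\bullet}))$ with $q'\in Q'$. The right tool is to test against a sufficiently large free algebra so that the partial-composition relations restricted to the relevant representatives become the \emph{generating} relations with no extra identifications, making the connected component literally the poset of representatives modulo the generating moves; then an initial object of that poset is unique, and the induction-on-arity argument already sketched in the proof of Lemma~\ref{lem:candec} (decomposing morphisms into those with a single non-trivial non-source vertex) pins down $q'$ and the $q_l$ uniquely. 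I would also need the easy observations that identity operations give singleton Hom-categories (so arity-$0$ slots force the $q_l$ to be identities where required) and that permutation isomorphisms are handled by the "indices increase" normalization, both of which are immediate from Corollary~\ref{crl:isos-in-tw} and the definition of $Lower'$.
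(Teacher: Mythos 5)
The paper offers no argument to compare against --- its proof of this proposition reads, in full, ``The proof is trivial'' --- so I can only assess your proposal on its own terms. Your translation of the statement is the right one: objects of $Hom_{\mathbb{T}_P(A)}(s,s')$ are depth-one representatives $q(a',a_1,\dots,a_n)$ with the variable in the (normalized) first slot, and morphisms are simultaneous substitutions into the non-distinguished slots. But note that you state the comparison morphism backwards in the forward direction: morphisms of cospans are \emph{upper} morphisms between the apexes, and an upper morphism goes from an operation to a \emph{composite} of that operation, i.e.\@ from the more contracted representative to the more refined one. So the canonical representative $q'(a',a_0,\dots,a_m)$ of Lemma~\ref{lem:candec} admits a unique morphism \emph{to} every other representative in its component --- not ``every representative admits a morphism to the canonical one,'' which would make it terminal. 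With the direction fixed, existence of that morphism comes from the canonical decomposition of $q$ and uniqueness from the uniqueness clause of canonical decomposability, so the forward direction goes through.

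The genuine gap is the one you flag yourself: in the backward direction, extracting $Q'$ from initial objects over a \emph{free} algebra certifies each $q''\in Q'$ as initial only for one particular labelling of its non-distinguished slots, and you are then forced into an unfinished naturality argument to transport that certificate to the labelling arising in an arbitrary decomposition of an arbitrary $q$; moreover the claim that the relevant component over a free algebra is ``literally a poset'' is not justified (permutation isomorphisms give non-identity isomorphisms between distinct objects). The clean route is the terminal algebra, which you mention and then abandon. Over the terminal algebra $\Tw(P)/A\cong\Tw(P)$, an object of a Hom-category is just an operation with a distinguished first input and no labels, and the connected components partition these objects. Define $Q'$ to be the set of sources of the chosen initial objects (normalized, via a permutation isomorphism, to have distinguished input $1$). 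Any decomposition $q=q''(id_{c_1}(1),r_0(i_{0\bullet}),\dots)$ with $q''\in Q'$ \emph{is} a morphism $(q'')\to(q)$ in the Hom-category, which places $(q'')$ in the component of $(q)$; since $(q'')$ lies in exactly one component and is the chosen initial object of the component it lies in, it is the initial object of the component of $(q)$, and the decomposition is the unique morphism out of it, whose canonical representative with increasing leaf indices pins down the $q_l$ and $j_{li}$ uniquely. This gives existence and uniqueness of the decomposition in one stroke, with no cross-algebra compatibility to check; the forward direction then produces initial objects over every algebra, which is why ``for any'' and ``for the terminal'' algebra are equivalent here.
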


The proof is trivial. The next corollary implies \textbf{Theorem 2} of \cite{walde20172}, \textbf{Theorem 1.1} of \cite{deBrito20dendroidal} and the asphericity of $\Omega$, proved in \cite{ara2019dendroidal}.

\begin{corollary}
For any canonically decomposable operad $P$ the functor $\Tw(P)/A\to \Tw_P(A)$ is the $(\infty, 1)$-localization by the upper morphisms. In particular, the functor $\Omega/P\to\Tw(P)$ is the $(\infty, 1)$-localization by boundary preserving morphisms. In particular, the functor $\Omega/sOp\to\Omega$, or the functor $\Tw(sOp)/sOp\to\Tw(sOp)$, is $(\infty,1)$-localization.
\end{corollary}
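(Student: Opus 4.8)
The plan is to combine the $(\infty,1)$-localization statement for the hammock localization proved in Lemma~\ref{lem:infty-localization} with the characterization of canonical decomposability given in Proposition~\ref{prp:canonical-decomp-initial-objects}, reducing the corollary to a statement about when the hammock localization is equivalent to its homotopy category. First I would recall the setup from the proof of Theorem~\ref{thm:infty-localization}: take $S$ to be the objects of $\Tw(P)/A$ of the form $id_c/a$, and let $L = Upper$ and $R' = Lower'$. Lemma~\ref{lem:infty-localization} applies (its hypotheses were verified there, using Lemma~\ref{lem:lower-upper-factor} and Lemma~\ref{lem:upper-lower-pushouts}), so for objects $s, s'$ in $S$ the simplicial set $L^H(\Tw(P)/A, Upper)(s,s')$ deformation retracts onto the nerve of the category of $(Lower', Upper)$-cospans from $s$ to $s'$, which is exactly $Hom_{\mathbb{T}_P(A)}(s,s')$. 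By the remark following Lemma~\ref{lem:infty-localization}, this also computes the homotopy type of $L^H(\Tw(P)/A, Upper)(t,t')$ for arbitrary objects $t, t'$, via the unique $Upper$-morphisms $s \to t$ and $s' \to t'$.

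Next I would invoke Proposition~\ref{prp:canonical-decomp-initial-objects}: canonical decomposability of $P$ is equivalent to the statement that for every $P$-algebra $A$ the connected components of the Hom-categories $Hom_{\mathbb{T}_P(A)}(s,s')$ have initial objects. A category all of whose connected components have initial objects has contractible nerve on each component, hence its nerve is homotopy equivalent to the discrete set of its connected components; thus $L^H(\Tw(P)/A, Upper)(s,s')$ is homotopy equivalent to the set $\pi_0 Hom_{\mathbb{T}_P(A)}(s,s')$. I then need to identify this set of connected components with $Hom_{\Tw_P(A)}(s,s')$. By the description in Corollary~\ref{crl:twsop} and Lemma~\ref{lem:candec}, a morphism in $\Tw_P(A)$ is an equivalence class of expressions, and two $(Lower', Upper)$-cospans lie in the same connected component precisely when their underlying morphisms in $\Tw(P)/A$, after composing the $Lower'$-part, evaluate to the same morphism in $\Tw_P(A)$ — this is essentially the observation that $Lower\circ R_{\psi=-1}$-type manipulations and the generating equivalences of $TO(A)$ coincide with the morphisms of cospans. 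So $\pi_0$ of the Hom-category is $Hom_{\Tw_P(A)}(s,s')$, and the hammock localization has discrete Hom-spaces isomorphic to those of $\Tw_P(A)$; since localizations with discrete mapping spaces are ordinary categories, $L^H(\Tw(P)/A, Upper)$ is equivalent to $\Tw_P(A)$, and the localization functor is the canonical functor $\Tw(P)/A \to \Tw_P(A)$ of Theorem~\ref{thm:evaluation-functor}.

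For the three "in particular" clauses: the sequence \eqref{seq:omega} identifies $\Omega/sOp$-algebra data, giving $\Tw(sOp)/P \simeq \Omega/P$ and $\Tw_{sOp}(P) \simeq \Tw(P)$ (the latter via the earlier proposition that $\Tw(sOp) \simeq \Omega$ and the functor $\Omega/P \to \Tw(P)$ is equivalent to $\Tw(sOp)/P \to \Tw_{sOp}(P)$), so applying the main statement to $Q = sOp$ and the $sOp$-algebra $P$ gives that $\Omega/P \to \Tw(P)$ is the localization by active morphisms, which in $\Omega$ are the boundary preserving morphisms. Taking $P = sOp$ itself, viewed as an $sOp_{\mathbb{N}}$-algebra — or rather, reading off that $sOp$ is canonically decomposable by Lemma~\ref{lem:graph-operads-canonically-decomposable} — specializes this to $\Omega/sOp \to \Omega$.

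I expect the main obstacle to be the identification of $\pi_0$ of the cospan Hom-category with $Hom_{\Tw_P(A)}$: one must check carefully that morphisms of $(Lower', Upper)$-cospans (which by the definition of $\mathbb{T}_P(A)$ are $Upper$-morphisms with trivial first vertex) generate exactly the equivalence relation whose quotient is the Hom-set of $\Tw_P(A)$, i.e. that no morphisms of $\Tw_P(A)$ are accidentally collapsed or kept apart. This should follow from the canonical-representative lemma (Lemma~\ref{lem:candec}) together with the strict factorization systems of Section~\ref{sec:structure}, but the bookkeeping of which vertices and permutations get identified is the delicate point; everything else is a formal consequence of Lemma~\ref{lem:infty-localization}, Proposition~\ref{prp:canonical-decomp-initial-objects}, and the fact that nerves of categories with componentwise initial objects are homotopy discrete.
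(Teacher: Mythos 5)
Your proposal is correct and follows essentially the route the paper intends: the corollary is deduced from Theorem~\ref{thm:infty-localization} (via Lemma~\ref{lem:infty-localization}) together with Proposition~\ref{prp:canonical-decomp-initial-objects}, using that componentwise initial objects make the groupoidified Hom-categories of $\mathbb{T}_P(A)$ homotopy discrete, and that $\pi_0$ of those Hom-categories recovers $\Tw_P(A)$ (the identification of the homotopy category of $\mathbb{T}_P(A)$ with $\Tw_P(A)$ that the paper asserts in the preamble of Section~\ref{sec:final}). The delicate point you flag — that cospan morphisms generate exactly the defining equivalences of $TO(A)$ — is left at the same level of detail in the paper itself, so nothing is missing relative to the source.
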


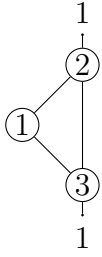
\begin{figure}[h]
\centering
\begin{tikzpicture}[scale=0.4]
\foreach \place/\name/\label in {{(0,0)/a/$1$}, {(2,2)/b/$2$}, {(2,-2)/c/$3$}}
    \node[circ] (\name) at \place {\label};
  \node[leaf,label=$1$] (b1) at (2,3) {};
  \node[leaf,label=below:$1$] (c1) at (2,-3) {};
  \draw (a) to (b);
  \draw (b) to (c);
  \draw (a) to (c);
  \draw (b) to (b1);
  \draw (c) to (c1);
\end{tikzpicture}
\caption{The element of the operad of properads used in Example~\ref{ex:properads-not-decomposable}.}
\label{fig:properad-counterexample}
\end{figure}

In contrast, the following example shows that twisted arrow categories of PROPs and of properads in general are not $(\infty,1)$-localizations of the corresponding slice categories of Segal presheaves.

\begin{example}
\label{ex:properads-not-decomposable}
Let $diOp$ be the operad of dioperads and $prOp$ be the operad of properads or the operad of PROPs. Let $B$ be the dioperad without operations of input arity $0$, without operations of output arity $0$, and with exactly one operation in $B(m;n)$ for any positive input arity $m$ and positive output arity $n$. Let $B'$ be the properadic or the PROP envelope of $B$, i.e.\@ the extension of the $diOp$-algebra $B$ to $prOp$-algebra $B'$ induced by the morphism of operads $diOp\to prOp$. Let $A$ be the properad or the PROP $B'\sqcup F(x)/{\sim}$, where $x$ is an operation of input and output arity $1$, and where relations encode that any composition of $x$ with any operation $p$ in $B'$ is equal to $p$. Consider the 1-morphism $f:id\to b'$ in $\mathbb{T}_{prOp}(A)$ represented by the operation in $prOp$ from Figure~\ref{fig:properad-counterexample} and by the three elements from $A$ that are of appropriate arity and that belong to the dioperad $B$. We will show that the connected component $\mathcal{C}$ of the object $f$ in the category $Hom_{\mathbb{T}_{prOp}(A)}(id,b')$ is not contractible. This will imply that $\Tw(prOp)/A\to\Tw_{prOp}(A)$ is not $(\infty,1)$-localization.

The 1-morphisms that are objects of $\mathcal{C}$ are represented by planar graphs that can be obtained from the planar graph of $f$ by adding vertices of input and output arity $1$, these vertices necessarily being marked either by identity operation or by $x^k$ for some $k$.  Consider the full subcategory $\mathcal{C}'$ of $\mathcal{C}$ on 1-morphisms such that all of the additional vertices are marked by $x$. The category $\mathcal{C}'$ is a reflective subcategory of $\mathcal{C}$, with the unit of adjunction being the 2-morphism that removes the vertices marked by identity operations and replaces the vertices marked by $x^k$ with $k$ vertices marked by $x$. Let $\mathcal{C}''$ be the subcategory of $\mathcal{C}'$ on 1-morphisms with all additional vertices located between the second and the third vertex of the triangle. The category $\mathcal{C}''$ is a coreflective subcategory of $\mathcal{C}'$. The category $\mathcal{C}''$ is isomorphic to the category $\mathcal{D}$ whose objects are natural numbers and whose morphisms $(a,b):n\to n+a+b$ correspond to pairs of natural numbers, with composition given by addition. The spaces $\mathcal{BC}$, $\mathcal{BC}''$ and $\mathcal{BD}$ are homotopy equivalent. The space $\mathcal{BD}$ is not contractible.
\end{example}

One can easily come up with a definition of a Segal presheaf over twisted arrow category of a PROP. The above example shows that the analogous naive notion of a \emph{weak} Segal presheaf over a twisted arrow category of a PROP is in general not the correct one. This problem may appear in practice as follows.

\begin{example} 
To generalize the present work further to the case of $\infty$-operads one needs the notion of $\infty$-category endowed with a factorization system. There is a coloured PROP $FactSys$ whose algebras are monoids endowed with a strict factorization system.  Segal presheaves over $\Tw(FactSys)$ should correspond to small categories endowed with a strict factorization system. Yet it is not immediately clear if weak Segal presheaves or some other nice presheaves over $\Tw(FactSys)$ correctly encode $\infty$-categories endowed with a factorization system, and in particular if $\Tw(prOp)/FactSys\to\Tw(FactSys)$ is $\infty$-localization. At least one can expect that $\infty$-categories endowed with a factorization system are correctly modeled by nice presheaves over $\Tw(prOp)/FactSys$.
\end{example}

Finally, since twisted arrow categories of planar operads are equivalent to twisted arrow categories of their symmetrizations, one may ask if analogous equivalence holds while passing from symmetric operads to more general structures that encode algebras. Cartesian operads, better known as multi-sorted Lawvere theories, are the simplest among structures that generalize symmetric operads, and the present theory can be generalized to some extent to the case of sufficiently nice cartesian operads. However passage from symmetric operads to cartesian operads does not induce equivalence of corresponding twisted arrow categories. 

\appendix
\section{Category theory}
\label{sec:appendix-cat-th}
\subsection{Strict factorization systems}
A strict factorization system is a refinement of an orthogonal factorization system.

\begin{definition}
An orthogonal factorization system for a category $C$ is a pair $(L, R)$ of subcategories of $C$ such that both $L$ and $R$ contain all isomorphisms of $C$, any morphism in $C$ can be factored as $r\circ l,\, l\in L, r\in R$, and this factorization is unique up to unique isomorphism of factorizations.
\end{definition}

\begin{definition}
  A strict factorization system for a category $C$ is a pair $(sL, sR)$ of subcategories of $C$ such that for any morphism in $C$ there is unique factorization $r\circ l,\, l\in sL, r\in sR$.
\end{definition}

A pair of wide subcategories $(L, R)$ of a category $C$ is a strict factorization system if for any morphism $f$ in $C$ the category of $(L, R)$-factorizations of $f$ is a point, and is an orthogonal factorization system if for any morphism $f$ in $C$ the category of $(L, R)$-factorizations is a contractible groupoid. 

In the rest of this appendix we denote by $I$ the class of all isomorphisms of a category $C$. Morphisms that contain $i$, $l$ or $r$ in their name are the morphisms from $I$, $sL$ or $sR$ respectively. In particular, $i_l
$ is in $I\cap sL$ and $i_r$ is in $I\cap sR$.

\begin{proposition}
\label{pr:ufs-are-ofs}
  Let $(sL, sR)$ be a strict factorization system for a category $C$. Then $((I\cap sR)\circ sL, sR\circ (I\cap sL))$ is an orthogonal factorization system for $C$.
\end{proposition}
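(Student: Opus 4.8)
The plan is to verify directly that the pair $(L',R')$ with $L' = (I\cap sR)\circ sL$ and $R' = sR\circ(I\cap sL)$ satisfies the axioms of an orthogonal factorization system, using throughout the fact that $(sL,sR)$ is a strict factorization system, i.e.\@ every morphism has a \emph{unique} factorization $r\circ l$ with $l\in sL$, $r\in sR$. First I would check that $L'$ and $R'$ are wide subcategories containing all isomorphisms. For this it suffices to observe that $I = (I\cap sR)\circ(I\cap sL) = (I\cap sL)\circ(I\cap sR)$: given an isomorphism $\theta$, its unique $(sL,sR)$-factorization $r\circ l$ consists of morphisms that must themselves be isomorphisms (since $r = \theta\circ l^{-1}$ forces $l$ to be split mono and, by uniqueness of factorizations applied to $l^{-1}$, an iso), and the same computation applied to $\theta^{-1}$ gives the other order. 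This shows $I\subseteq L'$ and $I\subseteq R'$, and closure under composition of $L'$ and $R'$ follows by absorbing the isomorphism factors using the interchange $I\circ sL \subseteq sL\circ I$ and $sR\circ I\subseteq I\circ sR$, which again comes from uniqueness of strict factorizations (the $(sL,sR)$-factorization of $i_r\circ l$ has the form $i'_l$ composed with something in $sR$, etc.).

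Next I would establish existence of $(L',R')$-factorizations. Given any $f$, take its strict factorization $f = r\circ l$ with $l\in sL$, $r\in sR$; then $f = (r)\circ(l) = (r\circ id)\circ(id\circ l)$ already exhibits $f$ as a morphism in $R'$ followed by a morphism in $L'$ — wait, the convention in the paper writes factorizations as $r\circ l$ with $l$ on the right, so I write $f = r'\circ l'$ with $l' = l \in sL \subseteq L'$ and $r' = r\in sR\subseteq R'$. So existence is immediate from the strict factorization itself. The substance is uniqueness up to unique isomorphism. Suppose $r'_1\circ l'_1 = r'_2\circ l'_2$ are two $(L',R')$-factorizations of $f$. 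Write $l'_k = i_k\circ l_k$ with $i_k\in I\cap sR$, $l_k\in sL$, and $r'_k = r_k\circ j_k$ with $r_k\in sR$, $j_k\in I\cap sL$. Using the interchange laws above, $r'_k\circ l'_k$ can be rewritten as an element of $sR$ composed with an element of $sL$, and by uniqueness of \emph{strict} factorizations these $sR$- and $sL$-parts agree for $k=1,2$. From this identification I would read off the comparison isomorphism: it is forced to be an isomorphism lying in $I$, and it is unique because it is determined on the intermediate object by the strict factorization data. This is essentially Lemma~\ref{lem:fact-cat-inherit}-type bookkeeping; I would phrase it as: the category of $(L',R')$-factorizations of $f$ is equivalent to the (one-object) category of $(sL,sR)$-factorizations of $f$ with the isomorphisms freely adjoined on each side, hence a contractible groupoid.

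The main obstacle I anticipate is the careful handling of the interchange relations $I\circ sL = sL\circ I$ and $sR\circ I = I\circ sR$ and of the fact that a morphism which is simultaneously in $sL$ (resp.\@ $sR$) and an isomorphism behaves like an isomorphism in the factorization — all of which rest on repeatedly invoking uniqueness of strict factorizations in slightly different guises. None of this is deep, but it is easy to get the left/right conventions tangled, so I would fix the convention (factorization written $r\circ l$, apply functions on the left) at the outset and state the interchange lemma explicitly as a sub-step before assembling the orthogonality argument. Once those are in hand, existence is trivial and uniqueness is a short diagram chase, so the proof should be compact.
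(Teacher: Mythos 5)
Your overall strategy is the paper's: derive interchange lemmas from uniqueness of strict factorizations, show that the $(sL,sR)$-factorization of an isomorphism consists of isomorphisms, observe that existence of $(L',R')$-factorizations is immediate, and reduce uniqueness-up-to-unique-isomorphism to bookkeeping. However, the central interchange inclusions are stated in the wrong direction, and as written the closure-under-composition step would fail. What the argument needs (and what the paper proves) is $sL\circ I\subseteq (I\cap sR)\circ sL$ and $I\circ sR\subseteq sR\circ(I\cap sL)$: in a composite $(i_1\circ l_1)\circ(i_2\circ l_2)$ of two morphisms of $L'$ the inner factor $l_1\circ i_2$ lies in $sL\circ I$ and must be pushed to the form (iso)$\circ$($sL$). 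Your stated inclusions $I\circ sL\subseteq sL\circ I$ and $sR\circ I\subseteq I\circ sR$ go the opposite way; applying them lands you in $sL\circ I$ rather than back in $L'=(I\cap sR)\circ sL$, and there is no reason for them to hold in a general strict factorization system (a composite $i\circ l$ always admits an $(sL,sR)$-factorization, but nothing forces an $(sL,I)$-factorization to exist). The parenthetical justification is also vacuous: $i_r\circ l$ with $i_r\in I\cap sR$ and $l\in sL$ is already in $(sL,sR)$-factored form, so its unique factorization is itself and yields no interchange. The lemmas you actually need concern $l\circ i$ and $i\circ r$, i.e.\@ an arbitrary isomorphism entering from the "wrong" side.

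Two smaller points. First, your argument that the strict factorization $r\circ l$ of an isomorphism $\theta$ consists of isomorphisms invokes $l^{-1}$ before $l$ is known to be invertible; the paper instead factors $id=\theta^{-1}\circ r\circ l$, applies the $I\circ sR$ interchange to $\theta^{-1}\circ r$ to get $r'\circ l'$ with $l'$ invertible, and then uses uniqueness of the factorization of $id$ to conclude $l'\circ l=id$, hence $l$ invertible. Second, "isomorphisms freely adjoined on each side, hence a contractible groupoid" elides the final check: after showing the groupoid of $(L',R')$-factorizations is connected (by comparing any factorization to the strict one), you must still verify that the automorphism group of a factorization is trivial, i.e.\@ that $r=r\circ i_r\circ i_l$ and $l=i_r\circ i_l\circ l$ force $i_l=i_r=id$; this again uses uniqueness of strict factorizations and is a separate step in the paper. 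With the interchange directions corrected and these two steps filled in, your outline becomes the paper's proof.
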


We prove this proposition by a sequence of lemmas. We implicitly use the fact that any morphism is representable as composition $r'\circ l'$ for unique $l'\in sL$ and $r'\in sR$.

\begin{lemma}
For any composable $i\in I$ and $l\in sL$ let $r' \circ l' = l\circ i$ be the $(sL,sR)$-factorization of their composition. Then $r'$ is an isomorphism.
\end{lemma}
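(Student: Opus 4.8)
The plan is to show that in the equation $r' \circ l' = l \circ i$, where $l \in sL$, $i \in I$, and $r' \circ l'$ is the unique $(sL, sR)$-factorization of $l \circ i$, the morphism $r'$ must be an isomorphism. The key idea is that the composite $l \circ i$ can be factored in an obvious way using only $sL$ and $I$, and then uniqueness of $(sL, sR)$-factorizations forces $r'$ to lie in $I$.

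First I would observe that since $i$ is an isomorphism, it admits a trivial $(sL, sR)$-factorization: we have $i = i_r \circ i_l$ where $i_l = i_r = $ some decomposition of $i$ into its $(sL, sR)$-parts; but more simply, $i$ itself, being an isomorphism, lies in $sR$ (since $sR$ contains all isomorphisms in the setting where strict factorization systems are assumed wide — actually here I should be careful: the definition of strict factorization system as stated does not require $sL$, $sR$ to contain all isomorphisms). Let me instead use the genuinely strict approach. Write $i = r_i \circ l_i$ for the unique $(sL, sR)$-factorization of $i$. Then $l \circ i = l \circ r_i \circ l_i$. Now I want to factor $l \circ r_i$; write $l \circ r_i = r'' \circ l''$ for its $(sL,sR)$-factorization. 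Then $l \circ i = r'' \circ (l'' \circ l_i)$, and since $sL$ is closed under composition, $l'' \circ l_i \in sL$, so by uniqueness $r' = r''$ and $l' = l'' \circ l_i$. This reduces the problem to showing: if $l \in sL$, $r \in sR$ is an isomorphism, and $l \circ r = r'' \circ l''$ is the $(sL,sR)$-factorization, then $r''$ is an isomorphism.

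For that reduced statement, I would argue as follows. Since $r$ is an isomorphism in $sR$, consider $r^{-1}$; it too has a factorization $r^{-1} = r_0 \circ l_0$. Then $\mathrm{id} = r \circ r^{-1} = r \circ r_0 \circ l_0$, and $r \circ r_0 \in sR$, so the unique factorization of the identity is $(r \circ r_0) \circ l_0$; but $\mathrm{id} = \mathrm{id} \circ \mathrm{id}$ is also a factorization, so $l_0 = \mathrm{id}$ and $r \circ r_0 = \mathrm{id}$, giving $r^{-1} = r_0 \in sR$. Thus $r^{-1} \in sR$, i.e.\ $r \in I \cap sR$ with inverse also in $sR$. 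Now from $l \circ r = r'' \circ l''$ we get $l = r'' \circ l'' \circ r^{-1}$. Factor $l'' \circ r^{-1} = r_1 \circ l_1$; then $l = (r'' \circ r_1) \circ l_1$ is a factorization of $l$, but $l = \mathrm{id} \circ l$ is also one, so by uniqueness $r'' \circ r_1 = \mathrm{id}$ and $l_1 = l$. Hence $r''$ has a right inverse $r_1$. Symmetrically (running the argument with $r$ in place of $r^{-1}$, starting from $l \circ r = r'' \circ l''$ and isolating $r''$ from the other side), or by noting that $r''$ is a morphism in $sR$ with a right inverse $r_1 \in sR$ and then showing $r_1$ also has a right inverse, one concludes $r''$ is a two-sided inverse, hence $r''$ is an isomorphism. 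Applying this to $r'' = r'$ completes the proof.

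The main obstacle I anticipate is the bookkeeping around whether $sL$ and $sR$ contain isomorphisms: the stated definition of strict factorization system only demands unique $(sL,sR)$-factorization of every morphism, so I must derive, rather than assume, that isomorphisms decompose nicely — this is exactly what the "$\mathrm{id} = \mathrm{id}\circ\mathrm{id}$ is the unique factorization of the identity, hence components of inverse factorizations collapse" trick handles. Once that lemma (isomorphisms in $sR$ have their inverse in $sR$, and likewise for $sL$, and $I \cap sL$, $I \cap sR$ generate all isomorphisms) is in hand, the rest is a short chain of uniqueness arguments. I expect this auxiliary lemma is either assumed known or is one of the "basic facts about strict factorization systems" the paper recalls in this appendix, so in the final writeup I would cite it rather than reprove it.
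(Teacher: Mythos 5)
Your core tool---comparing a manufactured $(sL,sR)$-factorization against the trivial one $f = \mathrm{id}\circ f$ and invoking uniqueness---is exactly the right one, and your argument for the ``reduced statement'' (given $l\in sL$ and an \emph{invertible} $r\in sR$, the $sR$-part of $l\circ r$ is invertible) is essentially sound. The problem is the reduction itself. You write $i = r_i\circ l_i$ and then apply the reduced statement to $l\circ r_i$, which requires $r_i$ to be an isomorphism. Nothing in the definition of a strict factorization system gives you that: the claim that the $(sL,sR)$-factorization of an isomorphism consists of isomorphisms is itself one of the lemmas this appendix is establishing from scratch (it appears two lemmas later), and its proof there depends on the sibling lemma about compositions $i\circ r$ with $r\in sR$ --- so it is not available to be ``cited rather than reproved'' at this point. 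As it stands, your proof of the present lemma silently routes through an unproven (and nontrivial) auxiliary fact; knowing only that $r_i$ is a split epimorphism (with right inverse $l_i\circ i^{-1}$) is not enough to run your argument, which needs a two-sided inverse $r_i^{-1}$ to form and factor.

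The detour is also unnecessary. The paper never factors $i$ at all: from $l\circ i = r'\circ l'$ it writes $l = r'\circ(l'\circ i^{-1})$, factors $l'\circ i^{-1} = r''\circ l''$, and compares $l = (r'\circ r'')\circ l''$ with $l = \mathrm{id}\circ l$ to get $r'\circ r'' = \mathrm{id}$; then it runs the same computation on $l' = r''\circ l''\circ i = r''\circ r'\circ l'$ and compares with $l' = \mathrm{id}\circ l'$ to get $r''\circ r' = \mathrm{id}$. This uses only the hypotheses of the lemma plus uniqueness, and is the pattern your ``$\mathrm{id}=\mathrm{id}\circ\mathrm{id}$'' trick already exploits --- you just need to apply it to $l$ and $l'$ directly rather than first decomposing $i$.
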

\begin{proof}
We have $l = l \circ i \circ i^{-1} = r' \circ l' \circ i^{-1} = r' \circ r'' \circ l''$, where $l'\circ i^{-1}=r''\circ l''$, thus $l=l''$ and $r'\circ r'' = id$. Similarly,
$l' = l' \circ i^{-1} \circ i = r'' \circ l'' \circ i = r'' \circ l \circ i = r'' \circ r' \circ l'$, thus $r''\circ r' = id$, and $r'$ is an isomorphism.
\end{proof}

\begin{lemma}
For any composable $r\in sR$ and $i\in I$ let $r' \circ l' = i \circ r$ be the $(sL,sR)$-factorization of their composition. Then $l'$ is an isomorphism.
\end{lemma}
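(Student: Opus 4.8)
The plan is to mimic, in dual form, the argument just given for $r'\circ l'=l\circ i$. The key observation to exploit is that strictness pins down the factorization of any morphism that already lies in one of the two subcategories: if a morphism lies in $sR$, then its unique $(sL,sR)$-factorization is that morphism composed with an identity on the left. So the strategy is to produce two equations — one exhibiting a morphism of $sR$ written as $r''\circ(l''\circ l')$ and one exhibiting $r'$ written as $r'\circ(l'\circ l'')$ — and then read off from uniqueness that both composites of $l'$ with $l''$ are identities.

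Concretely, first I would introduce the $(sL,sR)$-factorization $i^{-1}\circ r'=r''\circ l''$, where $l''\in sL$ and $r''\in sR$. Composing the defining equation $r'\circ l'=i\circ r$ on the left with $i^{-1}$ gives $r=i^{-1}\circ r'\circ l'=r''\circ(l''\circ l')$. Here $l''\circ l'\in sL$ and $r''\in sR$, so this is an $(sL,sR)$-factorization of $r$; since $r\in sR$ its factorization is unique and equals $r\circ\mathrm{id}$, whence $l''\circ l'=\mathrm{id}$ (and, as a byproduct, $r''=r$). Second, composing $i^{-1}\circ r'=r''\circ l''$ on the left with $i$ and substituting $r''=r$ yields $r'=i\circ r\circ l''=r'\circ l'\circ l''=r'\circ(l'\circ l'')$. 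Now $l'\circ l''\in sL$ and $r'\in sR$, so uniqueness of the $(sL,sR)$-factorization of $r'$ forces $l'\circ l''=\mathrm{id}$. Therefore $l''$ is a two-sided inverse of $l'$, so $l'\in I$.

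I do not expect a real obstacle: the entire content is bookkeeping, and the only thing to be careful about is invoking the uniqueness clause of the definition of a strict factorization system at exactly the two right places — for the factorizations of $r$ and of $r'$, each of which is manifestly of the form $(\text{that morphism})\circ\mathrm{id}$ — and checking that the auxiliary morphisms sit in the stated subcategories ($l',l''\in sL$ so their composites are in $sL$; $r'',r'\in sR$). Once this is set up the computation is immediate, and together with the preceding lemma it supplies both halves of what is needed for Proposition~\ref{pr:ufs-are-ofs}.
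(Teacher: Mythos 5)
Your proof is correct and is essentially the paper's own argument: the same auxiliary factorization $i^{-1}\circ r'=r''\circ l''$, the same two composites $r=r''\circ(l''\circ l')$ and $r'=r'\circ(l'\circ l'')$, and the same two appeals to uniqueness of the strict factorization. Your explicit remark that a morphism of $sR$ factors as itself composed with an identity is just the justification the paper leaves implicit.
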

\begin{proof}
We have $r=i^{-1}\circ i\circ r=i^{-1}\circ r'\circ l'= r''\circ l''\circ l'$, where $i^{-1}\circ r'=r''\circ l''$, thus $r=r''$ and $l''\circ l'=id$. Similarly, $r'=i\circ i^{-1}\circ r'=i\circ r''\circ l''=i\circ r\circ l''=r'\circ l'\circ l''$, thus $l'\circ l''=id$, and $l'$ is an isomorphism.
\end{proof}

\begin{lemma}
For any isomorphism $i$ let $r \circ l=i$ be the $(sL,sR)$-factorization of $i$. Then the morphisms $l$ and $r$ are isomorphisms.
\end{lemma}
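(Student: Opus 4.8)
The plan is to reduce everything to the two preceding lemmas together with the uniqueness of factorizations of identity morphisms. First I would record that $i = r\circ l$ being an isomorphism already forces $r$ to be a split epimorphism and $l$ a split monomorphism, since $r\circ(l\circ i^{-1}) = i\circ i^{-1} = id$ and $(i^{-1}\circ r)\circ l = i^{-1}\circ i = id$. So the real task is to promote these one-sided inverses to two-sided ones.

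To do this I would factor the morphism $l\circ i^{-1}$ as $r'\circ l'$ with $l'\in sL$ and $r'\in sR$. Because $l\in sL$ and $i^{-1}\in I$, the first of the two preceding lemmas tells us that $r'$ is an isomorphism. Post-composing with $r$ gives $r\circ r'\circ l' = r\circ l\circ i^{-1} = i\circ i^{-1} = id$. Here $r\circ r'\in sR$ and $l'\in sL$, so $(r\circ r')\circ l'$ is an $(sL,sR)$-factorization of an identity morphism; comparing it with the factorization $id\circ id$, which is legitimate since $sL$ and $sR$ are wide and hence both contain identities, and invoking uniqueness of factorizations, I conclude $l' = id$ and $r\circ r' = id$. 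Therefore $l\circ i^{-1} = r'\circ l' = r'$ is an isomorphism, whence $l = r'\circ i$ is an isomorphism and finally $r = i\circ l^{-1}$ is an isomorphism.

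The only point requiring a moment's care is this last comparison of factorizations of an identity morphism; everything else is formal bookkeeping. A completely symmetric argument is available, factoring $i^{-1}\circ r$ instead and using the second preceding lemma, so there is no genuine obstacle here — the statement drops out in a couple of lines once the right composite has been factored.
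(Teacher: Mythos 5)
Your proof is correct and is essentially the paper's argument reflected left-to-right: the paper factors $i^{-1}\circ r$ and invokes the second preceding lemma to get $id = r'\circ(l'\circ l)$, then uses uniqueness of the $(sL,sR)$-factorization of the identity, whereas you factor $l\circ i^{-1}$ and invoke the first preceding lemma before making the same uniqueness comparison. Both routes use the same ingredients, and you correctly flag the symmetric variant, so nothing further is needed.
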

\begin{proof}
We have $id = i^{-1} \circ r \circ l = r' \circ l' \circ l$. By preceding lemma $l'$ is an isomorphism. Since $l'\circ l= id$, the morphism $l$ is an isomorphism, and $r$ is an isomorphism.
\end{proof}

\begin{corollary}
The classes of morphisms $I\circ sL$ and $sR\circ I$ are closed under composition, and $I \circ sL = (sR\cap I) \circ sL$, $sR\circ I = sR\circ (I\cap sL)$.
\end{corollary}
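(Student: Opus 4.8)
The plan is to obtain all four assertions as formal consequences of the three lemmas just proved, which together express that an isomorphism can be ``commuted past'' the class $sL$ (or $sR$) at the cost of replacing one factor of the resulting $(sL,sR)$-factorization by another isomorphism, and that an isomorphism itself factors as an $(sL,sR)$-pair of isomorphisms. Throughout I use that $sL$ and $sR$, being subcategories, are closed under composition, as is $I$.

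First I would show that $I\circ sL$ is closed under composition. Given composable morphisms $i_2\circ l_2$ and $i_1\circ l_1$ in $I\circ sL$, associativity gives $(i_2\circ l_2)\circ(i_1\circ l_1)=i_2\circ(l_2\circ i_1)\circ l_1$. Applying the first lemma to $l_2\circ i_1$ yields its $(sL,sR)$-factorization $l_2\circ i_1=r'\circ l'$ with $r'\in I$, whence the composite equals $(i_2\circ r')\circ(l'\circ l_1)$, an element of $I\circ sL$. The argument for $sR\circ I$ is dual: for composable $r_2\circ i_2$ and $r_1\circ i_1$ one writes $(r_2\circ i_2)\circ(r_1\circ i_1)=r_2\circ(i_2\circ r_1)\circ i_1$, applies the second lemma to $i_2\circ r_1$ to get $i_2\circ r_1=r'\circ l'$ with $l'\in I$, and reads off $(r_2\circ r')\circ(l'\circ i_1)\in sR\circ I$.

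For the two equalities, the inclusions $(sR\cap I)\circ sL\subseteq I\circ sL$ and $sR\circ(I\cap sL)\subseteq sR\circ I$ are immediate from $sR\cap I\subseteq I$ and $I\cap sL\subseteq I$. For the reverse inclusions I would invoke the third lemma: an isomorphism $i$ admits an $(sL,sR)$-factorization $i=r\circ l_0$ in which both $r$ and $l_0$ are isomorphisms. Then $i\circ l=r\circ(l_0\circ l)$ with $r\in sR\cap I$ and $l_0\circ l\in sL$, proving $I\circ sL\subseteq(sR\cap I)\circ sL$; and $r_1\circ i=(r_1\circ r)\circ l_0$ with $r_1\circ r\in sR$ and $l_0\in I\cap sL$, proving $sR\circ I\subseteq sR\circ(I\cap sL)$.

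The argument presents no real difficulty: it is pure bookkeeping with the three factorization lemmas, and the only thing to watch is keeping straight which factor of each re-factorization is the isomorphism. Combined with Proposition~\ref{pr:ufs-are-ofs}, this exhibits $((I\cap sR)\circ sL,\, sR\circ(I\cap sL))$ as the orthogonal factorization system generated by $(sL,sR)$.
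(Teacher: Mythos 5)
Your proof is correct and follows exactly the route the paper intends: the corollary is stated without proof as an immediate consequence of the three preceding lemmas, and your bookkeeping (commuting the isomorphism past $sL$ or $sR$ via the first two lemmas for closure under composition, and splitting an isomorphism into an $(sL,sR)$-pair of isomorphisms via the third lemma for the two equalities) is precisely the intended argument.
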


\begin{lemma}
For any morphism $f$ in $C$ the groupoid of factorizations of  $f$ via $(I\circ sL, sR \circ I)$ is connected.
\end{lemma}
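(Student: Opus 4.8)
The plan is to exhibit, for the given morphism $f$, a single factorization to which every other $(I\circ sL,\, sR\circ I)$-factorization admits a morphism of factorizations; connectedness of the groupoid then follows at once. The natural candidate is the canonical factorization $f = r_0\circ l_0$ provided by the strict factorization system $(sL,sR)$: it is an object of the groupoid under consideration because $sL\subseteq I\circ sL$ and $sR\subseteq sR\circ I$, so $l_0\in I\circ sL$ and $r_0\in sR\circ I$.

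Starting from an arbitrary $(I\circ sL,\, sR\circ I)$-factorization $f = g\circ h$ with $h\colon X\to Y$ in $I\circ sL$ and $g\colon Y\to T$ in $sR\circ I$, I would write $h = j\circ l$ with $l\colon X\to Z$ in $sL$ and $j\colon Z\to Y$ an isomorphism. Then $g\circ j\colon Z\to T$ again lies in $sR\circ I$, since by the preceding corollary this class is closed under composition; and since $sR\circ I = sR\circ(I\cap sL)$, I may write $g\circ j = r'\circ\psi$ with $r'\in sR$ and $\psi\colon Z\to Z'$ in $I\cap sL$. Substituting gives $f = g\circ j\circ l = r'\circ(\psi\circ l)$ with $\psi\circ l\in sL$ (a composite inside the subcategory $sL$) and $r'\in sR$, so uniqueness of $(sL,sR)$-factorizations forces $\psi\circ l = l_0$, $r' = r_0$, and in particular $Z' = Z_0$, the intermediate object of the canonical factorization.

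It then remains to check that $\phi := \psi\circ j^{-1}\colon Y\to Z_0$ is a morphism of factorizations from $(h,g)$ to $(l_0,r_0)$, which is a direct computation: $\phi\circ h = \psi\circ j^{-1}\circ j\circ l = \psi\circ l = l_0$, and $r_0\circ\phi = (r_0\circ\psi)\circ j^{-1} = (r'\circ\psi)\circ j^{-1} = g\circ j\circ j^{-1} = g$. Being a composite of isomorphisms, $\phi$ is itself an isomorphism, hence a genuine morphism of the groupoid; since such a $\phi$ exists for every factorization of $f$, the groupoid is connected. There is no real obstacle here: the only point deserving attention is that one must re-decompose $g\circ j$ through $I\cap sL$ rather than merely through $I$, so that the uniqueness clause of the strict factorization system applies — and this relies precisely on the identities $I\circ sL = (I\cap sR)\circ sL$, $sR\circ I = sR\circ(I\cap sL)$ and the closure-under-composition statement established just above.
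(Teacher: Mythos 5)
Your proof is correct and follows essentially the same route as the paper: both decompose the given $(I\circ sL,\,sR\circ I)$-factorization using the refined forms from the preceding corollary, reassemble the pieces into the unique $(sL,sR)$-factorization, and read off the connecting isomorphism (your $\phi=\psi\circ j^{-1}$ is the paper's $i=i'_l\circ i_r^{-1}$ in slightly different bookkeeping). No gaps.
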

\begin{proof}
Let $r\circ i_l \circ i_r \circ l$ be a factorization of the morphism $f$ via $(I\circ sL, sR \circ I)$. By preceding lemma $i_l\circ i_r = i'_r\circ i'_l$. Let $l'= i'_l\circ l$, $r' = r\circ i'_r$, $i = i'_l\circ i_r^{-1}$. Then $r'\circ l'$ is the unique factorization of $f$ via $(sL, sR)$, and $i$ is the isomorphism from the original factorization to $r'\circ l'$.
\end{proof}

\begin{lemma}
The automorphism group of any factorization via $(I\circ sL, sR \circ I)$ is trivial.
\end{lemma}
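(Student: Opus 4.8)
The final statement asserts that in the orthogonal factorization system $((I\cap sR)\circ sL, sR\circ(I\cap sL))$ obtained from a strict factorization system $(sL,sR)$, every factorization of a morphism has trivial automorphism group. Combined with the preceding lemma (connectedness of the groupoid of such factorizations), this completes the proof that the pair is an orthogonal factorization system in the sense of the definition, which requires factorizations to be unique up to \emph{unique} isomorphism, i.e.\@ the groupoid of factorizations to be a contractible groupoid.

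The plan is as follows. Fix a morphism $f$ and a factorization $f = g\circ h$ with $h\in I\circ sL$ and $g\in sR\circ I$. An automorphism of this factorization is an isomorphism $\theta$ of the intermediate object such that $\theta\circ h = h$ and $g\circ\theta = g$. First I would reduce to the case of the canonical $(sL,sR)$-factorization: by the connectedness lemma there is an isomorphism of factorizations from $g\circ h$ to the unique $(sL,sR)$-factorization $r\circ l$ of $f$, and conjugating by this isomorphism identifies the automorphism group of $g\circ h$ with the automorphism group of $r\circ l$ as a factorization via $(I\circ sL, sR\circ I)$. So it suffices to show: if $\theta$ is an isomorphism with $\theta\circ l = l$ and $r\circ\theta = r$, then $\theta = id$.

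Now I would analyze $\theta\circ l$. Writing the $(sL,sR)$-factorization of the composite $\theta\circ l$ as $r_1\circ l_1$, the lemma stating that $r'$ is an isomorphism when we factor $l\circ i$ (applied with $i=\theta^{-1}$, or directly to $\theta\circ l$ via the dual observation) shows $r_1$ is an isomorphism, so $r_1\circ l_1$ is again of the form $I\circ sL$; but $\theta\circ l$ also equals $l$, whose $(sL,sR)$-factorization is $id\circ l$, and by uniqueness of the $(sL,sR)$-factorization we get $r_1 = id$ and $l_1 = l$. The key point is that $\theta\circ l = l$ together with uniqueness forces the relation to be an equality "on the nose" in the factorization system. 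Symmetrically, $r\circ\theta = r$ combined with the lemma on factoring $i\circ r$ pins down $\theta$ from the $sR$ side. Concretely, I expect the cleanest route is: from $\theta\circ l = l$ and the fact that $l$ and $\theta\circ l$ have the same $(sL,sR)$-factorization (namely $id\circ l$), together with the explicit uniqueness, deduce that the "comparison isomorphism" between these two factorizations, which is exactly $\theta$, must be $id$.

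The main obstacle I anticipate is bookkeeping: making sure that "automorphism of the factorization" is interpreted correctly (an isomorphism commuting with both legs), and carefully invoking the right earlier lemma in the right direction, since the excerpt's lemmas are stated for specific composites $l\circ i$, $i\circ r$, and $i$ itself. There is a mild asymmetry to watch — $l$ lives in $sL$ but $h$ lives in $I\circ sL$ — which is why the reduction to the canonical $(sL,sR)$-factorization is the crucial first move; once we are there, $\theta$ satisfies $\theta\circ l = l$ with $l\in sL$ genuinely, and the rigidity of the strict factorization system does the rest. I would write it as: reduce to canonical factorization; observe $\theta$ is the unique comparison isomorphism between $(id\circ l)$ and its own image under precomposition, hence between a factorization and itself, hence $\theta=id$ by uniqueness of the comparison in the (already established) connectedness argument; conclude the automorphism group is trivial, so the groupoid of factorizations of $f$ is contractible, finishing the proof of Proposition~\ref{pr:ufs-are-ofs}.

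\begin{proof}
Let $f$ be a morphism of $C$. By the preceding lemma the groupoid $\mathcal{G}_f$ of factorizations of $f$ via $(I\circ sL, sR\circ I)$ is connected, and it contains as an object the unique $(sL,sR)$-factorization $r\circ l$ of $f$. Since in a connected groupoid all automorphism groups are isomorphic, it suffices to show that the object $r\circ l$ has trivial automorphism group in $\mathcal{G}_f$.

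So let $\theta$ be an isomorphism of the intermediate object such that $\theta\circ l = l$ and $r\circ\theta = r$. Consider the $(sL,sR)$-factorization of the composite $\theta\circ l$. Since $\theta\in I$, by the first lemma of this sequence (applied to $i=\theta$ and the morphism $l\in sL$) the $sR$-part of this factorization is an isomorphism; but on the other hand $\theta\circ l = l$, and the $(sL,sR)$-factorization of $l$ is $id\circ l$. By uniqueness of $(sL,sR)$-factorizations, the factorization of $\theta\circ l$ is $id\circ l$ as well. Thus $\theta$, viewed as the comparison between the factorization $id\circ l$ of $l$ and the factorization obtained from $\theta\circ l = l$, is the unique isomorphism of factorizations from $id\circ l$ to itself; equivalently, the equality $\theta\circ l = l = id\circ l$ exhibits $\theta$ as an automorphism of the $(sL,sR)$-factorization $id\circ l$ of $l$. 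As $(sL,sR)$ is a strict factorization system, the category of $(sL,sR)$-factorizations of $l$ is a point, so this automorphism group is trivial and $\theta = id$. Hence $\mathcal{G}_f$ is a contractible groupoid, which is exactly the condition that $((I\cap sR)\circ sL, sR\circ(I\cap sL))$ is an orthogonal factorization system. This completes the proof of Proposition~\ref{pr:ufs-are-ofs}.
\end{proof}
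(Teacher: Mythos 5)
Your reduction to the canonical $(sL,sR)$-factorization via connectedness is exactly the paper's first move, and your analysis of $\theta\circ l$ via the unique factorization of $l$ is sound as far as it goes. But the final step is circular, and the proof has a genuine gap: you never use the second defining condition $r\circ\theta=r$ of an automorphism of the factorization, and you cannot avoid it. The claim that ``$\theta\circ l = l = id\circ l$ exhibits $\theta$ as an automorphism of the $(sL,sR)$-factorization $id\circ l$ of $l$'' is false as stated: to be a morphism of that factorization of $l$, $\theta$ must make \emph{both} triangles commute, i.e.\ satisfy $\theta\circ l=l$ \emph{and} $id\circ\theta=id$ --- and the latter is precisely the conclusion $\theta=id$ you are trying to reach. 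The hypothesis $\theta\circ l=l$ alone does not force $\theta=id$: take $C$ with objects $A,B$, a morphism $l:A\to B$, a nontrivial involution $\theta$ of $B$ with $\theta\circ l=l$, and the strict factorization system $sL=C$, $sR=\{\text{identities}\}$; here $\theta\circ l=l$ with $l\in sL$ and $\theta\neq id$. So any correct argument must invoke $r\circ\theta=r$.

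The paper's proof does exactly this, and the repair is short: write $\theta=\theta_r\circ\theta_l$ for the $(sL,sR)$-factorization of $\theta$ (both parts are isomorphisms by the earlier lemma). From $l=\theta\circ l=\theta_r\circ(\theta_l\circ l)$ and uniqueness of the factorization $id\circ l$ of $l$ you get $\theta_r=id$ --- this is the content your argument correctly extracts. Then from $r=r\circ\theta=r\circ\theta_l$ and uniqueness of the factorization $r\circ id$ of $r$ you get $\theta_l=id$, hence $\theta=id$. Your write-up establishes only the first half.
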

\begin{proof}
The groupoid of factorizations of a morphism is connected, thus automorphism groups of factorizations of the same morphism are isomorphic. Let $i=i_r\circ i_l$ be an automorphism of the unique $(sL,sR)$-factorization $r\circ l$ of a morphism. Then $r = r\circ i_r \circ i_l$ and $l = i_r\circ i_l \circ l$, and $i_l = i_r = id$.
\end{proof}
We have proved the proposition. 

\paragraph{Ternary factorization systems.}
If $(L_1,R_1)$ and $(L_2,R_2)$ are orthogonal factorization systems on the same category, then $L_1\subseteq L_2$ implies $R_2\subseteq R_1$. The analogous statement for strict factorization systems is false.

\begin{example}
Let $C$ be a category with two objects $A$ and $B$ and with three non-trivial morphisms: automorphism $i$ of $A$, and two morphisms $f,g:A\to B$; and relations $i^2=id$, $f\circ i=g$, $g\circ i = f$. Let $L$, $R_1$ and $R_2$ be the wide subcategories of $C$ with exactly one non-trivial morphism, this morphism being $i$, $f$ and $g$ respectively. Then $(L,R_1)$ and $(L, R_2)$ are strict factorization systems. Notice that these systems generate the same orthogonal factorization system.
\end{example}

\begin{definition}
  A ternary strict factorization system for a category $C$ is a pair of strict factorization systems $(L_1,R_1)$ and $(L_2,R_2)$ for  $C$ such that $L_1\subseteq L_2$ and $R_2\subseteq R_1$.
\end{definition}

\begin{proposition}
Let a pair $(L_1,R_1)$ and $(L_2,R_2)$ of strict factorization systems be a ternary strict factorization system for a category $C$. Then any morphism of $C$ can be decomposed uniquely as $r\circ m \circ l$, with $l\in L_1$, $m\in L_2\cap R_1$, and $r\in R_2$.
\end{proposition}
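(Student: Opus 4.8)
The plan is to obtain the ternary factorization by composing the two given strict factorizations in the obvious order and then checking, using the containments $L_1 \subseteq L_2$ and $R_2 \subseteq R_1$, that all the pieces land in the prescribed classes, and finally that the decomposition is unique. First I would take an arbitrary morphism $f$ of $C$ and factor it with respect to $(L_1, R_1)$, writing $f = g \circ l$ with $l \in L_1$ and $g \in R_1$. Then I would factor $g$ with respect to $(L_2, R_2)$, writing $g = r \circ m$ with $m \in L_2$ and $r \in R_2$. This gives $f = r \circ m \circ l$ with $l \in L_1$, $r \in R_2$, and $m \in L_2$; it remains to see that $m \in R_1$. For this, note that $m \circ l = g \in R_1$, so $m \circ l$ has $R_1$-part equal to $g$ and $L_1$-part trivial; on the other hand $m = m \circ \mathrm{id}$ exhibits $m$ with its own $(L_1,R_1)$-factorization $m = r_1 \circ l_1$, whence $m \circ l = r_1 \circ (l_1 \circ l)$, and since $l_1 \circ l \in L_1$ and $r_1 \in R_1$, uniqueness of the $(L_1,R_1)$-factorization of $g$ forces $r_1 = g$ and $l_1 \circ l = \mathrm{id}$ — but more directly, the $(L_1,R_1)$-factorization of the morphism $m$ must be $m = \mathrm{id}_{R_1} \circ m$ if $m \in L_1$, which is not yet known, so I will instead argue: $l_1 \circ l \in L_1$ and $r_1 \in R_1$, so $(r_1, l_1 \circ l)$ is an $(L_1,R_1)$-factorization of $g$; by uniqueness it coincides with $(g, l)$, so $r_1 = g$ and $l_1 \circ l = l$, hence $l_1 = \mathrm{id}$ (using that $l$ is, after restricting to the relevant objects, in a category where one can cancel — actually cancellation is not available in general, so instead observe $l_1 \circ l = l = \mathrm{id}\circ l$, and both $(g, l)$ and $(g, l_1\circ l)$ equal the same factorization tuple, giving $l_1 \circ l = l$; then since $m = r_1 \circ l_1 = g \circ l_1$ and also $g = m \circ l$, we get $m = m \circ l \circ l_1$; combined with the fact that the $(L_1,R_1)$-factorization tuple is literally unique, the cleanest route is the cancellation-free one below).

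A cleaner argument for $m \in R_1$: the two decompositions $g = m \circ l$ and $g = r_1 \circ (l_1 \circ l)$ (where $m = r_1 \circ l_1$ is the $(L_1,R_1)$-factorization of $m$) are both factorizations of $g$ into a morphism of $R_1$ followed by a morphism of $L_1$ — wait, the order is $R_1$ after $L_1$, so I must be careful about which side. Rewriting: $g = m \circ l$ with $l \in L_1$; and $g = m \circ l = (r_1 \circ l_1) \circ l = r_1 \circ (l_1 \circ l)$ with $r_1 \in R_1$ and $l_1 \circ l \in L_1$ (as $L_1$ is closed under composition). By uniqueness of the $(L_1, R_1)$-factorization of $g$ we conclude $r_1 = g$ and $l_1 \circ l = l$. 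Now $m = r_1 \circ l_1 = g \circ l_1$, and substituting $g = m \circ l$ gives $m = m \circ l \circ l_1 = m \circ (l \circ l_1)$. Also $l \circ l_1$: hmm, composability — $l_1 : \mathrm{src}(m) \to \cdot$ and $l : \cdot \to \mathrm{src}(m)$? This needs the objects to match up, and indeed from $g = m\circ l$ we have $\mathrm{tgt}(l) = \mathrm{src}(m)$, and $m = r_1 \circ l_1$ with $l_1 : \mathrm{src}(m) \to Z$, $r_1 : Z \to \mathrm{tgt}(m)$. Then $l_1 \circ l : \mathrm{src}(l) \to Z$ lies in $L_1$. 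The equation $l_1 \circ l = l$ forces $Z = \mathrm{src}(m)$ and $l_1 = \mathrm{id}_{\mathrm{src}(m)}$ provided left factors determine their codomain, which they do since $l_1 \circ l$ and $l$ are the same morphism hence have the same codomain $\mathrm{src}(m)$; and then $l_1 \circ l = l$ with $l_1$ an endomorphism in $L_1$ of $\mathrm{src}(m)$ — to kill $l_1$ I use that $(g, l)$ is the unique $(L_1,R_1)$-factorization and $l_1 \circ l = l$ means replacing $l$ by itself, which is consistent but does not yet give $l_1 = \mathrm{id}$. The fully clean fix: apply uniqueness to the morphism $m$ itself. Since $m \in L_2 \cap R_1$? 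No — we want to prove $m \in R_1$. So: is there an $(L_1,R_1)$-factorization of $m$ with trivial $L_1$-part, namely $m = m \circ \mathrm{id}$? That requires $m \in R_1$, which is the claim. Instead, from $g = m \circ l \in R_1$ and $R_1$ being closed under... no, $R_1$ is a subcategory so closed under composition but not under "left division".

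So the genuine content is exactly: if $x \circ y \in R_1$ and $y \in L_1$ then $x \in R_1$. This is the standard lemma that, for a strict factorization system, $R$ is closed under left division by $L$ — equivalently, it follows from uniqueness: factor $x = r' \circ l'$; then $x \circ y = r' \circ (l' \circ y)$ with $l' \circ y \in L_1$, $r' \in R_1$; but also $x \circ y = (x \circ y) \circ \mathrm{id}$ is an $(R_1,L_1)$... again the order issue. Let me fix the convention once: a strict factorization system $(L,R)$ means every $f$ factors uniquely as $f = r \circ l$, $l \in L$, $r \in R$. Then for $x \circ y \in R$ with $y \in L$: write $x = r' \circ l'$ (the $(L,R)$-factorization of $x$); then $x \circ y = r' \circ (l' \circ y)$ is an $(L,R)$-factorization of $x\circ y$; but $x \circ y = (x\circ y) \circ \mathrm{id}$ is also one, with $L$-part $\mathrm{id}$ and $R$-part $x \circ y$. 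Uniqueness gives $l' \circ y = \mathrm{id}$ and $r' = x\circ y$. Hmm, $l' \circ y = \mathrm{id}$ does not immediately give $l' = \mathrm{id}$. But here $y \in L$, $l' \in L$, $l' \circ y = \mathrm{id} \in L$; and then $x = r' \circ l' = (x \circ y) \circ l'$, and $y = y \circ \mathrm{id} = y \circ l' \circ y$, so $(y \circ l')$ is a... this is circular. The resolution I would actually invoke is Proposition~\ref{pr:ufs-are-ofs} and Lemma~\ref{lem:fact-cat-inherit} from the appendix, or simply cite that $R_1$ consists precisely of the morphisms whose $L_1$-part is an identity, together with the already-noted fact that the $L_1$-part of $g = m \circ l$ is $l$ and the $L_1$-part of $m$ composed on the right with $l$ gives (by the "middle-permits-cancellation" structure of strict factorization systems, Lemma~\ref{lem:fact-cat-inherit}) that the $L_1$-part of $m$ must be an identity. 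I would therefore structure the $m \in R_1$ step as an explicit appeal to the appendix lemmas on categories of factorizations, where this is precisely the kind of statement proved.

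Granting $m \in L_2 \cap R_1$, the decomposition $f = r \circ m \circ l$ with $l \in L_1$, $m \in L_2 \cap R_1$, $r \in R_2$ exists. For uniqueness, suppose $f = r \circ m \circ l = r' \circ m' \circ l'$ with the same membership constraints. Since $m \circ l \in R_1 \circ L_1$ — indeed $m \circ l$ has $m \in R_1$ on the left and $l \in L_1$ on the right, so $(m, l)$ is *the* $(L_1,R_1)$-factorization of $m \circ l$ — and likewise $(m', l')$ is the $(L_1,R_1)$-factorization of $m' \circ l'$; moreover $(m \circ l) = (m' \circ l')$ once we know $r = r'$. To get $r = r'$: the morphism $f$ has a unique $(L_2, R_2)$-factorization; its $R_2$-part is $r$ in the first decomposition because $m \circ l \in L_2$ (as $m \in L_2$ and $l \in L_1 \subseteq L_2$, and $L_2$ is closed under composition) and $r \in R_2$; similarly the $R_2$-part is $r'$ and the $L_2$-part is $m' \circ l'$ in the second. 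Uniqueness of the $(L_2,R_2)$-factorization gives $r = r'$ and $m \circ l = m' \circ l'$. Then uniqueness of the $(L_1, R_1)$-factorization of the common morphism $m \circ l = m' \circ l'$ gives $m = m'$ and $l = l'$. This completes the uniqueness, and hence the proof.

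\medskip

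\noindent\textbf{Remark on the main obstacle.} The one genuinely non-formal point is showing the middle term $m$ lands in $R_1$ (equivalently, that $R_1$ is closed under left division by morphisms of $L_1$). Everything else is bookkeeping with the two uniqueness statements and the closure of $L_2$ under composition. I expect to discharge the $m \in R_1$ step by a direct appeal to the appendix's development of categories of factorizations (e.g.\ Lemma~\ref{lem:fact-cat-inherit} and Proposition~\ref{pr:ufs-are-ofs}), rather than re-deriving it inline.
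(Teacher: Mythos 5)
Your overall skeleton is the same as the paper's (factor $f$ via $(L_1,R_1)$ as $f=g\circ l$, then refactor $g$ via $(L_2,R_2)$ as $g=r\circ m$), and your uniqueness argument is correct and matches the paper's: $(m\circ l, r)$ is the $(L_2,R_2)$-factorization of $f$ because $L_1\subseteq L_2$, and $(l,m)$ is the $(L_1,R_1)$-factorization of $m\circ l$ because $m\in R_1$. But the existence half has a genuine gap, and it starts with a concrete error: you assert $m\circ l = g$, whereas in your own setup $g = r\circ m$ and $f = r\circ m\circ l$, so $m\circ l$ is not $g$ at all. Everything you then try to do with the relation ``$g=m\circ l\in R_1$, $l\in L_1$'' is aimed at the wrong identity, and the lemma you eventually reduce to --- if $x\circ y\in R_1$ and $y\in L_1$ then $x\in R_1$ --- is neither what the situation provides nor something the appendix results you cite (Proposition~\ref{pr:ufs-are-ofs}, Lemma~\ref{lem:fact-cat-inherit}) establish; as you yourself notice, uniqueness only yields $l_3\circ y=\mathrm{id}$ there, which does not kill $l_3$.

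The correct relation to exploit is the one you have but never use: $g = r\circ m$ with $g\in R_1$ and $r\in R_2\subseteq R_1$. Factor $m = r_3\circ l_3$ via $(L_1,R_1)$. Then $g = (r\circ r_3)\circ l_3$ with $r\circ r_3\in R_1$ (this is exactly where the ternary hypothesis $R_2\subseteq R_1$ enters) and $l_3\in L_1$; since $g\in R_1$, the morphism $g=g\circ\mathrm{id}$ is also an $(L_1,R_1)$-factorization, and strict uniqueness of such factorizations forces $l_3=\mathrm{id}$, hence $m=r_3\in R_1$. This is the paper's argument, it is three lines, and it needs no division lemma. Note that here uniqueness of the factorization \emph{as a pair} immediately gives $l_3=\mathrm{id}$ --- no cancellation is required --- which is precisely the step that kept escaping you because you were comparing against the wrong factorization of the wrong morphism.
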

\begin{proof}
Let $f$ be a morphism in $C$, and $r_1\circ l_1$ be the decomposition of $f$ via $(L_1,R_1)$. Let $r_2\circ m$ be the decomposition of $r_1$ via $(L_2, R_2)$. Let $r_3\circ l_3$ be the decomposition of $m$ via $(L_1, R_1)$. Since $r_1=r_2\circ r_3\circ l_3$, $r_1\in R_1$, $r_2\circ r_3\in R_1$, $l_3\in L_1$, we have $l_3=id$, and $m=r_3\circ l_3=r_3\in R_1$. Thus $r_2\circ m\circ l_1$ is a decomposition of $f$ of the desired form.

The morphism $m\circ l_1$ is in $L_2$ and $r_2$ is in $R_2$, thus $m\circ l_1$ and $r_2$ are determined uniquely by $f$. Similarly, $r_2\circ m$ and $l_1$ are determined uniquely by $f$. The morphism $m$ is in $R_1$, and is determined uniquely by $m\circ l_1$, and thus $m$ is determined uniquely by $f$.
\end{proof}

Uniqueness of ternary decomposition in a category does not imply that the corresponding structure is a ternary factorization system, as the following example shows.

\begin{example}
Let $C$ be a category with five objects $A$, $B$, $C$, $X$ and $Y$, generated by morphisms $r:A\to B$, $l:B\to C$, $l':A\to X$, $m':X\to Y$ and $r':Y\to C$, with relation $l\circ r=r'\circ m'\circ l'$. The classes of morphisms $L$, $M$, $R$ containing all identity morphisms together with $l$ and $l'$; $m'$; $r$ and $r'$ respectively are wide subcategories of $C$. Any morphism decomposes uniquely via $(L,M,R)$. These do not correspond to a ternary factorization system: the class $M\circ L$, which should be a category $L_2$, is not a category.
\end{example}

\paragraph{$(sR,sL)$-factorizations.} We will use the following lemma on $(sR, sL)$-factorizations. 
\begin{lemma}
\label{lem:fact-cat-inherit}
Let $C$ be a category endowed with a strict factorization system $(sL, sR)$. For any morphism $f$ in $C$ the category of $(sR,sL)$-factorizations of $f$ has a strict factorization system induced by $(sL,sR)$.
\end{lemma}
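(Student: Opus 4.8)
The plan is to work with the following explicit description. For $f\colon A\to B$, an object of the category $\mathrm{Fact}(f)$ of $(sR,sL)$-factorizations of $f$ is a triple $(X,r,l)$ with $r\colon A\to X$ in $sR$, $l\colon X\to B$ in $sL$ and $l\circ r=f$, and a morphism $(X,r,l)\to(X',r',l')$ is a morphism $h\colon X\to X'$ of $C$ with $h\circ r=r'$ and $l'\circ h=l$. Let $\mathbf L$ (resp.\ $\mathbf R$) be the class of morphisms of $\mathrm{Fact}(f)$ whose underlying morphism of $C$ lies in $sL$ (resp.\ $sR$). Since $sL$ and $sR$ contain all identities and are closed under composition, so are $\mathbf L$ and $\mathbf R$; it remains to show that every morphism of $\mathrm{Fact}(f)$ has a unique factorization as a morphism in $\mathbf R$ after a morphism in $\mathbf L$.

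The computational core is a cancellation observation: \emph{if $r,r'\in sR$ and $h$ is a morphism of $C$ with $h\circ r=r'$, and $h=h_R\circ h_L$ is the $(sL,sR)$-factorization of $h$ in $C$, then $h_L\circ r\in sR$.} To see this, factor $h_L\circ r=p_R\circ p_L$ with $p_L\in sL$ and $p_R\in sR$; then $r'=h_R\circ p_R\circ p_L$ is an $(sL,sR)$-factorization of $r'$, and since $r'\in sR$ its unique factorization is $r'\circ\mathrm{id}$, so $p_L=\mathrm{id}$ and hence $h_L\circ r=p_R\in sR$. The dual statement (reverse all arrows) reads: if $l,l'\in sL$ and $l'\circ h=l$, and $h=h_R\circ h_L$, then $l'\circ h_R\in sL$.

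Given a morphism $h\colon(X,r,l)\to(X',r',l')$ of $\mathrm{Fact}(f)$, I would take the $(sL,sR)$-factorization $h=h_R\circ h_L$ of $h$ in $C$ through an object $Y$, and set $r_Y:=h_L\circ r$ and $l_Y:=l'\circ h_R$. The two observations give $r_Y\in sR$ and $l_Y\in sL$, and $l_Y\circ r_Y=l'\circ h_R\circ h_L\circ r=l'\circ h\circ r=l'\circ r'=f$, so $(Y,r_Y,l_Y)\in\mathrm{Fact}(f)$. Moreover $h_L$ is a morphism $(X,r,l)\to(Y,r_Y,l_Y)$ in $\mathbf L$, since $h_L\circ r=r_Y$ by definition and $l_Y\circ h_L=l'\circ h_R\circ h_L=l'\circ h=l$; and $h_R$ is a morphism $(Y,r_Y,l_Y)\to(X',r',l')$ in $\mathbf R$, since $h_R\circ r_Y=h_R\circ h_L\circ r=h\circ r=r'$ and $l'\circ h_R=l_Y$ by definition. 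Thus $h=h_R\circ h_L$ is an $(\mathbf L,\mathbf R)$-factorization.

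For uniqueness, suppose $h=g_R\circ g_L$ is another such factorization through $(Z,r_Z,l_Z)$ with $g_L\in\mathbf L$ and $g_R\in\mathbf R$. Then on underlying morphisms $h=g_R\circ g_L$ is an $(sL,sR)$-factorization of $h$ in $C$, so by uniqueness in $C$ we get $Z=Y$ and $g_L=h_L$, $g_R=h_R$; and then $r_Z=g_L\circ r=h_L\circ r=r_Y$ and $l_Z=l'\circ g_R=l'\circ h_R=l_Y$, so the two factorizations coincide in $\mathrm{Fact}(f)$. I expect no real obstacle here: the only step that genuinely uses the hypothesis that $(sL,sR)$ is a strict factorization system is the cancellation observation of the second paragraph, where uniqueness of factorizations in $C$ is invoked; the rest is a direct verification.
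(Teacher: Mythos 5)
Your proposal is correct and follows essentially the same route as the paper's proof: factor the underlying morphism of a morphism of factorizations via the $(sL,sR)$ strict factorization system of $C$, and use uniqueness of strict factorizations to show that the resulting composites $h_L\circ r$ and $l'\circ h_R$ land in $sR$ and $sL$ respectively, so the middle object is again an $(sR,sL)$-factorization of $f$. The paper states the cancellation step without proof, whereas you spell it out; otherwise the arguments coincide.
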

\begin{proof}
Let $l'\circ r'$ and $l''\circ r''$ be two $(sR, sL)$-factorizations of $f$, and $r\circ l$ be a morphism from the first to the second factorization.
The morphism $r''=r\circ l\circ r'$ is in $sR$, thus $l\circ r'$ is in $sR$. The morphism $l'=l''\circ r\circ l$ is in $sL$, thus $l''\circ r$ is in $sL$. The morphisms $l$ and $r$ are morphisms of $(sR,sL)$-factorizations.
\end{proof}

\subsection{Pushouts of special form}

\begin{lemma}
\label{lem:pushouts-of-categories}
Let $\mathcal{C}$ be a subcategory of a small category $\mathcal{B}$, and $A\to B$ and $A\to C$ be morphisms in $\mathcal{B}$. The functors $\mathcal{C}/A\to\mathcal{C}/B$ and $\mathcal{C}/A\to\mathcal{C}/C$ have a pushout $\mathcal{D}$. The square
\begin{center}
\begin{tikzcd}
\mathcal{C}/A \arrow[r] \arrow[d] & \mathcal{C}/B \arrow[d]\\
\mathcal{C}/C \arrow[r] & \mathcal{D}
\end{tikzcd}
\end{center}
is a pushout if and only if it is a pushout on the sets of objects and the two functors to $\mathcal{D}$ are discrete fibrations.
\end{lemma}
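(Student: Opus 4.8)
The plan is to reduce everything to presheaves on $\mathcal{C}$ and the equivalence between presheaves and discrete fibrations. For an object $X$ of $\mathcal{B}$ let $h_X$ denote the presheaf $c\mapsto\operatorname{Hom}_{\mathcal{B}}(c,X)$ on $\mathcal{C}$; its category of elements $\int h_X$ is exactly the comma category $\mathcal{C}/X$, and a morphism $X\to Y$ in $\mathcal{B}$ induces a morphism of presheaves $h_X\to h_Y$ whose category of elements is the functor $\mathcal{C}/X\to\mathcal{C}/Y$ appearing in the statement. I would first record three standard facts: the category-of-elements functor $\int\colon\operatorname{Psh}(\mathcal{C})\to\mathbf{Cat}$ preserves colimits (it is cocontinuous, being a left adjoint to $\mathcal{E}\mapsto(c\mapsto\operatorname{Fun}(\mathcal{C}/c,\mathcal{E}))$); $\int$ of any morphism of presheaves is a discrete fibration (immediate from naturality: over $\mathcal{C}$ the underlying morphism of a lift is forced and the element is then determined); and $\operatorname{Ob}\colon\mathbf{Cat}\to\mathbf{Set}$ preserves colimits.

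I would then argue as follows. Set $h_D:=h_B\sqcup_{h_A}h_C$, the pushout in $\operatorname{Psh}(\mathcal{C})$, computed pointwise. By cocontinuity of $\int$, the category $\mathcal{D}_0:=\int h_D$ with the evident functors from $\mathcal{C}/B=\int h_B$ and $\mathcal{C}/C=\int h_C$ is a pushout in $\mathbf{Cat}$; this proves the existence clause. Moreover $\operatorname{Ob}(\mathcal{D}_0)$ is the pushout of the object sets, and the two structure functors $\mathcal{C}/B\to\mathcal{D}_0$, $\mathcal{C}/C\to\mathcal{D}_0$ are discrete fibrations. Hence the genuine pushout has both listed properties; since any other pushout is uniquely isomorphic to $\mathcal{D}_0$ compatibly with the functors from $\mathcal{C}/B$ and $\mathcal{C}/C$, and discrete fibrations and pushouts-on-objects are stable under composition with isomorphisms, the ``only if'' direction follows. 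For the ``if'' direction, let $\mathcal{D}'$ with functors $F_B,F_C$ be a commuting square (agreeing on $\mathcal{C}/A$) that is a pushout on objects and has $F_B,F_C$ discrete fibrations, and let $\Phi\colon\mathcal{D}_0\to\mathcal{D}'$ be the comparison functor. On objects both categories are the pushout of the object sets and $\Phi$ realizes the canonical bijection, so $\Phi$ is bijective on objects. For full faithfulness I would use that every object of $\mathcal{D}_0$ has a representative in $\mathcal{C}/B$ or in $\mathcal{C}/C$ (again because the square is a pushout on objects) together with the discrete fibration property of $\mathcal{C}/B\to\mathcal{D}_0$, $\mathcal{C}/C\to\mathcal{D}_0$, $F_B$ and $F_C$: to lift a morphism $\mu'\colon\Phi d_1\to\Phi d_2$ of $\mathcal{D}'$, choose a representative of $d_2$ in, say, $\mathcal{C}/B$, lift $\mu'$ uniquely along the discrete fibration $F_B$ and push the result down along $\mathcal{C}/B\to\mathcal{D}_0$; and if $\mu_1,\mu_2\colon d_1\to d_2$ in $\mathcal{D}_0$ satisfy $\Phi\mu_1=\Phi\mu_2$, lift them uniquely along the discrete fibration $\mathcal{C}/B\to\mathcal{D}_0$ to $\nu_1,\nu_2$ with the same target representative, note $F_B\nu_i=\Phi\mu_i$ agree, so uniqueness of lifts along $F_B$ forces $\nu_1=\nu_2$ and hence $\mu_1=\mu_2$; the case of a $\mathcal{C}/C$-representative is symmetric. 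Thus $\Phi$ is an isomorphism and $\mathcal{D}'$ is a pushout.

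I expect the main obstacle to be the bookkeeping in the ``if'' direction — keeping track of which functors are known to be discrete fibrations and coordinating the several ``unique lift'' invocations (and remembering that morphisms of $\mathcal{C}/B$ are just morphisms of $\mathcal{C}$, so lifting is unambiguous) — rather than anything conceptually deep; the identification of the pushout with $\int h_D$ carries all the real content. A self-contained variant, avoiding the appeal to cocontinuity of $\int$, is to define $\mathcal{D}_0:=\int h_D$ by hand and verify its universal property directly: every morphism of $\mathcal{D}_0$ is the image of a morphism of $\mathcal{C}/B$ or of $\mathcal{C}/C$ (so a functor out of $\mathcal{D}_0$ is determined on morphisms), and existence of the induced functor reduces to a routine check of well-definedness and functoriality using that the given functors out of $\mathcal{C}/B$ and $\mathcal{C}/C$ agree on $\mathcal{C}/A$. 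I would use whichever of the two presentations is shorter in view of the machinery already introduced in the paper.
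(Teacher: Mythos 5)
Your proof is correct, but it is organized differently from the paper's. The paper builds the pushout $\mathcal{D}$ by hand (object set equal to the pushout of the object sets, and $\mathrm{Hom}_{\mathcal{D}}(-,[s\to t])$ declared to be the object set of $\mathcal{C}/s$) and then proves the ``if'' direction by verifying the universal property directly: a compatible pair $F_B,F_C$ determines $F_D$ on a morphism $g$ by choosing a lift of $g$ to $\mathcal{C}/B$ or $\mathcal{C}/C$, with well-definedness coming from the observation that any two lifts are connected by a zig-zag through $\mathcal{C}/A$. You instead identify $\mathcal{C}/X$ with the category of elements $\int h_X$ and invoke cocontinuity of $\int\colon\mathrm{Psh}(\mathcal{C})\to\mathbf{Cat}$ to produce the pushout as $\int(h_B\sqcup_{h_A}h_C)$, and you replace the direct universal-property check by a comparison-isomorphism argument (bijective on objects by the object-level pushout hypothesis, full and faithful by playing the lifting properties of the two pairs of discrete fibrations against each other). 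Both routes rest on the same structural fact, namely that all four categories are discrete fibrations over $\mathcal{C}$ and the functors between them are maps of such, so the pushout is computed fibrewise; but yours outsources existence to a standard and genuinely nontrivial fact about $\int$ (colimits in $\mathbf{Cat}$ are not computed levelwise in general), whereas the paper's argument is self-contained, and your ``self-contained variant'' at the end is essentially the paper's proof. The only point to polish is cosmetic: the right adjoint to $\int$ is $\mathcal{E}\mapsto\bigl(c\mapsto\mathrm{Ob}\,\mathrm{Fun}(\mathcal{C}/c,\mathcal{E})\bigr)$, the \emph{set} of functors.
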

\begin{proof}
The pushout $\mathcal{D}$ of the pair of functors $\mathcal{C}/A\to\mathcal{C}/B$ and $\mathcal{C}/A\to\mathcal{C}/C$ is constructed as follows. The set of objects of $\mathcal{D}$ is the pushout of the sets of objects of the corresponding categories. Notice that if morphisms $s\to t$ and $s'\to t'$ in $\mathcal{B}$ represent the same object in $\mathcal{D}$, with $t$ and $t'$ being in $\{B, C\}$, then $s=s'$. The set $Hom_{\mathcal{D}}(-,[s\to t])$ is defined to be the set of objects of $\mathcal{C}/s$. The set $Hom_{\mathcal{D}}(-,[s\to t])$ is isomorphic to the set $Hom_{\mathcal{C}/t}(-,s\to t)$, which allows to define sources and composition of morphisms in $\mathcal{D}$, and these are well-defined. In all categories in the square the set of morphisms with a target $s\to t$ or $[s\to t]$ is the set $\mathcal{C}/s$, and the four functors of the square are discrete fibrations.

In the opposite direction, assume that all the four functors are discrete fibrations, and that the square is a pushout on the sets of objects. Let $\mathcal{E}$ be a category and $F_B:\mathcal{C}/B\to\mathcal{E}$ and $F_C:\mathcal{C}/C\to\mathcal{E}$ be a pair of compatible functors. There is unique functor $F_D:\mathcal{D}\to\mathcal{E}$ compatible with $F_B$ and $F_C$. On objects of $\mathcal{D}$ the functor $F_D$ is defined uniquely by the pushout condition. Let $g$ be a morphism in $\mathcal{D}$. Since the set of objects of $\mathcal{D}$ is the pushout of sets of objects of $\mathcal{C}/B$ and $\mathcal{C}/C$, $g$ has a lift $g_B$ to $\mathcal{C}/B$ or a lift $g_C$ to $\mathcal{C}/C$. The morphism $F_D(g)$ then has to be equal to $F_B(g_B)$ or to $F_C(g_C)$. If $g_B$ or $g_C$ has a lift $g_A$ to $\mathcal{C}/A$, then $F_B(g_B)$ or $F_C(g_C)$ is equal to $F_A(g_A)$. Again by pushout and discrete opfibration properties, any two lifts of $g$ are connected by zig-zag via functors $\mathcal{C}/A\to\mathcal{C}/B$ and $\mathcal{C}/A\to\mathcal{C}/C$. This implies that $F_D(g)$ is well-defined, and that the square is a pushout.
\end{proof}

\begin{lemma}
\label{lem:pushout-part2}
Continuing Lemma~\ref{lem:pushouts-of-categories}, with $\mathcal{D}$ as the pushout, let $Y$ be a presheaf on $\mathcal{C}$. Define presheaves $Y'$ on the four categories in the pushout square by $Y'(s\to t)=Y(s)$, with obvious maps. The corresponding square of limits of the diagrams $Y'$ is a pullback square.
\begin{center}
\begin{tikzcd}
L_D\arrow[r] \arrow[d] & L_B \arrow[d] \\
L_C\arrow[r] & L_A
\end{tikzcd}
\end{center}
\end{lemma}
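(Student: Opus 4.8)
The statement to prove is Lemma~\ref{lem:pushout-part2}: given the pushout square of comma categories from Lemma~\ref{lem:pushouts-of-categories} and a presheaf $Y$ on $\mathcal{C}$, the induced square of limits
\[
\begin{tikzcd}
L_D\arrow[r] \arrow[d] & L_B \arrow[d] \\
L_C\arrow[r] & L_A
\end{tikzcd}
\]
is a pullback. The plan is to unwind the limits explicitly using the description of $\mathcal{D}$ obtained in the proof of Lemma~\ref{lem:pushouts-of-categories}, and then exhibit the pullback property by hand on elements, which is all that is needed since we are working in $Sets$.

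First I would compute each of the four limits concretely. An element of $L_A=\lim_{(s\to A)\in\mathcal{C}/A} Y'(s\to A)=\lim Y(s)$ is a compatible family $(\alpha_{s\to A})$, one element $\alpha_{s\to A}\in Y(s)$ for each object $s\to A$ of $\mathcal{C}/A$, compatible along all morphisms of $\mathcal{C}/A$; similarly for $L_B$, $L_C$, $L_D$. The key point, already recorded in the proof of Lemma~\ref{lem:pushouts-of-categories}, is that the set of objects of $\mathcal{D}$ is the pushout of the object sets of $\mathcal{C}/B$ and $\mathcal{C}/C$ over those of $\mathcal{C}/A$, and that for an object $[s\to t]$ of $\mathcal{D}$ the slice $\mathcal{D}/[s\to t]$ is (isomorphic to) $\mathcal{C}/s$ — the same slice that computes the corresponding component in $L_B$ or $L_C$. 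Hence a compatible family indexing $L_D$ is determined by its values on objects coming from $\mathcal{C}/B$ together with its values on objects coming from $\mathcal{C}/C$, and on the overlap — objects coming from $\mathcal{C}/A$ — these two must agree. That is precisely the statement that $L_D$ is the set of pairs $(\ell_B,\ell_C)\in L_B\times L_C$ mapping to the same element of $L_A$.

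Concretely the steps are: (1) identify $\mathrm{Ob}(\mathcal{D})$ with the pushout $\mathrm{Ob}(\mathcal{C}/B)\sqcup_{\mathrm{Ob}(\mathcal{C}/A)}\mathrm{Ob}(\mathcal{C}/C)$ and recall that the functors in the square are discrete fibrations, so restriction of any compatible family along them is again a compatible family and the maps $L_D\to L_B$, $L_D\to L_C$, $L_B\to L_A$, $L_C\to L_A$ are the evident restrictions; (2) given $\ell_B\in L_B$ and $\ell_C\in L_C$ with equal image in $L_A$, define a family $\ell_D$ on $\mathrm{Ob}(\mathcal{D})$ by using $\ell_B$ on objects lifting to $\mathcal{C}/B$ and $\ell_C$ on objects lifting to $\mathcal{C}/C$; well-definedness on the overlap is exactly the hypothesis $\ell_B\mapsto\ell_A\mapsfrom\ell_C$, using that the overlap objects are those lifting to $\mathcal{C}/A$ and that any object lifting to both lifts to $\mathcal{C}/A$ (this is the remark in the proof of Lemma~\ref{lem:pushouts-of-categories} that $s=s'$ forces the sources to coincide); (3) check that $\ell_D$ is compatible along morphisms of $\mathcal{D}$: any morphism of $\mathcal{D}$ lifts to $\mathcal{C}/B$ or to $\mathcal{C}/C$ by the discrete fibration / pushout-on-objects property, and along such a lift compatibility is inherited from $\ell_B$ or $\ell_C$; the zig-zag argument from Lemma~\ref{lem:pushouts-of-categories} shows there is no ambiguity; (4) conclude that $(\ell_B,\ell_C)\mapsto\ell_D$ is inverse to $(r_B,r_C)\colon L_D\to L_B\times_{L_A} L_C$, giving the pullback.

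The main obstacle is the bookkeeping in step (3): one must verify that the family $\ell_D$ is genuinely a cone over the whole diagram $Y'$ on $\mathcal{D}$, not merely over the parts visible from $\mathcal{C}/B$ and $\mathcal{C}/C$ separately. This is handled exactly as in the pushout verification inside the proof of Lemma~\ref{lem:pushouts-of-categories} — every morphism of $\mathcal{D}$ has a lift, any two lifts are joined by a zig-zag of lifts through $\mathcal{C}/A$, and compatibility of $\ell_B$ with $\ell_A$ and of $\ell_C$ with $\ell_A$ propagates along the zig-zag — so no new idea is required beyond transporting that argument from the construction of $\mathcal{D}$ to the presheaf-of-limits level. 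Everything else is a routine unwinding of the definition of a limit in $Sets$ together with the fact, used repeatedly, that the comma-slice $\mathcal{D}/[s\to t]$ coincides with $\mathcal{C}/s$.
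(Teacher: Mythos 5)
Your proposal is correct and is essentially the paper's own argument: the paper also builds the induced cone on $L_D$ objectwise using that $\mathrm{Ob}(\mathcal{D})$ is the pushout of object sets (so components over $[s\to t]$ are determined by the $L_B$- and $L_C$-components, agreeing on the overlap via $L_A$), and verifies compatibility with morphisms of $\mathcal{D}$ by the same lift/zig-zag observation. The only cosmetic difference is that the paper phrases this with a test object $E$ and maps $E\to L_B$, $E\to L_C$ rather than with elements $\ell_B$, $\ell_C$, which in $Sets$ amounts to the same thing.
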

\begin{proof}
In general for any diagram $F:\mathcal{I}\to\mathcal{E}$ in some category $\mathcal{E}$ with appropriate limits and any functor $\mathcal{J}\to\mathcal{I}$ there is a canonical map from the limit of $F$ to the limit of the restriction of $F$ to $\mathcal{J}$. The maps in the square are of this form. 

Let $E\to L_B$ and $E\to L_C$ be two compatible maps. Composition of these maps with the projection maps $L_B\to Y(s)$ and $L_C\to Y(s)$ gives well-defined maps $E\to Y(s)$ indexed by the objects $[s\to t]$ of $\mathcal{D}$. These maps are compatible with morphisms in $\mathcal{D}$: any two lifts of the same morphism $[s\to s'\to t]$ in $\mathcal{D}$ give the same map $Y(s')\to Y(s)$. We have constructed the map from $E$ to the diagram indexed by $\mathcal{D}$, which gives the unique map $E\to L_D$.
\end{proof}

\begin{lemma}
\label{lem:pushout-part3}
Continuing Lemma~\ref{lem:pushouts-of-categories}, with $\mathcal{D}$ as the pushout, let additionally $D'$ be an object of $\mathcal{B}$ with maps $B\to D'$ and $C\to D'$, and let $\mathcal{D}\to\mathcal{C}/D'$ be the map of the pushout. If $\mathcal{C}$ is a groupoid and for any presheaf $Y$ over $\mathcal{C}$ the canonical map from the limit $L_{D'}$ to the pullback and limit $L_D$ is an isomorphism, then the functor $\mathcal{D}\to\mathcal{C}/D'$ is an isomorphism.
\end{lemma}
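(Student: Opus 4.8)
The plan is to recognise both $\mathcal{D}$ and $\mathcal{C}/D'$ as categories of elements of presheaves on $\mathcal{C}$, to translate the hypothesis on limits into a statement about a single morphism of presheaves, and then to invoke the Yoneda lemma in the category $\widehat{\mathcal{C}}$ of presheaves on $\mathcal{C}$.

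First I would fix notation: for an object $X$ of $\mathcal{B}$ let $N_X\in\widehat{\mathcal{C}}$ be the restriction to $\mathcal{C}$ of the representable presheaf $\mathcal{B}(-,X)$, so that the comma category $\mathcal{C}/X$ is canonically the category of elements $\int_{\mathcal{C}}N_X$ and its projection to $\mathcal{C}$ is the associated discrete fibration. Let $M:=N_B\sqcup_{N_A}N_C$ be the pushout taken in $\widehat{\mathcal{C}}$, computed objectwise. Using the explicit description of the pushout $\mathcal{D}$ from Lemma~\ref{lem:pushouts-of-categories} — its object set is the pushout of the object sets, and $\mathrm{Hom}_{\mathcal{D}}(-,[s\to t])$ is the set of objects of $\mathcal{C}/s$ — I would check directly that $\mathcal{D}\cong\int_{\mathcal{C}}M$ over $\mathcal{C}$, compatibly with composition, and that under this identification the canonical functor $\mathcal{D}\to\mathcal{C}/D'$ is $\int_{\mathcal{C}}(\phi)$ for the evident morphism $\phi\colon M\to N_{D'}$ in $\widehat{\mathcal{C}}$ induced by $B\to D'$ and $C\to D'$.

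Second, I would compute the limits. For any presheaf $Y$ on $\mathcal{C}$ the diagram $Y'$ on $\mathcal{C}/D'=\int_{\mathcal{C}}N_{D'}$ is the pullback of $Y$ along the projection, and a compatible family over $\mathcal{C}/D'$ is exactly a natural transformation $N_{D'}\to Y$; hence $L_{D'}=\mathrm{Hom}_{\widehat{\mathcal{C}}}(N_{D'},Y)$. Likewise $L_D=\mathrm{Hom}_{\widehat{\mathcal{C}}}(M,Y)$ (equivalently: by Lemma~\ref{lem:pushout-part2} $L_D$ is the pullback $L_B\times_{L_A}L_C$, and the functor $\mathrm{Hom}_{\widehat{\mathcal{C}}}(-,Y)$ turns the pushout square defining $M$ into precisely this pullback), and the canonical comparison $L_{D'}\to L_D$ becomes precomposition $\phi^{*}\colon\mathrm{Hom}_{\widehat{\mathcal{C}}}(N_{D'},Y)\to\mathrm{Hom}_{\widehat{\mathcal{C}}}(M,Y)$.

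Then the conclusion is formal. The hypothesis says $\phi^{*}$ is a bijection for every $Y$. Taking $Y=M$ produces $\psi\colon N_{D'}\to M$ with $\psi\phi=\mathrm{id}_M$; taking $Y=N_{D'}$ and using injectivity of $\phi^{*}$ on endomorphisms of $N_{D'}$ forces $\phi\psi=\mathrm{id}_{N_{D'}}$, since $\phi\psi$ and $\mathrm{id}_{N_{D'}}$ are both sent by $\phi^{*}$ to $\phi$. So $\phi$ is an isomorphism in $\widehat{\mathcal{C}}$, and since the category-of-elements construction gives an equivalence between $\widehat{\mathcal{C}}$ and the category of discrete fibrations over $\mathcal{C}$, the functor $\mathcal{D}=\int_{\mathcal{C}}M\to\int_{\mathcal{C}}N_{D'}=\mathcal{C}/D'$ is an isomorphism of categories. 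The main obstacle is the bookkeeping of the first step — verifying that $\mathcal{D}\cong\int_{\mathcal{C}}M$ and that the induced comparison on limits is literally $\phi^{*}$; everything after that is pure Yoneda. The groupoid hypothesis on $\mathcal{C}$ is not consumed by this route; if one instead argued objectwise, testing $\phi_c\colon M(c)\to N_{D'}(c)$ against suitable representable $Y$, the groupoid structure is what would make those test presheaves tractable, so I would keep it available.
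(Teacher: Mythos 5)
Your argument is correct, and it takes a genuinely different route from the paper's. The identifications you need all check out against the explicit description of $\mathcal{D}$ in Lemma~\ref{lem:pushouts-of-categories}: the object set of $\mathcal{D}$ decomposes over objects of $\mathcal{C}$, so the pushout of object sets is $\bigsqcup_{s} M(s)$ with $M=N_B\sqcup_{N_A}N_C$; the prescription $\mathrm{Hom}_{\mathcal{D}}(-,[s\to t])\cong \mathrm{Ob}(\mathcal{C}/s)$ with sources computed through $\mathrm{Hom}_{\mathcal{C}/t}(-,s\to t)$ is exactly the hom-set description of $\int_{\mathcal{C}}M$; the limit of a diagram pulled back from $Y$ along a discrete fibration $\int_{\mathcal{C}}F\to\mathcal{C}$ is $\mathrm{Hom}_{\widehat{\mathcal{C}}}(F,Y)$; and the comparison map $L_{D'}\to L_D$ induced by restricting cones along $\mathcal{D}\to\mathcal{C}/D'$ is precomposition with $\phi$. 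After that the two Yoneda-style instantiations $Y=M$ and $Y=N_{D'}$ finish the proof. The paper argues differently: it uses the groupoid hypothesis to write every limit as a product $\prod_{[G]\in\pi_0}F(G)^{\mathrm{Aut}(G)}$ over connected components, then tests against just two presheaves --- the constant presheaf $\{0,1\}$, to show $\mathcal{D}\to\mathcal{C}/D'$ is bijective on connected components and hence surjective on objects, and $Y(s')=\mathrm{Hom}_{Set}(\mathrm{Hom}_{\mathcal{C}}(s,s'),\mathbb{Z})$, to show it is bijective on automorphism groups and hence injective on objects. Your route consumes the full quantifier ``for all $Y$'' in one stroke and, as you note, never uses that $\mathcal{C}$ is a groupoid, so it actually proves a stronger statement; what the paper's version buys is an explicit finite list of test presheaves, which is in the spirit of the converse direction it pairs with Lemma~\ref{lem:pushout-part2}. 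The only part of your write-up left as a plan is the bookkeeping that $\mathcal{D}\cong\int_{\mathcal{C}}M$ compatibly with composition, but this is immediate from the construction already recorded in Lemma~\ref{lem:pushouts-of-categories}, so I consider the proof complete.
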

\begin{proof}
In general, the limit of a diagram $F:\mathcal{G}\to Sets$ indexed by a groupoid $\mathcal{G}$ is the product $\prod_{[G]\in\pi_0(\mathcal{G})} F(G)^{Aut(G)}$ indexed by the set $\pi_0(\mathcal{G})$ of connected components of $\mathcal{G}$, where the set $F(G)^{Aut(G)}$ is the set of fixed points of the action of the automorphism group of a representative $G$ of a connected component $[G]$.

Since $\mathcal{C}$ is a groupoid, the categories $\mathcal{C}/A, \dots, \mathcal{C}/D', \mathcal{D}$ are groupoids. Let $Y$ be the presheaf on $\mathcal{C}$ with $Y(s)=\{0,1\}$ for all $s$ in $\mathcal{C}$, with maps being trivial (the corresponding limits $L_A,\dots, L_{D'}, L_D$ can be seen as the $0$-th cohomology with $\mathbb{Z}/2$ coefficients of the categories $\mathcal{C}/A,\dots,\mathcal{C}/D', \mathcal{D}$). Since $L_{D'}\to L_D$ is an isomorphism, the functor $\mathcal{D}\to\mathcal{C}/D'$ induces isomorphism on the sets of connected components, and thus, as a discrete fibration of groupoids, this functor is surjective on objects.

For any object $s$ in $\mathcal{C}$ let $Y$ be the presheaf on $\mathcal{C}$ with $Y(s')=Hom_{Set}(Hom_{\mathcal{C}}(s,s'),\mathbb{Z})$. For any object $s'$ in $\mathcal{C}$ that is not in the connected component $[s]$ the set $Y(s')$ is a singleton, thus the factors $Y'(s'\to t)^{Aut(s')}$ in the products that compute the limits $L_A,\dots,L_{D'},L_D$ are singletons. The remaining factors in these products are indexed by connected components that contain the objects of $\mathcal{C}/A,\dots,\mathcal{C}/D', \mathcal{D}$ of the form $s\to A, \dots, s\to D'$, or $[s\to t]$ respectively. A factor indexed by $[s\to t]$ is isomorphic to $Hom_{Set}(Aut_{\mathcal{C}}(s)/Aut(s\to t), \mathbb{Z})$. We have already proved that the indexing sets of $L_D$ and $L_{D'}$ are isomorphic. Since the map $L_{D'}\to L_{D}$ is an isomorphism, the functor $\mathcal{D}\to\mathcal{C}/D'$ induces isomorphism on the groups $Aut(s\to t)$. This implies that the functor $\mathcal{D}\to\mathcal{C}/D'$ is injective, and thus bijective, on objects, and is an isomorphism.
\end{proof}

\section{EZ-categories}
\label{sec:ez-cats}

There are two classes of particularly nice generalized Reedy categories: EZ-categories and Eilenberg--Zilber categories. Here we show how to check if the twisted arrow category or the enveloping category of a reasonable operad belongs to these classes. 

\begin{definition}
A generalized Reedy category is EZ-category (\cite{berger2011extension}) if $R_+$ is the set of all monomorphisms, $R_-$ is the set of all split epimorphisms, and any pair of split epimorphisms with common source has an absolute pushout.
\end{definition}

\begin{definition}
A generalized Reedy category is Eilenberg--Zilber category (\cite{moerdijk2016minimal}) if $R_-$ is the set of all split epimorphisms, and any two split epimorphisms that have the same set of sections are equal.
\end{definition}

\begin{proposition}
Under conditions of Theorem~\ref{thm:groupoid-reedy} (or Theorem~\ref{thm:enveloping-image-reedy}) any split epimorphism in $\Tw(P)$ (or in $\mathcal{V}(P)$) is in $R_-$.
\end{proposition}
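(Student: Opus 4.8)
The plan is to use the orthogonal factorization system $(R_-,R_+)$ on $\Tw(P)$ (respectively on $\mathcal{V}(P)$) and reduce the statement to the claim that a split epimorphism lying in $R_+$ must be an isomorphism. Recall from the discussion preceding Theorem~\ref{thm:groupoid-reedy} that $R_-=I\circ R_{\psi=-1}$ and $R_+=R_{\psi\geq 0}\circ I=R_{\psi\geq 0}$, and that $(R_-,R_+)$ is an orthogonal factorization system by Proposition~\ref{pr:ufs-are-ofs}. Note first that $I\circ R_-=I\circ I\circ R_{\psi=-1}=I\circ R_{\psi=-1}=R_-$, since $I$ is closed under composition.

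First I would take a split epimorphism $e$ in $\Tw(P)$, say with $e\circ s=\mathrm{id}$, and factor it as $e=r_+\circ r_-$ with $r_-\in R_-$ and $r_+\in R_+$. From $e\circ s=\mathrm{id}$ we get $r_+\circ(r_-\circ s)=\mathrm{id}$, so $r_+$ is itself a split epimorphism. Hence, using $I\circ R_-=R_-$, it suffices to prove that $r_+$ is an isomorphism, for then $e=r_+\circ r_-\in I\circ R_-=R_-$.

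Next I would prove the key lemma-sized step: any split epimorphism $g$ lying in $R_{\psi\geq 0}$ is an isomorphism. Since $g$ is a split epimorphism, Lemma~\ref{lem:splitepi} gives that every non-source vertex of the canonical representative of $g$ has arity at most $1$ and grading at most $0$; since $g\in R_{\psi\geq 0}$, every non-source vertex has grading at least $0$; therefore every non-source vertex has grading exactly $0$. Under the hypotheses of Theorem~\ref{thm:groupoid-reedy} the operations of grading $0$ have arity $1$ and form the groupoid $P(\psi=0)$, so each non-source vertex of $g$ has arity $1$ and is invertible in $P(1)$. By Corollary~\ref{crl:isos-in-tw} the morphism $g$ is an isomorphism. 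Applying this to $g=r_+$ completes the case of $\Tw(P)$.

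For $\mathcal{V}(P)$ the argument is the same: Theorem~\ref{thm:enveloping-image-reedy} provides the orthogonal factorization system $(R_-,R_+)$ on $\mathcal{V}(P)$ obtained by restriction from $\mathcal{U}(P)$, and since $\mathcal{V}(P)$ is a full subcategory of $\mathcal{U}(P)$ that contains every object isomorphic to one of its objects, a split epimorphism in $\mathcal{V}(P)$ is a split epimorphism in $\mathcal{U}(P)$, Lemma~\ref{lem:splitepi} and Corollary~\ref{crl:isos-in-tw} both apply, and an isomorphism of $\mathcal{U}(P)$ between objects of $\mathcal{V}(P)$ lies in $\mathcal{V}(P)$. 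The only delicate point — and thus the place to be careful rather than a genuine obstacle — is the bookkeeping needed to be sure that the $R_+$-component $r_+$ of the orthogonal factorization of $e$ again satisfies the split epimorphism hypothesis of Lemma~\ref{lem:splitepi}; everything else is a direct combination of Lemma~\ref{lem:splitepi}, the definition of $R_{\psi\geq 0}$, and Corollary~\ref{crl:isos-in-tw}.
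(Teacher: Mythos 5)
Your proof is correct and uses the same essential inputs as the paper's: Lemma~\ref{lem:splitepi} combined with the hypothesis that operations of grading $0$ have arity $1$ and form a groupoid, plus Corollary~\ref{crl:isos-in-tw}. The paper argues slightly more directly, observing that the non-source vertices of the split epimorphism $e$ itself must each have grading $-1$ or be invertible of arity $1$, so that $e$ lies in $I\circ R_{\psi=-1}=R_-$ without first passing through the orthogonal factorization; your detour via $e=r_+\circ r_-$ is harmless and reaches the same conclusion.
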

\begin{proof}
By Lemma~\ref{lem:splitepi} non-source vertices of a split epimorphism $e$ have arity at most $1$ and grading at most $0$. The conditions imply that non-source vertices of $e$ of arity $1$ are marked by isomorphisms, i.e.\@ that $e$ is in $R_-$.
\end{proof}

For any operad $P$ petal maps in $\Tw(P)$ are related to maps that we call \emph{elementary degeneracies}. Elementary degeneracies in $\Delta$ and $\Omega$ in the usual sense are precisely the maps  isomorphic (but not necessarily equal) to elementary degeneracies in the following sense. 

\begin{definition}
  Let $P$ be a planar, symmetric or cyclic operad with a nice grading $\psi$. An elementary degeneracy in $\Tw(P)$ is a morphism in $R_{\psi=-1}$ such that exactly one non-source vertex is marked by a non-trivial operation.
\end{definition}

Petal maps and elementary degeneracies are determined by a similar data. An elementary degeneracy $(p)'\circ_i q$ is determined by its source $p$, the operation $q$ and the index $i$ of the leaf of $p$ to which the operation $q$ is grafted. In planar case petal maps $p\circ_i (q)'$ are determined by the very same data. In symmetric and cyclic cases this data is determined up to the action of a cyclic group on the index $i$ and on the orbit of $p$. In canonical representatives of morphisms the index $i$ is assumed to be equal to $1$, but we will use non-canonical representatives of morphisms.

For any symmetric operad $P$ let $f:q\to t$ be a petal map. The morphism $f$ can be factored as $q\to id_c\to t$, where $id_c\to t$ is an input map, if and only if there are operations $r$ and $p$ and an index $i$ such that $p\circ_i(q)'$ is a representation of $f$, the operation $q$ is a left unit for the operation $r$, $t=(p\circ_i q)$ and $p=t\circ_i r = (p\circ_i q)\circ_i r$. The planar case differs only in that $q$ might be not a left, but a right unit for $r$, so that $p=(p\circ_i q)\circ_{i-1} r$.

\begin{proposition}
\label{pr:elem-deg-split-epi}
Under conditions of Theorem~\ref{thm:groupoid-reedy} (or Theorem~\ref{thm:enveloping-image-reedy}) suppose additionally that petals factor through inputs in $\Tw P$. Then any morphism in $R_-$ is a split epimorphism if (and, trivially, only if) elementary degeneracies from operations of arity $1$ (or their images in $\mathcal{V}(P)$) are split epimorphisms.
\end{proposition}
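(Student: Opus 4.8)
The plan is to reduce an arbitrary morphism $f$ in $R_-$ to a composite of elementary degeneracies (up to isomorphism), so that it suffices to split each factor. First I would recall, via Corollary~\ref{crl:isos-in-tw} and the definition of $R_-=I\circ R_{\psi=-1}$, that any $f\in R_-$ factors as $i\circ g$ with $i$ an isomorphism (automatically a split epimorphism) and $g\in R_{\psi=-1}$; since a composite of split epimorphisms is a split epimorphism, it is enough to treat $g\in R_{\psi=-1}$. By definition the non-source vertices of (the canonical representative of) $g$ are marked either by identity operations or by operations of grading $(-1)$, and the permutation on leaves is trivial. Ordering the non-source vertices $v_0,\dots,v_n$ (the lower vertex $v_0$ and the upper vertices $v_1,\dots,v_n$), I would write $g$ as the composite of morphisms $g_j$ in $R_{\psi=-1}$ in which only the $j$-th non-source vertex is non-trivial — this is exactly the decomposition used in the proof of Lemma~\ref{lem:discrete-criterion}, and it exhibits $g$ as a composite of elementary degeneracies together with permutation-free morphisms whose vertices are all identities (these latter are themselves identities in $\Tw(P)$). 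So the problem reduces to: every elementary degeneracy is a split epimorphism.

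Next I would split an arbitrary elementary degeneracy $e$. Such an $e$ has a representative $p\circ_i(q)'$ with $q$ of grading $(-1)$, hence by niceness of the grading $q$ is a unit of a binary operation; its target is $t=p\circ_i q$, an operation whose arity equals that of $p$ minus one (one leaf of $p$ has been plugged by the arity-$0$ operation $q$). The hypothesis ``petals factor through inputs in $\Tw(P)$'' is the crucial input: applied to the petal of $t$ obtained from the $q$-leaf, it says precisely that the petal $q\to t$ factors through an input map $id_c\to t$, and — as spelled out in the paragraph preceding the proposition — this means there is an operation $r$ and an index such that $q$ is a one-sided unit for $r$, $t=p\circ_i q$, and $p=t\circ_i r$. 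From $p=t\circ_i r=(p\circ_i q)\circ_i r$ and $q\circ_1 r'$-type unit relations one reads off a morphism $s:t\to p$ in $\Tw(P)$ — the morphism with a single non-trivial non-source vertex marked by $r$, grafted at the $i$-th leaf — such that $e\circ s=\mathrm{id}_t$; that is, $e$ is split by $s$. The verification that $e\circ s$ is the identity is a direct computation with the composition law of $\Tw(P)$ as described in Corollary~\ref{crl:twsop}, using the unit relation $q\circ r'=id$.

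The reduction in the previous paragraph produces, in general, an elementary degeneracy whose source $p$ may have any arity; to finish I must get down to source-arity $1$. Here I would use that an elementary degeneracy with source $p$ of arity $m$ is, after applying $m-1$ input maps (equivalently, restricting along the inert maps picking out the non-$q$-leaves), ``locally'' an elementary degeneracy from an operation of arity $1$, or more precisely that splitting of $e$ can be assembled from splittings of the arity-$1$ elementary degeneracies obtained by this restriction — this is where I would invoke Theorem~\ref{thm:groupoid-reedy} (or Theorem~\ref{thm:enveloping-image-reedy}) so that $P(\psi=0)$ is a groupoid and the relevant left/right unit operations $r$ are themselves controlled. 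I expect the main obstacle to be exactly this last bookkeeping step: making precise that the existence of the splitting operation $r$ for the arity-$1$ case forces the existence of the splitting $r$ in the general case, i.e.\ that the petal-through-input factorization for $t$ of higher arity is determined by the arity-$1$ factorizations and does not require an independent hypothesis. In the enveloping-category case one must additionally check that all of this descends along $\Tw(P)\to\mathcal{V}(P)$, which is routine since $\Tw(P)\to\mathcal{V}(P)$ is a discrete opfibration carrying input maps, petal maps, and the factorization systems to their counterparts.
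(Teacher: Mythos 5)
Your first paragraph (reduction to elementary degeneracies) and your second paragraph (using the petal-through-input factorization to produce the operation $r$ with $r\circ_1 q=id$ and the section $(p\circ_j q)'\circ_j r$) together contain the paper's entire argument for the main case. But your third paragraph reveals a genuine structural confusion: you have the logic of the two cases backwards. The definition of ``petals factor through inputs'' explicitly applies only to petal maps whose \emph{target has arity at least $1$}. For an elementary degeneracy $(s)'\circ_i q$ the target $s\circ_i q$ has arity one less than the source $s$, so the petal factorization is available exactly when the source has arity greater than $1$ --- and in that case it directly hands you the section, with no further reduction needed (up to composing with the permutation isomorphism $h:s\to p$ relating the two representatives of the petal map, a point you gloss over but which is minor). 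The case of source arity $1$ is precisely the case where the target has arity $0$ and the petal hypothesis says nothing; that is \emph{why} the proposition assumes separately that arity-$1$ elementary degeneracies split. There is no ``reduction from arity $m$ to arity $1$'' to perform, and the step you flag as the main obstacle is not a gap to be filled but a detour in the wrong direction.

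A second, smaller slip: you assert that niceness of the grading forces $q$ (of grading $-1$) to be a unit of a binary operation. Niceness only says that operations of grading $-1$ have arity $0$ and that units of binary operations have grading $-1$, not the converse; the existence of the binary operation $r$ with $r\circ_1 q = id$ comes from the petal factorization itself, not from the grading. Your second paragraph in fact already derives $r$ from the factorization, so this claim is unnecessary as well as unjustified. With the case split corrected --- hypothesis for source arity $1$, petal factorization for source arity greater than $1$ --- your argument matches the paper's proof.
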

\begin{proof}
Any morphism in $R_-$ is a composition of an isomorphism and of elementary degeneracies. It suffices to show that elementary degeneracies are split epimorphisms. We have assumed that these have a section if their source has arity $1$. Suppose then that $f=(s)'\circ_i q$ is an elementary degeneracy, with operation $s$ of arity greater than $1$. The petal map $g=s\circ_i (q)'$ factors through an input map of $s\circ_i q$. In case of symmetric operad this implies existence of operations $r$ and $p$ such that $g=p\circ_j (q)'$, $r\circ_1 q=id$ and $(p\circ_j q)\circ_j r = p$ for some $j$. The morphism $f_1=(p)'\circ_j q$ has a section $(p\circ_j q)'\circ_j r$. Finally, notice that $f=f_1\circ h$, where $h:s\to p$ is the obvious permutation isomorphism, and thus $f$ is a split epimorphism. Planar and cyclic cases, and the enveloping category case, are similar.
\end{proof}

\begin{example}
For the graph-substitution operads containing $\bigcirc$ the morphism $\nu\to\bigcirc$, and its image in $\mathcal{V}(P)$, is an elementary degeneracy without a section. For the rest of graph-substitution operads $P$ the subcategory $R_-$ of $\Tw P$ (or of $\mathcal{V}(P)$) is the subcategory of split epimorphisms.
\end{example}

\begin{proposition}
Under conditions of Theorem~\ref{thm:groupoid-reedy} (or Theorem~\ref{thm:enveloping-image-reedy}) assume that petals factor through inputs in $\Tw P$. All monomorphisms in $\Tw P$ (or in $\mathcal{V}(P)$) are in $R_+$ if and only if elementary degeneracies from operations of arity $1$ (or from their images) are not monomorphisms.
\end{proposition}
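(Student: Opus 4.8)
The plan is to deduce the statement from the orthogonal factorization system $(R_-,R_+)$ together with one observation: once elementary degeneracies from operations of arity $1$ are not monomorphisms, \emph{no} elementary degeneracy is a monomorphism, and hence every $R_-$-part of a monomorphism is an isomorphism.

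For the ``only if'' direction, suppose every monomorphism of $\Tw(P)$ (resp.\ of $\mathcal{V}(P)$) lies in $R_+$. If $e$ is an elementary degeneracy from an operation of arity $1$, then its canonical representative has a non-source vertex of grading $(-1)$, so $e\notin R_{\psi\geq 0}=R_+$; by hypothesis $e$ is therefore not a monomorphism. (One may also note that $e$ is not an isomorphism, since its source has arity $1$ and its target arity $0$, cf.\ Corollary~\ref{crl:isos-in-tw}.)

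For the ``if'' direction I would argue in two steps. \textbf{Step 1:} no elementary degeneracy is a monomorphism. If the source has arity $1$ this is exactly the hypothesis. If the source has arity $\geq 2$, the argument in the proof of Proposition~\ref{pr:elem-deg-split-epi}---which uses only that petals factor through inputs in $\Tw(P)$, not the arity-$1$ hypothesis---factors such an elementary degeneracy $e$ as $e=f_1\circ h$ with $h$ a permutation isomorphism and $f_1$ a split epimorphism whose target has arity exactly one less than its source. Were $e$ a monomorphism, so would be $f_1=e\circ h^{-1}$; but a split epimorphism that is a monomorphism is an isomorphism, and by Corollary~\ref{crl:isos-in-tw} $f_1$ cannot be an isomorphism, since its source and target have different arities. \textbf{Step 2:} let $f$ be an arbitrary monomorphism and write $f=r\circ l$ for its $(R_-,R_+)$-factorization. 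Then $l$ is a monomorphism; writing $l=i\circ l'$ with $i$ an isomorphism and $l'\in R_{\psi=-1}$, also $l'$ is a monomorphism. By the description of $R_-$ used in Proposition~\ref{pr:elem-deg-split-epi}, either $l'$ is an identity or $l'$ is a nontrivial composite $e_k\circ\cdots\circ e_1$ of elementary degeneracies; in the second case $e_1$ is a monomorphism (being a right factor of the monomorphism $l'$), contradicting Step 1. Hence $l'$ is an identity, $l=i$ is an isomorphism, and $f=r\circ l$ lies in $R_+$ because $R_+$ contains all isomorphisms and is closed under composition. The case of $\mathcal{V}(P)$ is identical, with Theorem~\ref{thm:enveloping-image-reedy} replacing Theorem~\ref{thm:groupoid-reedy}.

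The only point needing care---and the main obstacle---is Step 1 in the arity-$\geq 2$ case: one must verify that the split-epimorphism half of the construction in the proof of Proposition~\ref{pr:elem-deg-split-epi} does not secretly rely on the arity-$1$ hypothesis, so that the reduction is not circular. Everything else is formal bookkeeping with the factorization system.
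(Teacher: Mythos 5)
Your proof is correct, and its overall skeleton coincides with the paper's: both directions are reduced, via the $(R_-,R_+)$ factorization and the decomposition of $R_-$-morphisms into elementary degeneracies followed by an isomorphism, to the claim that no elementary degeneracy is a monomorphism, and the arity~$\geq 2$ case is reduced to the special-form degeneracy $f_1=(p)'\circ_j q$ supplied by the petals-factor-through-inputs hypothesis. Your worry about circularity is correctly resolved: in the proof of Proposition~\ref{pr:elem-deg-split-epi} the section $(p\circ_j q)'\circ_j r$ of $f_1$ is built entirely from the data $r$, $p$, $j$ with $r\circ_1 q=id$ and $(p\circ_j q)\circ_j r=p$ coming from the factorization of the petal map through an input, and the arity-$1$ hypothesis of that proposition enters only for sources of arity $1$; so invoking that construction here is legitimate.

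The one place where you genuinely diverge from the paper is the final step showing $f_1$ is not a monomorphism. The paper exhibits two distinct morphisms $id_c\to p$ (the $(j+1)$-th input map, and the map with upper vertex $r$ and lower vertex $p\circ_j q$) whose composites with $f_1$ agree. You instead observe that $f_1$ is a split epimorphism, that a split epimorphism which is a monomorphism is an isomorphism, and that $f_1$ cannot be an isomorphism because its source and target have different arities (Corollary~\ref{crl:isos-in-tw}). Both arguments consume exactly the same data; yours is slightly slicker in that it recycles the section already constructed, while the paper's is more explicit and does not pass through the split-epi formalism. Your version is valid in both $\Tw(P)$ and $\mathcal{V}(P)$ since sections and the arity obstruction persist in the image.
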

\begin{proof}
Let $g\circ l$ be the factorization of a monomorphism via $(R_-, R_+)$. The morphism $l$ is a composition of elementary degeneracies $l_j$ followed by an isomorphism. Since $g\circ l$ is a monomorphism, the first of these elementary degeneracies $l_1$ is a monomorphism. Thus monomorphisms are in $R_+$ if and only if elementary degeneracies are not monomorphisms.

As in the Proposition~\ref{pr:elem-deg-split-epi}, let $f=(s)'\circ_i q$ be an elementary degeneracy from an operation of arity greater than $1$, and $f_1=(p)'\circ_j q$ be the corresponding elementary degeneracy of the special form. It suffices to show that $f_1$ is not a monomorphism. Take two non-equal morphisms from an identity operation to $p$: the $(j+1)$-th input map of $p$ and the morphism with the upper vertex  marked by $r$ and the lower vertex marked by $(p\circ_j q)$ and connected to the source vertex by the $j$-th edge. Compositions of these morphisms with $f_1$ are equal, thus $f_1$ is not a monomorphism, and elementary degeneracies from operations of arity greater than $1$ are not monomorphisms.
\end{proof}

\begin{example}
In graph-substitution operads an operation of arity $1$ that can be a source of elementary degeneracy is isomorphic either to $id_1$, or to $\nu$. The elementary degeneracy $id_1\to\mu_0$ is not a monomorphism: its compositions with the two elementary degeneracies $\mu_2\to id_1$ coincide. Likewise, the compositions of the elementary degeneracy $\nu\to\bigcirc$ with either of the two elementary degeneracies $\nu\circ_1\mu_2\to\nu$ coincide. Thus for all graph-substitution operads monomorphisms in their twisted arrow categories or in the categories $\mathcal{V}(P)$ are in $R_+$.
\end{example}

\begin{proposition}
Under conditions of Theorem~\ref{thm:enveloping-image-reedy}, the operadic composition of operations of non-zero arity with operations of non-zero arity is injective (i.e.\@ for any operation $p$ in $P$ of non-zero arity the maps $p\circ_i -$ and $-\circ_i p$, restricted to the set of operations of non-zero arity, are injective for all $i$) if and only if any morphism in $\mathcal{V}(P)$ with all non-source vertices of non-zero arity is a monomorphism.
\end{proposition}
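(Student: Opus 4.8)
The plan is to establish both implications by unwinding the definition of composition in $\mathcal{V}(P)\subseteq\mathcal{U}(P)$ on canonical representatives, using Corollary~\ref{crl:twsop} and Lemma~\ref{lem:candec}. I would argue in the symmetric case, the planar and cyclic cases being entirely analogous. The facts I would invoke at the outset: $\mathcal{V}(P)$ is full in $\mathcal{U}(P)$ and contains every profile of an operation; for an operation $q$, the output map $out_q$ has canonical representative with $q$ as the unique upper vertex over an identity-marked source of arity $1$ and identity root vertex, while the $l$-th input map $\iota_l$ of $q$ has $q$ as its root vertex and identity upper vertices, and both lie in $\mathcal{V}(P)$; and for operations $q_1,q_2$ sharing a colour profile, $out_{q_1}=out_{q_2}$ (respectively $\iota^{(q_1)}_l=\iota^{(q_2)}_l$) if and only if $q_1=q_2$, since these canonical representatives differ only in the marking of a single vertex.

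For ``injective composition $\Rightarrow$ monomorphisms'': let $f\colon A\to B$ have all non-source vertices of non-zero arity, and suppose $fg_1=fg_2$ with $g_1,g_2\colon Z\to A$. After normalising by permutation isomorphisms I would write the canonical representatives of the $g_j$ as reduced trees with unmarked source $z$ and non-source vertices $p^{(j)}_0$ (root) and $p^{(j)}_1,\dots,p^{(j)}_k$ (upper). Expanding $fg_j$ via Corollary~\ref{crl:twsop} — graft the upper vertices of $f$ onto the input leaves of $g_j$ and its root vertex onto the output leaf, then contract every edge not adjacent to $z$ — exhibits the non-source vertices of $fg_j$ as the images of the $p^{(j)}_m$ under a fixed pattern (depending only on $f$, hence the same for $j=1,2$) of partial compositions with the non-source vertices of $f$: for $m\ge 1$, $\widetilde p^{(j)}_m$ is $p^{(j)}_m$ with some upper vertices of $f$ grafted onto its inputs, and $\widetilde p^{(j)}_0$ is moreover composed, below, with the root vertex of $f$. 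Since $fg_1=fg_2$ forces $\widetilde p^{(1)}_m=\widetilde p^{(2)}_m$ for all $m$, and since every non-source vertex of $f$ has non-zero arity, all the grafted operations have non-zero arity and all intermediate operations stay of non-zero arity (partial composition with a non-zero-arity operation cannot lower arity); peeling the grafts off one at a time and invoking injectivity of $-\circ_i q$ and of $q\circ_i-$ on non-zero-arity operations — the case where $p^{(j)}_m$ has arity $0$ being trivial, as then no graft occurs — gives $p^{(1)}_m=p^{(2)}_m$ for all $m$, i.e.\@ $g_1=g_2$.

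For the converse, assume every morphism of $\mathcal{V}(P)$ with all non-source vertices of non-zero arity is a monomorphism and fix $p$ of non-zero arity. If $q_1\circ_i p=q_2\circ_i p$ with $q_1,q_2$ of non-zero arity, they share a profile $(c_1,\dots,c_m;c_0)$; I would take $f$ to be the morphism out of this profile whose canonical representative has identity root vertex, $i$-th upper vertex $p$, and all remaining upper vertices identities. Its non-source vertices are $p$ and identities, all of non-zero arity, so $f$ is a monomorphism, and one computes that $f\circ out_{q_j}$ has canonical representative with source $id_{c_0}$, identity root vertex, and upper vertex $q_j\circ_i p$; hence $f\circ out_{q_1}=f\circ out_{q_2}$, so $out_{q_1}=out_{q_2}$ and $q_1=q_2$. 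Symmetrically, if $p\circ_i q_1=p\circ_i q_2$ with $q_1,q_2$ of non-zero arity, I would take $f$ with canonical representative having root vertex $p$ — the source plugged into the $i$-th input of $p$ — and all upper vertices identities; again a monomorphism, and one computes that $f\circ\iota^{(q_j)}_l$ has root vertex $p\circ_i q_j$, for any fixed input map $\iota_l$, so $f\circ\iota^{(q_1)}_l=f\circ\iota^{(q_2)}_l$, whence $\iota^{(q_1)}_l=\iota^{(q_2)}_l$ and $q_1=q_2$.

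The main obstacle is the bookkeeping hidden in ``one computes'': tracking exactly which edges survive the contraction step in each composite — and hence which operation ends up inside which slot of which — and checking, in the symmetric case, that the canonicalising permutation of leaves is the same for the two composites being compared, so that it cancels. The low-arity edge cases (source objects of arity $0$ or $1$, where the reduced-tree picture degenerates) will also need a brief separate verification.
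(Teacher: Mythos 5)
Your overall strategy coincides with the paper's: the forward direction is proved by showing that $g$ can be recovered from $f$ and $f\circ g$ using injectivity of partial composition, and the converse (which the paper dismisses as trivial) by testing a suitable monomorphism against output and input maps; your explicit witnesses for the converse are correct. One step in the forward direction is mis-stated, however, and it is exactly the place where the hypothesis on $f$ does its work. The pattern of which upper vertices of $f$ get grafted onto which non-source vertex $p^{(j)}_m$ of $g_j$ is \emph{not} ``fixed, depending only on $f$'': it is governed by the indices of the leaves of $g_j$'s tree, i.e.\@ by which inputs of $A$ are attached to which vertex of $g_j$, and this is part of the data of $g_j$ that could a priori differ between $g_1$ and $g_2$. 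So you cannot immediately peel grafts off the equality $\widetilde p^{(1)}_m=\widetilde p^{(2)}_m$; you must first show that the leaf distributions (and leaf indices) of $g_1$ and $g_2$ agree. This is where non-zero arity of the non-source vertices of $f$ enters a second time: since every upper vertex of $f$ carries at least one leaf of $B$, the partition of the leaves of $f\circ g_j$ among its non-source vertices determines, for each upper vertex of $f$, onto which $p^{(j)}_m$ it was grafted and into which slot; hence the indices of the leaves of each $p^{(j)}_m$ are recovered from $f$ and $f\circ g_j$, and only then does injectivity of $-\circ_i q$ and $q\circ_i-$ recover the operations themselves. (This is precisely what the paper's one-line proof means by recovering the ``non-source vertices \emph{and indices of leaves}'' of $g$.) With that correction your argument goes through; note that an arity-$0$ upper vertex of $f$ would break both the leaf-recovery step and the injectivity step, which is why the hypothesis is stated the way it is.
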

\begin{proof}
Let $f$ be a morphism in $\mathcal{V}(P)$ with all non-source vertices having non-zero arity (such a morphism $f$ is always in $R_+$). Injectivity of partial composition ensures that non-source vertices and indices of leaves of a morphism $g$ in $\mathcal{V}(P)$ can be recovered from the non-source vertices and indices of leaves of $f$ and $f\circ g$. The opposite direction is trivial.
\end{proof}

\begin{example}
The operads $mOp$, $mOp_{(g,n)}$, $mOp_{st}$, $mOp_{nc}$ and $ciuAs$ do not satisfy the injectivity condition, since the action of the groupoid $P(\psi=0)$ is not free. The operads containing $\bigcirc$ do not satisfy the injectivity condition, since $\nu\circ_1\mu_2=\nu\circ_1(\mu_2^{(12)})$. 

The graph-substitution operads based on trees satisfy the injectivity condition, and for these operads the subcategory $R_+$ of the category $\mathcal{V}(P)$ is the subcategory of monomorphisms.
\end{example}

For twisted arrow categories the ``only if'' part of the proposition above does not hold: any morphism in the twisted arrow category of the planar operad with three binary operations $p_1,p_2\in P(c_1,c_2;c_0)$ and $q\in P(c_3,c_4;c_1)$ and one ternary operation $p_1\circ_1 q=p_2\circ_1 q$ is a monomorphism.

\begin{proposition}
Under conditions of Theorem~\ref{thm:groupoid-reedy}, suppose additionally that for any operation $p$ in $P$ of non-zero arity the maps $-\circ_i p$, restricted to the set of operations of non-zero arity, are injective for all $i$. Then any morphism $f:t\to r$ with all vertices (including the source) having non-zero arity is a monomorphism.
\end{proposition}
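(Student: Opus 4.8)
The plan is to establish left-cancellability of $f\colon t\to r$ by reconstructing an arbitrary morphism $g\colon s\to t$ out of the pair $(f,h)$, where $h:=f\circ g$. I will work throughout with canonical reduced-tree (respectively reduced-graph) representatives and with the description of composition from Corollary~\ref{crl:twsop}. Write the non-source vertices of $f$ as $q_0$ (the root vertex) and $q_1,\dots,q_n$ (the upper vertices, $n$ the arity of $t$), all of non-zero arity by hypothesis. By Corollary~\ref{crl:twsop}, $h$ is obtained by grafting $q_0$ onto the root leaf of $g$, grafting each $q_i$ onto the leaf of $g$ that corresponds, through the target $t$ of $g$, to the $i$-th input of $t$, and then contracting every edge not adjacent to the source vertex of $g$.

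The source vertex of $h$ is the source vertex of $g$, so $s$ is recovered at once. Each non-source vertex of $h$ is the contraction of precisely one non-source vertex $p$ of $g$ together with the $q_i$'s grafted onto the leaves of $p$. If $p$ has arity $0$ it has no leaves, nothing is grafted onto it, and $p$ occurs verbatim among the non-source vertices of $h$ at a recoverable position. If $p$ has non-zero arity, every leaf of $p$ receives one of the $q_i$'s, namely those indexed by the inputs of $t$ coming from $p$; since these $q_i$'s have non-zero arity the resulting vertex $p'$ of $h$ again has non-zero arity, and, the arities of the $q_i$'s being known, the leaves of $p'$ that come from each grafted $q_i$ are determined, so $p$ is obtained from $p'$ by successively cancelling the $q_i$'s on the right. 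Here the injectivity hypothesis is used: each map $-\circ_\ell q_i$ is injective on operations of non-zero arity because $q_i$ has non-zero arity. This recovers the upper vertices $p_1,\dots,p_k$ of $g$, the partition of the inputs of $t$ into the corresponding blocks, and the permutation on leaves.

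It remains to recover the root vertex $p_0$. It has non-zero arity, since in a reduced tree the output of the source vertex is attached to one of the inputs of the root vertex. With the upper vertices in hand, set $s':=\gamma(s;p_1,\dots,p_k)$; evaluating $g$ gives $t=p_0\circ_1 s'$, and $t$ is known, being the source vertex of $f$. When $s'$ has non-zero arity the hypothesis finishes the proof: $-\circ_1 s'$ is injective on operations of non-zero arity, so $p_0$ is the unique operation of non-zero arity with $p_0\circ_1 s'=t$, and $g$ is fully determined by $(f,h)$. The case that I expect to be the main obstacle is $s'$ of arity $0$ — equivalently, when every input of $s$ has been capped by an arity-$0$ upper vertex — since then $-\circ_1 s'$ need not be injective. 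In this case the entire non-source part of $g$ other than $p_0$ consists of arity-$0$ operations, which by the second paragraph pass to $h$ unchanged, so $p_0$ differs from the root vertex of $h$ only by the grafting of the known vertices $q_0,\dots,q_n$ of $f$; cancelling these now involves cancelling $q_0$ from the output side as well, and the point to check is that this cancellation is still unambiguous given that \emph{every} vertex of $f$, including $q_0$ itself, has non-zero arity and hence $f$ collapses no petal. Combining the two cases shows that the full datum of $g$ is determined by $(f,h)$, i.e.\ $f$ is a monomorphism.
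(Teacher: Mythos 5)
Your strategy is the same as the paper's: recover the upper vertices of $g$ from $h=f\circ g$ by right-cancelling the (non-zero-arity) vertices of $f$, then recover the root vertex of $g$ from the equation $t=p_0\circ_1 s'$ with $s'=\gamma(s;p_1,\dots,p_k)$. Your first two paragraphs are a correct and usefully detailed expansion of the paper's one-sentence argument. The gap is exactly the case you flag at the end and do not close. When every upper vertex of $g$ has arity $0$, the operation $s'$ has arity $0$ and the hypothesis says nothing about $-\circ_1 s'$; your fallback of cancelling $q_0$ ``from the output side'' of the root vertex of $h$ amounts to injectivity of the left composition $q_0\circ_1 -$, which is precisely the half of the injectivity assumption that this proposition drops relative to the preceding one about $\mathcal{V}(P)$. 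The closing sentence ``the point to check is that this cancellation is still unambiguous'' is not an argument, and the point cannot in fact be checked from the stated hypotheses.

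Concretely, take the planar operad generated by $p_1\in P(;a_1)$, $p_2\in P(;a_2)$, $s\in P(a_1,a_2;b)$, $p_0\neq p_0'\in P(b,x_1,x_2;e)$ and $q_0\in P(e,z;w)$, each generator given grading $1$ (so the only grading-$0$ operations are identities and the conditions of Theorem~\ref{thm:groupoid-reedy} hold), modulo the two relations $p_0\circ_1 u=p_0'\circ_1 u$ with $u=\gamma(s;p_1,p_2)$ and $q_0\circ_1 p_0=q_0\circ_1 p_0'$. These relations only destroy injectivity of $-\circ_1 u$ with $u$ of arity $0$ and of the left composition $q_0\circ_1 -$, so every map $-\circ_i p$ with $p$ of non-zero arity remains injective on operations of non-zero arity. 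Nevertheless the morphism $f:t\to r$ with source $t=p_0\circ_1 u$, root vertex $q_0$ and identity upper vertices has all vertices of non-zero arity and is not a monomorphism: the two morphisms $s\to t$ with upper vertices $p_1,p_2$ and root vertex $p_0$, respectively $p_0'$, are distinct, yet both composites with $f$ have root vertex $q_0\circ_1 p_0=q_0\circ_1 p_0'$ and hence coincide. So the degenerate case you isolated is a genuine obstruction rather than a loose end; note that the paper's own proof passes over it with the unproved assertion that ``the lowest vertex of $g$ is determined by the upper vertices of $g$ and by the operations $s$ and $t$''. Closing it requires an additional hypothesis, e.g.\ injectivity of $p\circ_i -$ as in the preceding proposition, or injectivity of $-\circ_i p$ for $p$ of arity $0$ as well.
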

\begin{proof}
Injectivity ensures that any morphism $g:s\to t$ in $\Tw P$ is determined uniquely by $f\circ g$: the upper vertices of $g$ and the indices of their leaves are determined by $f\circ g$, and  the lowest vertex of $g$ is determined by the upper vertices of $g$ and by the operations $s$ and $t$.
\end{proof}

\begin{example}
For all graph-substitution operads compositions $-\circ_i p$ of operations of non-zero arity with operations of non-zero arity are injective: a graph $r$ can be recovered by contracting the subgraph $p$ in the graph $r\circ_i p$ and permuting the edges of the obtained vertex according to the permutation on leaves of $p$. To check that the subcategories $R_+$ of twisted arrow categories of graph-substitution operads consist of monomorphisms it suffices to check that morphisms in $R_+$ with at least one vertex of arity $0$ are monomorphisms.

For operads $ciuAs$, $mOp$, $mOp_{(g,n)}$ the morphism $\mu_0\to\bigcirc$ is not a monomorphism: the compositions of this morphism with the two maps $\mu_0\to\mu_0$ are equal. For the remaining graph-substitution operads the subcategories $R_+$ of their twisted arrow categories consist of monomorphisms.
\end{example}

Often twisted arrow categories and enveloping categories of operads are such that elementary degeneracies with a common source have a split pushout. Recall that a split pushout is a diagram of the following form.
\begin{center}
\begin{tikzcd}
A \arrow[r, "f"'] \arrow[d, "g"]           & B \arrow[d, "k"'] \arrow[l, "s"', bend right] \\
C \arrow[r, "h"] \arrow[u, "t", bend left] & P \arrow[u, "u"', bend right]                
\end{tikzcd}
\end{center}
Here the morphisms $t$, $s$ and $u$ are sections, and $f\circ t=u\circ h$ and $k\circ f=h\circ g$. Split pushouts are absolute pushouts. Split pushout is asymmetric notion: there are two ways in which morphisms with common source can have a split pushout.

\begin{proposition}
Under conditions of Theorem~\ref{thm:enveloping-image-reedy}, suppose that any morphism in $R_-$ is a split epimorphism. Then any pair of elementary degeneracies with a common source has a  pushout if and only if for any colour $c$ in $P$ there is at most one operation of grading $(-1)$ with the output colour $c$. Such pushouts are split pushouts, in both possible ways.
\end{proposition}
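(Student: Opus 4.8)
The plan is to analyze when the pushout of two elementary degeneracies exists in the category $\mathcal{V}(P)$ (the analysis for $\Tw(P)$ is analogous by the discrete opfibration), and to identify the obstruction with the existence of two distinct operations of grading $(-1)$ sharing an output colour. First I would unwind what a pair of elementary degeneracies with a common source $t$ looks like. By definition, such a degeneracy is a morphism in $R_{\psi=-1}$ with exactly one non-trivial non-source vertex, marked by an operation of grading $(-1)$; since petals factor through inputs (which is implicit once we use Theorem~\ref{thm:enveloping-image-reedy} together with the grading hypotheses — I will state the precise standing assumptions at the start), an elementary degeneracy $f:t\to r$ with source of arity $\geq 1$ is, up to isomorphism, the map $(p)'\circ_i q$ where $q$ is an operation of grading $(-1)$ grafted onto the $i$-th leaf of $p=t$; it has a section by the hypothesis that morphisms in $R_-$ are split epimorphisms. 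A pair of such degeneracies is then given by two leaves $i_1,i_2$ of $t$ (possibly equal) and two grading-$(-1)$ operations $q_1,q_2$ matching the colours of those leaves.

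Next I would build the candidate pushout. When $i_1\neq i_2$, grafting $q_1$ and $q_2$ onto the two distinct leaves commutes, so the obvious square closes up: $D=(t\circ_{i_1}q_1)\circ_{i_2}q_2=(t\circ_{i_2}q_2)\circ_{i_1}q_1$, and the pushout maps $k:r_1\to D$, $u:r_2\to D$ are again elementary degeneracies, each with a section coming from the corresponding splitting. One checks directly that together with the sections $s$ of $f$ and $t'$ of $g$ this forms a split pushout square, in both asymmetric directions, because grafting an arity-$0$ operation and its section commute with the unrelated grafting. When $i_1=i_2=i$ but $q_1$ and $q_2$ have the same output colour $c$ (which is the leaf colour), the only way to cocone the diagram is to identify the two degeneracies; the diagram has a pushout if and only if the two maps $f$ and $g$ can be simultaneously killed, which happens exactly when $q_1=q_2$, i.e.\@ when there is a unique grading-$(-1)$ operation with output colour $c$. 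If instead $q_1\neq q_2$ with the same output colour, I would exhibit a cocone with no universal factorization: map both $r_1$ and $r_2$ to a test object (for instance via the two distinct input maps of $t$, using that petals factor through inputs and that $q_i$ is a left unit of some operation) and show the cocone maps do not agree after pulling back along the two sections, using an argument parallel to the non-monomorphism arguments in the preceding propositions of the appendix.

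Then I would assemble the equivalence: the hypothesis ``for any colour $c$ there is at most one operation of grading $(-1)$ with output colour $c$'' is precisely what guarantees that the only problematic configuration ($i_1=i_2$, distinct $q$'s of the same colour) never occurs, so in that case every pair of elementary degeneracies with common source falls into the commuting-leaves case (after the $i_1=i_2=i$, $q_1=q_2$ subcase, which is trivial since then $f=g$ up to a permutation isomorphism and the pushout is split along that isomorphism). Conversely, if some colour $c$ carries two distinct grading-$(-1)$ operations, pick $t$ to be any operation with a leaf of colour $c$ (it exists, e.g.\@ an identity operation, or a binary operation if $c$ is suitable) and the corresponding pair of degeneracies has no pushout. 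Finally I would verify that in the good case the pushout squares are split in both possible senses, by writing out the two section systems explicitly and checking the triangle identities $f\circ t'=u\circ h$, $k\circ f=h\circ g$ together with $f\circ s=\mathrm{id}$, etc.

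The main obstacle I anticipate is the case analysis around $i_1=i_2$: one must carefully handle degenerate possibilities — the source $t$ having arity $1$ (so the leaf in question may be the root side in the planar case, or the degeneracy may land on $\bigcirc$), the interaction with the cyclic group action on indices in the symmetric and cyclic settings, and the bookkeeping of permutation isomorphisms needed to bring the two degeneracies into canonical position. I would manage this by reducing, via permutation isomorphisms and the discrete opfibration $\Tw(P)\to\mathcal{V}(P)$, to morphisms with trivial permutation on leaves and canonical representatives, as is done throughout Section~\ref{sec:structure}, and then the combinatorics becomes a finite check. The splitting claims themselves, once the right sections are named, are routine, so the real content is the exact identification of the obstruction with the uniqueness-of-$(-1)$-operations condition.
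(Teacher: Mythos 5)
Your overall strategy is the paper's: two elementary degeneracies grafting grading-$(-1)$ operations onto \emph{distinct} leaves commute, the evident square $A\to A\circ_i q_i\circ_j q_j$ is the pushout, and it is split; the only obstruction is a pair of distinct grading-$(-1)$ operations grafted onto the \emph{same} leaf, which can occur precisely when some colour carries two such operations. Two steps of your plan would not survive execution as written, however. First, for the necessity direction you propose to ``exhibit a cocone with no universal factorization''; but when $q_1\neq q_2$ are grafted onto the same leaf of the common source there are \emph{no cocones at all}. For any candidate legs $m_1,m_2$ the canonical representative of $m_1\circ e_1$ retains $q_1$ as an upper vertex (it is separated from the lower vertex by the source, so no generating relation can compose it away), and likewise $m_2\circ e_2$ retains $q_2$; uniqueness of canonical representatives then forces $q_1=q_2$. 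So the cocone category is empty, and that is why the pushout fails — this is what the paper's one-line remark about the output maps $id_c\to q_1$ and $id_c\to q_2$ amounts to. Your plan of assembling a test cocone from input maps and left units cannot be carried out.

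Second, you declare the splitting ``routine'' and assert parenthetically that the analysis for $\Tw(P)$ is analogous via the discrete opfibration. The remark immediately following this proposition in the paper says the opposite: for twisted arrow categories the colour condition is necessary but \emph{not} sufficient. The reason is exactly the step you wave through: a split pushout needs a section $t$ of $g$ satisfying $f\circ t=u\circ h$ for a section $u$ of $k$, and a section of an elementary degeneracy is not simply ``un-grafting'' — by Lemma~\ref{lem:splitepi} its unique non-trivial vertex is either a lower vertex or an upper vertex of arity at least $1$ cancelling $q$. The paper manufactures $t$ by transporting that vertex of $u$ to the corresponding leaf of $A$; this works in $\mathcal{V}(P)$ because objects are colour profiles, whereas in $\Tw(P)$ the transported morphism need not have the required target. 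So the compatibility of sections is the actual content of the splitness claim, not bookkeeping. (A smaller point: ``petals factor through inputs'' is not among the hypotheses of this proposition and is not needed — an elementary degeneracy is by definition the grafting of a grading-$(-1)$ operation onto a leaf.)
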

\begin{proof}
If there are two operations of grading $(-1)$ with the same output colour, then the images of the corresponding output maps do not have a pushout. Suppose then that for any colour $c$ in $P$ there is at most one operation of grading $(-1)$ with the output colour $c$. Let $f$ and $g$ be two different elementary degeneracies with common source $A$. If $g$ and $f$ graft an operation of grading $(-1)$ to the $i$-th and to the $j$-th leaves of $A$ respectively, with $i<j$, then $k$ and $h$ are the elementary degeneracies that graft the operations of grading $(-1)$ to the leaves now indexed by $i$ and $(j-1)$ respectively.

Let $u$ be a section of $k$. It has one non-trivial non-source vertex. If this vertex is a lower vertex, the index of its leaf is $i$. If this vertex is an upper vertex grafted in the $l$-th leaf, then the indices of its leaves are $l$ and $i$. Let $t$ be a morphism with only one non-trivial non-source vertex, equal to the non-trivial non-source vertex of $u$, grafted to the essentially the same leaf, and with leaves indexed by $i$ and, if the vertex is upper, by the index of the leaf to which this vertex is grafted. The morphism $t$ is a section of $g$, and $f\circ t=u\circ h$. The case with $i>j$ is similar.
\end{proof}

For twisted arrow categories the condition above is necessary, but not sufficient, since in this case the target of the morphism $t$ might be different from $A$. 

\begin{proposition}
For graph-substitution operads not containing $\bigcirc$ elementary degeneracies with common source have a split pushout, in both possible ways.
\end{proposition}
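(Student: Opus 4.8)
The plan is to reduce the statement to the proposition just established (the one about $\mathcal{V}(P)$) and then lift it to $\Tw(P)$. First I would check that every graph-substitution operad $P$ not containing $\bigcirc$ satisfies the three hypotheses of that proposition. It meets the conditions of Theorem~\ref{thm:groupoid-reedy}, equivalently of Theorem~\ref{thm:enveloping-image-reedy}: operations of grading $0$ have arity $1$, $P(\psi=0)$ is a groupoid, and operations with the same multiset of colours have the same grading — this is exactly the content of the Examples following those theorems. It satisfies ``every morphism in $R_-$ is a split epimorphism'': since $P$ omits $\bigcirc$ there is no degeneracy $\nu\to\bigcirc$, and by the Example following Proposition~\ref{pr:elem-deg-split-epi} the subcategory $R_-$ of $\Tw(P)$ (and of $\mathcal{V}(P)$) is precisely the subcategory of split epimorphisms. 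The remaining colour condition is handled next.

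The colour condition to verify is that for each colour $c$ of $P$ there is at most one operation of grading $(-1)$ with output colour $c$. In any graph-substitution operad an operation $p$ of grading $(-1)$ satisfies $|V_p|-1+2b_1(p)=-1$ (and in the tree-based cases, where $\psi$ is the arity grading, has arity $0$), so $p$ has no vertices and no loops, i.e.\ $p$ is an exceptional edge; and for each prescribed output colour there is at most one exceptional edge (for $sOp_C$ one for each ordered pair of edge-colours, and none for output colours not carried by an exceptional edge; for $mOp_{st}$, which omits $\mu_0$, there are no operations of grading $(-1)$ at all). Hence the previous proposition applies and gives that any pair of elementary degeneracies with a common source in $\mathcal{V}(P)$ has a pushout which is a split pushout in both possible ways.

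It remains to carry this over to $\Tw(P)$. By the remark preceding the statement, the only thing that can fail in $\Tw(P)$ is that the sections $t$ (and likewise $s$, $u$) built in the proof of the previous proposition might acquire the wrong \emph{operation}, not merely the wrong colour, as source or target. I would rule this out using the explicit description of morphisms of $\Tw(P)$ from Corollary~\ref{crl:twsop}: an elementary degeneracy out of an operation $A$ is the reduced graph obtained by grafting the exceptional edge $\mu_0$ at a degree-$2$ non-source slot of $A$, and a section of it is obtained by grafting, at the same slot, a binary operation $w$ of $P$ having $\mu_0$ as a left (or right) unit — such a $w$ exists in every graph-substitution operad. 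Since $w\circ_k\mu_0$ is the corresponding identity operation and grafting is compatible with operadic evaluation, the relevant composite is literally $\mathrm{id}_A$, with trivial leaf permutation once the leaf indices of $w$ are chosen as in that proof, so the pasting identities $k\circ f=h\circ g$ and $f\circ t=u\circ h$ already verified in $\mathcal{V}(P)$ hold verbatim in $\Tw(P)$, and the split pushout lifts. I expect this last point — bookkeeping of the precise underlying operations and leaf orders of $t$, $s$, $u$ so that the triangles commute on the nose in $\Tw(P)$ rather than only up to permutation isomorphism in $\mathcal{V}(P)$ — to be the main obstacle, though it is ultimately a routine check once the correct representatives are pinned down via Corollary~\ref{crl:twsop}.
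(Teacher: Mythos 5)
Your strategy is genuinely different from the paper's: you verify the hypotheses of the preceding proposition to obtain the split pushout in $\mathcal{V}(P)$ and then try to lift it along the discrete opfibration $\Tw(P)\to\mathcal{V}(P)$, whereas the paper constructs the sections directly from the graph of the common source $A$, by a case analysis on the adjacency of the two contracted vertices. Your hypothesis-checking is correct, and you are also right that once all four sections are known to exist in $\Tw(P)$ with the correct sources \emph{and targets}, the identities $k\circ f=h\circ g$ and $f\circ t=u\circ h$ lift automatically, by uniqueness of lifts with prescribed source along a discrete opfibration.

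The gap is in the lifting step, which is exactly where the paper warns the general argument breaks down (``the target of the morphism $t$ might be different from $A$''). Your description of a section of an elementary degeneracy $f\colon A\to B=A\circ_i\mu_0$ as ``grafting, at the same slot, a binary operation $w$ having $\mu_0$ as a unit'' does not stand up: the slot --- the degree-$2$ vertex $v$ of $A$ --- no longer exists in $B$. What one actually does is substitute the two-vertex subgraph of $A$ on $v$ and a neighbouring vertex $z$ into the vertex $z$ of $B$; this is an \emph{upper} morphism and requires a vertex-neighbour $z$ of $v$ to survive in the target of the section. In the pushout situation both $v$ and $w$ are contracted in $P$, so if the only vertex-neighbour of $w$ in $A$ is $v$ itself (for instance $A$ a two-vertex line, $B\cong C\cong id_1$, $P=\mu_0$; under $\Tw(uAs)\simeq\Delta$ this is the split pushout of the two surjections $[2]\to[1]$), then $k\colon B\to P$ admits no upper-morphism section at all: the sections $u$ and $t$ must be \emph{lower} morphisms, obtained by substituting the source into one vertex of a two-vertex graph. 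The paper's proof consists precisely of this dichotomy ($w$ adjacent to some vertex $z\neq v$ versus $w$ adjacent only to $v$), and your argument only covers the first branch. Until the second branch is treated, including the verification of $f\circ t=u\circ h$ there, the proof is incomplete.
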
 
\begin{proof}
Let $f$ and $g$ be two elementary degeneracies with common source $A$. Morphisms $f$ and $g$ replace vertices $v$ and $w$ of degree two in the graph $A$ by edges. If the vertex $w$ is adjacent to a vertex $z$ different from $v$, then the sections $t$ and $u$ in the split pushout can be chosen to be upper morphisms corresponding to insertions of a tree on two vertices (corresponding to $z$ and $w$) into the vertex $z$. If the vertex $w$ is adjacent only to $v$, then the sections $t$ and $u$ can be chosen to be lower morphisms, corresponding to insertion of the sources into the vertex of the tree with two vertices, one of which is the vertex $w$.
\end{proof}

\begin{proposition}
Under conditions of Theorem~\ref{thm:groupoid-reedy} or Theorem~\ref{thm:enveloping-image-reedy}, suppose that any morphism in $R_-$ is a split epimorphism and any pair of elementary degeneracies with a common source has a split pushout, in both possible ways. Then (1) any pair of split epimorphisms with a common source has an absolute pushout, and (2) any two split epimorphisms that have the same set of sections are equal.
\end{proposition}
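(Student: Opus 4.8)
The plan is to establish the two numbered claims in turn, leveraging the hypotheses: $R_-$ consists precisely of split epimorphisms, and elementary degeneracies sharing a source admit split pushouts in both directions. Recall (from the paragraph preceding the proposition and from the proof of Proposition~\ref{pr:elem-deg-split-epi}) that every morphism in $R_-$ factors as a composition of elementary degeneracies followed by an isomorphism. Both statements are then reduced to statements about elementary degeneracies and compositions thereof.

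For claim (1), I would take two split epimorphisms $e_1\colon A\to B_1$ and $e_2\colon A\to B_2$ with common source and write each as $\theta_j\circ d_{j,k}\circ\dots\circ d_{j,1}$ with $d_{j,i}$ elementary degeneracies and $\theta_j$ isomorphisms. Since isomorphisms contribute trivially to pushout diagrams (absolute pushouts are stable under postcomposition by isos on the legs), it suffices to build an absolute pushout of $d_{1,k_1}\circ\dots\circ d_{1,1}$ and $d_{2,k_2}\circ\dots\circ d_{2,1}$. The idea is an induction on $k_1+k_2$ that repeatedly inserts split pushout squares of pairs of elementary degeneracies and pastes them: a split pushout is an absolute pushout (it is preserved by any functor), and a horizontal or vertical composite of absolute pushout squares is again an absolute pushout. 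The subtle point, and the one I expect to be the main obstacle, is that after pushing one elementary degeneracy $d_{1,1}$ past the leg $d_{2,1}$ one obtains a \emph{morphism in $R_-$} along the new leg, not necessarily an elementary degeneracy; one must re-decompose it into elementary degeneracies (which is possible since $R_-$ is exactly the split epimorphisms) before continuing the induction. One also has to verify that the resulting composite square, built from split pushouts of elementary degeneracies, still has the two legs in $R_-$ and the cocone maps split epimorphisms — but this follows since $R_-$ is closed under composition and contains all isomorphisms, and the split pushouts are assumed split in both directions so all four maps in each elementary square lie in $R_-$.

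For claim (2), suppose $e,e'\colon A\to B$ and $A\to B'$ are split epimorphisms with the same set of sections $s\colon B\to A$. First, $e$ and $e'$ have the same target up to the obvious identification: any section $s$ satisfies $e\circ s=\mathrm{id}$, hence by the Reedy axiom (non-isomorphisms in $R_-$ strictly decrease degree) $e$ and $e'$ decrease degree by the same amount $\deg(A)-\deg(B)$. The argument then proceeds by induction on this degree drop. If the drop is zero, both $e$ and $e'$ are isomorphisms; sharing a section forces $e=e'=s^{-1}$. Otherwise factor $e$ through an elementary degeneracy: write $e=e_1\circ d$ with $d$ an elementary degeneracy out of $A$, and similarly $e'=e'_1\circ d'$. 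Using that petals factor through inputs and the analysis of elementary degeneracies out of operations of arity $\ge 1$ from Proposition~\ref{pr:elem-deg-split-epi}, I would show that the first elementary degeneracy is detected by the set of sections: a section $s$ of $e$ with $d\circ$(something)$= \mathrm{id}$ identifies the leaf to which the grading-$(-1)$ operation is grafted and the operation itself, and there is at most one grading-$(-1)$ operation with a given output colour (this is precisely the hypothesis used in the preceding proposition ensuring split pushouts exist). Hence $d=d'$, and then $e_1$ and $e'_1$ have the same set of sections, so $e_1=e'_1$ by induction, giving $e=e'$. The delicate step here is checking that the set of sections truly pins down the elementary degeneracy in the symmetric and cyclic cases, where representatives are only unique up to the relevant cyclic group action; I would handle this by passing to canonical representatives and invoking Corollary~\ref{crl:isos-in-tw} together with the fact that $P(\psi=0)$ is a groupoid so that no spurious automorphisms intervene. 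The enveloping-category case $\mathcal{V}(P)$ is identical once one replaces "operation marking the source" by "colour of the source", using Theorem~\ref{thm:enveloping-image-reedy}.
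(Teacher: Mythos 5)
Part (1) of your proposal is essentially the paper's argument: decompose each split epimorphism into elementary degeneracies (possible because $R_-$ is exactly the class of split epimorphisms and every $R_-$-morphism is a composite of elementary degeneracies followed by an isomorphism), then tile the square with split pushouts of elementary degeneracies, which are absolute and compose to an absolute pushout. Your worry that the new leg after one exchange might fail to be an elementary degeneracy does not really arise (in the split pushout squares constructed in the preceding propositions the legs $h,k$ are again elementary degeneracies), and in any case your workaround of re-decomposing inside $R_-$ is harmless. So (1) is fine.

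Part (2) has a genuine gap. The whole content of the statement is your claim that the set of sections of a split epimorphism detects the elementary degeneracies it factors through, and you do not prove it: you gesture at reading off ``the leaf to which the grading-$(-1)$ operation is grafted'' from a section, invoking that petals factor through inputs and that there is at most one grading-$(-1)$ operation per output colour. Neither of these is among the hypotheses of this proposition (the second was a hypothesis of the \emph{preceding} proposition about $\mathcal{V}(P)$, and the paper explicitly notes it is not sufficient for twisted arrow categories), so the sketch does not close. Moreover your induction needs $e$ and $e'$ to factor through a \emph{common} elementary degeneracy $d$, which is exactly what this unproved claim must supply. The paper gets all of this purely from the split-pushout hypothesis: by vertically stacking the elementary squares one shows that any split epimorphism $g$ and elementary degeneracy $f$ with common source admit a split pushout $k\circ f=h\circ g$ with $h$ an elementary degeneracy and with a section $t$ of $g$ satisfying $f\circ t=u\circ h$. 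If $g_1=l\circ f$ while $g$ does not factor through $f$, and if $t$ were also a section of $g_1$, then $id=g_1\circ t=l\circ f\circ t=l\circ u\circ h$ would give the elementary degeneracy $h$ a left inverse, contradicting that non-isomorphisms in $R_-$ strictly decrease degree. Hence $t$ separates the section sets of $g$ and $g_1$. This is precisely the contrapositive of your key claim, and it is the step your proposal is missing; to repair your argument you should replace the combinatorial ``reading off'' step by this pushout argument (or prove the detection claim under only the stated hypotheses).
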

\begin{proof}
The first part is simple: any split epimorphism is a composition of elementary degeneracies followed by an isomorphism, and the morphisms in the pushout square of elementary degeneracies are elementary degeneracies. The pushout square of two split epimorphisms can be built from pushout squares of elementary degeneracies.

For the second part observe that by vertically stacking the pushout diagrams of elementary degeneracies we can show that any split epimorphism $g$ and elementary degeneracy $f$ have a split pushout (as in the above diagram, and $g$ and $f$ cannot be exchanged here). If two split epimorphisms $g, g_1:A\to C$ are different, then there is an elementary degeneracy $f$ such that $g_1=l\circ f$ for some morphism $l$, and $g\neq l'\circ f$ for any morphism $l'$. Let $t$ be the section of $g$ in the split pushout of $f$ and $g$. If $t$ is a section of $g_1$, then $id=g_1\circ t=l\circ f\circ t=l\circ u\circ h$. The morphism $h$ is an elementary degeneracy, and thus does not have a left inverse. The section $t$ of $g$ is not a section of $g_1$. 
\end{proof}

We have shown that for operads $uAs$, $iuAs$, $pOp$, $sOp$, and $cOp$ the corresponding twisted arrow categories and enveloping categories are both Eilenberg--Zilber and EZ-categories. The twisted arrow categories of the operads $mOp_{nc}$ and $mOp_{st}$ are also Eilenberg--Zilber and EZ-categories, while the corresponding enveloping categories are Eilenberg--Zilber, but not EZ-categories.

\section{Twisted arrow quasi-categories}
\label{sec:quasi-cat}
Twisted arrow $\infty$-categories of simplicial operads were introduced in \cite{hoang2020quillen}. We propose an analogous construction for $\infty$-operads modeled by dendroidal sets. Unfortunately, the construction is ad hoc. We do not know if there is a good general context analogous to the discrete case. 

Denote by $\Omega'$ the full subcategory of $\Tw(sOp)$ on operadic trees with vertices ordered in depth-first search order. This category is equivalent to the category $\Omega$ and allows to ignore the order on vertices.

\begin{proposition}
There is a faithful functor $T:\Delta/\Gamma\to \Omega'$.
\end{proposition}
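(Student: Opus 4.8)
The plan is to describe $T$ concretely on objects and morphisms and then check functoriality and faithfulness. Recall that an object of $\Delta/\Gamma$ is a simplex $[n]$ together with a morphism $[n]\to\Gamma$ in the category of simplicial sets; since $\Gamma = sk(FinSet_*^{op})$ is (the nerve of) a category, such a morphism is a chain $\gamma\colon c_0\to c_1\to\dots\to c_n$ of composable arrows in $\Gamma$, i.e.\ a functor $[n]\to\Gamma$. By the identification $\Gamma\simeq\Tw_{sOp}(uCom)=\mathcal{U}_{sOp}(uCom)$ of Proposition~\ref{pr:comgamma}, each object $c_k$ is an operation $\mu_{m_k}$ of $uCom$, and each arrow $c_{k}\to c_{k+1}$ is a morphism in $\mathcal{U}_{sOp}(uCom)$, hence is canonically represented by a reduced symmetric tree. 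First I would build from this data a single operadic tree: stack the canonical representatives of $c_0\to c_1$, $c_1\to c_2$, \dots, $c_{n-1}\to c_n$ on top of one another as in the description of composition in Corollary~\ref{crl:twsop}, but \emph{without} contracting the internal edges, so that the result is a tree of height $(n+1)$ whose vertices come in $n$ levels (one level per arrow $c_k\to c_{k+1}$) plus the structure recording $c_0$; I would then re-index the vertices in depth-first search order to land in $\Omega'$. Equivalently, this is exactly the operadic tree underlying the element of the dendroidal nerve of $uCom$ (a ``linear'' tree-shaped diagram), and the point of using $uCom$ — the terminal operad — is that there are no further choices: each $\mu_{m}$ is unique in its arity.

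Next I would define $T$ on morphisms. A morphism in $\Delta/\Gamma$ from $(\,[n]\to\Gamma\,)$ to $(\,[n']\to\Gamma\,)$ is an order-preserving map $\phi\colon[n]\to[n']$ commuting with the two maps to $\Gamma$; concretely it is an endpoint-or-interior choice of a sub-chain. I would send such a $\phi$ to the morphism of $\Omega'$ obtained as follows: the map $\phi$ singles out, for each $k$, which block of levels of the tree $T([n']\to\Gamma)$ corresponds to the $k$-th level of $T([n]\to\Gamma)$, and collapsing (i.e.\ composing in $uCom$, equivalently contracting and using that $uCom$ is terminal) each such block of edges produces precisely the tree $T([n]\to\Gamma)$ together with a morphism in $\Tw(sOp)$ from it to $T([n']\to\Gamma)$; this is the grafting-and-contracting description of morphisms in $\Tw(sOp)$ from Corollary~\ref{crl:twsop}. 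Functoriality — that $T(\psi\circ\phi)=T(\psi)\circ T(\phi)$ and $T(\mathrm{id})=\mathrm{id}$ — is then a direct consequence of associativity of this ``graft then contract'' composition, which is built into $\Tw(sOp)$ being a category; I would spell this out but not grind through it.

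For faithfulness I would argue as follows. Suppose $\phi_1,\phi_2\colon(\,[n]\to\Gamma\,)\rightrightarrows(\,[n']\to\Gamma\,)$ have $T(\phi_1)=T(\phi_2)$. Both are order-preserving maps $[n]\to[n']$; the morphism $T(\phi_i)$ of $\Omega'$ records, via its upper vertices and the indexing of leaves, exactly which internal edges of the tall tree $T([n']\to\Gamma)$ get contracted, hence exactly the partition of the $n'$ levels into the $n$ blocks, hence exactly the monotone map $\phi_i$. Thus $\phi_1=\phi_2$. (One can phrase this more slickly: $T$ restricted to the fibre over a fixed object of $\Delta/\Gamma$ is already injective on morphisms because $\Delta$ has no nontrivial automorphisms over $\Gamma$ of this kind, and $T$ is visibly injective on objects since distinct chains in $\Gamma$ give distinct trees; combining gives faithfulness.) The main obstacle I expect is purely bookkeeping: making the ``stack the canonical representatives and re-index in depth-first order'' construction precise enough that the compatibility with $\phi$ — in particular the interaction between the permutations on leaves appearing in the canonical representatives in $\mathcal{U}_{sOp}(uCom)$ and the depth-first re-indexing in $\Omega'$ — is manifestly functorial; once the right normal form for these trees is fixed, everything else is formal. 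It may be cleanest to define $T$ as the composite $\Delta/\Gamma \to \Omega/uCom \xrightarrow{\ \sim\ } \Tw(sOp)/uCom \to \Tw(sOp)\simeq\Omega'$, where the first arrow is the evident ``tree-shaped diagram'' functor, the middle equivalence is the one from sequence \eqref{seq:omega}, and the last is the projection, and then note this composite is faithful because the first functor is faithful (an element of the dendroidal nerve of $uCom$ is determined by its underlying tree, which in turn recovers the chain and the monotone maps) and the projection $\Tw(sOp)/uCom\to\Tw(sOp)$ is faithful on the relevant subcategory since $uCom$ is terminal so each tree admits a unique decoration.
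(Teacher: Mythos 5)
Your construction is essentially the paper's: both send a chain in $\Gamma\simeq\mathcal{U}_{sOp}(uCom)$ to the operadic tree obtained by stacking the canonical reduced-tree representatives of its arrows, and send simplicial operators to the corresponding edge contractions, corolla graftings and degree-two-vertex insertions in $\Omega'$ (the paper phrases this on the generating face and degeneracy maps, you on general monotone maps, and you additionally spell out faithfulness, which the paper leaves implicit). One small correction: each arrow of $\Gamma$ contributes a root vertex below as well as a layer of upper vertices (the fibre over the basepoint), so the stacked tree has $1+2n$ levels rather than $n+1$ — though the procedure you describe, stacking the full canonical representatives without contracting, does produce the correct tree.
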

\begin{proof}
A functor $[0]\to\Gamma$, an object of $\Delta/\Gamma$, corresponds to an object $[n]$ of $\Gamma$. The functor $T$ sends this object to the tree with one vertex and with $n$ leaves ordered trivially. An object $[1]\to \Gamma$ that corresponds to a morphism $f$ in $\Gamma$ is sent by $T$ to the reduced symmetric tree such that leaves of the corolla adjacent to the $j$-th edge of the source vertex are indexed by $f^{-1}(j)$. An object $G:[n]\to\Gamma$ is sent to the tree of total height $(1+2n)$ obtained via sequence of graftings of corollas, with $k$-th grafting corresponding to $k$-th 1-simplex $G\circ([1]\to [n])$. The $i$-th face map $G\circ ([n-1]\to [n])$ is sent by $T$ to the composition of face maps in $\Omega'$ that contract inner edges of distance $i$ from the source vertex, or graft corollas into leaves in case $i=n$. The $i$-th degeneracy map $G\circ([n]\to[n-1])$ is sent by $T$ to the composition of degeneracy maps that replace inner edges of distance $i$ from the source vertex by edges with one vertex of degree $2$.
\end{proof}

\begin{definition}
Let $\pi:\Delta/\Gamma\to\Delta$ be the projection. The functor $\Tw:dSet\to sSet$, the twisted arrow set of a dendroidal set, is the composition of restriction along $T$ with the left Kan extension along $\pi$.
\end{definition}

\begin{proposition}
For any operad $P$ the twisted arrow set of the dendroidal nerve of $P$ is the nerve of the twisted arrow category of $P$.
\end{proposition}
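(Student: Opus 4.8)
The statement says: for any operad $P$, the twisted arrow set $\Tw(N_d(P))$ of the dendroidal nerve $N_d(P)$ is isomorphic to the nerve $N(\Tw(P))$ of the twisted arrow category of $P$. Since $\Tw$ of a dendroidal set is defined as a composite of restriction along $T:\Delta/\Gamma\to\Omega'$ followed by left Kan extension along $\pi:\Delta/\Gamma\to\Delta$, I would first unwind both constructions on the level of $n$-simplices and then exhibit a natural bijection between the two. The key reduction is that $N_d(P)$, as a presheaf on $\Omega\simeq\Omega'$, is representable-friendly: $N_d(P)(T(a))$ for an object $a=(G:[n]\to\Gamma)$ of $\Delta/\Gamma$ is by definition the set of $T(a)$-labellings of $P$, i.e.\ ways of marking the vertices of the operadic tree $T(a)$ by operations of $P$ compatibly with colours. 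Because $T(a)$ is assembled from corollas by a sequence of graftings dictated by the $1$-simplices of $G$, such a labelling is exactly a composable string of morphisms in $\Tw(P)$ together with a choice of source operation, with the $[n]$-worth of graftings recording the $n$ stages.

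**Main steps.** First I would compute $T^*N_d(P)$ as a presheaf on $\Delta/\Gamma$: on an object over $[n]$ it returns labelled trees of height $1+2n$, and I would identify this set, naturally in the $\Delta$-direction, with sequences $p_0\to p_1\to\cdots\to p_n$ of $n$ composable morphisms in $\Tw(P)$ whose first term $p_0$ is an object with the prescribed arity/colour data coming from $[0]\to\Gamma\xrightarrow{\;} \Gamma$. The face maps $G\circ([n-1]\to[n])$ of $\Delta/\Gamma$ are sent by $T$ to contractions of inner edges at a fixed distance from the source (or graftings at the leaves when $i=n$), which under the identification correspond precisely to composing adjacent morphisms in the string (or dropping the last one); the degeneracy maps correspond to inserting identities. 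So $T^*N_d(P)$, viewed via $\pi$ as living over $\Delta$, is the presheaf on $\Delta/\Gamma$ whose $\pi$-fibres assemble to $N(\Tw(P))$.

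Second, I would compute the left Kan extension $\pi_!(T^*N_d(P))$ pointwise. For $[n]\in\Delta$ the value is the colimit of $T^*N_d(P)$ over the comma category $(\pi\downarrow [n])$, whose objects are pairs of an object $a$ over some $[m]$ together with a map $[n]\to[m]$ in $\Delta$. I claim this colimit is just $\bigsqcup$ over $[0]\to\Gamma$-data modded out appropriately, and collapses to the set of $n$-simplices of $N(\Tw(P))$: intuitively, for fixed $n$ every object over $[n]$ in $\Delta/\Gamma$ maps to $[n]$ via the identity, these are terminal in the relevant diagram direction, and the colimit is computed by the over-category of identities, recovering exactly $T^*N_d(P)$ restricted to the fibre over $[n]$, which is $N(\Tw(P))_n$. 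I would check naturality in $[n]$ against the simplicial operators — faces and degeneracies in $\Delta$ act by the corresponding operators on strings of morphisms — so the bijection is an isomorphism of simplicial sets. Finally, by the earlier Corollary~\ref{crl:twsop}, $\Tw(P)$ has operations of $P$ as objects and reduced trees (resp.\ the grafting/contraction recipe) as morphisms and composition, which is exactly the combinatorial description that falls out of the labelled-tree picture, so the identification is the identity on the nose once objects and morphisms are matched up.

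**Main obstacle.** The hard part is the bookkeeping in the left Kan extension: showing that the colimit over $(\pi\downarrow [n])$ does not introduce spurious identifications or new elements beyond the fibre $T^*N_d(P)|_{[n]}$. Concretely one must verify that the comma category has a cofinal subcategory (the objects over $[n]$ with structure map $\mathrm{id}_{[n]}$, with only $\Gamma$-morphisms between them) on which $T^*N_d(P)$ is already a coproduct over the discrete data, so that the colimit is computed there. Establishing this cofinality — and checking it is compatible with the simplicial structure maps — is where the real work lies; everything else is a matter of carefully translating the grafting/contraction description of $T$ into composition of morphisms in $\Tw(P)$ and invoking Corollary~\ref{crl:twsop}. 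I expect no conceptual surprises, only a somewhat intricate verification that the ad hoc functor $T$ was rigged precisely so that this cofinality holds.
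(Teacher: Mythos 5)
The paper states this proposition without proof, so there is nothing to compare against line by line; your proposal supplies the argument that the definitions are designed to make work, and it is correct in outline. The genuine content is your first step: identifying, for each $G:[n]\to\Gamma$, the set $N_d(P)(T(G))$ of $P$-labellings of the height-$(1+2n)$ tree with the set of strings $p_0\to\cdots\to p_n$ in $\Tw(P)$ whose underlying string of arity data is $G$ (under $\Gamma\cong\Tw_{sOp}(uCom)$), using Corollary~\ref{crl:twsop} and Proposition~\ref{pr:comgamma} to match reduced labelled trees with morphisms and checking that $T$ of faces and degeneracies implements composition, omission and insertion of identities. Where you miscalibrate is the second step: the left Kan extension is not where the real work lies and needs no cofinality argument. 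The functor $\pi:\Delta/\Gamma\to\Delta$ is the category of elements of the simplicial set $N\Gamma$, hence a discrete fibration, and left Kan extension along a discrete fibration is computed objectwise as the coproduct over the (discrete) fibre: in the coend $\int^{a}\mathrm{Hom}_\Delta([n],\pi(a))\times T^*N_d(P)(a)$ every class has a unique representative of the form $(\mathrm{id}_{[n]},y)$ obtained by pulling $y$ back along the unique lift of the structure map, so $\pi_!(T^*N_d(P))([n])=\bigsqcup_{G:[n]\to\Gamma}N_d(P)(T(G))$ with no further identifications. In particular your description of the fibre as having ``only $\Gamma$-morphisms between'' its objects is off — morphisms in $\Delta/\Gamma$ covering $\mathrm{id}_{[n]}$ are identities only — but this only makes the step easier than you anticipate. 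With that observation the proof reduces entirely to the combinatorial matching in your first step, which is sound.
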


\begin{remark}
It is known\footnote{https://mathoverflow.net/questions/295318} that the nerve of the twisted arrow category $\Tw(C)$ of a~category~$C$ is the diagonal of the bisimplicial set obtained via join maps $([n],[m])\to [n+m+1]$ from the nerve of the category $C$, or, equivalently, obtained from the (unpointed) stable double category corresponding to the category $C$ (\cite{bergner20182}). Pointed stable double categories are equivalent to 2-Segal sets, or to discrete decomposition spaces. The construction of stable double category corresponding to a category relies on the fact that both $(Upper, Lower)$ and $(Lower,  Upper)$ are strict factorization systems for twisted arrow categories of categories. The twisted arrow set of a dendroidal set is also the diagonal of bisimplicial set, constructed using the $(Upper, Lower)$ factorization system and canonical $(Lower',Upper)$-factorizations.
\end{remark}

\begin{remark}
The functor $T$ is most likely analogous to the dendrification functor of \cite{heuts2016equivalence}. This suggests that there should be three model category structures on presheaves over $\Delta/\Gamma$ that model $\infty$-operads: one is used in Lurie's works, one should come from twisted arrow categories, and one should come from universal enveloping categories of operads.

Furthermore, there is a functor $\Omega'/\TwOp(uCom)\to \Tw(sOp)$ that generalizes the functor $T$. Composition of the restriction along this functor with the left Kan extension along the projection $\Omega'/\TwOp(uCom)\to\Omega'$ gives the functor $sd:dSets\to dSets$, which is likely the generalization of the edgewise subdivision of simplicial sets to dendroidal sets. The edgewise subdivision of the nerve of an operad is the nerve of its twisted arrow operad.
\end{remark}

\bibliographystyle{amsalpha}
\bibliography{main}

\end{document}